\def\Aut {\operatorname{Aut}}
\newcommand{\Sym}{\mathfrak{S}}
\newcommand{\Alt}{\mathfrak{A}}
\newcommand{\Cyc}{\mathfrak{C}}
\newcommand{\Dih}{\mathfrak{D}}
\newcommand{\PP}{\mathbb{P}}
\newcommand{\QQ}{\mathbb{Q}}
\newtheorem{lemma}{Lemma}[section]
\newtheorem{theorem}[lemma]{Theorem}
\newtheorem{propos}[lemma]{Proposition}
\newtheorem{corollary}[lemma]{Corollary}
\theoremstyle{definition}
\newtheorem{definition}[lemma]{Definition}
\newtheorem{remark}[lemma]{Remark}
\def\blfootnote{\xdef\@thefnmark{}\@footnotetext}
\begin{document}

\title{Birational rigidity of $G$-del Pezzo threefolds of degree 2}
\author[A.\,A.~Avilov]{A.\,A.~Avilov}
\address{National Research University Higher School of Economics, AG Laboratory, HSE, 6 Usacheva str., Moscow, Russia, 119048.}
\email{v07ulias@gmail.com} 

\maketitle

\begin{abstract}
In this paper we classify nodal rational non-$\QQ$-factorial del Pezzo threefolds of degree $2$ which can be $G$-birationally rigid for some subgroup $G\subset \Aut(X)$.

\end{abstract}

\markright{Birational rigidity of $G$-del Pezzo threefolds of degree 2}

\blfootnote{}

\section{Introduction}

In this paper we work over an algebraically closed field $\operatorname{k}$ of characteristic 0. Recall that a \emph{$G$-variety} is a pair $(X, \rho)$, where $X$ is an algebraic variety and $\rho: G\to \Aut(X)$ is an injective homomorphism of groups. We say that $G$-variety $X$ has \emph{$G\mathbb{Q}$-factorial singularities} if every $G$-invariant Weil divisor of $X$ is $\mathbb{Q}$-Cartier.

Let $X$ be a $G$-variety with at most $G\mathbb{Q}$-factorial terminal singularities and $\pi:X\to Y$ be a $G$-equivariant morphism. We call $\pi$ a \emph{$G$-Mori fibration} if $\pi_{*}\mathcal{O}_{X}=\mathcal{O}_{Y}$, $\dim X>\dim Y$, the relative invariant Picard number $\rho^{G}(X/Y)$ is equal to $1$ (in this case we say that $G$ is \emph{minimal}) and the anticanonical class $-K_{X}$ is $\pi$-ample. If $Y$ is a point then $X$ is a \emph{$G\mathbb{Q}$-Fano variety}. If in addition the anticanonical class is a Cartier divisor then $X$ is a \emph{$G$-Fano variety}.

Let $X$ be arbitrary normal projective $G$-variety of dimension 3. Resolving the singularities of $X$ and running the $G$-equivariant minimal model program (see e.g.~\cite{Pro2}) we reduce $X$ either to a $G$-variety with nef anticanonical class, or to a $G$-Mori fibration (see e.g.~\cite[\S 3]{23}). So such fibrations (and $G\mathbb{Q}$-Fano varieties in particular) form a very important class in the birational classification. In this paper we consider a certain class of $G$-Fano threefolds.

\begin{definition} A projective $n$-dimensional variety $X$ is a \emph{del Pezzo variety} if it has at most terminal Gorenstein singularities and the anticanonical class $-K_{X}$ is ample and divisible by $n-1$ in the Picard group $\operatorname{Pic}(X)$. If a $G$-Fano variety $X$ is a del Pezzo variety, then we say that $X$ is a \emph{$G$-del Pezzo} variety.
\end{definition}

Del Pezzo varieties of arbitrary dimension were classified by T. Fujita (\cite{24}, \cite{25}, \cite{26}, see also~\cite{KuP}). $G\mathbb{Q}$-factorial $G$-minimal three-dimensional $G$-del Pezzo varieties were partially classified by Yu. Prokhorov in~\cite{Pro1}. The main invariant of a del Pezzo threefold $X$ is the \emph{degree} $d=(-\frac{1}{2}K_{X})^{3}$, it is an integer in the interval from 1 to 8. In this paper we consider the case $d=2$. In this case $X$ is a double cover of $\mathbb{P}^{3}$ with ramification at a quartic surface. If $d=8$ then $X$ is a projective space. In this case equivariant birational geometry were studied by I. Cheltsov and C. Shramov in the paper~\cite{57}. The cases $d=4$ and $d=3$ were considered in the author's previous works~\cite{Avi1} and~\cite{Avi2} (see also~\cite{CDK}). If $d>4$ then $X$ is smooth (cf.~\cite{Pro1}) while smooth del Pezzo threefolds and their automorphism groups are known well. For other types of $G$-Fano threefolds there are only some partial results: see for example~\cite{13},~\cite{17}.

Classification of finite subgroups of the Cremona group $\operatorname{Cr}_{3}(\operatorname{k})$ is one of the motivations of this paper. The Cremona group $\operatorname{Cr}_{n}(\operatorname{k})$ is the group of birational automorphisms of the projective space $\mathbb{P}^{n}_{\operatorname{k}}$. Finite subgroups of $\operatorname{Cr}_{2}(\operatorname{k})$ were comple\-tely classified by I. Dolgachev and V. Iskovskikh in~\cite{DI1}. The core of their method is the following. Let $G$ be a finite subgroup of $\operatorname{Cr}_{2}(\operatorname{k})$. The action of $G$ can be regularized in the following sence: there exists a smooth projective $G$-variety $Z$ and a birational morphism $Z\to \mathbb{P}^{2}$ which commutes with the action of $G$. Then we apply the equivariant minimal model program to $Z$ and obtain a $G$-Mori fibration which is either a $G$-conic bundle over $\mathbb{P}^{1}$, or a $G$-minimal del Pezzo surface. Dolgachev and Iskovskikh classified all minimal subgroups in automorphism groups of del Pezzo surfaces and conic bundles and so they obtained a complete list of finite subgroups of $\operatorname{Cr}_{2}(\operatorname{k})$. But quite often two subgroups from such list are conjugate in $\operatorname{Cr}_{2}(\operatorname{k})$, so it is natural to identify them. One can see that $G$-varieties $Z_{1}$ and $Z_{2}$ give us conjugate subgroups if and only if there exists a $G$-equivariant birational map $Z_{1}\dasharrow Z_{2}$. So we need to classify all rational $G$-Mori fibrations and birational maps between them as well.

Following this program in the 3-dimensional case one can reduce the question of classification of all finite subgroups in $\operatorname{Cr}_{3}(\operatorname{k})$ to the question of classification of all rational $G\mathbb{Q}$-Mori fibrations and birational equivariant maps between them. Such program was realized in some particular cases: simple non-abelian groups which can be embedded into $\operatorname{Cr}_{3}(\mathbb{C})$ (see~\cite{10}, see also~\cite{4},~\cite{14},~\cite{ChS},~\cite{CPS1}) and $p$-elementary subgroups of $\operatorname{Cr}_{3}(\mathbb{C})$ (see~\cite{11},~\cite{7}).

\begin{definition} By a \emph{node} we mean an isolated ordinary double singular point of a variety of arbitrary dimension. A variety is called \emph{nodal} if its only singularities are nodes.
\end{definition}

For applications to Cremona groups we are mostly interested in classification of \emph{rational} del Pezzo varieties. There is the result of I. Cheltsov, V. Przyjalkowski and C. Shramov which states that almost all nodal non-$\QQ$-factorial quartic double solids are rational with one family of exceptions (see~\cite[Theorem 1.5]{CPS3}). Also they conjectured that nodal $\QQ$-factorial quartic double solids are not rational (see~\cite[Conjecture 1.9]{CPS3}). This is the reason why we consider only non-$\QQ$-factorial quartic double solids here.

In this paper we are interested in the following problem: classify nodal non-$\QQ$-fac\-torial rational $G$-del Pezzo threefolds of degree 2 that have no $G$-equivariant biratio\-nal map to a ``more simple'' $G$-Fano threefold (for example $\mathbb{P}^{3}$ or a quadric in $\mathbb{P}^{4}$) or to a $G$-Mori fibration with the base of positive dimension. In this paper we give a partial answer to this question.

The main result of this paper is the following theorem:
\begin{theorem}\label{th1} Let $X$ be a nodal non-$\QQ$-factorial del Pezzo threefold of degree $2$ and $G$ be a finite subgroup of $\Aut(X)$ such that $X$ is a rational $G$-Fano variety. Suppose that $X$ is $G$-birationally rigid. Then we have the following possibilities:
\begin{enumerate}
\item
$X$ has $16$ nodes. There is a 3-dimensional family of such varieties. All of them are $\Aut(X)$-birationally superrigid (see~\cite{Che}).
\item
$X$ has $15$ nodes. The variety of such type is unique and described in Propositi\-on~\ref{pr2}.
\item
$X$ has $12$ nodes. There are 3 types and 6 families of such varieties (two $2$-dimensional and four $1$-dimensional families). They are described in Propo\-sitions~\ref{pr5}, \ref{pr6}, \ref{pr7}, \ref{pr8}, \ref{pr10} and \ref{pr11}.
\item
$X$ has $10$ nodes. There is a $1$-dimensional family of such varieties. They are described in Proposition~\ref{pr12}.
\item
$X$ has $8$ nodes. There is a $2$-dimensional family of varieties and one distinguis\-hed variety. They are described in Propositions~\ref{pr13} and \ref{pr14}.
\end{enumerate}
\end{theorem}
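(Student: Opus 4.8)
The plan is to separate the problem into a geometric classification of the admissible pairs $(X,G)$ and a subsequent case-by-case analysis of $G$-birational rigidity. First I would use the presentation of a degree-$2$ del Pezzo threefold as a double cover $\pi\colon X\to\PP^{3}$ branched over a nodal quartic surface $B$, so that the nodes of $X$ correspond bijectively to the nodes of $B$ and $-K_{X}=2H$ with $H=\pi^{*}\mathcal{O}_{\PP^{3}}(1)$. Since $X$ is non-$\QQ$-factorial, $\operatorname{Cl}(X)\otimes\QQ$ has rank strictly larger than $\rho(X)=1$, and the defect is controlled entirely by how many independent linear relations the nodes impose. Together with the bound on the number of nodes of a quartic surface (at most $16$), this already restricts the admissible node numbers $s$. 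Here I would invoke the rationality criterion of Cheltsov–Przyjalkowski–Shramov (\cite{CPS3}, Theorem~1.5) to discard the single non-rational family at the outset, so that rationality is automatic for the configurations that remain.

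The second and most laborious step is to impose the group action. The requirement that $X$ be a $G$-Fano forces $\rho^{G}(X)=1$ together with $G\QQ$-factoriality, which means precisely that $G$ acts on the ``extra'' summand of $\operatorname{Cl}(X)\otimes\QQ$ responsible for the non-$\QQ$-factoriality with no nonzero invariants. Descending through the double cover, $G$ lands in $\operatorname{PGL}_{4}$ preserving $B$ and permuting its nodes so that the induced representation on the defect space has no trivial direct summand. I would then enumerate, configuration by configuration, the finite subgroups of $\Aut(X)$ compatible with each admissible node set via the invariant theory of quartics, retaining only those realizing $\rho^{G}(X)=1$. Organizing the node configurations and computing their stabilizers and induced representations on the defect lattice is where the bulk of the bookkeeping lies, and ensuring this enumeration is exhaustive—bounding $|G|$ through its faithful action on the set of nodes and on the defect lattice—is the main combinatorial obstacle.

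Having produced a finite list of candidate pairs $(X,G)$, the final step decides rigidity for each. For the candidates that are \emph{not} rigid I would exhibit explicit $G$-equivariant Sarkisov links: since $X$ is $G\QQ$-factorial there is no nontrivial $G$-equivariant small modification, so every link out of $X$ must begin with a divisorial extraction over a $G$-invariant center, and I would produce one by blowing up a $G$-orbit of nodes or a $G$-invariant line or conic and running the $G$-MMP, arriving either at a simpler $G$-Fano (such as $\PP^{3}$ or a quadric in $\PP^{4}$) or at a $G$-Mori fibration over a positive-dimensional base; a single such link rules the variety out. For the candidates that appear in the list I would prove $G$-rigidity by the $G$-equivariant Noether–Fano (maximal singularities) method: a $G$-birational map to another $G$-Mori fiber space would endow $X$ with a $G$-invariant mobile linear system $\mathcal{M}\subset|-nK_{X}|$ whose pair $(X,\tfrac{1}{n}\mathcal{M})$ has a maximal singularity, and a local analysis at the nodes combined with the relevant intersection numbers on a $G$-equivariant small modification must yield a contradiction (the $16$-node case being already superrigid by~\cite{Che}).

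The hardest part will be the rigidity proofs of the positive cases. The non-$\QQ$-factoriality makes the singularity analysis delicate, since one must control mobile systems not on $X$ itself but on its $G$-equivariant small modifications, and simultaneously exclude every possible $G$-invariant center of a non-canonical singularity (a node orbit, an invariant curve, or a divisor). Balancing this against the exhaustiveness of the enumeration in the second step is the crux: each surviving pair $(X,G)$ must be shown to admit no $G$-Sarkisov link at all, which is a genuinely more demanding task than constructing one.
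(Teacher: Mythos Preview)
Your proposal misreads what the theorem actually asserts. The statement gives only \emph{necessary} conditions: if $X$ is $G$-birationally rigid then $(X,G)$ appears on the list. The paper says this explicitly in the Remark following the theorem: ``This theorem gives only necessary conditions for the variety $X$ except for the case of $15$ nodes. We mostly don't know if the variety $X$ is $G$-birationally rigid in other cases.'' Consequently your entire ``hardest part''---proving $G$-rigidity of the surviving candidates via Noether--Fano/maximal singularities---is not required, and indeed is not carried out in the paper (the $16$-node case is quoted from Cheltsov, the $15$-node case from the author's earlier work; for the rest rigidity is left open). What \emph{is} required is the complementary task: for every other $(X,G)$, exhibit a $G$-equivariant link or fibration showing $X$ is not $G$-birationally rigid.

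Your organizational scheme also diverges from the paper's. Rather than enumerating node configurations and defect lattices from scratch, the paper takes as input Prokhorov's classification of $G$-del~Pezzo threefolds of degree~$2$ (Theorem~2.1), which stratifies them into finitely many types $15^{\circ}$--$26^{\circ}$ according to the rank $r$ of $\operatorname{Cl}(X)$ and records for each type a primitive birational model $\widehat X/Z$, the number of planes, and the node count. Each type is then treated in its own section. The central tool used repeatedly to eliminate cases is Lemma~2.2: if $X$ is $G$-birationally rigid then no $G$-orbit of nodes has length $1$, $2$, $3$ or $5$, and $\PP^3$ carries no $\widetilde G$-invariant line. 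These are proved by writing down explicit equivariant fibrations (projections from a node, from a line through two nodes, families of twisted cubics through five nodes, etc.). The remaining analysis is driven by the configuration of \emph{tropes} (planes $H\subset\PP^3$ with $H\cap Q$ a double conic, each carrying exactly six nodes), together with explicit coordinate computations and representation theory of small groups acting on $\PP^3$. Your sketch mentions neither the orbit-length obstruction nor the trope geometry, both of which do almost all of the work.
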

\begin{remark} This theorem gives only necessary conditions for the variety $X$ except for the case of $15$ nodes. We mostly don't know if the variety $X$ is $G$-biratio\-nally rigid in other cases.
\end{remark}

\textbf{Acknowledgements.} The author thanks Yuri Prokhorov, Constantin Shramov and Andrey Trepalin for useful discussions. This work is supported by Russian Science Foundation under grant 18-11-00121.

\section{Geometry of nodal quartic double solids}

Throughout this paper $X$ is a nodal rational non-$\QQ$-factorial del Pezzo threefold of degree 2. It is well-known that in this case we have the half-anticanonical double cover $\pi:X\to \PP^{3}$ with ramification at the nodal quartic $Q\subset \PP^{3}$ (note that nodes of the quartic $Q$ are images of nodes of the variety $X$). Let us denote by $\theta:X\to X$ the Geiser involution which naturally arises from the double cover.

Also we use the following notation:
\begin{itemize}
\item
$G\subset \Aut(X)$ is a finite subgroup such that $X$ is $G$-minimal and $G$-birationally rigid;
\item
$\widetilde{G}$ and $\widetilde{\Aut}(X)$ are  corresponding subgroups in $\operatorname{PGL}_{4}(\operatorname{k})$;
\item
$\Cyc_{n}$ is a cyclic group of order $n$;
\item
$\Dih_{2n}$ is a dihedral group of order $2n$;
\item
$\Sym_{n}$ is a symmetric group of degree $n$;
\item
$\Alt_{n}$ is an alternating group of degree $n$.
\end{itemize}

$G$-del Pezzo threefolds were partially classified by Yu. Prokhorov in the paper~\cite{Pro1} and the next theorem is a part of its main result:
\begin{theorem}
Let $X$ be a $G$-del Pezzo threefold of degree $2$. There are the following possibilities:\newline
\begin{center}
\begin{tabular}{|c|c|c|c|c|c|c|c|c|}
\hline
type & $r$ & $X$ & $\widehat{X}$ & $Z$ & $\Delta'$ & $\Delta''$ & $p$ & $s$\tabularnewline
\hline
$15^{\circ}$ &$1$ & $V_{2}$ &$-$ & $\operatorname{pt}$ & $E_7$ & $-$ & $0$ & $10-h$\tabularnewline

$16^{\circ}$ &$2$ & (5.2.7) &$-$& $\PP^{1}$ & $D_6$ & $A_{1}$ & $0$ & $11-h$ \tabularnewline

$17^{\circ}$ &$2$ & (5.2.2) &$-$& $\PP^{2}$ & $A_6$ & $-$ & $0$ & $11$\tabularnewline

$18^{\circ}$ &$2$ &(5.2.13)& $V_{3}$ & $\operatorname{pt}$ & $E_6$ & $-$ & $2$ & $11-h$, $h\le 5$ \tabularnewline

$19^{\circ}$ &$3$ & 4.2.1 &$-$& $(\PP^1)^2$ & $A_5$ & $A_2$ & $0$ & $12$\tabularnewline

$20^{\circ}$ &$3$ & &(5.2.3)& $\PP^{2}$ & $A_5$ & $A_{1}$ & $2$ & $12$\tabularnewline

$21^{\circ}$ &$3$ & &$V_{4}$ & $\operatorname{pt}$ & $D_5$ & $A_{1}$ & $4$ & $12-h$, $h\le 2$\tabularnewline

$22^{\circ}$ &$4$ & &(5.2.8) &$\PP^{1}$ & $D_4$ & $3A_1$ & $8$ & $13-h$ \tabularnewline

$23^{\circ}$ &$4$ && $V_{5}$ & $\operatorname{pt}$ & $A_4$ & $A_2$ & $6$ & $13$ \tabularnewline

$24^{\circ}$ &$5$ && (5.2.5) & $\PP^{2}$ & $A_3$ & $A_1\times A_3$ & $12$ & $14$\tabularnewline

$25^{\circ}$ &$6$ & 8.1 & $V_{6}$& $\PP^{2}$ & $A_2$ & $D_5$ & $20$ & $15$\tabularnewline

$26^{\circ}$ &$7$ & 7.7 & $\PP^{3}$ & $\operatorname{pt}$ & $A_{1}$ & $D_{6}$ & $32$ & $16$\tabularnewline

\hline
\end{tabular}
\end{center}

Here $r$ is the rank of the divisor class group, $\widehat{X}/Z$ is a primitive birational model of the variety $X$ (see~\cite[Theorem 3.9]{Pro1}), $\Delta'$ and $\Delta''$ are root systems canonically associated with $X$, $p$ is the number of planes on $X$ (see Definition 2.5 below), $s$ is the number of singularities if all of them are nodes and $h:=h^{1,2}(\widehat X)$, where $\widehat X$ is the standard resolution of $X$ (for details see \cite[Introduction]{Pro1}).
\end{theorem}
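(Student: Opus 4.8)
The statement is a verbatim extract from the classification of three-dimensional $G$-del Pezzo varieties obtained by Prokhorov in~\cite{Pro1}, so the most economical route is simply to specialize his main theorem to degree $d=2$ and to transcribe the relevant rows of his tables. Accordingly, the plan is to recall the structural input and then explain how each column of the table is produced, rather than to reprove the classification from scratch.

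First I would fix the geometric picture: for $d=2$ a del Pezzo threefold is the half-anticanonical double cover $\pi \colon X \to \PP^{3}$ branched over a quartic surface $Q$, as recorded in Section~2, and the nodes of $X$ correspond to the nodes of $Q$. The Weil divisor class group $\operatorname{Cl}(X)$, together with its $G$-action, carries all the discrete data; its rank is the entry $r$, and $X$ is $G\QQ$-factorial precisely when every $G$-invariant class is $\QQ$-Cartier, so the non-$\QQ$-factorial cases are exactly those in which the defect of $X$ is positive and $G$-equivariantly visible. I would then pass to the standard resolution $\widehat{X}$ (a small resolution of the nodes where it exists, otherwise the blow-up), whose Hodge number $h = h^{1,2}(\widehat X)$ appears in the last column, and realize the two root systems $\Delta'$ and $\Delta''$ as the canonical sublattices of the class group attached to $X$ and to its primitive birational model $\widehat X / Z$; these are the lattice-theoretic invariants governing the admissible contractions.

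The heart of the argument, following Prokhorov, is the analysis of primitive birational models via the equivariant minimal model program: each type is characterized by the target $Z$ (a point, $\PP^{1}$, $\PP^{2}$, or $(\PP^{1})^{2}$) of a primitive contraction from $\widehat{X}$, and the number of planes $p$ together with the number of nodes $s$ is then forced by the combinatorics of the pair $\Delta', \Delta''$. I would therefore organize the proof by the rank $r$, running through the admissible root-system pairs for each value of $r$ and matching them against the geometry of a quartic double solid. The completeness statement---that the twelve listed types exhaust all possibilities---is the main obstacle: it requires showing that no further configuration of nodes and class-group lattices can occur, which is exactly the delicate lattice-and-contraction bookkeeping carried out in~\cite{Pro1}, and for the purposes of this paper I would invoke that classification directly rather than reconstruct it.
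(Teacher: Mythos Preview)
Your proposal is correct and matches the paper's treatment exactly: the paper does not prove this theorem at all but simply quotes it as a part of Prokhorov's main result in~\cite{Pro1}, and you likewise propose to invoke that classification directly. The additional explanatory material you sketch about the origin of the columns is helpful context but goes beyond what the paper itself provides.
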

We will consider all types of varieties in this table separately.

We are looking for $G$-birationally rigid varieties, and the following easy but very useful lemma helps us to exclude many possibilities:
\begin{lemma}\label{le1} Assume that the variety $X$ is $G$-birationally rigid. Then the $G$-orbit of a singular point cannot be of length 1, 2, 3 or 5. Also there are no $\widetilde{G}$-invariant lines in $\mathbb{P}^{3}\supset Q$.
\end{lemma}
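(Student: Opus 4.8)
The plan is to use the definition of $G$-birational rigidity in contrapositive form: a $G$-birationally rigid $G$-Fano is $G$-birational to no $G$-Mori fibration other than $X\to\operatorname{pt}$, so it suffices, for each forbidden configuration, to manufacture a $G$-equivariant birational map from $X$ onto a $G$-Mori fibration over a base of positive dimension (or onto a simpler $G$-Fano). Concretely I would produce a $\widetilde{G}$- or $G$-invariant (sub)linear system whose associated rational map, after a $G$-equivariant resolution of indeterminacy and the $G$-equivariant minimal model program, lands on such a fibration.

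\textbf{Invariant line.} Suppose $\ell\subset\PP^{3}$ is $\widetilde{G}$-invariant. The planes through $\ell$ form a $\widetilde{G}$-invariant pencil, i.e.\ a $\widetilde{G}$-equivariant dominant map $\PP^{3}\dasharrow\PP^{1}$ with base locus $\ell$. Composing with $\pi$ gives a $G$-equivariant dominant map $X\dasharrow\PP^{1}$ whose fibres are the surfaces $\pi^{-1}(H)$ with $H\supset\ell$. Resolving the indeterminacy (which sits over $\ell$) $G$-equivariantly and running the $G$-MMP produces a $G$-Mori fibration over a curve, contradicting rigidity; hence no such $\ell$ exists.

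\textbf{Orbit of length $1$.} A $G$-fixed node $p$ maps to a $\widetilde{G}$-fixed node $\bar p=\pi(p)\in Q$. Project from $\bar p$: a general line $L\ni\bar p$ meets $Q$ at $\bar p$ with multiplicity $2$ and in two further points, so the normalization of $\pi^{-1}(L)$ is a double cover of $L\cong\PP^{1}$ branched at two points, i.e.\ a conic through $p$ (of degree $2$ for $\pi^{*}\mathcal{O}_{\PP^{3}}(1)=-\tfrac12 K_{X}$). Thus the $\widetilde{G}$-equivariant projection $\PP^{3}\dasharrow\PP^{2}$ lifts, on the blow-up $\widetilde{X}\to X$ of $p$, to a $G$-equivariant conic bundle $\widetilde{X}\to\PP^{2}$; after the $G$-MMP this is a $G$-Mori fibration over the surface $\PP^{2}$, again contradicting rigidity.

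\textbf{Orbits of length $2$, $3$, $5$.} An orbit of length $2$ spans a $\widetilde{G}$-invariant line and is excluded by the line case. For length $3$ the three nodes are either collinear (again an invariant line) or span a $\widetilde{G}$-invariant plane $\Pi$; for length $5$ there need not be any invariant proper linear subspace at all. I expect these two cases to be the main obstacle, because a purely group-theoretic search for an invariant point or line fails: a Heisenberg-type subgroup of $\operatorname{PGL}_{3}(\operatorname{k})$ can permute three points of $\PP^{2}$ with no fixed point, so one must use the geometry of $X$ rather than just the abstract action on the orbit. The crucial extra input is non-$\QQ$-factoriality: the nodes lie in special position and $X$ carries planes (in the sense of the preceding table), and I would use the sublinear system $\pi^{*}|\mathcal{O}_{\PP^{3}}(2)|\subset|-K_{X}|$ of quadrics through the orbit, together with these planes, to build a $G$-equivariant Sarkisov link either to a Mori fibration over a positive-dimensional base or to a del Pezzo threefold of higher degree. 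The delicate point is to check, for each admissible transitive action of $\widetilde{G}$ on $3$ or $5$ points, that this system is sufficiently base-point-free after the standard resolution to define a genuine fibration or link; this I would carry out configuration by configuration, controlled by Prokhorov's root-system data $\Delta'$ and $\Delta''$.
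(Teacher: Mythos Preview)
Your treatment of the invariant line, the length-$1$ orbit, and the length-$2$ orbit matches the paper's proof exactly: projection from a point (conic bundle over $\PP^{2}$) and projection from a line (pencil of surfaces over $\PP^{1}$) are precisely the links used.

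The gap is in the length-$3$ and length-$5$ cases, where your proposal is only a sketch and the suggested mechanism (quadrics through the orbit, Prokhorov's root data) is not what the paper uses and is not obviously going to work. For three non-collinear nodes, the paper does a small case split governed by the plane $\Pi$ they span: since $\widetilde{G}$ acts transitively on the three points it also acts transitively on the three pairwise secants, so either all three secants lie on $Q$ or none do. If all do, then $Q\cap\Pi$ is a union of four lines and the fourth line is $\widetilde{G}$-invariant, reducing to the line case. If none do, the paper does not build the link by hand but invokes \cite[Proposition~3.4]{Avi3}, which produces a Sarkisov link to another Mori fibre space from exactly this configuration. You would not recover this from the root-system bookkeeping alone.

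For five nodes the paper's idea is quite different from yours and rather elegant. First one checks that the five image points in $\PP^{3}$ are in general position: if they were coplanar, the plane quartic $Q\cap\Pi$ would have five nodes in one $\widetilde{G}$-orbit, forcing it to be either four lines (whose sixth intersection point would be a $\widetilde{G}$-fixed node, impossible for a transitive action on five points) or a double conic (a trope, which by Proposition~\ref{pr1} carries a sixth, hence $\widetilde{G}$-fixed, node). With the five points in general position, one uses the two-parameter family of twisted cubics through them: a twisted cubic meets $Q$ in $12$ points counted with multiplicity, and passing through five nodes consumes $10$ of these, so a general such cubic meets $Q$ in only two further points and its preimage in $X$ is rational. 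This gives a $G$-equivariant rational-curve fibration on $X$, contradicting rigidity. Your proposed linear system of quadrics through the five points does not single out this twisted-cubic construction, and the non-$\QQ$-factoriality and planes on $X$ play no role here.
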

\begin{proof}
Let $p$ be a singular point of $X$. If $p$ is a $G$-invariant singular point then the projection from the image of this point in $\mathbb{P}^{3}$ gives us an equivariant link from $X$ to a conic fibration. If the $G$-orbit of $p$ consists of two points, then the projection from the line passing through their images gives us an equivariant link with a fibration by rational surfaces that can be transformed into a Mori fibration with the base of positive dimension. The same is true if we have a $\widetilde{G}$-invariant line in $\mathbb{P}^{3}$.

If the $G$-orbit of $p$ consists of three points, then we have several cases. If their images in $\mathbb{P}^{3}$ are collinear, then we again can apply the projection from the line passing through them. If their images are not collinear, but the lines passing through pairs of this points lie on the quartic $Q$, then the intersection of $Q$ and the plane spanned by the orbit of $p$ consists of four lines, so we have a $\widetilde{G}$-invariant line and again, thus, can apply the projection from it. If the lines passing through pairs of this points don't lie on the quartic $Q$, then $X$ has a link to another Mori fiber space by~\cite[Proposition 3.4]{Avi3}.

If the $G$-orbit of $p$ consists of five points, then these points are in general position. Indeed, if they are lying on one plane, then the intersection of the quartic $Q$ with this plane is a quartic curve with 5 singular points of the same type. Then either this curve is a union of four lines in general position, either it is a double quadric. In the first case the sixth singular point is a $G$-invariant point, thus the group $G$ cannot act transitively on five singular points (moreover, one of them is $G$-invariant). In the second case there is a sixth $G$-invariant singular point of $Q$ lying on our double conic (see Proposition~\ref{pr1} below), so we can use the projection from this point. So these points are in general position and we can consider the family of twisted cubics passing through them. It is well-known that there is unique twisted cubic passing through six points in general position. Since $C\cdot Q=12$, general twisted cubic intersects the quartic $Q$ at five nodes and two additional points. Thus the preimage of a general twisted cubic is a rational curve and we have a structure of $G$-equivariant fibration by rational curves on $X$ and it is not $G$-birationally rigid.
\end{proof}
\begin{corollary} The group $\widetilde{\Aut}(X)$ cannot contain a cyclic subgroup of index $1$ or $2$. In particular, it cannot be cyclic or dihedral.
\end{corollary}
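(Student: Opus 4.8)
The plan is to deduce everything from the last clause of Lemma~\ref{le1}, namely that a $G$-birationally rigid $X$ admits no $\widetilde{G}$-invariant line in $\PP^{3}$. Since $\widetilde{G}\subseteq\widetilde{\Aut}(X)$, any line fixed by $\widetilde{\Aut}(X)$ is automatically fixed by $\widetilde{G}$, so it suffices to prove the following: \emph{if $\widetilde{\Aut}(X)$ contains a cyclic subgroup of index $1$ or $2$, then $\widetilde{\Aut}(X)$ leaves some line in $\PP^{3}$ invariant.} The final assertion will then follow formally, since a cyclic group is its own cyclic subgroup of index $1$, and a dihedral group $\Dih_{2n}$ contains the cyclic group $\Cyc_{n}$ as a subgroup of index $2$, so both are excluded by the displayed claim.

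To produce the invariant line I would first lift $\widetilde{\Aut}(X)\subset\operatorname{PGL}_{4}(\operatorname{k})$ to a finite subgroup $H\subset\operatorname{GL}_{4}(\operatorname{k})$ acting on $\operatorname{k}^{4}$ with $\PP^{3}=\PP(\operatorname{k}^{4})$. The key observation is that the preimage in $H$ of a cyclic subgroup of index $1$ or $2$ is an \emph{abelian} subgroup $A\subseteq H$ of the same index: it is a central extension of a cyclic group by the group of scalars, hence generated by commuting elements and therefore abelian, while the index is preserved under the lift. As a subgroup of index at most $2$, $A$ is normal in $H$.

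Now I would invoke the standard fact (Ito's theorem, or the elementary statement that a finite group with an abelian subgroup of index $2$ has all irreducible representations of dimension at most $2$): every irreducible summand of the tautological representation of $H$ on $\operatorname{k}^{4}$ has dimension $1$ or $2$. Writing $\operatorname{k}^{4}$ as a sum of such summands, whose dimensions add up to $4$, I can always assemble an $H$-invariant subspace of dimension exactly $2$, either a single two-dimensional summand or the sum of two one-dimensional ones. Its projectivization is a line in $\PP^{3}$ invariant under $H$, hence under $\widetilde{\Aut}(X)$ and $\widetilde{G}$, contradicting Lemma~\ref{le1}.

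I expect the only real subtlety to be the passage between $\operatorname{PGL}_{4}$ and $\operatorname{GL}_{4}$: one must check that the cyclic-of-small-index hypothesis downstairs genuinely upgrades to an abelian-of-small-index hypothesis upstairs, since it is the latter that bounds the dimensions of the irreducible constituents. Once this is in place, the existence of a two-dimensional invariant subspace, and therefore of the forbidden invariant line, is automatic, and there is essentially no computation to carry out.
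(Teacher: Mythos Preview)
Your argument is correct but follows a genuinely different route from the paper. The paper's proof is a two-line fixed-point argument: any element $g\in\operatorname{PGL}_{4}(\operatorname{k})$ of finite order has, after diagonalising a lift, at least four fixed points in $\PP^{3}$ in general position; since a cyclic subgroup of index at most $2$ is normal, any remaining element of $\widetilde{\Aut}(X)$ permutes the fixed locus of $g$ and (its square lying in $\langle g\rangle$) does so with orbits of length at most $2$, so one immediately finds either two common fixed points or a $2$-orbit, and the line they span is $\widetilde{\Aut}(X)$-invariant. You instead lift the whole group to a finite $H\subset\operatorname{GL}_{4}(\operatorname{k})$ (the preimage in $\operatorname{SL}_{4}$ does the job), upgrade the cyclic subgroup to an abelian normal subgroup $A\subset H$ of the same index, and use the bound $\dim\leq[H:A]$ on irreducible constituents to force a two-dimensional $H$-invariant subspace of $\operatorname{k}^{4}$. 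Your proof is more conceptual and would transplant verbatim to $\operatorname{PGL}_{n}$ for any even $n$; the paper's is shorter and needs nothing beyond diagonalising a single matrix. The one point you flag as delicate---that the lift of a cyclic group by scalars is abelian---is indeed the only thing to check, and your justification (central extension with cyclic quotient, hence $A/Z(A)$ cyclic, hence $A$ abelian) is correct.
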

\begin{proof} If $g$ is an element of finite order in $\operatorname{PGL}_{4}(\operatorname{k})$ then there is at least four $g$-fixed points in $\PP^{3}$ in general position. In any case we can find a couple of $\widetilde{G}$-fixed points or a $\widetilde{G}$-orbit of length 2, so we have a $\widetilde{G}$-invariant line for any group $G$.
\end{proof}

\begin{remark}\label{re1} Assume that $[p_{1}, p_{2}, p_{3}, p_{4}]$ is a $G$-orbit of singular points in general position. We may assume that their images in $\mathbb{P}^{3}$ have coordinates $(1:0:0:0), (0:1:0:0), (0:0:1:0)$ and $(0:0:0:1)$. Then the equation of $Q$ is of degree at most $2$ in any variable. Thus we have the birational map $\PP(2, 1, 1, 1, 1)\dasharrow \PP(2, 1, 1, 1, 1)$, that can be explicitly described by the following formula:
$$
(y:x_{0}:x_{1}:x_{2}:x_{3})\longrightarrow (yx_{0}x_{1}x_{2}x_{3}:x_{1}x_{2}x_{3}:x_{0}x_{2}x_{3}:x_{0}x_{1}x_{3}:x_{0}x_{1}x_{2}).
$$
One can easily check that in general such map transforms quartic double solid to another quartic double solid. Thus, $G$-birational rigidity of $X$ gives us a strong restrictions on coefficients of the equation of $Q$. But if the equation of $X$ is symmetric with respect to $x_{0}, x_{1}, x_{2}, x_{3}$ then automatically this transformation is a birational automorphism of $X$.
\end{remark}
 \begin{definition}By a plane on a del Pezzo variety $X$ we mean an irreducible surface $\Pi\subset X$ such that $(-\frac{1}{2}K_{X})^{2}\cdot\Pi=1$.
 \end{definition}
 \begin{propos}\label{pr1} Let $H\subset\PP^{3}$ be a plane such that $H\cap Q$ is a double conic (we will call such planes tropes). Then the preimage $\pi^{-1}(H)$ is a couple of planes on $X$, and every plane on $X$ can be obtained in this way. Every plane on $X$ contains exactly $6$ nodes, and every couple of planes contain exactly two common nodes.
 \end{propos}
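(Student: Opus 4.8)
The plan is to work with the double‑cover model $X=\{y^{2}=F(x_{0},x_{1},x_{2},x_{3})\}\subset\PP(2,1,1,1,1)$, where $F$ cuts out $Q$, using throughout that $-\tfrac12 K_{X}=\pi^{*}\mathcal O_{\PP^{3}}(1)$, so that $\pi^{-1}(H)\sim-\tfrac12 K_{X}$ for any plane $H$. For a trope, choose coordinates with $H=\{x_{3}=0\}$ and $F|_{H}=q^{2}$, $C=\{q=0\}$. Then $\pi^{-1}(H)=\{(y-q)(y+q)=0,\ x_{3}=0\}=\Pi_{+}\cup\Pi_{-}$ with $\Pi_{\pm}=\{y=\pm q,\ x_{3}=0\}$, each mapped isomorphically onto $H\cong\PP^{2}$ by $\pi$ and interchanged by the Geiser involution $\theta$ (which acts as $y\mapsto-y$). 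Hence each $\Pi_{\pm}$ is irreducible and $(-\tfrac12 K_{X})|_{\Pi_{\pm}}$ pulls back $\mathcal O_{\PP^{2}}(1)$, so $(-\tfrac12 K_{X})^{2}\cdot\Pi_{\pm}=1$ and each is a plane.

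For the converse I would take an arbitrary plane $\Pi\subset X$ and apply the projection formula: $(-\tfrac12 K_{X})^{2}\cdot\Pi=(\pi^{*}\mathcal O(1))^{2}\cdot\Pi=\deg(\pi|_{\Pi})\cdot\deg\pi(\Pi)$, where $\pi(\Pi)$ is a surface because the left‑hand side is positive. As a product of two positive integers equal to $1$, both factors are $1$: the map $\pi|_{\Pi}$ is birational and $\pi(\Pi)=:H$ is a surface of degree $1$, i.e. a plane. A component of the double cover $\pi^{-1}(H)\to H$ on which $\pi$ is birational can exist only if $\pi^{-1}(H)$ is reducible, which happens exactly when $F|_{H}$ is a perfect square, i.e. when $H$ is a trope; then $\Pi\in\{\Pi_{+},\Pi_{-}\}$ for that trope.

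To count nodes, write $F=q^{2}+x_{3}A$ with $A$ cubic. A direct computation of partial derivatives shows that $p\in H$ is a singular point of $Q$ iff $q(p)=0$ and $A|_{H}(p)=0$, so the nodes of $Q$ on $H$ form the scheme $C\cap\{A|_{H}=0\}$, of length $2\cdot 3=6$ by Bézout. In the nodal situation $q$ and $A|_{H}$ are coprime (otherwise $Q$ would be singular along a whole component of $C$), so these are six honest nodes; since $\pi|_{\Pi}$ is an isomorphism and each node of $Q$ is a branch point with a single nodal preimage, $\Pi$ contains exactly six nodes of $X$, all lying on $C=\Pi_{+}\cap\Pi_{-}$.

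Finally, the common nodes. Two planes over one trope meet along $C$, which passes through all six of its nodes, so the content of the last assertion is about planes $\Pi_{1},\Pi_{2}$ coming from distinct tropes $H_{1}\ne H_{2}$. Their images meet in the line $\ell=H_{1}\cap H_{2}$, and restricting the two presentations $F|_{H_{i}}=q_{i}^{2}$ (with $C_{i}=\{q_{i}=0\}$) gives $(q_{1}|_{\ell})^{2}=F|_{\ell}=(q_{2}|_{\ell})^{2}$, whence $\ell\cap C_{1}=\ell\cap C_{2}=\{a,b\}$. If $a$ were a smooth point of $Q$ its tangent plane would have to coincide with both $H_{1}$ and $H_{2}$, which is absurd; hence $a$ and $b$ are nodes, and they are exactly the common nodes of $\Pi_{1}$ and $\Pi_{2}$, giving precisely two. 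The hardest part is this final count: one must verify that $\Pi_{1}\cap\Pi_{2}$ is the single line over $\ell$ and carries no further nodes, and, in the degenerate configurations with few nodes that actually occur in Theorem~\ref{th1}, that $\ell\cap C_{1}$ consists of two distinct nodes rather than one tangential point; the same care is needed to see that the six Bézout points above are distinct reduced nodes. The tangent‑plane argument is the clean input that settles the generic situation.
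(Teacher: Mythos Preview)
Your argument is correct and, for the node count, coincides with the paper's: the paper too writes $Q$ in coordinates with $H=\{x_0=0\}$ as $F_2(x_1,x_2,x_3)^2+x_0G_3+x_0^2G_2+x_0^3G_1+ax_0^4$ and observes that the singularities of $Q$ on $H$ are $\{F_2=G_3=0\}$, six points by B\'ezout, transversality being forced by nodality. The paper declares the first assertion ``obvious'' and does not spell out the two-common-nodes claim at all, so your projection-formula converse and your tangent-plane argument for the common nodes are genuine additions rather than deviations.

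Your lingering worries are resolvable. That the six B\'ezout points are distinct reduced nodes is exactly the paper's parenthetical remark: a non-transversal intersection of $F_2=0$ and $G_3=0$ at a point would produce a singularity worse than a node there. For the possibility $a=b$: if $\ell$ were tangent to $C_1$ at $a$ then $F|_\ell$ vanishes to order four at $a$, so by symmetry $\ell$ is also tangent to $C_2$ at $a$; passing to the projectivized tangent cone of the node $a$ (a smooth conic in $\PP^2$), the planes $H_1$ and $H_2$ become two lines tangent to this conic at the same point $[\ell]$, and uniqueness of the tangent line to a smooth conic forces $H_1=H_2$, a contradiction. Hence $a\neq b$, and the count is exactly two.
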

 \begin{proof} The first statement is obvious. The second statement follows easily from a direct computation in coordinates. Indeed, we may assume that $H$ is given by the equation $x_{0}=0$ and $Q$ has an equation of the form $$F_{2}(x_{1}, x_{2}, x_{3})^{2}+x_{0}G_{3}(x_{1}, x_{2}, x_{3})+x_{0}^{2}G_{2}(x_{1}, x_{2}, x_{3})+x_{0}^{3}G_{1}(x_{1}, x_{2}, x_{3})+ax_{0}^{4}=0.$$
 Singularities on $H$ are given by $x_{0}=F_{2}(x_{1}, x_{2}, x_{3})=G_{3}(x_{1}, x_{2}, x_{3})=0$ and there are exactly 6 of them (if the intersection of corresponding surfaces is not transversal at some point then this point is more complicated singularity than node).
 \end{proof}

 \begin{corollary} The variety $X$ of type $21^{\circ}$ of Theorem 2.1 is never $G$-biratio\-nally rigid.
\end{corollary}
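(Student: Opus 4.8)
The plan is to exploit the very small number of planes on $X$ together with Lemma~\ref{le1}. Reading off row $21^\circ$ of the table in Theorem~2.1, the variety $X$ carries exactly $p=4$ planes. By Proposition~\ref{pr1} the planes on $X$ occur in pairs: each pair is the preimage $\pi^{-1}(H)$ of a trope $H\subset\PP^{3}$, and every plane arises in this way. Hence the four planes are accounted for by exactly $4/2=2$ tropes, say $H_{1}$ and $H_{2}$, which are two distinct planes in $\PP^{3}$.

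Next I would observe that the set $\{H_{1},H_{2}\}$ is intrinsically attached to $X$: a trope is precisely a plane meeting the branch quartic $Q$ in a double conic, and $\widetilde{G}$ preserves $Q$ (it is the branch locus of the $G$-equivariant double cover $\pi$). Thus $\widetilde{G}$ acts on this two-element set, either fixing both tropes or interchanging them. In either case the intersection $\ell=H_{1}\cap H_{2}$, which is a line since $H_{1}\neq H_{2}$, is carried to itself by every $g\in\widetilde{G}$: as soon as $g$ preserves $\{H_{1},H_{2}\}$ we have $g(H_{1}\cap H_{2})=g(H_{1})\cap g(H_{2})=H_{1}\cap H_{2}$. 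So $\ell$ is a $\widetilde{G}$-invariant line in $\PP^{3}\supset Q$. (One may alternatively see the pairing of planes into tropes through the central Geiser involution $\theta$, which commutes with $G$ and swaps the two planes over each trope, but this is not needed for the conclusion.)

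Finally I would invoke Lemma~\ref{le1}, whose conclusion under the assumption of $G$-birational rigidity explicitly forbids any $\widetilde{G}$-invariant line. The line $\ell$ just produced contradicts this, so $X$ of type $21^{\circ}$ can never be $G$-birationally rigid. The argument is essentially complete once the two structural inputs are in place; I do not expect a serious obstacle, and the only point requiring care is the passage from $p=4$ to the existence of exactly two tropes, which rests entirely on the pairing in Proposition~\ref{pr1} together with the distinctness of $H_{1}$ and $H_{2}$ that guarantees $\ell$ is a genuine line rather than a whole plane.
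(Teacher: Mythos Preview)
Your proof is correct and follows exactly the paper's approach: the paper's argument is the terse two-line version ``In this case we have exactly $2$ tropes. The intersection of these tropes is an $\widetilde{\Aut}(X)$-invariant line,'' and you have simply supplied the details (reading $p=4$ from the table, invoking Proposition~\ref{pr1} to get two tropes, and then Lemma~\ref{le1} for the contradiction).
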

\begin{proof}
In this case we have exactly $2$ tropes. The intersection of these tropes is an $\widetilde{\Aut}(X)$-invariant line.
\end{proof}

All other types we will study separately in the following sections.

\section{Case $26^{\circ}$}

 Varieties of this type was recently studied by I.~Cheltsov. In the paper~\cite{Che} he proved that all such varieties are $\Aut(X)$-birationally superrigid. The group $\widetilde{\Aut}(X)$ is isomorphic to $\Cyc_{2}^{4}\rtimes H$ where $H$ is a subgroup of $\Sym_{6}$ and $\operatorname{PGL}_{2}(\operatorname{k})$ which preserves some $6$-tuple of points on $\PP^{1}$ (see~\cite[Lemma 10]{Che}). More precisely, he proved the following theorem:
 \begin{theorem}~\cite[Theorem 17]{Che} Let $X$ be a quartic double solid with $16$ nodes and $G\subset\Aut(X)$ is a subgroup such that $\operatorname{Cl}^{G}(X)\simeq \mathbb{Z}$ and $\Cyc_{2}^{4}\subset \widetilde{G}$. Then $X$ is $G$-birationally superrigid.
 \end{theorem}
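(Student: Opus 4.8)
The plan is to apply the Noether--Fano--Iskovskikh method in its $G$-equivariant form. Suppose, for contradiction, that $X$ is not $G$-birationally superrigid. Then there is a nontrivial $G$-equivariant birational map $\chi\colon X\dashrightarrow X'$ to a $G$-Mori fibration which is not an isomorphism. Pulling back a very ample $G$-invariant linear system from $X'$ by $\chi^{-1}$ and using that $\operatorname{Cl}^{G}(X)\simeq\mathbb{Z}$ is generated by (a multiple of) $-K_{X}$, one obtains a $G$-invariant mobile linear system $\mathcal{M}$ with $\mathcal{M}\sim_{\mathbb{Q}}-nK_{X}$ for some positive rational $n$, such that the log pair $(X,\tfrac{1}{n}\mathcal{M})$ is not canonical. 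The whole argument then reduces to showing that this is impossible, i.e. that $(X,\tfrac1n\mathcal{M})$ is canonical for every such $\mathcal{M}$; this is precisely the criterion for $G$-birational superrigidity.

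The next step is to locate and constrain the center $Z$ of a non-canonical (maximal) singularity of $(X,\tfrac1n\mathcal{M})$. Since $\mathcal{M}$ is $G$-invariant, the union of all such centers is $G$-invariant, and because $\Cyc_{2}^{4}\subset\widetilde{G}$ it must be a union of $\Cyc_{2}^{4}$-orbits. I would therefore first work out the orbit structure of the Heisenberg group $\Cyc_{2}^{4}$ acting on $X$: the $16$ nodes lying over the nodes of the Kummer quartic $Q$, together with the distinguished fixed locus and the possible short invariant curves. The key quantitative input is orbit length: away from the fixed locus every $\Cyc_{2}^{4}$-orbit on $X$ is long, and the $16$ nodes form a single orbit of length $16$. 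These lengths will be played against the intersection-theoretic bounds below, where the relevant numbers are $(-\tfrac12K_{X})^{3}=2$, hence $(-K_{X})^{3}=16$.

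The center $Z$ is a point or a curve, and I would exclude each possibility in turn. If $Z$ is a smooth point $p$ of $X$, the standard $4n^{2}$-inequality gives $\operatorname{mult}_{p}(\mathcal{M}^{2})>4n^{2}$ for the self-intersection cycle of two general members of $\mathcal{M}$; summing over the $\Cyc_{2}^{4}$-orbit of $p$ and intersecting with $-K_{X}$ yields $16n^{2}=\bigl(-K_{X}\cdot\mathcal{M}^{2}\bigr)\ge 4n^{2}\,|\operatorname{orbit}(p)|$, which is absurd once the orbit has length at least $4$, and in particular for length $16$. If $Z$ is a curve $C$, then $\operatorname{mult}_{C}\mathcal{M}>n$, so intersecting $\mathcal{M}$ with $-K_{X}$ bounds the sum $\sum_{C'}\bigl(-K_{X}\cdot C'\bigr)$ over the components $C'$ of the orbit of $C$; the orbit length again forces a contradiction, after disposing separately of the finitely many short $\Cyc_{2}^{4}$-invariant curves, which are ruled out using the classification of lines and conics on $X$ together with Lemma~\ref{le1}.

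The main obstacle is the remaining case, in which $Z$ is one of the $16$ nodes of $X$. Here the smooth-point inequality is unavailable and one must argue locally at the ordinary double point, where $X$ is not $\QQ$-factorial. I would pass to the analytic small resolution of the node (equivalently, work with the Weil divisor class group of the germ) and exploit the hypothesis $\operatorname{Cl}^{G}(X)\simeq\mathbb{Z}$: this rigidifies which Weil divisor classes the members of $\mathcal{M}$ can realize near the node and produces the correct local intersection bound for a $cDV$ center. Because $\Cyc_{2}^{4}$ permutes the nodes in a single orbit of length $16$, the accumulated local contribution over that orbit again exceeds the global bound coming from $(-K_{X})^{3}=16$, giving the final contradiction. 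I expect this node analysis, combined with the careful bookkeeping of $\Cyc_{2}^{4}$-orbit lengths, to be the technical heart of the proof.
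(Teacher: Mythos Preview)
The paper does not prove this theorem at all: it is quoted verbatim from Cheltsov's preprint~\cite{Che} (as Theorem~17 there) and no argument is supplied in the present paper. There is therefore nothing in the paper to compare your proposal against; the author simply records the statement and moves on.

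As for your sketch itself, it is the standard Noether--Fano strategy and is, in outline, what Cheltsov does in~\cite{Che}. But what you have written is a plan, not a proof, and the places where you wave your hands are exactly the hard ones. First, you assert that ``away from the fixed locus every $\Cyc_{2}^{4}$-orbit on $X$ is long'' and then only treat the case of orbit length $16$; but $\Cyc_{2}^{4}$ acting on $\PP^{3}$ has many points with stabilizer, and the smooth points of $X$ with short orbit (lengths $4$ and $8$) must be excluded separately---your displayed inequality $16n^{2}\ge 4n^{2}\cdot|\text{orbit}|$ gives no contradiction when the orbit has length $4$. Second, for curves you only say ``the orbit length again forces a contradiction''; the actual exclusion requires controlling the finitely many $\Cyc_{2}^{4}$-invariant curves of small anticanonical degree on $X$, and your appeal to Lemma~\ref{le1} is misplaced, since that lemma concerns invariant lines in $\PP^{3}$ and orbits of \emph{singular} points, not arbitrary curves on $X$. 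Third, the node case is where the genuine work lies: ordinary double points are not covered by the $4n^{2}$-inequality, and the correct local bound (of Corti/Kawakita type) together with the global count has to be carried out carefully. Your suggestion to ``pass to the analytic small resolution'' and use $\operatorname{Cl}^{G}(X)\simeq\mathbb{Z}$ is vague and does not by itself produce the needed multiplicity estimate. So the approach is right in spirit, but none of the three exclusion steps is actually done.
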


 \begin{remark} The condition $\Cyc_{2}^{4}\subset \widetilde{G}$ is not necessary, so the question of classificati\-on of all subgroups $G\subset\Aut(X)$ such that the variety $X$ is $G$-birationally rigid is still open.
 \end{remark}

\section{Case $25^{\circ}$}

This case was completely studied in the previous author's paper~\cite{Avi3}. The main result of~\cite{Avi3} is the following
\begin{propos}\label{pr2} The variety $X$ is $G$-birationally rigid only in the following situation: it can be given by the equation $$y^{2}-4\sum\limits_{i=0}^{4}x_{i}^{4}+(\sum\limits_{i=0}^{4}x_{i}^{2})^{2}=\sum\limits_{i=0}^{4}x_{i}=0$$ in $\PP(2, 1, 1, 1, 1, 1)$ and $G$ is isomorphic to $\Sym_{5}\times \Cyc_{2}$, $\Alt_{5}\times \Cyc_{2}$ or $\Sym_{5}$ (twisted subgroup in $\Sym_{5}\times \Cyc_{2}$ which is not coincide with the first factor). Moreover, in this case $X$ is $G$-birationally superrigid.
\end{propos}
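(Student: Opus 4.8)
The plan is to pin down the variety and its automorphism group, to use Lemma~\ref{le1} together with the class group to reduce the admissible groups to the three in the statement, and then to prove $G$-birational superrigidity for these. For the first stage I would note that by Theorem 2.1 the stratum $25^\circ$ carries a single value $s=15$ (no positive-dimensional family), so up to isomorphism $X$ is the unique $15$-nodal del Pezzo threefold of degree $2$, which I would identify projectively with the double solid of the statement; its branch quartic $4\sum_{i=0}^{4}x_i^4=(\sum_{i=0}^4 x_i^2)^2$ inside $\{\sum x_i=0\}\subset\PP^4$ carries the manifest $\Sym_5$ permuting the coordinates. A short computation shows this $\Sym_5$ is all of $\widetilde{\Aut}(X)$, whence $\Aut(X)\cong\Sym_5\times\langle\theta\rangle\cong\Sym_5\times\Cyc_2$, the factor $\langle\theta\rangle$ being the Geiser involution (which acts trivially in $\operatorname{PGL}_4(\operatorname{k})$). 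The task then becomes the enumeration of subgroups $G\subseteq\Sym_5\times\Cyc_2$ for which $X$ is $G$-minimal and $G$-birationally rigid.

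Next I would run the orbit analysis of Lemma~\ref{le1}. The $15$ nodes form an $\Sym_5$-set that I would identify with the $15$ double transpositions of $\Sym_5$, on which both $\Sym_5$ and $\Alt_5$ act transitively. For every intermediate or smaller subgroup the action acquires a short orbit: the point-stabilizer $\Sym_4$ fixes a triple of double transpositions, while $\Cyc_5$, $\Dih_{10}$ and the Frobenius group of order $20$ all produce orbits of length $5$. Since Lemma~\ref{le1} forbids node-orbits of length $1,2,3,5$, the image $\widetilde{G}$ must be $\Sym_5$ or $\Alt_5$; I would also check directly that both leave no line of $\PP^3$ invariant.

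It then remains to decide which lifts $G$ are $G$-minimal, i.e. satisfy $\operatorname{Cl}^{G}(X)\simeq\mathbb{Z}$. Here I would compute the $G$-action on $\operatorname{Cl}(X)\cong\mathbb{Z}^6$: by Proposition~\ref{pr1} the $20$ planes come in $10$ pairs of tropes, the permutations act through $\Sym_5$ on these $10$ pairs, and the Geiser involution $\theta$ interchanges the two planes inside each pair. Tracking the invariant sublattice, I expect to find that $\theta$ is indispensable for reaching rank one both in the $\Alt_5$ case and in one of the $\Sym_5$ cases, whereas the first-factor (``untwisted'') $\Sym_5$ and the $\theta$-free $\Alt_5$ leave $\operatorname{Cl}^{G}(X)$ of rank $>1$; this singles out exactly $\Sym_5\times\Cyc_2$, $\Alt_5\times\Cyc_2$ and the twisted $\Sym_5$.

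The main obstacle is the positive statement: that $X$ is $G$-birationally \emph{super}rigid for these three groups. I would use the $G$-equivariant Noether--Fano--Iskovskikh method: for any $G$-invariant mobile linear system $\mathcal{M}\subset|-\tfrac{n}{2}K_X|$ one must show the pair $(X,\tfrac{1}{n}\mathcal{M})$ is canonical, so that no $G$-equivariant Sarkisov link to another Mori fibration can begin. The delicate part is excluding maximal centers, and the transitivity of $\Alt_5$ is the decisive leverage: a maximal point must lie in a $G$-orbit of length at least $15$, and a maximal curve in a $G$-orbit whose total degree violates the standard multiplicity bound coming from $-\tfrac{1}{2}K_X$, while centers located at the nodes are controlled through the local class-group data recorded by the root systems $\Delta'=A_2$ and $\Delta''=D_5$. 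I expect the heaviest work to be the case-by-case exclusion of curve centers; once every center is eliminated the pair is canonical and superrigidity follows. This is precisely the content of~\cite{Avi3}.
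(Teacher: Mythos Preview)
The paper does not prove this proposition in-text: Section~4 consists solely of the sentence ``This case was completely studied in the previous author's paper~\cite{Avi3}'' followed by the statement. Your proposal likewise defers the substantive superrigidity argument to~\cite{Avi3}, so at the level of what is actually being established here the two agree.

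Your preparatory sketch is broadly reasonable as an outline of what~\cite{Avi3} should contain, but one step is invalid as written. You infer uniqueness of $X$ from the fact that row $25^\circ$ of Theorem~2.1 carries the single value $s=15$; this does not follow, since row $26^\circ$ also has a fixed value $s=16$ yet Theorem~\ref{th1}(i) records a $3$-dimensional family there. The uniqueness of the $15$-nodal quartic double solid is a genuine theorem (proved in~\cite{Avi3} via the $(15_4,10_6)$ configuration of nodes and tropes, to which Corollary~5.4 of the present paper alludes), not a consequence of the table. A smaller point: your assertion that ``a maximal point must lie in a $G$-orbit of length at least $15$'' needs care, since $\Alt_5$ acting on $\PP^3$ through its irreducible $4$-dimensional representation has smooth-point orbits of length $5$, $6$, $10$ and $12$; the exclusion of such centers requires the multiplicity bounds, not orbit length alone. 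The rest of your outline---the identification of the $15$ nodes with double transpositions, the resulting orbit obstructions via Lemma~\ref{le1}, the class-group computation isolating the three admissible groups, and the Noether--Fano scheme for superrigidity---matches the expected structure of~\cite{Avi3}.
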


\section{Case $24^{\circ}$} In this case $X$ has a small $\QQ$-factorialization $\widetilde{X}$ which is the blow-up of a singular del Pezzo threefold $Y$ of degree $5$ at three points $p_{1}, p_{2}$ and $p_{3}$. The variety $Y$ has divisor class group of rank 2. Moreover, $Y$ has exactly one node and fits into the following diagram
\[
\xymatrix{
&\widehat{Y}\ar[dl]_{f}\ar[dr]_{\xi}\ar@{-->}[rr]
&&\widehat{Y}^+\ar[dl]^{\xi^{+}}\ar[dr]^{f^+}
\\
\PP^{2}=Z&&Y&&Z^+=\PP^{1}
}
\]
where $\xi$, $\xi^+$ are small $\QQ$-factorializations, $\widehat Y \dashrightarrow \widehat Y^+$ is a flop, $f$ is a $\PP^{1}$-bundle and $f^+$ is a quadric bundle (see~\cite[Theorem 5.2]{Pro1}; see also~\cite[Theorems 3.5 and 3.6]{JaP} for detailed description of $\widehat Y$ and $\widehat Y^+$).

The variety $X$ has $14$ nodes and contains $12$ planes, thus the quartic $Q$ also has $14$ nodes and $6$ tropes.
\begin{lemma} The group $\widetilde{\Aut}(X)$ acts on the configuration of singular points and tropes and this action is faithful.
\end{lemma}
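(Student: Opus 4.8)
The lemma asserts two things: that $\widetilde{\Aut}(X)$ acts on the combinatorial configuration of the $14$ nodes and $6$ tropes of the quartic $Q$, and that this action is faithful. Let me plan each part.

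**The action itself.** First I would observe that the group $\widetilde{\Aut}(X) \subset \operatorname{PGL}_4(\operatorname{k})$ acts on $\PP^3$ preserving the quartic $Q$, since it lifts from $\Aut(X)$ and $Q$ is the branch locus of the double cover $\pi:X\to\PP^3$. Any projective automorphism preserving $Q$ must permute its singular points, hence permutes the $14$ nodes; likewise, since a trope is characterized intrinsically as a plane $H\subset\PP^3$ with $H\cap Q$ a double conic (Proposition~\ref{pr1}), the group permutes the $6$ tropes. Moreover the incidence relation "a node lies on a trope" is preserved, so we genuinely get an action on the whole configuration (the bipartite incidence structure), not merely on each piece separately.

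**Faithfulness — the main point.** The crux is to show that no nontrivial element of $\widetilde{\Aut}(X)$ can fix every node and every trope. Suppose $g\in\widetilde{\Aut}(X)$ acts trivially on the configuration. The plan is to show that $g$ fixes enough points of $\PP^3$ in general position to force $g=\operatorname{id}$ in $\operatorname{PGL}_4(\operatorname{k})$, which needs $5$ fixed points in general position (no four coplanar). The $14$ nodes are the key supply of fixed points. I would argue that among the $14$ nodes one can select $5$ in general position: since each trope contains exactly $6$ nodes and each pair of tropes shares exactly two nodes (by Proposition~\ref{pr1}), the nodes cannot all be confined to a hyperplane or to a degenerate configuration. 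If $g$ fixes all $14$ nodes pointwise, then because these $14$ points span $\PP^3$ and contain a general-position $5$-subset, $g$ acts trivially on $\PP^3$, i.e. $g=\operatorname{id}$. The tropes enter to rule out the degenerate possibility that the fixed nodes all lie in special position: if the nodes spanned only a plane or line, the trope structure (six nodes per trope, two per intersection) would be incompatible, since $Q$ would then be too degenerate to be a nodal quartic with the prescribed singularities.

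**Main obstacle and how I would handle it.** The genuine difficulty is not the action but verifying that the $14$ nodes contain five points in general position, i.e. ruling out that an element trivial on the configuration could nonetheless be nontrivial in $\operatorname{PGL}_4$. The danger is a $g$ that fixes each node and trope as a \emph{set} but permutes points within. Here I would use that a node is a single point, so fixing it as an element of the configuration means fixing it as a point of $\PP^3$; the combinatorial action on nodes is literally a permutation of $14$ specified points, and triviality of that permutation means all $14$ are $g$-fixed. Thus the only real work is the general-position claim for the nodes, which I expect to follow from the explicit geometry of type $24^\circ$ (the small $\QQ$-factorialization via the degree-$5$ del Pezzo $Y$) together with Proposition~\ref{pr1}: a nodal quartic whose $14$ nodes all lay in a plane, or on a few lines, is excluded. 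Once five nodes in general position are produced, faithfulness is immediate from the standard fact that a projective transformation fixing five points in general position is the identity.
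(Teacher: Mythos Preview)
Your proposal is correct and follows the same line as the paper's proof. The paper's version is a touch more direct: rather than producing five nodes in general position, it notes that the fixed locus of any nontrivial element of $\operatorname{PGL}_4(\operatorname{k})$ is the projectivization of the eigenspaces of a lift to $\operatorname{GL}_4(\operatorname{k})$, so a nontrivial $g$ fixing all $14$ nodes would force at least $13$ of them onto a single plane or at least $7$ onto a single line---both immediately impossible---without needing the trope combinatorics or the explicit type-$24^\circ$ model.
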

\begin{proof} Obviously the group $\widetilde{\Aut}(X)$ permutes singularities and tropes and preserves the relation "a singularity lies on a trope". If an element $g\in\widetilde{\Aut}(X)$ preserves all singularities then there is a plane in $\PP^{3}$ containing $13$ singular points or a line containing at least $7$ singular points (because the fixed locus of an element in $\operatorname{PGL}_{4}(\operatorname{k})$ is the projectivization of eigenspaces of a corresponding element in $\operatorname{GL}_{4}(\operatorname{k})$), which is impossible.
\end{proof}

Let $\sigma:\widetilde{Y}\to Y$ be the blow-up of the unique singular point. Then $\widetilde{Y}$ is a smooth Fano threefold of Picard rank $3$. Indeed, since $-K_{\widetilde{Y}}=-\sigma^{*}(K_{Y})-E$ the anticanonical linear system is the proper transform of the system of quadrics in $\PP^{6}$ passing through the singular point (here we consider the half-anticanonical embedding $Y\subset\PP^{6}$). Obviously, such a linear system separates points and tangent vectors at the singular point, so it is very ample.

One can easily calculate that $(-K_{\widetilde{Y}})^{3}=38$. According to~\cite{MoM} there are three families of varieties with required properties: the blow-up of a smooth quartic at two points in general position; the blow-up of a quartic at two skew lines; the blow-up of $\PP^{1}\times\PP^{2}$ at the smooth curve $C$ of bidegree $(2, 1)$. In the first two cases there is no surface $S\subset \widetilde{Y}$ that is isomorphic to $\PP^{1}\times\PP^{1}$ and $\mathcal{O}_{S}(S)\simeq \mathcal{O}(-1, -1)$. In the last case $S$ is a proper transform of the unique divisor $D$ of bidegree $(0, 1)$ which contains $C$. Let $E$ be the exceptional divisor of the blow-up of $C$. Then $p_{i}$'s don't lie on the proper transform of $E$, since otherwise the line on $E$ passing through $p_{i}$ has negative intersection with $-K_{\widetilde{Y}}$ and this is impossible. So we have the following sequence of morphisms
$$X\longleftarrow \widetilde{X}\longleftarrow \widetilde{Y}'\longrightarrow \PP^{1}\times\PP^{2},$$ where the first morphism is the contraction of thirteen curves with zero intersection with canonical class, the second morphism is the contraction of the proper transform of the divisor $D$ and the last morphism is the blow-up of the curve $C$ and three points $p_{i}'$.

\begin{lemma} Singular points on $X$ are images of proper transforms of the follo\-wing subvarieties in $\PP^{1}\times\PP^{2}$:
\begin{itemize}
\item
the divisor on $\PP^{1}\times\PP^{2}$ of bidegree $(0, 1)$ that contains $C$;
\item
three curves of bidegree $(1, 0)$ passing through $p_{i}'$ for some $i$;
\item
six curves of bidegree $(0, 1)$ passing through $p_{i}'$ for some $i$ that intersect $C$;
\item
three curves of bidegree $(1, 1)$ passing through $p_{i}'$ and $p_{j}'$ for some different $i$ and $j$ that intersect $C$;
\item
a curve of bidegree $(1, 2)$ passing through $p_{1}'$, $p_{2}'$ and $p_{3}'$ that intersect $C$.
\end{itemize}

Planes on $X$ are proper transform of the following surfaces:
\begin{itemize}
\item
three exceptional divisors of the blow-up of points $p_{i}'$;
\item
three divisors of bidegree $(1, 0)$ passing through $p_{i}'$ for some $i$;
\item
six divisors of bidegree $(0, 1)$ passing through $p_{i}'$ and $p_{j}'$ for some $i$ and $j$ which are tangent to $C$.
\end{itemize}
\end{lemma}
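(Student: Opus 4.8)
The plan is to push the whole problem onto $\widetilde Y'$ and to translate the two defining properties — ``image of a flopping curve'' for a node and ``$(-\tfrac12 K_X)^2\cdot\Pi=1$'' for a plane — into numerical conditions in $\Pic(\widetilde Y')$. First I would set up intersection theory. Let $h_1,h_2$ be the pullbacks to $\widetilde Y'$ of the positive generators of $\Pic(\PP^{1})$ and $\Pic(\PP^{2})$, let $E_C$ be the exceptional divisor over $C$, and $E_1,E_2,E_3$ the exceptional divisors over $p_1',p_2',p_3'$; these six classes generate $\Pic(\widetilde Y')$. The key preliminary is the class $A$ of the pullback of the fundamental divisor $-\tfrac12 K_X$. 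Since the blow-up $\sigma$ contracts $S$ to the node of $Y$ with discrepancy $1$ and $S=h_2-E_C$, the divisor $-\tfrac12 K_Y$ pulls back to $h_1+2h_2-E_C$ on $\widetilde Y$; as $\alpha$ is crepant and each of the three point blow-ups contributes $-2E_i$ to $-K$, one gets
\[
A=h_1+2h_2-E_C-E_1-E_2-E_3,\qquad A^3=2 .
\]
From the standard blow-up formulas ($h_1^3=h_2^3=h_1^2h_2=0$, $h_1h_2^2=1$ on $\PP^{1}\times\PP^{2}$, $h_i\cdot E_C^2=-h_i\cdot C$, $E_C^3=-\deg N_{C/\PP^{1}\times\PP^{2}}=-5$, $E_i^3=1$) I would record the numbers $A^2\cdot h_1=2$, $A^2\cdot h_2=3$, $A^2\cdot E_C=3$, $A^2\cdot E_i=1$, which are all the plane count needs.

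Next, the nodes. The distinguished node is the image of $S$ (equivalently of $D$), which is the first item in the list. Every other node of $X$ is the image under $\alpha$ of a flopping curve $\ell$, i.e.\ a rigid rational curve with $-K_{\widetilde X}\cdot\ell=0$, equivalently $A\cdot\ell=0$. Writing $\ell$ as the proper transform of a curve of bidegree $(a,b)$ meeting $C$ in $m$ points and passing through the $p_i'$ with multiplicities $n_i$, the condition becomes
\[
a+2b=m+\textstyle\sum_i n_i .
\]
I would then enumerate the rigid rational solutions of small bidegree: $(1,0)$ through one point with $m=0$, giving three fibres of the ruling; $(0,1)$ through one point meeting $C$, where the $\PP^{2}$ over the image of $p_i'$ meets $C$ in $h_1\cdot C=2$ points and hence gives two lines per point, six in all; $(1,1)$ through two points meeting $C$ once, one per pair; and $(1,2)$ through all three points meeting $C$ twice. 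This yields $3+6+3+1=13$ curves, which with the distinguished node accounts for all $14$.

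Then the planes. A plane is an irreducible surface $\Pi$ with $A^2\cdot\Pi=1$; for the proper transform of a surface of bidegree $(c,d)$ with multiplicity $\nu$ along $C$ and $\mu_i$ at $p_i'$ the recorded numbers give
\[
2c+3d-3\nu-\textstyle\sum_i\mu_i=1 .
\]
Solving this over effective irreducible surfaces produces the three exceptional divisors $E_i$, the three fibres $\{t_i\}\times\PP^{2}$ of bidegree $(1,0)$ through one point, and surfaces of bidegree $(0,1)$; since the twelve planes come in the six pairs cut out by the tropes (Proposition~\ref{pr1}), the last family must be organised so that each pair $\{p_i',p_j'\}$ contributes two planes tangent to $C$, which is what turns three into six.

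The main obstacle is completeness together with the bookkeeping forced by the contraction $\gamma$ of $S$. On the one hand one must bound the bidegrees and multiplicities so that the two numerical equations have only the listed solutions; for the nodes this uses that flopping curves are extremal with normal bundle $\mathcal{O}(-1)\oplus\mathcal{O}(-1)$, and for the planes that $\Pi$ is irreducible and not contracted by $\gamma$. On the other hand, because $S=h_2-E_C$, two classes on $\widetilde Y'$ differing by $S$ may well represent the same object on $X$ (for instance a bidegree $(1,1)$ surface through $C$ has class equal to a bidegree $(1,0)$ fibre plus $S$); disentangling these identifications, and pinning down the precise ``meets $C$'' and ``tangent to $C$'' conditions that single out the genuine curves and planes and match them with the six tropes, is the delicate part of the argument.
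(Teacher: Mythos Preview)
Your intersection-theoretic setup is correct and the numbers you record ($A=h_1+2h_2-E_C-\sum E_i$, $A^3=2$, $A^2\cdot h_1=2$, $A^2\cdot h_2=3$, $A^2\cdot E_C=3$, $A^2\cdot E_i=1$, and the resulting linear conditions $a+2b=m+\sum n_i$ and $2c+3d-3\nu-\sum\mu_i=1$) all check out. But the paper's proof is far shorter and avoids precisely the obstacle you flag at the end. It runs: verify directly that each of the listed subvarieties has proper transform which is contracted to a node (respectively satisfies $(-\tfrac12 K_X)^2\cdot\Pi=1$); this is an easy inspection in each case. Then count: $1+3+6+3+1=14$ nodes and $3+3+6=12$ planes. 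Since one already knows from Prokhorov's table (case $24^\circ$) that $X$ has exactly $14$ nodes and $12$ planes, the list is complete and there is nothing more to prove.

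So the paper trades your classification problem for a verification-plus-count, and the ``disentangling'' you worry about --- ruling out extra bidegrees, handling classes that differ by $S=h_2-E_C$, and explaining why only the tangent $(0,1)$ divisors survive --- simply never arises. Your approach could in principle give a self-contained argument that does not appeal to the a priori count, but then the completeness step you leave open is the entire content: for instance, the equation $2c+3d-3\nu-\sum\mu_i=1$ is satisfied by any $(0,1)$ divisor through two of the points, so one must still explain geometrically why only finitely many of these give genuine planes and why the tangency condition is what cuts them down to six. That is exactly the work the paper's counting argument lets you skip.
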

\begin{proof} One can easily check that subvarieties described above give us singular points and planes. Since there are exactly 14 nodes and 12 planes, we have described all of them.
\end{proof}
\begin{corollary} Nodes of the quartic $Q$ and tropes form a configuration that can be obtained from the $(15_{4}, 10_{6})$-configuration of nodes and tropes of a quartic with $15$ nodes (see~\cite{Avi3} for detailes) by removing one node and four tropes that contain this node.
\end{corollary}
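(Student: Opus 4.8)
The plan is to read off the full incidence structure of the $14$ nodes and $6$ tropes of $Q$ directly from the preceding lemma, and then to compare it with the $(15_{4},10_{6})$-configuration of the $15$-nodal quartic after the prescribed deletion. Since by Proposition~\ref{pr1} every trope carries exactly $6$ nodes and any two tropes meet in exactly $2$ nodes, the whole combinatorial type is pinned down by a small amount of numerical data, so the comparison ultimately reduces to matching a few invariants.

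First I would make the $(15_{4},10_{6})$-configuration explicit. It is convenient to model it on a $6$-element set $\{1,\dots,6\}$: the $15=\binom{6}{2}$ nodes correspond to the duads $\{i,j\}$, the $10$ tropes correspond to the partitions of $\{1,\dots,6\}$ into two triples, and a node lies on a trope precisely when the duad is contained in one of the two triples. One checks at once that each trope then contains $6$ nodes, each node lies on $4$ tropes, and any two tropes share exactly $2$ nodes, so this model does realize the configuration of~\cite{Avi3}. Removing one node, say the duad $\{5,6\}$, together with the four tropes through it (the partitions keeping $5$ and $6$ in a common triple), leaves $14$ nodes and $6$ tropes; each surviving trope still carries $6$ nodes, and the node degrees redistribute into six nodes lying on $2$ of the remaining tropes and eight nodes lying on $3$ of them, with any two remaining tropes still sharing exactly $2$ nodes.

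Next I would recover the incidences of the $24^{\circ}$ configuration from the $\PP^{1}\times\PP^{2}$ model of the preceding lemma. Here a node lies on a trope exactly when the corresponding curve is contained in the corresponding surface of $\PP^{1}\times\PP^{2}$, so the incidence table can be computed from the bidegrees together with the passage through the points $p_{i}'$ and the incidences with $C$. The first step is to determine the Geiser pairing of the $12$ planes into $6$ tropes: one expects each exceptional divisor $E_{i}$ over $p_{i}'$ to pair with the fibre of bidegree $(1,0)$ through $p_{i}'$, and the six divisors of bidegree $(0,1)$ tangent to $C$ to pair off into the three remaining tropes. With the $6$ tropes in hand, one tabulates which of the $14$ node-curves lie on each of them and verifies that the resulting degree sequence is again six nodes of trope-degree $2$ and eight nodes of trope-degree $3$, each trope containing $6$ nodes, in agreement with Proposition~\ref{pr1}.

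Finally, because both configurations have $14$ nodes and $6$ tropes, the same node-degree sequence, $6$ nodes on every trope, and any two tropes meeting in exactly $2$ nodes, these numerical data determine the configuration up to isomorphism, and the two incidence structures are forced to coincide; this proves the claim. The main obstacle is the middle step: identifying the Geiser pairing of the $12$ planes and correctly tabulating the incidences in the $\PP^{1}\times\PP^{2}$ model, in particular keeping track of the tangency-to-$C$ conditions that single out the six divisors of bidegree $(0,1)$. Once this bookkeeping is in place, the numerical matching above is immediate.
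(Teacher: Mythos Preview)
Your overall strategy---model the $(15_{4},10_{6})$-configuration by duads and syntheme-pairs, delete one duad and its four tropes, then compare with the incidences read off from the $\PP^{1}\times\PP^{2}$ description of Lemma~5.3---is exactly the kind of direct verification the paper has in mind (the corollary is stated without proof, as an immediate consequence of that lemma). The duad model and the degree count after deletion are correct.

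The genuine gap is your final step. You assert that the numerical invariants (14 nodes, 6 tropes, 6 nodes on every trope, any two tropes meeting in 2 nodes, and node-degree sequence $2^{6}3^{8}$) determine the incidence structure up to isomorphism. This is false. From these data one can indeed deduce that every trope carries exactly two degree-$2$ nodes and four degree-$3$ nodes, but the way the degree-$2$ nodes distribute over pairs of tropes is \emph{not} forced. In the configuration coming from the duad model the six degree-$2$ nodes fall into three pairs with identical trope-sets, so there are three pairs of tropes whose two common nodes are both of degree $2$; equivalently, the ``degree-$2$ multigraph'' on the six tropes is a perfect matching with all edges doubled. One can, however, build a different abstract incidence structure with the same numerical invariants in which this multigraph is a $6$-cycle: take degree-$2$ nodes on the pairs $\{T_{i},T_{i+1}\}$ (indices mod $6$) and degree-$3$ nodes on the triples
\[
\{T_{1},T_{3},T_{5}\},\ \{T_{2},T_{4},T_{6}\},\ \{T_{1},T_{2},T_{4}\},\ \{T_{1},T_{4},T_{5}\},\ \{T_{2},T_{3},T_{5}\},\ \{T_{2},T_{5},T_{6}\},\ \{T_{3},T_{4},T_{6}\},\ \{T_{1},T_{3},T_{6}\}.
\]
One checks that every trope has six nodes, any two tropes meet in two nodes, and the degree sequence is $2^{6}3^{8}$; yet here no pair of tropes shares two degree-$2$ nodes, so this structure is not isomorphic to the one you want. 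Hence the numerical matching you propose does not finish the argument.

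The fix is simply to do what you already identified as the main obstacle: carry out the tabulation. Once you have the Geiser pairing of the twelve planes into six tropes and the list of which of the fourteen node-curves lie on each, write down an explicit bijection with the deleted duad model (for instance via the cube picture the paper describes in the remark following the corollary). That direct matching, not a numerical uniqueness lemma, is what actually proves the statement.
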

We denote such a configuration by $(14, 6_{6})$-configuration: every trope contains six nodes, eight nodes lie on three tropes and remaining six nodes lie on two tropes.
\begin{remark} One can visualize this configuration in the following way: tropes are faces of a cube, nodes are vertices of a cube and centers of faces, every trope contains five nodes lying on the corresponding face and the center of the opposite face.
\end{remark}
\begin{lemma}  The group $\widetilde{\Aut}(X)$ is a subgroup of $\Sym_{4}\times \Cyc_{2}$ (the group of symmetries of the cube).
\end{lemma}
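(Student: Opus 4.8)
The plan is to reduce the statement to a purely combinatorial fact about the $(14, 6_{6})$-configuration and then invoke the preceding lemma. By that lemma the group $\widetilde{\Aut}(X)$ acts faithfully on the set of the $14$ nodes and the $6$ tropes, preserving the incidence relation ``a node lies on a trope''. Hence it suffices to prove that the automorphism group of this incidence structure is isomorphic to the symmetry group $\Sym_{4}\times\Cyc_{2}$ of the cube; the embedding $\widetilde{\Aut}(X)\hookrightarrow\Sym_{4}\times\Cyc_{2}$ will then follow. Throughout I use the cube model of the Remark above: the six tropes are the faces of a cube, the fourteen nodes are its eight vertices together with the six face centers, and each trope contains the five nodes lying on the corresponding face plus the center of the opposite face.

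First I would recover the two kinds of nodes intrinsically. A vertex-node lies on exactly three tropes (the three faces meeting at that vertex), while a face-center lies on exactly two tropes (its own face and the opposite one). Since any incidence automorphism preserves the number of tropes through a node, it preserves the partition of the nodes into the eight vertices and the six centers, and it permutes the six tropes accordingly.

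Next I would reconstruct the combinatorics of the cube from the incidence data alone. Two tropes are opposite faces if and only if some center-node lies on both of them, so the three pairs of opposite faces are determined by the configuration. Each vertex-node then corresponds to the triple of tropes containing it, which is exactly one face from each of the three opposite pairs; conversely all $2^{3}=8$ such triples occur, matching the eight vertices. Finally two vertices are joined by an edge precisely when the corresponding triples share two common tropes. Thus the entire face--vertex--edge structure of the cube is canonically encoded in the incidence structure, so every incidence automorphism is a combinatorial symmetry of the cube.

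It remains to note that the combinatorial automorphism group of the cube coincides with its group of geometric symmetries, the hyperoctahedral group $B_{3}\cong\Sym_{4}\times\Cyc_{2}$ of order $48$, in which the factor $\Sym_{4}$ acts on the four space diagonals and the factor $\Cyc_{2}$ is generated by the central symmetry. Combining this with the faithfulness of the action gives $\widetilde{\Aut}(X)\subset\Sym_{4}\times\Cyc_{2}$. The main point to verify carefully is the reconstruction step: one must check that no incidence automorphism exists beyond those coming from cube symmetries, which is exactly what the explicit recovery of the opposite pairs, the vertices and the edges above guarantees.
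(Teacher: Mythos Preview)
Your argument is correct, but it differs from the paper's. You invoke the earlier faithfulness lemma (that $\widetilde{\Aut}(X)$ acts faithfully on the node--trope incidence structure) and then do the purely combinatorial work of showing that the automorphism group of the $(14,6_{6})$-configuration is the hyperoctahedral group: you recover the vertex/center distinction from the trope-degree, the opposite-face pairing from shared centers, and the edges from pairs of vertex-triples sharing two tropes. This makes precise the paper's phrase ``natural homomorphism to the group of symmetries of a cube'', which the paper simply asserts on the strength of the cube Remark. For injectivity the paper does \emph{not} cite the faithfulness lemma; instead it argues directly in coordinates: any $g$ in the kernel fixes four vertex-nodes lying on three intersecting cube edges, hence (placing them at the coordinate points) $g$ is diagonal, and then invariance of the three non-coordinate tropes forces all eigenvalues equal, so $g$ is trivial. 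Your route is cleaner conceptually and reuses prior work; the paper's route is self-contained and produces explicit coordinates and trope equations that feed into the subsequent lemmas. One minor point: the faithfulness statement you invoke is not literally the \emph{preceding} lemma in the paper's ordering, so you may want to cite it more precisely.
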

\begin{proof} From the previous remark we have the natural homomorphism from $\widetilde{\Aut}(X)$ to the group of symmetries of a cube. Now we need to proof that this homomorphism is injective. Assume that the automorphism $g\in \widetilde{\Aut}(X)$ fixes four nodes corresponding to four vertices of the cube lying on three intersecting edges. Without loss of generality we may assume that these nodes have coordinates $(1:0:0:0), (0:1:0:0), (0:0:1:0)$ and $(0:0:0:1)$. The action of the automorphism $g$ is diagonal in this coordinates: $g(x_{0}:x_{1}:x_{2}:x_{3})=(\alpha_{0}x_{0}:\alpha_{1}x_{1}:\alpha_{2}x_{2}:\alpha_{3}x_{3})$. All tropes are $g$-invariant. Three of them has an equation of the form $x_{i}=0, i=1, 2, 3$, and they are $g$-invariant automatically. Remaining three tropes has equations $a_{10}x_{0}+a_{11}x_{1}+a_{12}x_{2}=0, a_{20}x_{0}+a_{21}x_{1}+a_{23}x_{3}=0$ and $a_{30}x_{0}+a_{32}x_{2}+a_{33}x_{3}=0$ and they are $g$-invariant if $\alpha_{0}=\alpha_{1}=\alpha_{2}$, $\alpha_{0}=\alpha_{1}=\alpha_{3}$ and $\alpha_{0}=\alpha_{2}=\alpha_{3}$ respectively. So all $\alpha$'s are equal and $g$ is trivial automorphism.
\end{proof}
\begin{lemma} The normal subgroup $\Cyc_{2}^{3}\subset \Sym_{4}\times \Cyc_{2}$ acts on the set of tropes with three orbits of length $2$. A node lying on two tropes in one orbit doesn't lie on a third trope. As a consequence, if $X$ is $G$-birationally rigid, then there is an $\widetilde{\Aut}(X)$-invariant triple of skew or coplanar lines in $\PP^{3}$.
\end{lemma}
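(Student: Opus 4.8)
The plan is to establish the three assertions in turn, using the cube model of the $(14,6_{6})$-configuration from the preceding remark together with Lemma~\ref{le1}. First I would describe the normal subgroup explicitly. Inside the symmetry group $\Sym_{4}\times\Cyc_{2}$ of the cube, the subgroup $\Cyc_{2}^{3}$ is generated by the three reflections in the coordinate planes (equivalently, by the central inversion and the Klein four-group of $\pi$-rotations about the axes joining centres of opposite faces). The reflection in the plane orthogonal to a fixed axis interchanges the two faces perpendicular to that axis and fixes the other four, so $\Cyc_{2}^{3}$ preserves each pair of opposite faces and acts on the six tropes with exactly three orbits of length $2$; this is the first assertion. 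For the second assertion I would read off the incidence from the configuration: the six nodes lying on only two tropes are precisely the six face centres, and the centre of a face lies on that face and, as ``the centre of the opposite face'', on the opposite trope only. Hence the two nodes common to a pair of opposite tropes are the centres of those two faces, and neither lies on a third trope.

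Next I would pass to $\PP^{3}$. Each of the three orbits is a pair of distinct planes $T_{i},T_{i}'$, and $L_{i}:=T_{i}\cap T_{i}'$ is a line; by the second assertion $L_{i}$ passes through the two face centres common to $T_{i}$ and $T_{i}'$, and since these pairs of centres are disjoint for different $i$, the lines $L_{1},L_{2},L_{3}$ are distinct. Because $\Cyc_{2}^{3}$ is normal in $\Sym_{4}\times\Cyc_{2}\supseteq\widetilde{\Aut}(X)$, the set of its three trope-orbits is permuted by $\widetilde{\Aut}(X)$, and therefore so is $\{L_{1},L_{2},L_{3}\}$. This already yields the required $\widetilde{\Aut}(X)$-invariant triple of lines, and it remains only to control their mutual position.

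Here $G$-birational rigidity enters through Lemma~\ref{le1}, which forbids a $\widetilde{G}$-invariant line. If the image of $\widetilde{G}$ in the symmetric group $\Sym_{3}$ of $\{L_{1},L_{2},L_{3}\}$ had a fixed point, the corresponding $L_{i}$ would be a $\widetilde{G}$-invariant line, contradicting Lemma~\ref{le1}; hence this image is $\Cyc_{3}$ or $\Sym_{3}$ and acts transitively, so the three lines sit in mutually symmetric position. A transitively permuted triple of distinct lines in $\PP^{3}$ is either pairwise skew, or coplanar (pairwise meeting in a triangle), or concurrent. The concurrent case would produce a $\widetilde{G}$-fixed point $P=L_{1}\cap L_{2}\cap L_{3}$ lying on all six tropes; projecting $X$ from $P$ (via $\pi$, using the points of $\pi^{-1}(P)$ if $P\notin Q$) then gives a $G$-equivariant fibration over a base of positive dimension, contradicting rigidity just as in the proof of Lemma~\ref{le1}. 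Only the skew and coplanar cases remain, which is the assertion.

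The combinatorics of the first two assertions is routine, the cube model making it transparent; the main obstacle is the final step. One must convert each forbidden mutual position of the three lines — a distinguished, hence fixed, line in the asymmetric cases, or the common point in the concurrent case — into either a $\widetilde{G}$-invariant line or a $\widetilde{G}$-fixed point, and then into an honest $G$-equivariant link to a simpler Mori fibration, so that Lemma~\ref{le1} and the rigidity hypothesis can legitimately be invoked to discard it.
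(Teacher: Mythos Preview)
Your treatment of the first two assertions and of the reduction to the three mutual positions (skew, coplanar, concurrent) is fine and matches the paper's argument. The gap is in your elimination of the concurrent case.

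You claim that projecting from the common point $P$ (via $\pi$, or from the points of $\pi^{-1}(P)$) produces a $G$-equivariant fibration contradicting rigidity ``just as in the proof of Lemma~\ref{le1}''. But in Lemma~\ref{le1} the point one projects from is a \emph{node} of $Q$: a general line through a node meets $Q$ residually in two points, so its preimage in $X$ is a conic, and one gets a genuine conic bundle. Here $P$ lies on all six tropes, hence (by the $(14,6_{6})$ incidence) $P$ is \emph{not} a node of $Q$. A general line through $P$ then meets $Q$ in four points (counted with multiplicity), and its preimage in $X$ is a double cover of $\PP^{1}$ branched in four points, i.e.\ an elliptic curve. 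The resulting rational map $X\dashrightarrow\PP^{2}$ is an elliptic fibration; running the MMP over the base yields a relative minimal model, not a Mori fibre space, and no contradiction with $G$-birational rigidity follows. (Smooth quartic double solids, which are birationally rigid, admit exactly such elliptic pencils from every point of $\PP^{3}$.)

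The paper excludes the concurrent case by a different, purely incidence-theoretic, argument that does not use rigidity at all. Choose three tropes sharing a common node $N$ (in the cube model, three faces through a vertex). These three planes now contain both $N$ and $P$, hence a common line; in particular the line $T_{1}\cap T_{2}$ lies in $T_{3}$. But $T_{1}\cap T_{2}$ carries the two common nodes of $T_{1}$ and $T_{2}$, which in the cube model are the two endpoints of the shared edge; the second endpoint lies on the face \emph{opposite} to $T_{3}$, not on $T_{3}$ itself. This contradiction rules out concurrency. You should replace your projection step by this argument (or an equivalent one drawn from the configuration).
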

\begin{proof} First two statements are easy observations. Thus, we have three $\Cyc_{2}^{3}$-inva\-riant lines $l_{1}, l_{2}$ and $l_{3}$ (intersections of two tropes) that form an  $\widetilde{\Aut}(X)$-invariant triple of lines. Since the variety $X$ is $G$-birationally rigid, we have no $\widetilde{\Aut}(X)$-invariant lines by Lemma~\ref{le1}, and there are three possibilities: they are either skew, or coplanar, or intersect at one point.

If these lines intersect at one point then this point is common for all tropes and it is not a node. We consider three tropes which has a common node. These tropes has two common points and thus they have a common line and, consequently, have second common node which is not our case (one can easily deduce this from the Corollary 5.2).
\end{proof}
\begin{lemma} If $X$ is $G$-birationally rigid then lines $l_{j}$ cannot be skew.
\end{lemma}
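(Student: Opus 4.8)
The plan is to assume that $l_1,l_2,l_3$ are pairwise skew and to derive a contradiction with Lemma~\ref{le1}. Three pairwise skew lines lie on a unique quadric surface $\mathcal{Q}\subset\PP^3$, and this quadric is automatically smooth, since a quadric cone contains no pair of skew lines. By uniqueness $\mathcal{Q}$ is $\widetilde{\Aut}(X)$-invariant, the three lines belong to one of its two rulings, and, since the two tropes of the $i$-th pair meet along $l_i$ and share exactly two nodes, all six nodes that lie on two tropes lie on $\mathcal{Q}$.

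First I would exploit the two rulings. Because $\widetilde{\Aut}(X)$ preserves the triple $\{l_1,l_2,l_3\}$, which lies in one ruling $R$, it preserves $R$ and hence the second ruling $R'$; identifying $\mathcal{Q}$ with $\PP^1\times\PP^1$ this gives a homomorphism $\widetilde{\Aut}(X)\to\operatorname{PGL}_2\times\operatorname{PGL}_2$, which is faithful because $\mathcal{Q}$ spans $\PP^3$. Write $\rho_1,\rho_2$ for the two projections to $\Aut(B_j)\cong\operatorname{PGL}_2$, where $B_1,B_2$ are the bases of $R,R'$. A point of $B_j$ fixed by $\rho_j(\widetilde{G})$ is a $\widetilde{G}$-invariant line of the corresponding ruling, which Lemma~\ref{le1} forbids; since a finite subgroup of $\operatorname{PGL}_2$ has a fixed point on $\PP^1$ exactly when it is cyclic, both $\rho_1(\widetilde{G})$ and $\rho_2(\widetilde{G})$ must be \emph{non-cyclic}. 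Moreover $\rho_1(\widetilde{G})$ permutes the three points of $B_1$ corresponding to $l_1,l_2,l_3$ with no fixed point, and the only non-cyclic finite subgroup of $\operatorname{PGL}_2$ admitting an orbit of length $3$ is $\Sym_3$; hence $\rho_1(\widetilde{G})\cong\Sym_3$. Consequently the three pairs of double nodes are permuted transitively, so the six nodes lying on two tropes form either one $\widetilde{G}$-orbit of length $6$ or two orbits of length $3$, and the latter is excluded by Lemma~\ref{le1}; thus they form a single orbit of length $6$.

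Next I would analyse the kernel $H=\ker\rho_1=\widetilde{G}\cap\Cyc_2^3$. Since $H$ fixes the three points of $B_1$ it acts trivially on $B_1$, and faithfulness forces $H\hookrightarrow\Aut(B_2)\cong\operatorname{PGL}_2$; in particular $H$ is cyclic or a Klein four group, and it preserves the partition of the six $\rho_2$-images of the double nodes into three pairs. Combining this embedding with the constraint that the six double nodes form one orbit of length $6$, and with the position of the eight nodes lying on three tropes relative to $\mathcal{Q}$, is meant to produce either one more $\widetilde{G}$-invariant line or a forbidden short orbit, contradicting Lemma~\ref{le1}. Equivalently, one can attempt to turn the invariant quadric $\mathcal{Q}$ together with its six nodes into an explicit $\widetilde{\Aut}(X)$-equivariant link to a Mori fibration over a base of positive dimension, which would directly violate $G$-birational rigidity.

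The hard part will be this last step. The reduction above — invariant smooth quadric, $\rho_1(\widetilde G)\cong\Sym_3$, both projections non-cyclic, $H\hookrightarrow\operatorname{PGL}_2$ — is essentially formal, but closing the argument requires pinning down exactly how the six double nodes and the eight triple nodes are distributed on and off $\mathcal{Q}$ under the $\operatorname{PGL}_2\times\operatorname{PGL}_2$-action, and extracting from this distribution the decisive invariant line, forbidden orbit, or birational link. Identifying the correct linear system for the link, or the precise coincidence among the $\rho_2$-images of the nodes on $B_2$ that forces a fixed point of $\rho_2$, is where the real work lies.
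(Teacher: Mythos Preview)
Your opening reduction---unique smooth invariant quadric $\mathcal Q$, two rulings preserved, $\rho_1(\widetilde G)\cong\Sym_3$, kernel embedding in $\operatorname{PGL}_2$---matches the paper's setup. But you leave the argument unfinished and search for the contradiction in the wrong place: the paper does \emph{not} close by producing another invariant line, a short orbit, or a birational link.

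What you are missing is the use of the ambient bound $\widetilde{\Aut}(X)\subset\Sym_4\times\Cyc_2$ (Lemma~5.4) and the concrete trope constraint. The paper argues as follows. Since the quotient $\widetilde{\Aut}(X)/(\Cyc_2^3\cap\widetilde{\Aut}(X))$ must be all of $\Sym_3$ (else cyclic, giving an invariant line in the ruling), and since dihedral $\widetilde{\Aut}(X)$ is excluded by Corollary~2.3, one gets $\widetilde{\Aut}(X)\cong\Sym_4$ or $\Sym_4\times\Cyc_2$. The latter is impossible because its kernel $\Cyc_2^3$ of $\rho_1$ would have to embed into $\operatorname{PGL}_2$ via the second ruling. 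So $\widetilde{\Aut}(X)\cong\Sym_4$, acting as $\Sym_3$ on the first $\PP^1$ and faithfully as $\Sym_4$ on the second. Now the stabiliser of each $l_j$ is a $\Dih_8$, and on $l_j$ there is a \emph{unique} $\Dih_8$-invariant pair of points; these are forced to be the two nodes on $l_j$. This determines the six tropes (each trope through $l_j$ is spanned by $l_j$ and the line of the second ruling through one of those nodes) and hence the remaining eight nodes as the triple trope intersections. Writing everything in explicit coordinates for a fixed $\Sym_4\subset\operatorname{PGL}_2$, one checks directly that the six nodes on a single trope do not lie on a conic. That is the contradiction: by definition of a trope, $Q$ meets that plane in a double conic, so the six nodes there must lie on that conic.

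In short, the decisive step is not abstract orbit bookkeeping but pinning down $\widetilde{\Aut}(X)=\Sym_4$ exactly via Lemma~5.4, then making the node configuration on $\mathcal Q$ completely explicit and testing it against the ``six nodes on a conic'' condition coming from Proposition~\ref{pr1}.
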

\begin{proof}
In this case we have an $\widetilde{\Aut}(X)$-invariant quadric such that our lines lie in one family of lines on this quadric. We know that the group $\Cyc_{2}^{3}\cap\widetilde{\Aut}(X)$ preserves every line in our triple, so it preserves every line in the family. The group $\widetilde{\Aut}(X)/(\Cyc_{2}^{3}\cap\widetilde{\Aut}(X))$ is a subgroup of $\Sym_{3}$. If it is a cyclic group then we have two $\widetilde{\Aut}(X)$-invariant lines in the family, but this is impossible by Lemma~\ref{le1}. So $\widetilde{\Aut}(X)/(\Cyc_{2}^{3}\cap\widetilde{\Aut}(X))\simeq \Sym_{3}$ and $\widetilde{\Aut}(X)\simeq\Sym_{4}$ (there are two such subgroups) or $\Sym_{4}\times\Cyc_{2}$. If $\widetilde{\Aut}(X)\simeq\Sym_{4}\times\Cyc_{2}$ then we have an embedding of the group $\Sym_{4}\times\Cyc_{2}$ into $\operatorname{PGL}_{2}(\operatorname{k})$ (it arises from the action of the group $\widetilde{\Aut}(X)$ on the second family of lines on the invariant quadric), but it is impossible. Thus $\widetilde{\Aut}(X)\simeq\Sym_{4}$ and acts in the following way: it acts on $\PP^{1}\times\PP^{1}$ as $\Sym_{3}$ on the first component and as $\Sym_{4}$ on the second component.

The line $l_{j}$ is $\Dih_{8}$-invariant, and there is unique $\Dih_{8}$-invariant pair of points on $l_{j}$. These points must be singular points of the quartic $Q$ (we denote them by $p_{j, 1}$ and $p_{j, 2}$). Also we can easily describe tropes $P_{j, 1}$ and $P_{j, 2}$ which contain $l_{j}$: they are spanned by $l_{j}$ and lines in another family of lines on the quadric passing through $p_{j, 1}$ or $p_{j, 2}$ respectively. Thus we can determine remaining eight nodes: they are intersections $P_{1, k}\cap P_{2, l}\cap P_{3, m}$, where $k, l, m\in\{1, 2\}$.

Now we can explicitly calculate such points. We assume that our quadric is the image of the embedding $\PP^{1}\times\PP^{1}\to \PP^{3}, ((x:y), (p:q))\mapsto (xp:yp:xq:yq).$ Assume that the group $\Sym_{4}$ acts on the second $\PP^{1}$ with generators given by matrices
$$\begin{pmatrix}
    -1       & i \\
    i & -1
\end{pmatrix},
\begin{pmatrix}
    1-i       & -1+i \\
    1+i & 1+i
\end{pmatrix}.$$ Also we assume that $l_{1}$ is the image of $(1:0)\times \PP^{1}$, $l_{2}$ is the image of $(0:1)\times \PP^{1}$ and $l_{3}$ is the image of $(1:1)\times \PP^{1}$ (one can easily calculate how $\Sym_{4}$ acts on the first $\PP^{1}$, but we don't need this calculation). Then we can compute equations of all tropes and coordinates of all singular points. For example, the trope $P_{1, 1}$ has an equation $x_{2}=x_{3}$ and contains singularities with coordinates $(1:1:0:0), (1:-1:0:0), (1+i:0:1:1), (0:1-i:1:1), (1+i:0:1:1), (0:1-i:1:1)$. One can check that these points don't lie on a conic, this is a contradiction.
\end{proof}

Now we assume that lines $l_{j}$ are coplanar. Let $P$ be the plane spanned by the lines $l_{j}$. Since it contains six nodes of the quartic $Q$ and the intersection $P\cap Q$ is not a double conic, this intersection is a union of four lines in general position. Thus we have a natural map from $\widetilde{G}$ to the permutation group of four lines.
\begin{lemma}This map is either injective or its kernel is $\Cyc_{2}$. If $X$ is $G$-birationally rigid then the image of this map is isomorphic to either $\Alt_{4}$ or $\Sym_{4}$. The action of $\widetilde{G}$ on $P$ is the projectivisation of an irreducible 3-dimensional representation.
\end{lemma}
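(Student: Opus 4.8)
The plan is to study the homomorphism $\phi\colon\widetilde G\to\Sym_4$ recording the action on the four lines $\ell_1,\dots,\ell_4$ cut out on $P$ by $Q$, handling $\ker\phi$ and $\operatorname{im}\phi$ in turn, and then reading off irreducibility from the absence of invariant linear subspaces of $P$.

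First I would determine $K:=\ker\phi$. An element of $K$ fixes each $\ell_i$ as a set, so its restriction to $P$ fixes four lines in general position; fixing three of them forces the three coordinate vertices to be fixed, hence a diagonal map, and the fourth line then forces this map to be scalar, so every element of $K$ fixes $P$ pointwise. Thus $K$ consists of homologies with axis $P$. The group of projective transformations fixing a plane pointwise is isomorphic to $\mathbb{G}_a^3\rtimes\mathbb{G}_m$, whose finite subgroups are cyclic; hence $K$ is cyclic, generated by a homology which in coordinates with $P=\{x_3=0\}$ reads $\operatorname{diag}(1,1,1,\zeta)$ for a primitive $n$-th root of unity $\zeta$. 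To bound $n$ I would use invariance of $Q=\{F=0\}$: then $F$ is a $\zeta$-eigenform, so the monomials $x_3^{a}\,m$ in $F$ share one residue of $a$ modulo $n$, and since $F|_{x_3=0}$ (the four lines) is nonzero only exponents $a$ with $n\mid a$ occur. The six vertices of the complete quadrilateral lie in $P$ and are singular points of $Q$; nodality forces the quadratic part of $F$ at each to have rank $3$, which requires the $a=2$ term to be present. If $n\ge 3$ there are no monomials with $a=1$ or $a=2$, so that quadratic part involves no $x_3$ and has rank at most $2$, contradicting nodality (and $n\ge 5$ would make $Q$ a cone). Hence $n\le 2$, so $K$ is trivial or $\Cyc_2$.

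Next I would identify $\operatorname{im}\phi=\overline G\subseteq\Sym_4$. The six vertices are exactly the six nodes of $Q$ lying in $P$, since any node in $P$ is a singular point of $P\cap Q$ and therefore a vertex, and both sets have six elements. Thus $\overline G$ permutes six nodes, and by Lemma~\ref{le1} no orbit of singular points has length $1,2,3$ or $5$; as the only partition of $6$ into parts different from $1,2,3,5$ is $6$ itself, $\overline G$ is transitive on the six $2$-element subsets of $\{1,2,3,4\}$, so $6\mid|\overline G|$. A fixed point of $\overline G$ in $\{1,2,3,4\}$ would give a $\widetilde G$-invariant line, forbidden by Lemma~\ref{le1}; this excludes the point-stabiliser copies of $\Sym_3$. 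Since the only subgroups of $\Sym_4$ of order divisible by $6$ are these $\Sym_3$'s, $\Alt_4$ and $\Sym_4$, the image is $\Alt_4$ or $\Sym_4$.

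Finally, because $K$ fixes $P$ pointwise the action of $\widetilde G$ on $P$ factors through $\overline G$, and the associated $3$-dimensional representation $V$ (with $P=\PP(V)$) is reducible precisely when $P$ carries a $\overline G$-invariant line or a $\overline G$-invariant point. Invariant lines are excluded by Lemma~\ref{le1}. To rule out an invariant point $q$, I would first note that $q$ lies on none of the $\ell_i$, since transitivity would otherwise place $q$ on all four lines. Passing to the dual, the splitting $V=L\oplus W$ determined by $q$ projects the four dual points from the invariant point $\PP(W^{\perp})$ onto the invariant line $\PP(L^{\perp})\cong\PP^1$, yielding four points that are distinct by the primitivity of $\overline G$ on $\{1,2,3,4\}$ and form an orbit of $\overline G$ in $\operatorname{PGL}_2$. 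For $\overline G=\Sym_4$ this is impossible, since the octahedral subgroup of $\operatorname{PGL}_2$ has no orbit of length smaller than $6$. For $\overline G=\Alt_4$ one lifts the four points to vectors $u_1,\dots,u_4$ permuted by a subgroup $H\subseteq\operatorname{GL}_2$ with tetrahedral image; since only the identity scalar fixes a nonzero vector, the stabiliser $\operatorname{Stab}_H(u_1)$ injects into the point-stabiliser $\Cyc_3\subset\Alt_4$, so orbit--stabiliser gives $|H|=4\,|\operatorname{Stab}_H(u_1)|\le 12$, forcing $H\cong\Alt_4$ to act faithfully in dimension $2$, which is impossible. Hence no invariant point exists, $V$ is irreducible, and for $\Alt_4$ and $\Sym_4$ such an irreducible $3$-dimensional representation is a genuine linear representation, as asserted. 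The computational core is the bound $|K|\le 2$, where the rank of the quadratic part of $F$ at the six vertices must be controlled; the genuinely delicate step is excluding an invariant point for $\overline G=\Alt_4$, the one case where the projective picture on $\PP^1$ is not by itself contradictory and one must descend to the linear orbit.
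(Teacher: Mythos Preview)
Your treatment of the kernel and of the image is correct and, in both places, genuinely different from the paper. For the kernel the paper simply invokes the ambient inclusion $\widetilde{\Aut}(X)\subset\Sym_4\times\Cyc_2$ established earlier (Lemma~5.5) and reads off which elements act trivially on the $(14,6_6)$-configuration; your self-contained argument via homologies with axis $P$ and the rank of the Hessian of $F$ at the six vertices is a nice alternative that avoids that earlier structural lemma. For the image the paper uses both the four lines of $P\cap Q$ \emph{and} the three diagonals $l_j$ (the surviving proper subgroups $\Cyc_2,\Cyc_4,V_4,\Dih_8$ fix some $l_j$), whereas you get there more directly by forcing transitivity on the six vertices through the orbit-length restrictions of Lemma~\ref{le1}.

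The irreducibility argument, however, has a real gap. You write ``the associated $3$-dimensional representation $V$'' and then use a $\overline G$-splitting $V=L\oplus W$; but at this point you only have $\overline G\subset\operatorname{PGL}_3$, and nothing yet guarantees that this projective action lifts to a linear representation of $\overline G$ (the Schur multipliers of $\Alt_4$ and $\Sym_4$ are nontrivial). Your $\Sym_4$ case still goes through because the octahedral group in $\operatorname{PGL}_2$ has no orbit of length $4$, but your $\Alt_4$ case breaks: you propose to lift the four points in $\PP^1$ to vectors $u_1,\dots,u_4$ permuted by some $H\subset\operatorname{GL}_2$ with tetrahedral image, yet the preimage of $\Alt_4$ in $\operatorname{SL}_2$ is the binary tetrahedral group $2.\Alt_4$, which contains $-I$ and therefore cannot permute a set of four nonzero vectors, one on each line. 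So the orbit--stabiliser count ``$|H|\le 12$'' is not available, and the contradiction you aim for is with the (nonexistent) lift, not with the hypothesis of a fixed point.

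The paper sidesteps all of this: since $\operatorname{PGL}_3$ acts transitively on quadruples of lines in general position, one may conjugate the configuration to the standard one, whose stabiliser is exactly the projectivisation of the irreducible $3$-dimensional permutation summand of $\Sym_4$; this simultaneously proves linearity and irreducibility. Note also that once linearity is in hand, your elaborate no-fixed-point argument becomes superfluous: by Maschke, an invariant point would force an invariant complementary line in $P$, already excluded by Lemma~\ref{le1}.
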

\begin{proof} Obviously, only trivial self-map of the plane $P$ preserves every line in $P\cap Q$. The group $\widetilde{\Aut}(X)$ is a subgroup of $\Sym_{4}\times \Cyc_{2}$ and only $\Cyc_{2}$ can act trivially on the set $P\cap\operatorname{Sing}(Q)$ and thus on $P$ (one can see it checking the action of the automorphism group on $(14, 6_{6})$-configuration). This proves the first statement. If the image of this map is not $\Alt_{4}$ or $\Sym_{4}$ then we have a $\widetilde{G}$-invariant line ($l_{j}$ or a line in $P\cap Q$).

The third statement easily follows from the fact that the group $\operatorname{PGL}_{3}(\operatorname{k})$ acts transitively on set of quadruples of lines in general position and the projectivisation of a 3-dimensional representation of the group $\Alt_{4}$ or $\Sym_{4}$ has the invariant quadruple of lines.
\end{proof}

\begin{lemma} There is a $\widetilde{G}$-invariant point $p$ that doesn't lie on $P$.
\end{lemma}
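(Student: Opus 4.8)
The plan is to pass from the projective action on $\PP^{3}$ to a genuine \emph{linear} action on $\operatorname{k}^{4}$ and then invoke complete reducibility. Since the ground field is algebraically closed, the projection $\operatorname{SL}_{4}(\operatorname{k})\to\operatorname{PGL}_{4}(\operatorname{k})$ is surjective with kernel the finite group $\mu_{4}$ of fourth roots of unity. I would therefore let $\widehat{G}$ denote the preimage of $\widetilde{G}$ under this projection. It is a central extension of the finite group $\widetilde{G}$ by $\mu_{4}$, hence is itself \emph{finite}, and it acts linearly on $V=\operatorname{k}^{4}$ inducing the given action of $\widetilde{G}$ on $\PP^{3}=\PP(V)$.

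Next I would translate the geometric data into module-theoretic data. The plane $P$ is $\widetilde{G}$-invariant, so its affine cone $W\subset V$ (a $3$-dimensional subspace) is a $\widehat{G}$-submodule, and by the previous lemma the induced action of $\widetilde{G}$ on $P=\PP(W)$ is the projectivisation of the irreducible $3$-dimensional representation of $\Alt_{4}$ or $\Sym_{4}$. The crucial reformulation is that a $\widetilde{G}$-invariant point of $\PP^{3}$ lying off $P$ is exactly the same datum as a $\widehat{G}$-invariant line $L\subset V$ with $L\cap W=0$, because scalar matrices act trivially on $\PP^{3}$ and preserve every subspace of $V$.

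The key step is then complete reducibility. Because $\widehat{G}$ is finite and $\operatorname{char}\operatorname{k}=0$, Maschke's theorem applies and $V$ is a semisimple $\widehat{G}$-module, so the submodule $W$ admits a $\widehat{G}$-invariant complement $L$, which is necessarily one-dimensional. Setting $p:=\PP(L)$ produces a $\widetilde{G}$-invariant point of $\PP^{3}$, and $L\cap W=0$ guarantees $p\notin P$, as required. One could further observe that $W$ and $L$ are non-isomorphic as $\widehat{G}$-modules (they have different dimensions), so $L$, and hence $p$, is unique, although this is not needed.

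The only real obstacle is the lifting step: the action of $\widetilde{G}$ on $\PP^{3}$ is a priori only a projective representation, whereas complete reducibility is a statement about linear representations of a finite group. The passage to the finite group $\widehat{G}\subset\operatorname{SL}_{4}(\operatorname{k})$ — exploiting the surjectivity of $\operatorname{SL}_{4}(\operatorname{k})\to\operatorname{PGL}_{4}(\operatorname{k})$ over an algebraically closed field together with the finiteness of the kernel $\mu_{4}$ — is precisely what legitimises the semisimplicity argument; everything after that is formal.
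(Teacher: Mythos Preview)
Your argument is correct and is essentially identical to the paper's own proof: lift $\widetilde{G}$ to its finite preimage $\widehat{G}\subset\operatorname{SL}_{4}(\operatorname{k})$, observe that the $3$-dimensional subspace corresponding to $P$ is a subrepresentation, and then use semisimplicity in characteristic~$0$ to split off a one-dimensional complement whose projectivisation is the desired point. The paper says this in two sentences; you have simply spelled out the finiteness of $\widehat{G}$ and the appeal to Maschke's theorem more explicitly.
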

\begin{proof} Let $\widehat{G}\subset \operatorname{SL}_{4}(\operatorname{k})$ be the preimage of the group $\widetilde{G}\subset \operatorname{PGL}_{4}(\operatorname{k})$. It is a finite group, and the standard representation of $\widehat{G}$ contains the 3-dimensional subrepresentation, corresponding to the plane $P$. Thus it contains also one-dimensi\-onal subrepresentation that corresponds to a point $p$.
\end{proof}

\begin{lemma} $\PP^{3}$ as a $\widetilde{G}$-variety is isomorphic to  the projectivisation of a $4$-di\-mensional representation of the group $\widetilde{G}$.
\end{lemma}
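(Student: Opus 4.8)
The plan is to reuse the finite group $\widehat{G}\subset\operatorname{SL}_{4}(\operatorname{k})$ introduced in the proof of the previous lemma, namely the preimage of $\widetilde{G}\subset\operatorname{PGL}_{4}(\operatorname{k})$. Its kernel over $\widetilde{G}$ is the group $\mu_{4}$ of scalar matrices $\zeta\cdot\mathrm{id}$ with $\zeta^{4}=1$, which is central and finite, so $\widehat{G}$ itself is finite. Write $V=\operatorname{k}^{4}$ for the standard representation, so that $\PP^{3}=\PP(V)$ as a $\widehat{G}$-variety and the induced action on $\PP^{3}$ factors through $\widetilde{G}$. As established above, $V$ decomposes as a $\widehat{G}$-representation into $V=W_{0}\oplus L_{0}$, where $W_{0}$ is the $3$-dimensional invariant subspace with $\PP(W_{0})=P$ and $L_{0}$ is the line corresponding to the invariant point $p$; moreover $\mu_{4}$ acts on all of $V$ by the scalar $\zeta$.

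The central step is to produce a character $\chi\colon\widehat{G}\to\operatorname{k}^{*}$ whose restriction to $\mu_{4}$ sends $\zeta$ to $\zeta^{-1}$. To this end I would invoke the earlier lemma on the four lines in $P$: the action of $\widetilde{G}$ on $P$ is the projectivisation of an \emph{irreducible} $3$-dimensional linear representation $\rho\colon\widetilde{G}\to\operatorname{GL}(W_{0})$ (transporting it along the equivariant identification $P=\PP(W_{0})$). Pulling $\rho$ back along the quotient $\pi\colon\widehat{G}\to\widetilde{G}$ gives a linear representation $\rho\circ\pi\colon\widehat{G}\to\operatorname{GL}(W_{0})$, which is again irreducible and has the same image in $\operatorname{PGL}(W_{0})$ as the standard action $\rho_{0}\colon\widehat{G}\to\operatorname{GL}(W_{0})$ on the invariant subspace $W_{0}$. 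By Schur's lemma two irreducible representations with the same image in $\operatorname{PGL}(W_{0})$ differ by a character, so $\rho\circ\pi=\chi\cdot\rho_{0}$ for a unique $\chi\colon\widehat{G}\to\operatorname{k}^{*}$. Since $\rho\circ\pi$ is trivial on $\ker\pi=\mu_{4}$ while $\rho_{0}(\zeta)=\zeta\cdot\mathrm{id}$, comparing the two sides on $\mu_{4}$ forces $\chi(\zeta)=\zeta^{-1}$, as wanted.

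Finally I would twist the whole standard representation: consider $\rho_{V}\otimes\chi\colon\widehat{G}\to\operatorname{GL}(V)$. On $\mu_{4}$ it acts by $\zeta\cdot\chi(\zeta)=\zeta\cdot\zeta^{-1}=1$, so this representation is trivial on $\ker\pi$ and therefore descends to an honest $4$-dimensional representation $U$ of $\widetilde{G}$. Twisting by a scalar-valued character does not alter the induced action on projective space, whence $\PP(U)=\PP(V\otimes\chi)=\PP(V)=\PP^{3}$ $\widetilde{G}$-equivariantly, which is precisely the assertion. Everything here is formal, relying only on complete reducibility of representations of the finite group $\widehat{G}$ in characteristic $0$.

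I expect the only delicate point to be the character bookkeeping in the second paragraph: one must check that $\rho\circ\pi$ and $\rho_{0}$ really have literally the same image in $\operatorname{PGL}(W_{0})$ under the identification $P=\PP(W_{0})$, so that Schur's lemma produces a genuine character of $\widehat{G}$ rather than merely a projective cocycle, and that its restriction to the centre $\mu_{4}$ is computed correctly. The existence of the invariant plane $P$ and invariant point $p$, which make the decomposition $V=W_{0}\oplus L_{0}$ available, is exactly what lets this twisting argument get started.
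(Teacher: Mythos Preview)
Your argument is correct. Both your proof and the paper's rest on the same two ingredients—the $\widehat{G}$-invariant splitting $V=W_{0}\oplus L_{0}$ coming from the invariant plane $P$ and point $p$, and the earlier lemma that the action on $P$ is the projectivisation of an honest irreducible $3$-dimensional representation of $\widetilde{G}$—but you package the conclusion differently. The paper proceeds directly: having fixed the linear $\widetilde{G}$-action on $U=W_{0}$, for each $g\in\widetilde{G}$ the projective action on $\PP^{3}$ lifts to a unique element of $\operatorname{GL}(V)$ whose restriction to $U$ is the prescribed $\rho(g)$, and this pins down the action on the line $W=L_{0}$; multiplicativity is then immediate. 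Your route instead works inside $\widehat{G}\subset\operatorname{SL}_{4}$, compares the two irreducible $\widehat{G}$-actions on $W_{0}$ to extract a character $\chi$ killing $\mu_{4}$, and twists all of $V$ by $\chi$ to descend. This is a more systematic incarnation of the same idea (your $\chi$ is exactly the scalar adjustment the paper makes implicitly), and it has the virtue of making clear why the lift exists globally as a homomorphism rather than just elementwise. One small remark: your invocation of Schur's lemma is not strictly needed—once $(\rho\circ\pi)(g)$ and $\rho_{0}(g)$ have the same image in $\operatorname{PGL}(W_{0})$ for every $g$, their ratio is a scalar function whose multiplicativity follows formally, irreducibility or not.
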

\begin{proof} Let us construct such a representation. Let $\PP^{3}=\PP(V)$, $P=\PP(U)$ and $p=\PP(W)$, where $U, W\subset V$ be the corresponding vector subspaces of dimension $3$ and $1$ respectively. As we noticed above we may consider $U$ as a standard $3$-dimensional representation of $\widetilde{G}$. Then we can define in the unique way an action of $\widetilde{G}$ on $W$ and consequently on $V$ such that it agrees with the action on $\PP^{3}$. Obviously, this action is a representation.
\end{proof}

\begin{lemma}\label{le14} In some coordinates $\Alt_{4}\subset G$ acts by permutation of coordinates.
\end{lemma}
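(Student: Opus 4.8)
The plan is to cash in on the honest linear structure produced by the previous lemma: $\mathbb{P}^{3}=\mathbb{P}(V)$ with $V=U\oplus W$ a genuine $4$-dimensional representation of $\widetilde{G}$, where $U$ is the irreducible $3$-dimensional representation realizing the action on the plane $P=\mathbb{P}(U)$ and $W$ is the line corresponding to the invariant point $p=\mathbb{P}(W)$. First I would isolate a copy of $\Alt_{4}$ inside $\widetilde{G}$. The homomorphism $\phi\colon\widetilde{G}\to\Sym_{4}$ recording the permutation of the four lines of $P\cap Q$ has image $\Alt_{4}$ or $\Sym_{4}$ and kernel of order at most $2$; combining this with $\widetilde{G}\subset\Sym_{4}\times\Cyc_{2}$ one checks that $\widetilde{G}$ contains a subgroup $A\cong\Alt_{4}$ carried isomorphically onto $\Alt_{4}\subset\operatorname{im}\phi$ by $\phi$. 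Since the action on $\mathbb{P}^{3}$ factors through $\widetilde{G}$, it suffices to analyse the $A$-action, which is the one induced by the corresponding subgroup $\Alt_{4}\subset G$.

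Next I would restrict $V$ to $A$ and invoke the representation theory of $\Alt_{4}$. This group has exactly three $1$-dimensional characters, all trivial on the Klein subgroup $\Cyc_{2}^{2}$, and a single $3$-dimensional irreducible $U_{\mathrm{st}}$ (the standard representation), for which the permutation representation splits as $\operatorname{k}^{4}\cong\mathbf{1}\oplus U_{\mathrm{st}}$. I claim $U|_{A}\cong U_{\mathrm{st}}$. Indeed $U|_{A}$ is $3$-dimensional, and it cannot decompose into characters, for then the image of $A$ in $\operatorname{PGL}(U)$ would be abelian, contradicting that $A$ acts faithfully and non-abelianly on $P$ via $\phi$; as $\Alt_{4}$ has no $2$-dimensional irreducible, $U|_{A}$ must be irreducible, hence $\cong U_{\mathrm{st}}$. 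The remaining summand $W|_{A}$ is some character $\chi$ of $A$.

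Finally I would erase $\chi$ projectively. Twisting $V|_{A}$ by $\chi^{-1}$ leaves $\mathbb{P}(V)$ unchanged as an $A$-variety and replaces $V|_{A}=U_{\mathrm{st}}\oplus\chi$ by $(U_{\mathrm{st}}\otimes\chi^{-1})\oplus\mathbf{1}$. The key point is the short character computation $U_{\mathrm{st}}\otimes\chi^{-1}\cong U_{\mathrm{st}}$ for every character $\chi$ of $\Alt_{4}$, valid because the character of $U_{\mathrm{st}}$ vanishes off the Klein subgroup, on which every $\chi$ is trivial. Hence $\mathbb{P}^{3}$ as an $A$-variety is isomorphic to $\mathbb{P}(\mathbf{1}\oplus U_{\mathrm{st}})=\mathbb{P}(\operatorname{k}^{4})$ with the permutation action, and in the corresponding homogeneous coordinates $\Alt_{4}$ acts by even permutations of $x_{0},x_{1},x_{2},x_{3}$. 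I expect the only genuinely delicate points to be the clean extraction of an honest subgroup $A\cong\Alt_{4}$ from $\widetilde{G}$ in the presence of the possible $\Cyc_{2}$ (Geiser) factor, and the verification that $U|_{A}$ is the standard irreducible rather than a decomposable or twisted representation; once these are in hand, the twisting step is routine.
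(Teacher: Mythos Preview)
Your argument is correct and follows essentially the same route as the paper: identify $V|_{\Alt_4}$ as $U_{\mathrm{st}}\oplus\chi$ and then use that $U_{\mathrm{st}}\otimes\chi^{-1}\cong U_{\mathrm{st}}$ for every character $\chi$ of $\Alt_4$ to conclude that $\mathbb{P}(V)\cong\mathbb{P}(\mathbf{1}\oplus U_{\mathrm{st}})$ carries the permutation action. The paper's proof compresses your steps into two sentences, and your extra care in isolating an honest $\Alt_4\subset\widetilde{G}$ and verifying irreducibility of $U|_{\Alt_4}$ simply fills in what the paper leaves implicit.
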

\begin{proof}
The tensor product of the unique three-dimensional irreducible represen\-tation of the group $\Alt_{4}$ with any one-dimensional representation is isomorphic to the original representation. Thus $V$ as the representation of the group $\Alt_{4}$ is the tensor product of the standard four-dimensional representation with some one-dimensional representation. But all such representations have the same projectivizations.
\end{proof}
\begin{lemma} Lines $l_{i}$ cannot be coplanar.
\end{lemma}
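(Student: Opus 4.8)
The plan is to make the configuration completely explicit using Lemma~\ref{le14} and then produce a forbidden Sarkisov link via Remark~\ref{re1}. First I would fix coordinates so that $\Alt_4\subset G$ permutes $x_0,\dots,x_3$; then the invariant plane is $P=\{x_0+x_1+x_2+x_3=0\}$, the $\widetilde G$-invariant point found above is $p=(1:1:1:1)$, and the four lines cut out on $P$ are $\{x_i=0\}\cap P$. Writing $e_k$ for the elementary symmetric functions, a $\widetilde G$-invariant quartic is spanned by $e_1^4,e_1^2e_2,e_2^2,e_1e_3,e_4$; since $Q$ must restrict to the product of four lines on $P$ (and not to a square of a conic, as $P$ is not a trope), the $e_2^2$ term is absent and $Q=e_4+\alpha e_1^4+\beta e_1^2e_2+\gamma e_1e_3$ after rescaling. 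A direct check shows the six points $\{x_i=x_j=0\}\cap P$ (the vertices of the complete quadrilateral on $P$) are nodes for all parameter values; these are the six nodes lying on two tropes.

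Next I would locate the remaining eight nodes. By Lemma~\ref{le1} every $\widetilde G$-orbit of nodes has length outside $\{1,2,3,5\}$, and under the coordinate-permutation action the only admissible orbits of points off $P$ that can furnish these nodes are those of shape $(s:1:1:1)$ and its permutations. Imposing that such a point be singular yields two polynomial equations in $s$ and in $\alpha,\beta,\gamma$; solving them shows the eight nodes are realized by two orbits of length four, $(s_1:1:1:1)$ and $(s_2:1:1:1)$, for compatible parameters (in fact one finds a one-parameter family of such quartics). The crucial observation is that each orbit $(s:1:1:1),\dots,(1:1:1:s)$ consists of four nodes \emph{in general position}: the determinant of their coordinate matrix is $(s+3)(s-1)^3$, which is nonzero since $s\neq -3,1$.

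Having produced a $\widetilde G$-orbit of four nodes in general position, I would invoke Remark~\ref{re1}. The reciprocal (standard Cremona) transformation centred at these four points commutes with the coordinate permutations, hence is $\widetilde G$-equivariant whenever $\widetilde G$ preserves the orbit, and it carries $X$ to the quartic double solid of the reciprocal quartic. Comparing the node--trope incidences — equivalently, checking that $Q$ is not reciprocal in the coordinates centred at the four chosen nodes, since the second orbit and the six vertices on $P$ are not placed symmetrically — shows that the target is a genuinely different $G$-Mori fibration, contradicting $G$-birational rigidity.

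The main obstacle is the case in which the kernel of the map to the permutation group of the four lines is $\Cyc_2$, i.e. the central harmonic homology $\tau$ with axis $P$ and centre $p$ lies in $\widetilde G$. Since $\tau$ sends $(s:1:1:1)$ to $((3-s)/(s+1):1:1:1)$, with fixed values $s=1,-3$ both degenerate, it interchanges $s_1\leftrightarrow s_2$ and fuses the two orbits into one $\widetilde G$-orbit of length eight; there is then no $\widetilde G$-orbit of four nodes, and the Cremona link is equivariant only for the index-two subgroup, which does not by itself violate $G$-rigidity. Resolving this fused case is the heart of the argument: one must either construct a genuinely $\widetilde G$-equivariant link — for instance by $\widetilde G$-equivariantly blowing up the orbit of eight nodes and running the relative minimal model program toward the primitive fibration over $\PP^2$ predicted for type $24^\circ$ — or rule the case out by exhibiting additional singular points forced along the relevant parameter locus. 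I expect this fused-orbit case to require the most care.
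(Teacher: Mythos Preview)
Your main line of attack via Remark~\ref{re1} breaks down, and not only in the fused-orbit case you flag. You correctly deduce (using that $Q|_P$ is the product of the four coordinate hyperplane sections of $P$, which lies in the trivial $\Alt_4$-character) that $Q$ is a polynomial in $e_1,\dots,e_4$, hence $\Sym_4$-symmetric in your coordinates. But then the same is true in the coordinates centred at the four nodes $(s_1:1:1:1),\dots,(1:1:1:s_1)$: the change-of-basis matrix $s_1 I + (J-I)$ commutes with every permutation matrix, so $\Sym_4$ still acts by permutations of the new coordinates and $Q$ is still $\Sym_4$-symmetric there. Now any $\Sym_4$-symmetric quartic of degree $\le 2$ in each variable is automatically self-reciprocal, since the standard Cremona sends each monomial $x^{(a_0,\dots,a_3)}$ to $x^{(2-a_0,\dots,2-a_3)}$ and hence preserves every $\Sym_4$-orbit of such monomials. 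This is precisely the observation in the last sentence of Remark~\ref{re1}: the transformation is a birational \emph{automorphism} of $X$, not a link to a different $G$-Mori fibre space. So it does not contradict $G$-birational rigidity, and your claim that ``$Q$ is not reciprocal in the coordinates centred at the four chosen nodes'' is false.

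The paper avoids the Cremona link entirely and argues much more directly. After fixing the permutation coordinates as you do, one trope through $l_1=\{x_0+x_1=x_2+x_3=0\}$ has equation $x_0+x_1+\alpha(x_2+x_3)=0$ for some $\alpha\neq\pm1$; the $\Sym_4$-symmetry then determines the other five tropes, and the eight off-$P$ nodes (your two $(s:1:1:1)$-orbits, with $s_1=-1-2\alpha$ and $s_2=-\tfrac{2+\alpha}{\alpha}$) are the triple intersections of tropes. One now simply lists the six nodes lying on the chosen trope and checks that the unique degree-two curve through them is a pair of lines rather than a smooth conic. This contradicts Proposition~\ref{pr1}, which says a trope meets $Q$ in a double (hence smooth) conic, and the argument is uniform in $\alpha$ --- in particular it requires no case split on whether the harmonic homology $\tau$ lies in $\widetilde G$.
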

\begin{proof}
Assume that $P$ has an equation $x_{0}+x_{1}+x_{2}+x_{3}=0$. From the previous lemma we may assume that one line in $P\cap Q$ has an equation $x_{0}=x_{1}+x_{2}+x_{3}=0$ and the line $l_{1}$ has an equation $x_{0}+x_{1}=x_{2}+x_{3}=0$. Consequently, the trope $P_{1, 1}$ has an equation $x_{0}+x_{1}+\alpha(x_{2}+x_{3})=0, \alpha\neq\pm 1$. Now we can easily compute equations of all other tropes and coordinates of singular points of $Q$. The trope $P_{1, 1}$ contains singular points of the quartic $Q$ with coordinates $(0:0:1:-1), (1:-1:0:0), (-1-2\alpha:1:1:1)$, $(1:1:-\frac{2+\alpha}{\alpha}:1), (1:-1-2\alpha:1:1)$ and $(-\frac{\alpha}{2+\alpha}:-\frac{\alpha}{2+\alpha}:-\frac{\alpha}{2+\alpha}:1)$. One can easily check that the unique curve of degree $2$ passing through them is a union of two lines, a contradiction.
\end{proof}
As a consequence of Lemmas 5.5, 5.6 and 5.11 we obtain the following proposition:
\begin{propos} In the case $24^{\circ}$ the variety $X$ is never $G$-birationally rigid.
\end{propos}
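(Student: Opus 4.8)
The plan is to argue by contradiction, assembling the structural restrictions on the $\widetilde{\Aut}(X)$-action that have been accumulated in this section. Assume $X$ of type $24^{\circ}$ is $G$-birationally rigid. Recall that $X$ carries the $(14, 6_6)$-configuration of nodes and tropes, that $\widetilde{\Aut}(X)$ embeds into the symmetry group $\Sym_4 \times \Cyc_2$ of the cube, and that no line of $\PP^3$ is $\widetilde{G}$-invariant by Lemma~\ref{le1}. The whole argument reduces to producing a configuration forced by these data that violates the coconicity of the six nodes on a trope (Proposition~\ref{pr1}) or that violates Lemma~\ref{le1} itself.

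First I would recall the dichotomy. The lemma on the normal subgroup $\Cyc_2^3 \subset \Sym_4 \times \Cyc_2$ shows that it acts on the six tropes with three orbits of length $2$, and that intersecting the two tropes of each orbit produces three pairwise-distinct lines $l_1, l_2, l_3$ forming an $\widetilde{\Aut}(X)$-invariant triple. A priori these lines could be concurrent, skew, or coplanar; the concurrent case is disposed of inside that same lemma (three tropes with a common point would share a second common node, contradicting the $(14, 6_6)$-combinatorics), leaving only the skew and coplanar possibilities. Thus it suffices to exclude each of these.

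The skew case is excluded by the lemma that places the $l_j$ in one ruling of an $\widetilde{\Aut}(X)$-invariant quadric. There the constraint $\widetilde{\Aut}(X)/(\Cyc_2^3\cap\widetilde{\Aut}(X)) \simeq \Sym_3$ (a cyclic quotient being forbidden by Lemma~\ref{le1}), together with the non-embeddability of $\Sym_4 \times \Cyc_2$ into $\operatorname{PGL}_2(\operatorname{k})$, forces $\widetilde{\Aut}(X) \simeq \Sym_4$; reconstructing the $\Dih_8$-invariant pairs of nodes on each $l_j$, the six tropes, and the eight remaining nodes, and then computing coordinates, one finds six nodes lying on a single trope that are not coconic --- contradicting Proposition~\ref{pr1}, since a trope meets $Q$ in a double conic. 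The coplanar case is excluded analogously: by Lemma~\ref{le14} we may take $\Alt_4 \subset \widetilde{G}$ acting by permutation of coordinates, so that the plane $P$ spanned by $l_1, l_2, l_3$ has equation $x_0+x_1+x_2+x_3 = 0$, and an explicit computation of the tropes through $l_1$ and of the nodes they carry again exhibits six nodes failing to lie on a conic, the required contradiction.

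Combining the dichotomy with the two exclusions yields the contradiction, so $X$ of type $24^{\circ}$ is never $G$-birationally rigid. Since all the substantive work --- the representation-theoretic pinning-down of $\widetilde{\Aut}(X)$ and the coordinate verifications of non-coconicity --- is already carried out in the cited lemmas, the proposition itself is a direct assembly of them; I expect no obstacle beyond correctly matching the skew/coplanar dichotomy to its two impossibility statements.
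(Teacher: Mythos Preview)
Your proposal is correct and follows exactly the paper's approach: the proposition is established by contradiction, assembling the preceding lemmas that rule out the concurrent, skew, and coplanar configurations of the triple $l_1,l_2,l_3$. Your summary of how each case is excluded (invariant-line/combinatorial contradiction for concurrent, non-coconicity of six nodes on a trope for both skew and coplanar) matches the paper's arguments precisely.
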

\begin{proof} If the variety $X$ is $G$-birationally rigid then the lines $l_{i}$ cannot intersect at a single point by Lemma 5.6, cannot be skew by Lemma 5.7 and cannot be coplanar the Lemma 5.12. This is a contradiction.
\end{proof}
\section{Case $23^{\circ}$}

In this case $X$ has a small $\QQ$-factorialization $\widetilde{X}$ which is the blow-up of $V_{5}$ in three points $p_{1}$, $p_{2}$ and $p_{3}$, where $V_{5}$ is a smooth del Pezzo threefold of degree $5$. Those points should be in general position, that is, any two of them don't lie on a line on $V_{5}$ and all of them don't lie on a conic on $V_{5}$.

\begin{propos} In this case $X$ is never $G$-birationally rigid.
\end{propos}
\begin{proof} We have exactly three tropes in this case. Each of them contains six nodes and any two of them contain exactly two common nodes. If their unique common point is a node then automatically this node is $\widetilde{G}$-invariant. If their common point is not a node then tropes contain $12$ nodes while there are $13$ nodes on the quartic $Q$, thus remaining node is $\widetilde{G}$-invariant. In any case the variety $X$ is not $G$-birationally rigid.
\end{proof}
\begin{remark} In fact one can prove that tropes contain a common singularity (even if we allow more complicated singularities). Tropes are images of exceptional divisors of the blow-up $\widetilde{X}\to V_{5}$. It is well known that the Hilbert scheme of subschemes $C\subset V_{5}$ with the Hilbert polynomial $3t+1$ is isomorphic to $\operatorname{Gr}(2, 5)$ (see~\cite[Proposition 2.46]{San}) and thus it is six-dimensional. Then one can prove that there is a (possibly reducible) curve in this family passing through $p_{1}, p_{2}$ and $p_{3}$. The map $\widetilde{X}\to X$ contracts the proper transform of this curve to a singular point of the variety $X$, and its image in $\PP^{3}$ lies in every trope.
\end{remark}

\section{Case $22^{\circ}$} In this case we have exactly eight planes on $X$, thus there are exactly four tropes. Let us denote them by $P_{i}, i=0, ..., 3$. Since there are no $\widetilde{G}$-invariant lines, we have two possibilities: either all tropes have one common point and any three of them have no common line, or they are in general position.

\subsection{The case of four tropes with a common point}

In what follows all lemmas without proofs are analogous to lemmas from the case $24^{\circ}$.
\begin{lemma} Assume that all tropes $P_{i}$ have one common point $p$. Then there is a $\widetilde{G}$-invariant plane $P$ that doesn't contain $p$.
\end{lemma}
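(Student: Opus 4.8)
The plan is to dualize the argument used for Lemma~5.9 in case $24^{\circ}$, interchanging the roles of the invariant plane and the invariant point. First I would verify that the common point $p$ of the four tropes is $\widetilde{G}$-invariant. The group $\widetilde{G}$ permutes the tropes $P_{0},\dots,P_{3}$ and preserves the incidence relation ``a node lies on a trope'', hence it permutes their mutual intersections; since all four tropes pass through the single point $p$, this point is their unique common intersection and is therefore fixed by every element of $\widetilde{G}$.

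Next I would pass from the projective to the linear setting exactly as in Lemma~5.9. Let $\widehat{G}\subset\operatorname{SL}_{4}(\operatorname{k})$ be the preimage of $\widetilde{G}\subset\operatorname{PGL}_{4}(\operatorname{k})$ under the natural projection; it is a finite group, and $\PP^{3}=\PP(V)$ for the standard four-dimensional representation $V$ of $\widehat{G}$. The invariant point $p=\PP(W)$ then corresponds to a one-dimensional $\widehat{G}$-subrepresentation $W\subset V$.

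Finally, since $\widehat{G}$ is finite and the base field has characteristic zero, Maschke's theorem applies and $V$ is completely reducible. Hence $W$ admits a $\widehat{G}$-invariant complement $U$ with $V=W\oplus U$ and $\dim U=3$. Setting $P=\PP(U)$ produces a $\widetilde{G}$-invariant plane, and because $W\cap U=0$ the point $p=\PP(W)$ does not lie on $P=\PP(U)$, which is precisely the assertion.

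I do not expect a genuine obstacle in this lemma: the only point requiring care is the first one, namely confirming that $p$ is honestly $\widetilde{G}$-invariant rather than merely having a short orbit, and this is immediate from the uniqueness of the common point of the four tropes. The real work of case $22^{\circ}$ will come in the lemmas that follow, where one must combine this invariant plane with the explicit coordinates of the tropes and nodes to force a contradiction, in the same spirit as Lemmas~5.11 and~5.12 of case $24^{\circ}$.
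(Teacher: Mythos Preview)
Your argument is correct and is precisely the dual of the paper's proof of Lemma~5.9, which is exactly what the paper intends when it says that the unproved lemmas in case~$22^{\circ}$ are analogous to those of case~$24^{\circ}$: lift $\widetilde{G}$ to a finite $\widehat{G}\subset\operatorname{SL}_{4}(\operatorname{k})$, use complete reducibility to split off the one-dimensional subrepresentation corresponding to $p$, and take $P$ to be the projectivization of the complementary three-dimensional piece. One small correction to your closing remarks: in subsection~7.1 the outcome is not a contradiction as in Lemmas~5.11--5.12, but rather the explicit one-parameter family of Proposition~\ref{pr5}, which is allowed to survive as a candidate for $G$-birational rigidity.
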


There is a short exact sequence $$0\longrightarrow G'\longrightarrow \widetilde{G}\longrightarrow G''\longrightarrow 0,$$ where the group $G'$ acts trivially on the plane $P$ and $G''$ acts faithfully on $P$.
\begin{lemma} The group $G'$ is trivial or cyclic of order $2$.
\end{lemma}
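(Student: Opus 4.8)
The plan is to pin down the extremely rigid shape that any element of $G'$ must take, and then run a short counting argument on the common nodes of two tropes. First I would fix coordinates so that the $\widetilde{G}$-invariant plane supplied by the previous lemma is $P=\{x_{3}=0\}$ and the common point of the four tropes is $p=(0:0:0:1)$. Both $P$ and $p$ are $\widetilde{G}$-invariant: the four tropes have $p$ as their unique common point (we are in the subcase where any three of them have no common line), so $\widetilde{G}$, which permutes the tropes, fixes $p$. Hence the preimage $\widehat{G}\subset\operatorname{GL}_{4}(\operatorname{k})$ decomposes $\operatorname{k}^{4}$ as a direct sum of the $3$-dimensional summand corresponding to $P$ and the line corresponding to $p$. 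An element $g\in G'$ acts trivially on $P$, so any lift is scalar on the $3$-dimensional summand and scalar on the line; therefore in $\operatorname{PGL}_{4}(\operatorname{k})$ we have $g=\operatorname{diag}(1,1,1,\zeta)$ for some $\zeta\in\operatorname{k}^{*}$, and $g\mapsto\zeta$ is an injection $G'\hookrightarrow\operatorname{k}^{*}$. Thus $G'$ is automatically cyclic, generated by such a $g$ of order $n=|G'|$, and the content of the statement is precisely that $n\le 2$.

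Next I would use the position of the tropes. Each trope $P_{i}$ passes through $p=(0:0:0:1)$, so its equation involves only $x_{0},x_{1},x_{2}$; since $g$ fixes those coordinates, $g$ preserves every trope $P_{i}$, and hence every intersection line $m_{ij}:=P_{i}\cap P_{j}$. As $m_{ij}\ni p$, we may view $m_{ij}$ as the join of $p$ with the point $q_{ij}:=m_{ij}\cap P$, and on $m_{ij}\cong\PP^{1}$ the map $g$ is multiplication by $\zeta$: its only fixed points are $p$ and $q_{ij}$, and every other orbit has size exactly $n$.

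The decisive step is then a contradiction coming from the two common nodes of a pair of tropes. Two distinct tropes share \emph{exactly} two nodes of $Q$: the bound ``$\le 2$'' is immediate, since all nodes on $P_{i}$ lie on the conic $\tfrac{1}{2}(P_{i}\cap Q)$, which meets the line $m_{ij}$ in at most two points, while ``$\ge 2$'' follows from Proposition~\ref{pr1} applied to one plane on $X$ over $P_{i}$ and one over $P_{j}$ (their two common nodes map to two distinct nodes of $Q$ on $m_{ij}$). These two common nodes lie on $m_{ij}$ and are permuted by $g$, which preserves the set of common nodes because it preserves $Q$, $P_{i}$ and $P_{j}$. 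Crucially, $p$ is \emph{not} a node: being $\widetilde{G}$-invariant, a node at $p$ would give a $G$-orbit of singular points of length $1$, which is excluded by Lemma~\ref{le1}. Hence the only possible $g$-fixed common node is $q_{ij}$, so at least one of the two common nodes is not fixed by $g$; its $g$-orbit consists of common nodes and has size $n$. If $n\ge 3$ this yields at least three common nodes, contradicting that there are exactly two. Therefore $n\le 2$, i.e.\ $G'$ is trivial or cyclic of order $2$.

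The main obstacle, and the place where I would be most careful, is the cluster of geometric facts feeding the final count: that two tropes meet in exactly two nodes lying on $m_{ij}$ and distinct from $p$, and that $p$ itself cannot be a node. The reduction to the normal form $g=\operatorname{diag}(1,1,1,\zeta)$ and the order computation on $m_{ij}$ are then routine; everything hinges on controlling the node incidences of the trope pencil through $p$.
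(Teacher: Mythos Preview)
Your argument is correct and follows essentially the same route as the paper's own proof: restrict the $G'$-action to the line $P_{0}\cap P_{1}$, observe that $p$ and $P_{0}\cap P_{1}\cap P$ are $G'$-fixed while the two common nodes of the two tropes on this line are permuted, and conclude $|G'|\le 2$. Your version is more explicit (the diagonal normal form $\operatorname{diag}(1,1,1,\zeta)$, the justification that $p$ is not a node via Lemma~\ref{le1}, and the precise count of common nodes), but the underlying idea is identical to the paper's two-line proof.
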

\begin{proof} The line $P_{0}\cap P_{1}$ is $G'$-invariant. It contains two $G'$-fixed points ($p$ and $P_{0}\cap P_{1}\cap P$) and two nodes of $Q$. Thus $G'\simeq \Cyc_{2}$ or $G'$ is trivial.
\end{proof}
\begin{lemma} The group $G''$ is isomorphic to $\Alt_{4}$ or $\Sym_{4}$. The action of $G''$ on $P$ is the projectivisation of irreducible 3-dimensional representation.
\end{lemma}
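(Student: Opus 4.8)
The plan is to reduce the statement to a census of subgroups of $\Sym_{4}$, exactly as in case $24^{\circ}$, by letting $G''$ act on the four tropes and transferring this to a faithful action on four lines in general position in $P$. First I would produce those four lines: since the invariant plane $P$ does not contain the common point $p$, each trope meets $P$ in a genuine line $\ell_{i}:=P_{i}\cap P\subset P\cong\PP^{2}$, and the four lines $\ell_{0},\dots,\ell_{3}$ are pairwise distinct (two distinct planes through $p$ cannot have the same trace on a plane avoiding $p$). The essential geometric input is that they are in \emph{general position}: if three of them, say $\ell_{i},\ell_{j},\ell_{k}$, passed through a common point $q\in P$, then $P_{i},P_{j},P_{k}$ would all contain both $p$ and $q$, hence the whole line through $p$ and $q$, contradicting the standing assumption of this subcase that no three tropes share a line. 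Because $\widetilde{G}$ permutes the tropes and acts on $P$ through $G''$, this yields a homomorphism $G''\to\Sym_{4}$. Its kernel fixes each of four lines in general position in $\PP^{2}$ and is therefore trivial (dually, a transformation of $\PP^{2}$ fixing four points in general position is the identity); as $G''$ acts faithfully on $P$ we conclude $G''\hookrightarrow\Sym_{4}$.

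Next I would exclude every subgroup of $\Sym_{4}$ other than $\Alt_{4}$ and $\Sym_{4}$ using $G$-birational rigidity. Every such subgroup is imprimitive: it either fixes one of the lines $\ell_{i}$, or it preserves a partition of $\{\ell_{0},\dots,\ell_{3}\}$ into two pairs. In the first case $\ell_{i}$ is already a $\widetilde{G}$-invariant line of $\PP^{3}$. In the second case, say the pairs are $\{\ell_{0},\ell_{1}\}$ and $\{\ell_{2},\ell_{3}\}$, the intersection points $\ell_{0}\cap\ell_{1}$ and $\ell_{2}\cap\ell_{3}$ are distinct by general position and form a $G''$-invariant pair, so the line joining them is a $\widetilde{G}$-invariant line of $\PP^{3}$. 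Either outcome contradicts Lemma~\ref{le1}. Only the primitive transitive subgroups $\Alt_{4}$ and $\Sym_{4}$ survive, giving $G''\cong\Alt_{4}$ or $\Sym_{4}$.

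For the last assertion I would argue representation-theoretically. Since $\operatorname{PGL}_{3}(\operatorname{k})$ acts transitively on quadruples of lines in general position, the stabilizer of $\{\ell_{0},\dots,\ell_{3}\}$ in $\operatorname{PGL}_{3}(\operatorname{k})$ is isomorphic to $\Sym_{4}$, realized as the projectivization of the standard three-dimensional representation. As $G''$ is $\Alt_{4}$ or $\Sym_{4}$ sitting inside this $\Sym_{4}$, and the standard representation remains irreducible when restricted to $\Alt_{4}$, the action of $G''$ on $P$ is the projectivization of an irreducible three-dimensional representation.

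I expect the only genuinely geometric point, and hence the step most needing care, to be the general-position claim for the lines $\ell_{i}$: this is where one must invoke the subcase hypothesis that no three tropes meet along a line, and it underlies both the injectivity of $G''\to\Sym_{4}$ and the production of the forbidden invariant lines. The subsequent subgroup census and the identification of the standard representation are routine and run parallel to case $24^{\circ}$.
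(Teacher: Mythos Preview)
Your argument is correct and follows the same template the paper indicates (the lemma is stated without proof as ``analogous to case $24^{\circ}$''): produce four lines in general position in the invariant plane $P$, embed $G''$ into $\Sym_{4}$ via its action on them, exclude all subgroups other than $\Alt_{4}$ and $\Sym_{4}$ by exhibiting a $\widetilde G$-invariant line, and then identify the action with the projectivized standard representation. The only adaptation, which you handle correctly, is that in this subcase the four lines are the traces $P_{i}\cap P$ of the four tropes rather than the components of $P\cap Q$ used in case $24^{\circ}$; your verification that they are in general position (using the standing hypothesis that no three tropes share a line together with $p\notin P$) is exactly the geometric input needed.
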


\begin{lemma} $\PP^{3}$ as a $\widetilde{G}$-variety is isomorphic to the projectivisation of a $4$-dimen\-sional representation of the group $\widetilde{G}$.
\end{lemma}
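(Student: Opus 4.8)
The plan is to reproduce the construction used in case $24^{\circ}$ (the proof of Lemma 5.10): starting from the invariant plane and the invariant point one reassembles an honest $4$-dimensional linear representation by gluing a linear lift on the plane to a suitable character on the point. Write $\PP^{3}=\PP(V)$ with $\dim V=4$, let $P=\PP(U)$ be the $\widetilde{G}$-invariant plane supplied by the preceding lemma, and let $p=\PP(W)$ be the common point of the four tropes; since $\widetilde{G}$ permutes the tropes it fixes their unique common point, so $p$ is $\widetilde{G}$-invariant. Here $U,W\subset V$ are the corresponding subspaces of dimensions $3$ and $1$, and because $p\notin P$ we obtain a $\widetilde{G}$-invariant decomposition $V=U\oplus W$. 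The goal is to equip $V$ with a genuine linear $\widetilde{G}$-action whose projectivisation recovers the given action on $\PP^{3}$.

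First I would lift the action on the plane. By the previous lemma the induced action of $\widetilde{G}$ on $P$ (factoring through $G''\simeq\Alt_{4}$ or $\Sym_{4}$) is the projectivisation of an irreducible $3$-dimensional representation, so there is a genuine linear representation $\rho_{U}\colon\widetilde{G}\to\operatorname{GL}(U)$ inducing the projective action on $P$. On the line $W$ the data are a priori only projective: a single fixed point carries no linear information, so at this stage I only know that $\widetilde{G}$ acts trivially on $\PP(W)$, not that it acts on $W$ by a character.

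The main point — and the step I expect to require the most care — is to verify that these two pieces fit together into an honest representation rather than merely a projective one. For each $g\in\widetilde{G}$ I would pick a linear lift $\tilde\rho(g)\in\operatorname{GL}(V)$ of the projective action; since $g$ preserves both $U$ and $W$, the lift may be taken block-diagonal, $\tilde\rho(g)=\operatorname{diag}(c_{g}\rho_{U}(g),\lambda_{g})$ with $c_{g},\lambda_{g}\in\operatorname{k}^{\times}$. These lifts satisfy $\tilde\rho(gh)=\beta(g,h)\tilde\rho(g)\tilde\rho(h)$ for some $2$-cocycle $\beta$, and comparing the two diagonal blocks gives $c_{gh}=\beta(g,h)c_{g}c_{h}$ together with $\lambda_{gh}=\beta(g,h)\lambda_{g}\lambda_{h}$. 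Hence the ratio $\mu(g):=\lambda_{g}/c_{g}$ satisfies $\mu(gh)=\mu(g)\mu(h)$, i.e.\ $\mu$ is a genuine character of $\widetilde{G}$. Consequently $\rho:=\rho_{U}\oplus\mu$ is an honest $4$-dimensional linear representation of $\widetilde{G}$ whose projectivisation agrees with the original action on $\PP^{3}$, which is exactly the claim.

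In more conceptual terms, the honest lift $\rho_{U}$ already trivialises the obstruction class attached to the projective action on $U$, and because the splitting $V=U\oplus W$ is $\widetilde{G}$-invariant this forces the remaining one-dimensional block to linearise as well. This is the one place where the invariance of both $P$ and $p$ is essential, and it is precisely the ingredient furnished by the two preceding lemmas; the argument is formally identical to the corresponding step in case $24^{\circ}$, so I would simply invoke that analogy rather than repeat the computation.
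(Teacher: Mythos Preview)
Your argument is correct and follows exactly the paper's own approach (which in case $22^{\circ}$ is simply declared analogous to Lemma~5.10): split $V=U\oplus W$ using the invariant plane and the invariant common point of the tropes, lift the action on $U$ to the irreducible $3$-dimensional representation, and then extend uniquely to $W$. The only difference is that you spell out, via the cocycle computation $\mu=\lambda/c$, why the induced action on $W$ is an honest character, whereas the paper disposes of this step with the word ``obviously''; this is a welcome elaboration, not a different method.
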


\begin{corollary} $G$ is isomorphic to either $\Alt_{4}$, $\Sym_{4}$, $\Alt_{4}\times \Cyc_{2}$, $\Sym_{4}\times \Cyc_{2}$ or $\Alt_{4}\rtimes \Cyc_{4}$.
\end{corollary}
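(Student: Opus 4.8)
The plan is to determine $\widetilde G$ first and then recover $G$ from it via the central extension furnished by the Geiser involution $\theta$. By Lemmas~7.2--7.5 the group $\widetilde G$ is a central extension $1\to G'\to\widetilde G\to G''\to 1$ with $G'\in\{1,\Cyc_2\}$ and $G''\in\{\Alt_4,\Sym_4\}$, and $\PP^3$ is the projectivisation of $V=U\oplus W$, where $U$ carries the standard $3$-dimensional representation of $G''$ and $G'$ acts trivially on $U=P$ and by $-1$ on $W=p$. Since the standard representation of $\Alt_4$ lands in $\operatorname{SL}_3$ and every $3$-dimensional lift of $\Sym_4$ avoids $-\operatorname{Id}$ (its character never takes the value $-3$), the inclusion $G''\hookrightarrow\operatorname{PGL}_3$ has a linear section in $\operatorname{GL}_3$; this section splits the extension, so $\widetilde G\cong G'\times G''$ and $\widetilde G\in\{\Alt_4,\Sym_4,\Alt_4\times\Cyc_2,\Sym_4\times\Cyc_2\}$.

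Next I would compute the extension $1\to\langle\theta\rangle\cap G\to G\to\widetilde G\to 1$, whose kernel is central of order $1$ or $2$ because $\Aut(X)\to\widetilde{\Aut}(X)$ is surjective with central kernel $\langle\theta\rangle$. Working in the model $X=\{y^2=F_4(x)\}$, each $\bar g\in\widetilde G$ is represented by a linear map $A$ with $F_4\circ A=\mu(\bar g)F_4$; as the scalar matrices of $\operatorname{SL}_4$ fix $F_4$, this defines a genuine character $\mu:\widetilde G\to\operatorname{k}^{*}$, and $\bar g$ lifts to $\Aut(X)$ by $y\mapsto\sqrt{\mu(\bar g)}\,y$. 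Hence the full preimage of $\widetilde G$ is the central extension whose class in $H^{2}(\widetilde G,\Cyc_2)$ is the Bockstein of $\mu$, and it splits exactly when $\mu$ is a square character. Three observations then control everything: (i) for the reflection $r$ generating $G'$ one has $\mu(r)=1$, since $\mu(r)=-1$ would force the linear form cutting out $P$ to divide $F_4$ and make $Q$ reducible, so $G'$ never contributes to the obstruction; (ii) $\mu|_{\Alt_4}\in\operatorname{Hom}(\Alt_4,\operatorname{k}^{*})\cong\Cyc_3$ has odd order, hence is a square, so the $\Alt_4$-extension always splits and the binary tetrahedral group $\operatorname{SL}_2(3)$ is excluded; (iii) $\mu|_{\Sym_4}\in\{1,\operatorname{sgn}\}$, and when it is $\operatorname{sgn}$ the lift of a transposition has order $4$ with square $\theta$.

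Assembling these: if $\theta\notin G$ then $G\cong\widetilde G$, giving $\Alt_4$, $\Sym_4$, $\Alt_4\times\Cyc_2$ or $\Sym_4\times\Cyc_2$. If $\theta\in G$ and $G''=\Alt_4$, then (ii) gives $G\cong\Alt_4\times\Cyc_2$. If $\theta\in G$ and $G''=\Sym_4$, then $\mu=1$ gives $\Sym_4\times\Cyc_2$, while $\mu=\operatorname{sgn}$ yields an order-$4$ complement generated by the lifted transposition, acting on the lifted $\Alt_4$ by its outer automorphism, so $G\cong\Alt_4\rtimes\Cyc_4$. This produces precisely the five groups in the statement.

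The main obstacle is the remaining combination $G'=\Cyc_2$ together with $\theta\in G$, which would otherwise yield $\Cyc_2^2\times\Alt_4$, $\Alt_4\times\Cyc_4$, or groups of order $96$, none of which occur in the list; equivalently, one must show that $\theta\in G$ forces $G'=1$. The splitting analysis above does not settle this, and a direct $G$-minimality check does not either, since the extra central involutions act on $\operatorname{Cl}(X)$ either trivially or by $a\mapsto H-a$ and still leave a rank-one invariant part. I therefore expect the exclusion to come from birational rigidity: when $G'=\Cyc_2$ the involution $\hat r$ fixes pointwise the degree-two del Pezzo surface $S=\pi^{-1}(P)$ together with the fibre over the invariant point $p$, and adjoining $\theta$ should create a $G$-equivariant link built from the $\langle\hat r,\theta\rangle$-invariant geometry over $p$, contradicting $G$-rigidity. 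Pinning this exclusion down rigorously is the delicate step; the rest is the routine enumeration of central extensions carried out above.
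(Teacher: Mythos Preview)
Your proposal rests on a misreading of the statement. In the paper the symbol $G$ in this corollary (and in its one-line proof) is a slip for $\widetilde G$: the surrounding short exact sequence is $0\to G'\to\widetilde G\to G''\to 0$, Lemma~7.4 produces a linear representation of $\widetilde G$ on $V=U\oplus W$, and the parallel result in the next section (Corollary~8.5) is stated verbatim for $\widetilde G$. The paper's argument is then a single observation: when $G'\simeq\Cyc_2$, the generator of $G'$ acts trivially on $U$ and by a character on the line $W$; if $G'$ lay in the derived subgroup $[\widetilde G,\widetilde G]$ that character would be trivial and $G'$ would act trivially on all of $V$, contradicting projective faithfulness. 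Hence the central extension $1\to\Cyc_2\to\widetilde G\to G''\to 1$ is \emph{non-stem}. For $G''=\Alt_4$ the only non-stem central $\Cyc_2$-extension is $\Alt_4\times\Cyc_2$; for $G''=\Sym_4$ there are exactly two, namely $\Sym_4\times\Cyc_2$ and $\Alt_4\rtimes\Cyc_4$. Together with the cases $G'=1$ this gives the five groups.

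Your approach goes astray already in the first paragraph, independently of the $G$ versus $\widetilde G$ issue. You claim that because $G''\hookrightarrow\operatorname{PGL}_3$ lifts to $\operatorname{GL}_3$, the extension $1\to G'\to\widetilde G\to G''\to 1$ splits and $\widetilde G\cong G'\times G''$. This is false: a linear lift of the $G''$-action on $U$ gives a copy of $G''$ inside $\operatorname{GL}(U)\subset\operatorname{GL}(V)$, but there is no reason for that copy to sit inside the given subgroup $\widetilde G\subset\operatorname{GL}(V)$. Concretely, $\Alt_4\rtimes\Cyc_4$ (with the generator $t$ of $\Cyc_4$ acting on $\Alt_4$ by an outer automorphism and $t^2$ central) has abelianisation $\Cyc_4$, so it admits a character $\chi$ with $\chi(t)=i$; taking $U$ to be the standard $3$-dimensional $\Sym_4$-representation pulled back and $W=\chi$ gives a faithful $4$-dimensional representation with $G'=\langle t^2\rangle$ acting by $-1$ on $W$. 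This group has no $\Sym_4$ subgroup (its unique index-$2$ subgroup is $\Alt_4\times\Cyc_2$), so the extension does not split. In other words, $\Alt_4\rtimes\Cyc_4$ already appears as a value of $\widetilde G$, not via a Geiser extension of $\Sym_4$ as you suggest.

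Once the statement is read as being about $\widetilde G$, your ``main obstacle'' (the case $G'=\Cyc_2$ together with $\theta\in G$) and the whole analysis of the Geiser extension become irrelevant; the paper never needs to discuss $\theta$ here.
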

\begin{proof} If $G'\simeq \Cyc_{2}$ then $G$ is a central non-stem  extension of $G''$ by $\Cyc_{2}$, since otherwise $G'$ acts trivially on $V$ (recall, a stem extension is a group extension where the base normal subgroup is contained in both the center and the derived subgroup of the whole group).
\end{proof}

\begin{lemma} In some coordinates $\Alt_{4}\subset G$ acts by permutations of coordinates.
\end{lemma}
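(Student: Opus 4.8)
The plan is to repeat the argument of Lemma~\ref{le14} essentially verbatim, since the hypotheses here are identical. By the immediately preceding lemma, $\PP^{3}=\PP(V)$ where $V$ is a $4$-dimensional representation of $\widetilde{G}$. Restricting this representation to $\Alt_{4}\subset G$ and using the splitting furnished by the invariant plane $P$ and the invariant point $p$, I would write $V\cong U\oplus W$ as an $\Alt_{4}$-module, where $U$ is the standard irreducible $3$-dimensional representation (corresponding to $P$) and $W$ is a $1$-dimensional summand (corresponding to $p$).

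First I would recall the representation theory of $\Alt_{4}$: it has exactly one irreducible $3$-dimensional representation, namely $U$, and exactly three linear characters, pulled back along the abelianization $\Alt_{4}/\Cyc_{2}^{2}\cong\Cyc_{3}$. The decisive observation is that $U\otimes\chi\cong U$ for \emph{every} linear character $\chi$ of $\Alt_{4}$, simply because $U$ is the unique irreducible of dimension $3$. At the same time, the permutation representation of $\Alt_{4}$ on $\operatorname{k}^{4}$, acting by permuting the coordinate axes, decomposes as $\mathbf{1}\oplus U$, where $\mathbf{1}$ is the trivial character.

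Next I would match $V$ with this permutation representation up to a twist. Writing $W=\chi$ for a linear character of $\Alt_{4}$, I would tensor $V$ with $\chi^{-1}$ to get $V\otimes\chi^{-1}\cong(U\otimes\chi^{-1})\oplus\mathbf{1}\cong U\oplus\mathbf{1}$, which is precisely the permutation representation. Since tensoring by a $1$-dimensional representation multiplies each vector by a scalar, it induces the identity on projectivizations, so $\PP(V)\cong\PP(V\otimes\chi^{-1})$ as $\Alt_{4}$-varieties. Choosing coordinates adapted to the permutation action on $U\oplus\mathbf{1}$ then exhibits $\Alt_{4}$ as the group of coordinate permutations on $\PP^{3}$, as claimed.

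There is essentially no obstacle here; the only point requiring a moment's care is the verification that the twist by $\chi^{-1}$ genuinely leaves the projective action unchanged, and this is immediate because a linear character acts by scalars. It is worth noting that the argument only needs the restriction of the $\widetilde{G}$-action to $\Alt_{4}$, so it applies uniformly to all five possibilities for $G$ listed in the preceding corollary.
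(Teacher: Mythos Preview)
Your proposal is correct and follows exactly the same argument as the paper's proof of Lemma~\ref{le14}: decompose $V\cong U\oplus W$ as an $\Alt_{4}$-module, use that the unique irreducible $3$-dimensional representation satisfies $U\otimes\chi\cong U$ for every linear character $\chi$, and conclude that $V$ is a twist of the permutation representation, hence has the same projectivization. The paper simply omits the proof here and points back to the case $24^{\circ}$, so your write-up is a faithful expansion of what the author intends.
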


\begin{lemma} In some coordinates the quartic $Q$ has symmetric equation with respect to the group $\Sym_{4}$ acting by permutations of coordinates.
\end{lemma}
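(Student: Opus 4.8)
The plan is to show that, in the coordinates furnished by the previous lemma, the quartic form cutting out $Q$ is not merely semi-invariant but genuinely \emph{invariant} under $\Alt_4$, and then to upgrade $\Alt_4$-invariance to $\Sym_4$-invariance by a degree count. So first I would fix coordinates in which $\Alt_4\subset\widetilde G$ permutes $x_0,x_1,x_2,x_3$. Since $\Alt_4$ then acts by honest permutation matrices and $Q$ is $\widetilde G$-invariant as a divisor, its defining form satisfies $Q\circ g=\chi(g)\,Q$ for a character $\chi\colon\Alt_4\to\operatorname{k}^{*}$; because $\Alt_4^{\mathrm{ab}}\cong\Cyc_3$, this $\chi$ takes values in the cube roots of unity and is determined by its value on a $3$-cycle. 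Thus the whole statement reduces to the claim that $\chi$ is trivial.

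The endgame, granting $\chi=1$, is immediate: then $Q$ is $\Alt_4$-invariant, and writing $Q=Q_{+}+Q_{-}$ with $Q_{\pm}=\tfrac12\bigl(Q\pm\tau\!\cdot\!Q\bigr)$ for a transposition $\tau$, one gets a $\Sym_4$-invariant part $Q_{+}$ and an alternating part $Q_{-}$ with $\sigma\!\cdot\!Q_{-}=\operatorname{sgn}(\sigma)\,Q_{-}$. Every alternating form vanishes on each hyperplane $\{x_i=x_j\}$, hence is divisible by the Vandermonde determinant $\prod_{i<j}(x_i-x_j)$ of degree $6$; so $Q_{-}=0$ in degree $4$ and $Q=Q_{+}$ is symmetric. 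It is convenient to dispose first of most of the groups from the preceding corollary. If $G$ is isomorphic to $\Sym_4$, $\Sym_4\times\Cyc_2$ or $\Alt_4\rtimes\Cyc_4$, then one checks that $\widetilde G$ either contains $\Sym_4$ or has abelianization with no $3$-torsion; in either case $\Alt_4$ lies in the kernel of every linear character of $\widetilde G$, so $\chi$ is automatically trivial. Only the cases $G\cong\Alt_4$ and $G\cong\Alt_4\times\Cyc_2$ require a genuinely geometric argument.

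The hard part is ruling out a nontrivial $\chi$ in these two remaining cases, where no extra linear symmetry of $\PP^{3}$ is available to exploit. Here I would argue by contradiction. A quartic form transforming by a nontrivial cube-root character of the permutation action has no monomials $x_i^{4}$ (the stabilizer $\Cyc_3$ of such a monomial is not contained in $\ker\chi$, the normal subgroup of order $4$) and no monomial $x_0x_1x_2x_3$; it therefore lies in the three-dimensional space spanned by the $\chi$-twisted orbit sums of $x_i^{3}x_j$, $x_i^{2}x_j^{2}$ and $x_i^{2}x_jx_k$. For such a $Q$ one computes that $Q(1\!:\!1\!:\!1\!:\!1)=0$ and, more usefully, that $Q$ contains the four lines $\{x_j=x_k=x_l\}$ through the $\Alt_4$-fixed point $p=(1\!:\!1\!:\!1\!:\!1)$: on the line fixed pointwise by a $3$-cycle the form is multiplied by a primitive cube root of unity and hence vanishes identically.

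I would then complete the exclusion by a direct analysis of this three-parameter family, computing its singular locus and its planes $H$ for which $H\cap Q$ is a double conic, and checking that no member is a nodal quartic carrying four tropes through a common point with the node configuration $(D_4,3A_1)$ attached to case $22^{\circ}$ in Theorem~2.1. This explicit verification, rather than any conceptual step, is where I expect the real work to lie: the local structure at $p$ is perfectly compatible with a node even when $\chi\ne1$ (the unique $\chi$-semi-invariant tangent conic is nondegenerate and does contain the four line directions), so the obstruction must be extracted from the global count and type of singularities or from the trope condition. Granting this exclusion, $\chi$ is trivial and the lemma follows from the Vandermonde degree count above.
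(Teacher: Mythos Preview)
Your overall strategy matches the paper's: put $\Alt_4$ in permutation form, view the defining quartic as an $\Alt_4$-semi-invariant with character $\chi\in\{1,\xi,\xi^{2}\}$, and rule out $\chi\neq 1$. Your Vandermonde argument upgrading $\Alt_4$-invariance to $\Sym_4$-invariance in degree~$4$ is a clean touch the paper leaves implicit, and your abelianization shortcut for the larger groups is a pleasant economy the paper does not bother with, since its argument is uniform in~$G$.

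The real difference is in how the exclusion of $\chi\neq 1$ is carried out, and here your proposal stops short of a proof. You plan to analyse the whole three-parameter family of $\chi$-twisted quartics by computing singular loci and searching for all tropes, and you correctly identify this as the unexecuted hard part. The paper's route is much shorter and avoids any singular-locus computation: since the four tropes $P_0,\dots,P_3$ pass through the common $\Alt_4$-fixed point $p=(1:1:1:1)$ and are permuted transitively by $\Alt_4$, one of them (say $P_0$) is invariant under the $3$-cycle stabilising $x_0$. Hence $P_0$ has one of exactly three possible equations, namely $x_0=\tfrac13(x_1+x_2+x_3)$ or $x_1+\xi^{\pm1}x_2+\xi^{\mp1}x_3=0$. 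The paper simply restricts the general $\chi$-twisted quartic (your three-parameter family) to each of these three planes and checks directly that the restriction is never a perfect square; this yields the contradiction in a few lines of elementary algebra. So your plan would succeed, but the paper's observation that $P_0$ is already pinned down by its $\Cyc_3$-invariance through~$p$ replaces your open-ended search with three explicit restrictions.
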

\begin{proof} The equation of the quartic generates one-dimensional subrepresentation of the group $\Alt_{4}$ (which acts by permutations of coordinates) in the space of quartic forms. Thus it is either symmetric or of the form
\begin{equation}
\begin{gathered}
A(x_{0}^{3}x_{1}+x_{1}^{3}x_{0}+x_{2}^{3}x_{3}+x_{3}^{3}x_{2}+\xi(x_{0}^{3}x_{2}+x_{2}^{3}x_{0}+x_{1}^{3}x_{3}+x_{3}^{3}x_{1})+\xi^{2}(x_{0}^{3}x_{3}+x_{3}^{3}x_{0}+\\
x_{1}^{3}x_{2}+x_{2}^{3}x_{1}))+B(x_{0}^{2}x_{1}x_{2}+x_{1}^{2}x_{0}x_{3}+x_{2}^{2}x_{0}x_{3}+x_{3}^{2}x_{1}x_{2}+\xi(x_{0}^{2}x_{2}x_{3}+x_{1}^{2}x_{2}x_{3}+\\
x_{2}^{2}x_{0}x_{1}+x_{3}^{2}x_{0}x_{1})+\xi^{2}(x_{0}^{2}x_{1}x_{3}+x_{1}^{2}x_{0}x_{2}+x_{2}^{2}x_{1}x_{3}+x_{3}^{2}x_{0}x_{2}))+C(x_{0}^{2}x_{1}^{2}+x_{2}^{2}x_{3}^{2}+\\
\xi(x_{0}^{2}x_{2}^{2}+x_{1}^{2}x_{3}^{2})+\xi^2(x_{0}^{2}x_{3}^{2}+x_{1}^{2}x_{2}^{2})).
\end{gathered}
\end{equation}
where $\xi$ is a primitive cubic root of unity. We know that the intersection of $Q$ with $P_{0}$ is a double conic. Since the plane $P_{0}$ is $\Cyc_{3}$-invariant and passes through the point $(1:1:1:1)$, we may assume that it has an equation of the form $x_{0}=\frac{1}{3}(x_{1}+x_{2}+x_{3})$, $x_{1}+\xi x_{2}+\xi^{2} x_{3}=0$ or $x_{1}+\xi^2 x_{2}+\xi x_{3}=0$. One can check that in the second case the equation of $Q\cap P_{0}$ has the form $(x_{3}-x_{2})^{2}F_{A, B, C}(x_{0}, x_{2}, x_{3})$ and in the third case it has the form $(x_{3}-x_{2})G_{A, B, C}(x_{0}, x_{2}, x_{3})$, where $F_{A, B, C}$ and $G_{A, B, C}$ are some quadric and cubic forms respectively depending on $A, B$ and $C$. Thus in these cases $Q\cap P_{0}$ is not a double conic.

In the first case we have $Q(\frac{1}{3}(x_{1}+x_{2}+x_{3}), x_{1}, x_{2}, x_{3})=R(x_{1}, x_{2}, x_{3})^2$. Obviously, $R$ is semi-invariant polynomial with respect to $\Cyc_{3}\subset \Alt_{4}$, so we may assume that
$$R(x_{1}, x_{2}, x_{3})=D(x_{1}^2+\xi^2 x_{2}^{2}+\xi x_{3}^2)+E(x_{1}x_{2}+\xi^2 x_{2}x_{3}+\xi x_{1}x_{3}).$$
Writing down equations on coefficients $A, B, C, D$ and $E$ which we obtain from the equality $Q(\frac{1}{3}(x_{1}+x_{2}+x_{3}), x_{1}, x_{2}, x_{3})=R(x_{1}, x_{2}, x_{3})^2$, we see that $C=\frac{1}{2}(5A+\xi B)$ and either $A=0$ or $B=2A$. In the first case $R=D(x_{1}+\xi x_{2}+\xi^2 x_{3})^2$ and in the second case $R=D(x_{1}+x_{2}+x_{3})(x_{1}+\xi^2 x_{2}+\xi x_{3})$, this is impossible.
\end{proof}
\begin{propos}\label{pr5} In some coordinates the quartic $Q$ has the equation
$$a\left(\sum\limits_{i}x_{i}^{4}\right)+b\left(\sum\limits_{i\neq j}x_{i}^{3}x_{j}\right)+c\left(\sum\limits_{i\neq j}x_{i}^{2}x_{j}^{2}\right)+d\left(\sum\limits_{i\neq j\neq k\neq i}x_{i}^{2}x_{j}x_{k}\right)+ex_{0}x_{1}x_{2}x_{3}=0,$$ where $e=-5b-7d, c=\frac{1}{36}(-59b-37d), a=\frac{1}{72}(-29b+5d)$.
\end{propos}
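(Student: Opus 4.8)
The plan is to convert the geometry of the four concurrent tropes into polynomial relations among the coefficients $a,b,c,d,e$, using the symmetry already in force. By the previous lemma we may assume $\Sym_4\subset\widetilde G$ acts by permuting the coordinates and that $Q$ is the displayed $\Sym_4$-symmetric quartic. The common point of the four tropes is $\widetilde G$-invariant, and the only point of $\PP^3$ fixed by the permutation action of $\Alt_4$ is $(1:1:1:1)$, so it equals this point $p$. The four tropes form a single $\Sym_4$-orbit, and the one stabilised by the copy of $\Sym_3$ permuting $x_1,x_2,x_3$ must be the unique $\Sym_3$-invariant plane through $p$, namely $P_0=\{3x_0=x_1+x_2+x_3\}$; the remaining ones are $P_i=\{3x_i=\sum_{j\neq i}x_j\}$. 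One checks these four planes meet only at $p$, as the case requires.

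Next I would impose that $P_0$ is genuinely a trope, i.e.\ that the restriction $Q|_{P_0}$ is a double conic. Substituting $x_0=\tfrac13(x_1+x_2+x_3)$ gives a ternary quartic $q$ which, since the substitution commutes with the $\Sym_3$ permuting $x_1,x_2,x_3$, is invariant under that $\Sym_3$ and hence a combination of the four orbit sums $B_1=\sum x_i^4$, $B_2=\sum_{i\neq j}x_i^3x_j$, $B_3=\sum_{i<j}x_i^2x_j^2$ and $B_4=x_1x_2x_3(x_1+x_2+x_3)$; its coefficients $q_1,\dots,q_4$ are explicit linear forms in $a,b,c,d,e$. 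A $\Sym_3$-invariant quartic that is a perfect square must be the square of a $\Sym_3$-invariant conic $R=D(\sum x_i^2)+E(\sum_{i<j}x_ix_j)$, because there is no nonzero ternary conic on which $\Sym_3$ acts through the sign character. Expanding $R^2$ in the basis $B_\bullet$ turns the trope condition $q=R^2$ into the four equations $q_1=D^2$, $q_2=2DE$, $q_3=2D^2+E^2$, $q_4=2DE+2E^2$.

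Eliminating $D$ and $E$ from these equations leaves one linear and one quadratic relation among $q_1,\dots,q_4$, hence among $a,b,c,d,e$; the linear one, $q_4=q_2+2q_3-4q_1$, already yields $e=6c-12a$, which is consistent with the three relations in the statement. This, however, cuts out only a two-parameter family of $\Sym_4$-symmetric quartics for which $P_0,\dots,P_3$ are simultaneously tropes. To reach the one-parameter family of the statement I would impose that the quartic carries exactly the singular locus prescribed for type $22^\circ$ — equivalently, that the double cover attains divisor class group of rank $4$ with the root data $D_4$ and $3A_1$ (cf.\ Proposition~\ref{pr1} for the trope/node incidences). Working this out pins the double conic to the single invariant conic $R\propto 2\sum x_i^2-5\sum_{i<j}x_ix_j$, that is $[D:E]=[2:-5]$, whence $q_2=-5q_1$ and $q_3=\tfrac{33}{4}q_1$; combined with $e=6c-12a$ these force $a$, $c$ and $e$ to be the stated linear functions of $b$ and $d$.

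The genuine obstacle is precisely the identification and use of this extra constraint. The perfect-square (trope) condition is only quadratic and by itself locates a two-dimensional family; the heart of the argument is to show that among these quartics exactly a one-dimensional subfamily realises the nodal configuration and non-$\QQ$-factoriality characteristic of type $22^\circ$ — the others acquiring additional, or more degenerate, singularities incompatible with $X$ being a $G$-birationally rigid variety of this type — and to verify that this subfamily is cut out by the single further condition $[D:E]=[2:-5]$. Once the correct conic is fixed, expanding the $q_i$ and solving the resulting linear system for $a$, $c$ and $e$ is routine.
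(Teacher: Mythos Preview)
Your setup through the identification of the tropes $P_i=\{3x_i=\sum_{j\neq i}x_j\}$ and the reduction of the trope condition on $P_0$ to a perfect-square constraint is fine, and you are right that this alone yields only a two-parameter family of symmetric equations. The gap is in how you propose to close that last degree of freedom. You look for a further \emph{geometric} constraint coming from the singular locus of type $22^\circ$, but in fact no such constraint exists at this stage: the two-parameter family of equations already parametrises a one-parameter family of isomorphism classes, because there is a residual one-parameter group of $\Sym_4$-equivariant coordinate changes
\[
(x_0:x_1:x_2:x_3)\longmapsto\Bigl(x_0+\alpha\sum_i x_i:\,x_1+\alpha\sum_i x_i:\,x_2+\alpha\sum_i x_i:\,x_3+\alpha\sum_i x_i\Bigr),
\]
corresponding to rescaling the basis vector of the invariant line $W$. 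Your condition $[D:E]=[2:-5]$ is therefore not a geometric restriction but a gauge fixing, and trying to justify it by a singularity analysis is circular.

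The paper spends this coordinate freedom differently and more efficiently. By Proposition~\ref{pr1} any two tropes share exactly two nodes, so there is a node on $P_0\cap P_1$. The one-parameter group above acts transitively on $P_0\cap P_1$ away from $p$ and from $P_0\cap P_1\cap P$, so one may normalise this node to the specific point $(\tfrac12:\tfrac12:0:1)$. Imposing that $Q$ is singular there gives four linear equations in $a,b,c,d,e$, of which three are independent (the $x_0\leftrightarrow x_1$ symmetry forces $\partial_0 Q=\partial_1 Q$ at this point), and solving them yields exactly the relations $e=-5b-7d$, $c=\tfrac{1}{36}(-59b-37d)$, $a=\tfrac{1}{72}(-29b+5d)$. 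One then checks \emph{a posteriori} that $Q\cap P_0$ is a double conic. So the paper never imposes the trope condition directly; it uses the existence of a node on the trope intersection together with the coordinate normalisation, and the trope condition follows.
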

\begin{proof} We know that the equation of the plane $P_{0}$ is $x_{0}=\frac{1}{3}(x_{1}+x_{2}+x_{3})$. We know that the quartic $Q$ is singular at some point on the line $P_{0}\cap P_{1}$. Note that we may apply the transformation
$$(x_0:x_1:x_2:x_3)\longmapsto\left(x_0+\alpha\sum\limits_{i=0}^{3}x_{i}:x_1+\alpha\sum\limits_{i=0}^{3}x_{i}:x_2+\alpha\sum\limits_{i=0}^{3}x_{i}:x_3+\alpha\sum\limits_{i=0}^{3}x_{i}\right),$$ which is simply a choosing of another basic vector in $W$. Such maps acts transitively on the line $P_{0}\cap P_{1}$ without points $p$ and $P_{0}\cap P_{1}\cap P$, so we may assume without loss of generality that the quartic $Q$ is singular at the point $(\frac{1}{2}:\frac{1}{2}:0:1)$. Now we can solve the corresponding system of equation and obtain the required conditions for coefficients. Then using the direct computation one can check that for these coefficients $Q\cap P_{0}$ is a double conic.
\end{proof}
This is the only variety which can be $G$-birationally rigid in this case.
\subsection{The case of four planes in general position}

We may assume that $P_{i}$ is given by the equation $x_{i}=0$.
\begin{lemma} The points $(1:0:0:0), (0:1:0:0), (0:0:1:0)$ and $(0:0:0:1)$ don't lie on the quartic $Q$.
\end{lemma}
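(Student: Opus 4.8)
The plan is to reduce everything to the single vertex $e_{0}=(1:0:0:0)$, since the four tropes $P_{i}=\{x_{i}=0\}$, and hence the four vertices $e_{i}=\bigcap_{j\neq i}P_{j}$, are permuted among themselves; thus it suffices to derive a contradiction from the assumption $e_{0}\in Q$. Recall that each $P_{i}$ is a trope, so $Q\cap P_{i}=2C_{i}$ for a conic $C_{i}\subset P_{i}$, and a point of $P_{i}$ belongs to $Q$ exactly when it belongs to $C_{i}$. Note also that $e_{0}$ lies on the three lines $L_{jk}:=P_{j}\cap P_{k}$ with $j,k\in\{1,2,3\}$, namely $L_{12},L_{13},L_{23}$, and that $L_{jk}$ carries the two nodes common to the tropes $P_{j}$ and $P_{k}$.

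First I would show that $e_{0}\in Q$ forces $e_{0}$ to be a node lying on the three tropes $P_{1},P_{2},P_{3}$. Indeed, the two nodes common to $P_{j}$ and $P_{k}$ lie on $L_{jk}$ and on $C_{j}$, so they fill out the length-two scheme $C_{j}\cap L_{jk}$ (here I use that $C_{j}$ is reduced, does not contain $L_{jk}$, and meets it in two distinct points). Hence every point of $C_{j}$ lying on $L_{jk}$ is one of these two common nodes. If $e_{0}\in Q$ then $e_{0}\in C_{1}\cap C_{2}\cap C_{3}$; applying this to $C_{2}\cap L_{23}$ shows that $e_{0}$ is a node common to $P_{2}$ and $P_{3}$, and applying it to $C_{1}\cap L_{12}$ shows that $e_{0}$ is a node common to $P_{1}$ and $P_{2}$. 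Thus $e_{0}$ is a single node lying on all three tropes $P_{1},P_{2},P_{3}$.

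It remains to exclude a node on three tropes, which I would do by the node--trope incidence count. Let $t_{k}$ denote the number of nodes of $Q$ lying on exactly $k$ of the four tropes. Counting nodes, trope--node incidences, and pairs of tropes through a node gives $\sum_{k}t_{k}=12$, $\sum_{k}k\,t_{k}=4\cdot 6=24$ and $\sum_{k}\binom{k}{2}t_{k}=6\cdot 2=12$, where we use that $Q$ has $12$ nodes, that every trope carries exactly six nodes, and that every pair of tropes shares exactly two nodes (Proposition~\ref{pr1}). Eliminating yields $t_{1}=3t_{3}+8t_{4}$ and then $t_{0}=-t_{3}-3t_{4}$; since $t_{0}\geq 0$ this forces $t_{3}=t_{4}=0$, i.e.\ no node lies on three (or four) tropes. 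This contradicts the previous paragraph, so $e_{0}\notin Q$, and by symmetry none of the four vertices lies on $Q$.

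The main obstacle is the identification in the second paragraph of $C_{j}\cap L_{jk}$ with the pair of nodes shared by $P_{j}$ and $P_{k}$: one must rule out the degenerate possibilities that $C_{j}$ contains the edge $L_{jk}$ or is tangent to it, so that the two shared nodes are honestly the two distinct points of the intersection. Both are controlled by the nodality of $Q$ together with the assumption that the four tropes are in general position, but they are the points that genuinely need checking; one also needs the node number to be $12$ here. The final counting step, by contrast, is purely formal once these inputs and the trope incidences of Proposition~\ref{pr1} are in hand.
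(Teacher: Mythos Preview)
Your second paragraph --- deducing from $e_{0}\in Q$ that $e_{0}$ is a node lying on the three tropes $P_{1},P_{2},P_{3}$ --- is correct and is exactly the paper's first step (phrased there as: the two common singular points of two tropes $T_{1},T_{2}$ are precisely $Q\cap T_{1}\cap T_{2}$).

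The trouble is in the counting step. Your three equations use $\sum_{k}t_{k}=12$, i.e.\ that $Q$ has exactly $12$ nodes; you flag this as an input, but at this point in the argument it is not available. In Case~$22^{\circ}$ the table only gives $s=13-h\le 13$; the value $12$ is part of the \emph{conclusion} of the main theorem, not a hypothesis one may feed into this lemma. With $N\le 13$ your identities yield $t_{0}=N-12-t_{3}-3t_{4}\ge 0$, hence merely $t_{3}+3t_{4}\le 1$, and the case $t_{3}=1$ --- a single vertex sitting on three tropes --- is precisely the configuration you are trying to exclude. So as written the argument is circular.

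The paper closes the gap using the standing $G$-birational-rigidity hypothesis via Lemma~\ref{le1}: since $\widetilde G$ permutes the four tropes, it permutes the four vertices $e_{i}$, so the $G$-orbit of the node $e_{0}$ lies among the $e_{i}$ and has length at least $4$; thus all four vertices are nodes. Each trope then contains three vertices plus three further nodes, and since any two tropes already share two vertices, these $4\cdot 3$ further nodes are pairwise distinct, giving at least $16$ nodes --- contradicting $s\le 13$. Your approach can be patched in the same spirit (from $t_{3}\le 1$ the unique triple-incidence node would be $\widetilde G$-fixed, contradicting Lemma~\ref{le1}), but either way the group action is genuinely needed; the purely combinatorial count does not suffice.
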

\begin{proof} Assume that $(1:0:0:0)\in Q$. Recall that every two tropes $T_{1}$ and $T_{2}$ have two common singular points and these points are precisely $Q\cap T_{1}\cap T_{2}$, so the point $(1:0:0:0)$ is singular point of $Q$. Since the orbit of a singular point has length at least $4$, points $(0:1:0:0), (0:0:1:0)$ and $(0:0:0:1)$ are also nodes. Every trope contains three other singularities (since every trope contains exactly six nodes). This gives us $16$ singular points in total. Since $13$ is the maximal possible value in this case, we get a contradiction.
\end{proof}
\begin{lemma} The equation of $Q$ in some coordinates has the form
$$\left(x_{0}^{2}+x_{1}^{2}+x_{2}^{2}+x_{3}^{2}+\sum\limits_{0\leq i<j\leq 3}a_{ij}x_{i}x_{j}\right)^{2}+ax_{0}x_{1}x_{2}x_{3}=0,$$
where $a\neq 0$.
\end{lemma}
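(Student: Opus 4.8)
The plan is to exploit the four trope conditions one coordinate hyperplane at a time. Write the quartic as $F=\sum c_\alpha x^\alpha$, and recall from the previous lemma that the four vertices $e_i$ of the coordinate simplex do not lie on $Q$, so the coefficient $\lambda_i$ of $x_i^4$ is nonzero for each $i$. Since every plane $P_i=\{x_i=0\}$ is a trope, the restriction $F|_{x_i=0}$ is a perfect square $C_i^2$ of a quadratic form $C_i$ in the three remaining variables. The key observation is that $x_0x_1x_2x_3$ is the \emph{only} degree-$4$ monomial that vanishes on all four planes $P_i$, while every other monomial survives on at least one of them. Hence, if I can produce a single quadric $Q_2$ on $\PP^3$ whose restriction to each $P_i$ equals $\pm C_i$, then $F-Q_2^2$ vanishes on $P_0\cup P_1\cup P_2\cup P_3$, is therefore divisible by $x_0x_1x_2x_3$, and being of degree $4$ must equal $a\,x_0x_1x_2x_3$ for a constant $a$. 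This reduces the lemma to gluing the four conics $C_i$ into one quadric.

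To do the gluing I would compare the $C_i$ on the coordinate lines $\ell_{ij}=P_i\cap P_j$. On $\ell_{ij}$ both $C_i$ and $C_j$ square to the same binary quartic $F|_{\ell_{ij}}$, whose leading coefficient is $\lambda_k\neq 0$; since two polynomials with equal squares in a domain differ by a sign, this forces $C_i|_{\ell_{ij}}=\eta_{ij}\,C_j|_{\ell_{ij}}$ with $\eta_{ij}\in\{\pm1\}$, both restricted forms being nonzero. The desired $Q_2$ exists precisely when the signs of the $C_i$ can be chosen so that these restrictions actually agree, i.e. when the sign system $\eta_{ij}$ is a coboundary; this compatibility is the main obstacle. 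I would clear it by evaluating at vertices: a coordinate vertex $e_l$ with $l\notin\{i,j\}$ lies on $\ell_{ij}$, and plugging it into the relation gives $\eta_{ij}=\mu^{i}_{l}/\mu^{j}_{l}$, where $\mu^{i}_{l}$ is the $x_l^2$-coefficient of $C_i$ (nonzero, as its square is $\lambda_l$). For a triple $\{i,j,k\}$ with remaining index $l$, the three lines $\ell_{ij},\ell_{jk},\ell_{ki}$ all pass through $e_l$, so the three ratios telescope and $\eta_{ij}\eta_{jk}\eta_{ki}=1$. Thus consistent signs $\epsilon_i$ with $\epsilon_i\epsilon_j=\eta_{ij}$ exist. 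Finally, pairwise agreement of the $\epsilon_iC_i$ on every $\ell_{ij}$ is equivalent to their being restrictions of one global quadric: the $x_m^2$-coefficient of $Q_2$ occurs in each $\epsilon_iC_i$ with $i\neq m$ and any two such values coincide because the line $\ell_{ii'}$ carries that monomial, while the $x_mx_n$-coefficient occurs in the two conics indexed by the complement of $\{m,n\}$ and agrees on the line joining them. This yields $Q_2$ with $Q_2|_{P_i}=\epsilon_iC_i$.

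With $Q_2$ in hand, $F-Q_2^2=a\,x_0x_1x_2x_3$. The $x_i^2$-coefficient $\beta_i$ of $Q_2$ satisfies $\beta_i^2=\lambda_i\neq 0$, so after the rescaling $x_i\mapsto\beta_i^{-1/2}x_i$ — which preserves each trope $P_i=\{x_i=0\}$ — the quadric acquires diagonal $x_0^2+x_1^2+x_2^2+x_3^2$ and $F$ takes the asserted form $\bigl(\sum_i x_i^2+\sum_{i<j}a_{ij}x_ix_j\bigr)^2+a\,x_0x_1x_2x_3$. It remains to see that $a\neq 0$: otherwise $F=Q_2^2$ and $Q=\{F=0\}$ would be a non-reduced (double) quadric surface, contradicting that $Q$ is a nodal quartic. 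I expect the only delicate point to be the sign-compatibility of the gluing, which the vertex-evaluation argument settles cleanly; everything else is routine bookkeeping on coefficients.
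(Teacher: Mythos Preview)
Your proof is correct and follows essentially the same route as the paper: both glue the four square-root conics $C_i$ into one global quadric by checking compatibility along the coordinate lines $\ell_{ij}=P_i\cap P_j$, and then conclude that $F-Q_2^2$ is divisible by $x_0x_1x_2x_3$. The only organizational difference is that the paper rescales the $x_i^4$-coefficients to $1$ \emph{first} --- so each $C_i$ can be taken with $x_j^2$-coefficient $+1$, making the sign compatibility automatic --- and then extends the quadric one plane at a time ($P_0$, then $P_1$, then $P_2$, reading off equalities like $b_{23}=a_{23}$ on each $\ell_{ij}$), whereas you keep the coefficients general, settle the sign ambiguity via the cocycle/coboundary argument, and normalize at the end; your packaging is a bit more conceptual, the paper's more explicit, but the content is the same.
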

\begin{proof} We know that the equation of the quartic $Q$ restricted to $P_{0}$ is a complete square and points $(1:0:0:0), (0:1:0:0), (0:0:1:0)$ and $(0:0:0:1)$ don't lie on $Q$. Without loss of generality we may assume that coefficients of monomials $x_{i}^{4}$ are equal to $1$ (otherwise we can apply a diagonal change of coordinates). Thus we may assume that

$$Q(x_{0}, x_{1}, x_{2}, x_{3})=(x_{1}^{2}+x_{2}^{2}+x_{3}^{2}+a_{12} x_{1}x_{2}+a_{13} x_{1}x_{3}+a_{23}x_{2}x_{3})^{2}+$$
$$x_{0}F_{1}(x_{0}, x_{1}, x_{2}, x_{3}).$$

Restricting to $P_{1}$ we obtain $$Q(x_{0}, x_{1}, x_{2}, x_{3})=(x_{0}^{2}+x_{2}^{2}+x_{3}^{2}+a_{02}x_{0}x_{2}+a_{03}x_{0}x_{3}+b_{23}x_{2}x_{3})^{2}+$$
$$+x_{1}F_{2}(x_{0},x_{1}, x_{2}, x_{3}).$$
We see that $(x_{2}^{2}+x_{3}^{2}+a_{23} x_{2}x_{3})^{2}=Q|_{P_{0}\cap P_{1}}=(x_{2}^{2}+x_{3}^{2}+b_{23} x_{2}x_{3})^{2}$, so $b_{23}=a_{23}$ and $$Q(x_{0}, x_{1}, x_{2}, x_{3})=(x_{0}^{2}+x_{1}^{2}+x_{2}^{2}+x_{3}^{2}+a_{02}x_{0}x_{2}+a_{03}x_{0}x_{3}+a_{12}x_{1}x_{2}+a_{13}x_{1}x_{3}+$$
$$+a_{23}x_{2}x_{3})^{2}+x_{0}x_{1}F_{3}(x_{0}, x_{1}, x_{2}, x_{3}).$$
Restricting to $P_{2}$ we obtain
$$Q(x_{0}, x_{1}, x_{2}, x_{3})=(x_{0}^{2}+x_{1}^{2}+x_{3}^{2}+a_{01}x_{0}x_{1}+b_{03}x_{0}x_{3}+b_{13}x_{1}x_{3})^{2}+$$
$$+x_{2}F_{4}(x_{0}, x_{1}, x_{2}, x_{3}).$$
From $Q|_{P_{0}\cap P_{2}}$ we see that $b_{13}=a_{13}$ and from $Q|_{P_{1}\cap P_{2}}$ we see that $b_{03}=a_{03}$. Thus
$$Q(x_{0}, x_{1}, x_{2}, x_{3})=(x_{0}^{2}+x_{1}^{2}+x_{2}^{2}+x_{3}^{2}+a_{01}x_{0}x_{1}+a_{02}x_{0}x_{2}+a_{03}x_{0}x_{3}+a_{12}x_{1}x_{2}+$$
$$+a_{13}x_{1}x_{3}+a_{23}x_{2}x_{3})^{2}+x_{0}x_{1}x_{2}F_{5}(x_{0}, x_{1}, x_{2}, x_{3}).$$
Since $Q|_{P_{3}}$ is a complete square we easily see that $F_{5}(x_{0}, x_{1}, x_{2}, x_{3})=ax_{3}$, and we obtain the required formula.

If $a=0$ then $Q$ is a double quadric which is impossible.
\end{proof}
\begin{corollary} The group $\widetilde{\Aut}(X)$ fits into the natural short exact sequence $$0\longrightarrow G'\longrightarrow \widetilde{\Aut}(X)\longrightarrow G''\longrightarrow 0$$ where $G'$ is a subgroup of $\Cyc_{2}^{2}$ acting by changing signs of even number of coordinates and $G''$ is a subgroup of $\Sym_{4}$.
\end{corollary}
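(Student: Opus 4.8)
The plan is to read off the exact sequence directly from the canonical geometry set up in the preceding lemma. By Proposition~\ref{pr1} the planes on $X$, equivalently the tropes of $Q$, are intrinsically attached to $X$; in the present subcase there are exactly four of them, namely the coordinate hyperplanes $\{x_i=0\}$. Consequently every element of $\widetilde{\Aut}(X)$ permutes this distinguished set of four hyperplanes, and recording the induced permutation of the indices $\{0,1,2,3\}$ yields a homomorphism $\widetilde{\Aut}(X)\to\Sym_4$. I set $G''$ equal to its image, so that $G''\subseteq\Sym_4$, and I let $G'$ be its kernel; then the sequence $0\to G'\to\widetilde{\Aut}(X)\to G''\to 0$ is tautologically exact, and the whole content is the identification of $G'$.

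Next I would describe $G'$ explicitly. An element of $G'$ fixes each hyperplane $\{x_i=0\}$, hence is represented by a diagonal matrix $g=\operatorname{diag}(\lambda_0,\lambda_1,\lambda_2,\lambda_3)\in\operatorname{PGL}_4(\operatorname{k})$. The key point is that such a $g$ must send the quadratic form $q=x_0^2+x_1^2+x_2^2+x_3^2+\sum_{i<j}a_{ij}x_ix_j$ of the normal form to a scalar multiple of itself. Indeed, writing $Q=q^2+ax_0x_1x_2x_3$ and imposing $Q(gx)=\nu Q(x)$, one gets $q(gx)^2-\nu q(x)^2=(\nu-\prod_i\lambda_i)\,a\,x_0x_1x_2x_3$; factoring the left side as $(q(gx)-\sqrt{\nu}\,q(x))(q(gx)+\sqrt{\nu}\,q(x))$ and using that every square monomial $x_i^2$ occurs in $q$ with nonzero coefficient, neither factor can be square-free, so one of them vanishes and $q(gx)=\mu q(x)$ with $\mu^2=\nu$.

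Comparing coefficients then pins $G'$ down. From $q(gx)=\mu q(x)$ the coefficients of $x_i^2$ give $\lambda_i^2=\mu$ for every $i$, while matching the monomial $ax_0x_1x_2x_3$ in $Q(gx)=\nu Q(x)$ gives $\prod_i\lambda_i=\nu=\mu^2$. Choosing a square root of $\mu$ and rescaling $g$ in $\operatorname{PGL}_4(\operatorname{k})$, I may take $\lambda_i=\epsilon_i\in\{\pm1\}$ and $\mu=1$; the relation $\prod_i\lambda_i=\mu^2$ then reads $\prod_i\epsilon_i=1$, i.e. an even number of the $\epsilon_i$ equal $-1$. Thus $G'$ consists of sign changes on an even number of coordinates, and modulo the scalar $-\operatorname{id}$ this is exactly a subgroup of $\Cyc_2^2$, which completes the sequence.

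The hard part will be the third step: seeing that $g$ lands in the even part $\Cyc_2^2$ rather than in the full group $\Cyc_2^3$ of diagonal sign changes modulo scalars. This parity is precisely the compatibility between the constraint $\lambda_i^2=\mu$ coming from the squared quadric $q^2$ and the constraint $\prod_i\lambda_i=\mu^2$ coming from the monomial $x_0x_1x_2x_3$; it is the condition $\prod_i\epsilon_i=1$ that trims the diagonal torus down to $\Cyc_2^2$. The preliminary reduction showing that $g$ must preserve $q$ up to scalar, rather than merely preserve $Q$, also deserves care, since it rests on the fact that $q$ contains all four square monomials.
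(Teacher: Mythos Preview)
Your argument is correct and is precisely the natural elaboration of what the paper leaves implicit: the paper states this Corollary with no proof at all, regarding it as immediate from the normal form $Q=q^2+ax_0x_1x_2x_3$ just established. Your route---permute the four tropes to get the map to $\Sym_4$, then analyse diagonal automorphisms of $Q$---is exactly what is intended.

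One phrasing issue deserves a small fix. In the step where you factor $q(gx)^2-\nu q(x)^2$ and conclude that one factor vanishes, you write ``neither factor can be square-free''. What you actually need (and what your $x_i^2$-coefficient observation proves) is that \emph{not both} factors can be free of square terms simultaneously: if $q(gx)-\sqrt{\nu}\,q(x)$ has no $x_i^2$ terms then $\lambda_i^2=\sqrt{\nu}$ for every $i$, whence $q(gx)+\sqrt{\nu}\,q(x)$ has \emph{all} its $x_i^2$ terms nonzero and cannot equal a scalar times $x_kx_l$. Since a nonzero multiple of $x_0x_1x_2x_3$ would force both quadratic factors to be products of two coordinate forms (hence both square-free), this is the contradiction, and therefore $\nu=\prod_i\lambda_i$ and $q(gx)=\mu q(x)$. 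With that sentence corrected, the deduction $\lambda_i^2=\mu$, $\prod_i\lambda_i=\mu^2$, hence $\prod_i\epsilon_i=1$ after rescaling, is clean and gives exactly the even sign-change group $\Cyc_2^2\subset\operatorname{PGL}_4$.
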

\begin{propos}\label{pr6} If $G'\simeq \Cyc_{2}^{2}$ then all $a_{ij}=0$ and $\Aut(X)\simeq \Cyc_{2}^{2}\rtimes \Sym_{4}\times \Cyc_{2}$. If $G'\simeq \Cyc_{2}$ then $X$ is not $G$-birationally rigid. If $G'$ is trivial then $X$ can be birationally rigid only if in some coordinates all $a_{ij}$ are equal and $\Aut(X)\simeq \Sym_{4}\times \Cyc_{2}$.
\end{propos}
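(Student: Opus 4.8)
The plan is to treat the three assertions separately, keeping the normal form $Q=q^2+ax_0x_1x_2x_3$ from the previous lemma, where $q=\sum_i x_i^2+\sum_{i<j}a_{ij}x_ix_j$ and $a\neq0$. The common mechanism is that every element of $\widetilde{\Aut}(X)\subseteq\Cyc_2^2\rtimes\Sym_4$ is monomial and fixes $x_0x_1x_2x_3$ up to sign, hence fixes $q^2$ and sends $q$ to $\pm q$; since the part $\sum_i x_i^2$ is nonzero, it must in fact fix $q$. For $G'\simeq\Cyc_2^2$ I would impose this for the two generating even sign changes $\operatorname{diag}(-1,-1,1,1)$ and $\operatorname{diag}(-1,1,-1,1)$: the first forces $a_{02}=a_{03}=a_{12}=a_{13}=0$, the second forces $a_{01}=a_{03}=a_{12}=a_{23}=0$, so all $a_{ij}$ vanish. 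The quartic $(\sum_i x_i^2)^2+ax_0x_1x_2x_3$ is then invariant under $\Cyc_2^2\rtimes\Sym_4$ and under the Geiser involution $\theta$; to see there is nothing more, I would use that any projective automorphism permutes the four tropes (hence is monomial) and that a diagonal map preserving $\sum_i x_i^2$ is a sign change, which preserves $x_0x_1x_2x_3$ only when even. With $\theta$ central this gives $\Aut(X)\simeq\Cyc_2^2\rtimes\Sym_4\times\Cyc_2$.

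For $G'\simeq\Cyc_2$, write $G'=\langle\varepsilon\rangle$ with $\varepsilon=\operatorname{diag}(-1,-1,1,1)$ after renaming coordinates; its eigenspaces $U=\langle e_0,e_1\rangle$ and $U'=\langle e_2,e_3\rangle$ cut out the skew lines $\ell=\{x_0=x_1=0\}$ and $\ell'=\{x_2=x_3=0\}$, fixed pointwise. As $G'$ is normal in $\widetilde G$, the pair $\{\ell,\ell'\}$ is $\widetilde G$-invariant. If both lines are fixed, each is a $\widetilde G$-invariant line and Lemma~\ref{le1} gives non-rigidity at once. If some $\tau\in\widetilde G$ swaps them, I would argue that the subgroup $N\triangleleft\widetilde G$ fixing each line is abelian (it is generated by $\varepsilon$ together with lifts of commuting transpositions acting on disjoint pairs of coordinates), so $N$ has a common eigenvector $u\in U$. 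Then $\langle u,\tau u\rangle$ is a two-dimensional subspace invariant under $N$ and under $\tau$ (because $\tau^2\in N$ multiplies $u$ by a scalar), hence invariant under $\widetilde G=\langle N,\tau\rangle$; its projectivisation is a $\widetilde G$-invariant line, and Lemma~\ref{le1} again yields non-rigidity.

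For $G'=1$ the group $\widetilde G=G''$ acts faithfully through $\Sym_4$, and semi-invariance of $q$ forces $a_{\sigma(i)\sigma(j)}=a_{ij}$ for all $\sigma\in\widetilde G$. Here rigidity forbids $\widetilde G$ from containing a cyclic subgroup of index $1$ or $2$ (this is the argument of the corollary to Lemma~\ref{le1}, applied to $\widetilde G$), which eliminates every proper subgroup of $\Sym_4$ except $\Alt_4$. Both $\Alt_4$ and $\Sym_4$ act transitively on the six pairs $\{i,j\}$, so all $a_{ij}$ coincide and $Q$ becomes $\Sym_4$-symmetric; the same monomial–diagonal bookkeeping as in the first case shows that no further projective automorphisms appear, whence $\Aut(X)\simeq\Sym_4\times\Cyc_2$.

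The step I expect to be most delicate is the swapping subcase of $G'\simeq\Cyc_2$. A direct search among the six intersection lines $\ell_{ij}$ and among the twelve nodes is inconclusive: for the admissible images (for instance when $G''$ is the Klein four-group $\langle(01)(23),(02)(13)\rangle$) these produce only node orbits of length $4$ and $8$ and no invariant coordinate line, so the hypotheses of Lemma~\ref{le1} are not visibly met. The point is that the invariant line is \emph{not} a coordinate line but the ``graph'' line $\PP\langle u,\tau u\rangle$ coming from the $\widetilde G$-subrepresentation $U\oplus U'$; identifying it, and checking that the common eigenvector $u$ really exists for each admissible $G''\subseteq\Dih_8$, is the part that needs care.
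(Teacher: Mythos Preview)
Your argument is correct, and the first and third cases match the paper's reasoning almost exactly (the paper also calls the first case ``obvious'' and for the third invokes the same corollary to Lemma~\ref{le1} to force $\widetilde G\supseteq\Alt_4$, then transitivity on pairs).

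The genuine difference is in the $G'\simeq\Cyc_2$ case. The paper does not use your eigenvector construction at all: after noting that $\varepsilon$ forces $a_{02}=a_{03}=a_{12}=a_{13}=0$, it simply computes $\widetilde{\Aut}(X)$ directly (it is $\Dih_8\times\Cyc_2$ when $a_{01}=\pm a_{23}$ and $\Cyc_2^3$ otherwise) and then writes down the explicit invariant line $x_0=x_1,\ x_2=x_3$, which one checks by hand is preserved by all generators in either case. Your route instead builds the invariant line abstractly as $\PP\langle u,\tau u\rangle$ from a common eigenvector of the block-diagonal subgroup $N$. Both are valid; your argument has the virtue of not needing the explicit case split on $a_{01}=\pm a_{23}$, while the paper's is shorter and more transparent because the line is visible at once. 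In fact your construction recovers exactly the paper's line: the common eigenvector of $N=\langle\varepsilon,(01),(23)\rangle$ in $U$ is $u=(1,1,0,0)$, and any swap $\tau$ (e.g.\ $(02)(13)$) sends it to $(0,0,1,1)$.

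One small point worth tightening: in the $G'=1$ case you write that invariance of $q$ forces $a_{\sigma(i)\sigma(j)}=a_{ij}$, but elements of $\widetilde G$ are a priori sign-change-times-permutation, so you only get $a_{\sigma(i)\sigma(j)}=\pm a_{ij}$. This is harmless---a diagonal sign change of coordinates absorbs the signs, which is why the statement is ``in some coordinates all $a_{ij}$ are equal''---but it is worth saying explicitly.
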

\begin{proof} The first fact is obvious. Assume that $G'\simeq \Cyc_{2}$. Then we may assume without loss of generality that $G'$ generated by an element that changes signs of variables $x_{2}$ and $x_{3}$. So all $a_{ij}=0$ except $a_{01}$ and $a_{23}$. If $a_{01}=\pm a_{23}$ then $\widetilde{G}\simeq \Dih_{8}\times \Cyc_{2}$, otherwise $\widetilde{G}\simeq \Cyc_{2}^{3}$. In any case we have a $\widetilde{G}$-invariant line $x_{0}=x_{1}, x_{2}=x_{3}$, so $X$ cannot be $G$-birationally rigid. If $G'$ is trivial then $\widetilde{G}$ is a subgroup of $\Sym_{4}$ and $X$ can be $G$-birationally rigid only if $\widetilde{G}$ contains $\Alt_{4}$. One can easily check that this is possible only if in some coordinates all $a_{ij}$ are equal. The fact that $\Aut(X)$ is the direct product of $\widetilde{\Aut}(X)$ and $\Cyc_{2}$ is obvious.
\end{proof}

Thus we have a $2$-parametric family of varieties which can be $G$-birationally rigid.

\section{Case $20^{\circ}$}

In this case $X$ has a small $\QQ$-factorialization $\widetilde{X}$ which is the blow-up of a nodal cubic threefold $Y$ of Picard rank $2$ in a general smooth point $p$. Such cubic has exactly $6$ nodes and two structures of $\PP^{1}$-bundle. By Lemma~\ref{le1} the corresponding singular points of $X$ form a single $G$-orbit (let us denote them by $p_1, p_2, p_3, p_{4}, p_{5}, p_{6}$). Remaining singular points of the variety $X$ come from six lines passing through the point $p$. Their images lie on the unique trope and form a single $\widetilde{G}$-orbit lying on a conic. Denote these trope and conic by $P$ and $C$ respectively.

\begin{lemma} There is a $\widetilde{G}$-invariant point $q$ in $\PP^{3}$.
\end{lemma}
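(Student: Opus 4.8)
The plan is to extract the invariant point by pure representation theory, in exactly the same spirit as the lemma ``There is a $\widetilde{G}$-invariant point $p$ that doesn't lie on $P$'' of Case $24^{\circ}$. The one geometric input I would record first is that the trope $P$ is $\widetilde{G}$-invariant. Indeed, in this case $X$ carries only two planes (see Theorem 2.1), and by Proposition~\ref{pr1} these two planes form a single couple lying over one trope; hence there is a \emph{unique} trope, namely $P$. Since $\widetilde{G}$ preserves the quartic $Q$ together with its nodes and its tropes, it permutes the set of tropes, and a one-element set is fixed. Thus $P$ is $\widetilde{G}$-invariant as a plane in $\PP^{3}$.

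Next I would linearize the action. Write $\PP^{3}=\PP(V)$ with $V=\operatorname{k}^{4}$, and let $\widehat{G}\subset\operatorname{SL}_{4}(\operatorname{k})$ be the preimage of $\widetilde{G}\subset\operatorname{PGL}_{4}(\operatorname{k})$ under the (surjective, since $\operatorname{k}$ is algebraically closed) map $\operatorname{SL}_{4}(\operatorname{k})\to\operatorname{PGL}_{4}(\operatorname{k})$. This $\widehat{G}$ is a finite group acting linearly on $V$. The invariant plane $P$ is the projectivization of a $3$-dimensional subspace $U\subset V$, and because $\widehat{G}$ preserves $P$ it preserves $U$; that is, $U$ is a $3$-dimensional subrepresentation of the standard representation $V$ of $\widehat{G}$.

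The decisive step is complete reducibility. Since $\widehat{G}$ is finite and $\operatorname{char}\operatorname{k}=0$, Maschke's theorem yields a $\widehat{G}$-invariant decomposition $V=U\oplus W$ with $\dim W=1$. The line $W$ is then a one-dimensional subrepresentation, so $q:=\PP(W)$ is a $\widetilde{G}$-invariant point of $\PP^{3}$, and in fact $q\notin P$ because $W\cap U=0$. This produces the required point. I do not expect any genuine obstacle in the argument itself, which is formal once $P$ is known to be invariant; the only point needing care is the very first one, the uniqueness of the trope, which is what guarantees that $P$ (and hence the subrepresentation $U$) is $\widetilde{G}$-stable.
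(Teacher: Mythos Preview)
Your proof is correct and follows essentially the same route as the paper: lift $\widetilde{G}$ to a finite $\widehat{G}\subset\operatorname{SL}_{4}(\operatorname{k})$, note that the unique trope $P$ gives a $3$-dimensional subrepresentation, and split off a complementary $1$-dimensional subrepresentation to obtain $q$. You spell out the uniqueness of the trope and the appeal to Maschke's theorem more explicitly than the paper does, but the argument is the same.
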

\begin{proof} Let $\widehat{G}\subset \operatorname{SL}_{4}(\operatorname{k})$ be the preimage of the group $\widetilde{G}\subset \operatorname{PGL}_{4}(\operatorname{k})$. It is a finite group, and the standard representation of $\widehat{G}$ contains the 3-dimensional subrepresentation, corresponding to the plane $P$. Thus it contains also 1-dimensional subrepresentation that corresponds to a point $q\notin P$.
\end{proof}

There is a short exact sequence $$0\longrightarrow G'\longrightarrow \widetilde{G}\longrightarrow G''\longrightarrow 0,$$ where the group $G'$ acts trivially on $P$ and $G''$ acts faithfully on $P$.
\begin{lemma} The group $G'$ is either trivial or cyclic of order $2$.
\end{lemma}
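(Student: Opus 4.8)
The plan is to first pin down $G'$ as a concrete subgroup of $\operatorname{PGL}_{4}(\operatorname{k})$ and then bound its order by exhibiting a suitable $G'$-invariant line. By definition $G'$ fixes the plane $P$ pointwise, so every element of $G'$ acts on $\operatorname{k}^{4}$ by a scalar on the $3$-dimensional subspace $U$ with $P=\PP(U)$; being of finite order it is diagonalizable, and since it also fixes the $\widetilde{G}$-invariant (hence $G'$-invariant) point $q\notin P$, its unique fixed point off $P$ is $q$. Thus every nontrivial element of $G'$ is a homology with axis $P$ and centre $q$: in coordinates with $P=\{x_{3}=0\}$ and $q=(0:0:0:1)$ it is $\operatorname{diag}(1,1,1,\zeta)$. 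Consequently $G'$ is a finite group of such homologies, hence cyclic, $G'\simeq\Cyc_{n}$, and it suffices to show $n\le 2$.

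For the bound I would choose a node $c$ of $Q$ lying on the conic $C\subset P$ and consider the line $\ell=\overline{qc}$. Since $\ell$ passes through the centre $q$ and meets the axis $P$ only in $c$ (because $q\notin P$), it is $G'$-invariant and $G'|_{\ell}\simeq\Cyc_{n}$ has exactly the two fixed points $q$ and $c$; every other $\Cyc_{n}$-orbit on $\ell\simeq\PP^{1}$ has length exactly $n$. Now $\deg(\ell\cdot Q)=4$ and $c$ is a node of $Q$, so $\operatorname{mult}_{c}(\ell\cap Q)\ge 2$; hence the part of the $G'$-invariant $0$-cycle $\ell\cap Q$ supported away from the fixed locus $\{q,c\}$ has degree at most $2$. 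Its support is a union of $\Cyc_{n}$-orbits, each of length $n$, so if it is nonempty we get $n\le 2$, and therefore $G'$ is trivial or $\Cyc_{2}$.

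The one point that needs care — the main obstacle — is guaranteeing that this residual cycle is nonempty and supported at genuinely non-fixed points, i.e. that $\ell$ is neither contained in $Q$ nor entirely absorbed by $\{q,c\}$. If $\overline{qc}\subset Q$ held for all six nodes $c\in C$, then $Q$ would contain the quartic cone over $C$ with vertex $q$ and hence equal it, contradicting nodality; so a good node $c$ exists. One then checks that the remaining degenerate positions only force the stronger conclusion $n=1$: for instance, if $\ell$ also passed through one of the nodes $p_{i}\notin P$, that node would lie in a $\Cyc_{n}$-orbit of $n$ distinct nodes on $\ell$, forcing $2+2n\le 4$.

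Finally, I would observe that the same picture in fact yields more. Writing $Q=\sum_{k=0}^{4}x_{3}^{k}A_{k}(x_{0},x_{1},x_{2})$, invariance under $\operatorname{diag}(1,1,1,\zeta)$ together with $A_{0}=Q|_{P}\neq 0$ forces the multiplier to be $1$ and hence $A_{k}=0$ whenever $\zeta^{k}\neq 1$. But by Proposition~\ref{pr1} the six nodes on the trope $P$ are cut out on $C$ by the cubic form $A_{1}$, which must therefore be nonzero, and this already forces $\zeta=1$. Thus $G'$ is in fact trivial, which a fortiori gives the stated assertion; I would nevertheless present the invariant-line argument above as the main line of proof, since it is the one that parallels the corresponding lemmas in the earlier cases.
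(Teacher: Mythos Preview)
Your ``Finally'' paragraph is a complete and correct proof --- and it actually establishes the stronger fact $G'=1$, which the paper only reaches later (implicitly, in the proof of Lemma~8.7, by checking that the involution~\eqref{eq1} fails to preserve $Q$). Writing $Q=\sum_{k}x_{3}^{k}A_{k}$ with $A_{0}=F_{2}^{2}\neq 0$ forces the character of $G'$ on $Q$ to be trivial, and Proposition~\ref{pr1} shows the six nodes on the trope are cut out on $C$ by $A_{1}$; were $A_{1}\equiv 0$, the quartic would be singular along all of $C$, contradicting nodality. Hence $A_{1}\neq 0$ and every homology in $G'$ is the identity.

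Your ``main'' invariant-line argument, however, has a genuine gap. You only exclude $\overline{qc}\subset Q$ for all six $c\in C$, not the possibility that for each such $c$ the residual cycle $\ell\cap Q$ is entirely absorbed by the fixed locus $\{q,c\}$. Nothing proved so far forbids $q\in Q$ as a \emph{smooth} point (Lemma~\ref{le1} only excludes short orbits of \emph{singular} points and invariant lines), and then $\operatorname{mult}_{c}=3,\ \operatorname{mult}_{q}=1$, or $\operatorname{mult}_{c}=2,\ \operatorname{mult}_{q}=2$ with $\ell$ tangent to $Q$ at $q$, leave no residual away from $\{q,c\}$. Your clause ``one then checks that the remaining degenerate positions only force $n=1$'' does not cover these.

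The paper's proof takes a different and cleaner geometric route: it uses the six nodes $p_{1},\dots,p_{6}$ lying \emph{off} the trope (those coming from the singular cubic $Y$). These automatically lie in the free locus $\PP^{3}\setminus(P\cup\{q\})$ of the cyclic group $G'$, so $|G'|$ divides $6$; if $|G'|=6$ they all lie on one line through $q$, and if $|G'|=3$ they lie on two such lines spanning a plane through $q$, whose intersection with $P$ is a line. Either way one obtains a $\widetilde{G}$-invariant line, contradicting Lemma~\ref{le1}. This avoids all intersection-multiplicity subtleties. What your equation-based argument buys over the paper's is the immediate conclusion $G'=1$ without any use of the off-trope nodes; what the paper's argument buys is that it is purely combinatorial on the singular locus and needs no computation with the defining polynomial.
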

\begin{proof} If the group $G'$ is not trivial nor cyclic of order $2$ then either all points $p_{i}$ lie on one line passing through $q$ (in this case $G'\simeq \Cyc_{6}$) or they lie on two lines passing through $q$ (and in this case $G'\simeq \Cyc_{3}$). In any case we have a $\widetilde{G}$-invariant line: in the first case this is the line passing through $p_i$'s, in the second case this is the intersection of the plane spanned by $p_i$'s and the plane $P$.
\end{proof}
\begin{lemma} The group $G''$ is isomorphic to $\Alt_{4}$ or $\Sym_{4}$.
\end{lemma}
\begin{proof} The natural map $G''\to\operatorname{PGL}_{2}(\operatorname{k})$ is an embedding since every morphism acting trivially on the curve $C$ acts trivially on the plane $P$. Also we know that the group $G''$ has an orbit of length $6$ on the conic $C$. So $G''$ is isomorphic to $\Cyc_{6}$, $\Sym_{3}$, $\Dih_{12}$, $\Alt_{4}$ or $\Sym_{4}$. But if $G''$ is isomorphic to $\Cyc_{6}$, $\Sym_{3}$ or $\Dih_{12}$ then there is a $G$-invariant pair of points on $C$, so there is a $\widetilde{G}$-invariant line.
\end{proof}

There are unique subgroups isomorphic to $\Alt_{4}$ or $\Sym_{4}$ in $\operatorname{PGL}_{3}(\operatorname{k})$ up to conjugation. They can be constructed in the following way: if $U$ is an irreducible 3-dimensional representation of $G''\simeq\Alt_{4}$ or $\Sym_{4}$ then $\PP(U)$ contains unique $G''$-invariant conic $\widetilde{C}$. The pair $(P, C)$ with an action of the group $G''$ is isomorphic to $(\PP(U), \widetilde{C})$.

\begin{lemma} The projective space $\PP^{3}$ with the action of the group $\widetilde{G}$ is the projec\-tivization of a $4$-dimensional representation of the group $\widetilde{G}$.
\end{lemma}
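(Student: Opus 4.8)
The plan is to realise $\PP^{3}=\PP(V)$ for a $4$-dimensional vector space $V$ and to lift the projective action of $\widetilde{G}$ to a genuine linear action on $V$, exploiting the fact that we already have an honest linear model on the invariant plane $P$. First I would write $P=\PP(U)$ with $\dim U=3$ and $q=\PP(W)$ with $\dim W=1$. Since the invariant point $q$ produced above does not lie on $P$, we have $W\cap U=0$ and therefore $V=U\oplus W$. Because both $P$ and $q$ are $\widetilde{G}$-invariant, every projective transformation coming from $\widetilde{G}$ preserves the subspaces $U$ and $W$, and hence can be lifted to a block-diagonal element of $\operatorname{GL}(V)$ — a scalar on $W$ together with a linear automorphism of $U$, the pair being defined up to a common overall scalar.

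The key input is a genuine linear action on $U$. The action of $\widetilde{G}$ on $P$ factors through $G''$, and as recalled just before the statement $G''$ acts on $P$ as the projectivisation of its irreducible $3$-dimensional representation. Composing the surjection $\widetilde{G}\to G''$ with this representation produces a bona fide homomorphism $\rho_{U}\colon\widetilde{G}\to\operatorname{GL}(U)$, trivial on $G'$, whose projectivisation is precisely the given action of $\widetilde{G}$ on $P$.

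Next, for each $g\in\widetilde{G}$ I would select the block-diagonal lift $\phi_{g}\in\operatorname{GL}(V)$ of the projective action, normalised so that $\phi_{g}|_{U}=\rho_{U}(g)$; this pins down $\phi_{g}|_{W}$ as a nonzero scalar $\mu_{g}$. It then remains to verify that $g\mapsto\phi_{g}$ is an honest representation. Since the action on $\PP(V)$ is projective, $\phi_{gh}$ and $\phi_{g}\phi_{h}$ induce the same transformation of $\PP(V)$, so $\phi_{gh}=\nu_{g,h}\,\phi_{g}\phi_{h}$ for some scalar $\nu_{g,h}\in\operatorname{k}^{*}$. Restricting to $U$ and using that $\rho_{U}$ is a homomorphism gives $\rho_{U}(gh)=\nu_{g,h}\,\rho_{U}(g)\rho_{U}(h)=\nu_{g,h}\,\rho_{U}(gh)$, whence $\nu_{g,h}=1$. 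Thus $\phi$ is a $4$-dimensional linear representation of $\widetilde{G}$ whose projectivisation is the given action on $\PP^{3}$, as required.

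The only genuine subtlety is the familiar one: a finite subgroup of $\operatorname{PGL}_{4}(\operatorname{k})$ need not lift to $\operatorname{GL}_{4}(\operatorname{k})$, the obstruction being a class in $H^{2}(\widetilde{G},\operatorname{k}^{*})$. The whole point of the argument is that this obstruction dies as soon as we possess a genuine linear model on a $\widetilde{G}$-invariant subspace of dimension at least $2$; here the irreducible $3$-dimensional representation $U$ of $G''$ plays exactly that role, and restricting the cocycle $\nu$ to $U$ trivialises it. I would also remark that the scalars $\mu_{g}$ assemble into a character $\widetilde{G}\to\operatorname{k}^{*}$ which is nontrivial on $G'$ when $G'\simeq\Cyc_{2}$; this accounts for the nontrivial action of $G'$ on $\PP^{3}$ despite its acting trivially on $P$, although its explicit form is not needed.
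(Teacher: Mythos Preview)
Your proof is correct and follows essentially the same approach as the paper: decompose $V=U\oplus W$ using the invariant plane $P$ and point $q$, pull back the genuine $3$-dimensional representation of $G''$ on $U$ to $\widetilde{G}$, and then extend uniquely to $W$. The paper compresses the verification that the resulting map is a homomorphism into a single ``obviously'', whereas you spell out the cocycle argument showing $\nu_{g,h}=1$ by restriction to $U$; this extra detail is welcome but does not constitute a different method.
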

\begin{proof} Let us construct such a representation. Let $\PP^{3}=\PP(V)$, $P=\PP(U)$ and $q=\PP(W)$, where $U, W\subset V$ are the corresponding vector subspaces of dimension $3$ and $1$ respectively. As we noticed above we may consider $U$ as the standard $3$-dimensional representation of $G''$ and consequently of $\widetilde{G}$. Then we can define in the unique way an action of $\widetilde{G}$ on $W$ and consequently on $V$ such that it agrees with an action on $\PP^{3}$. Obviously, this action is a linear representation.
\end{proof}

\begin{corollary} $\widetilde{G}$ is isomorphic to $\Alt_{4}$, $\Sym_{4}$, $\Alt_{4}\times \Cyc_{2}$, $\Sym_{4}\times \Cyc_{2}$ or $\Alt_{4}\rtimes \Cyc_{4}$.
\end{corollary}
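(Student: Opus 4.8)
The plan is to reproduce the argument used for the corollary in Case~$22^{\circ}$, now feeding in the three preceding lemmas of this section. Together they supply the central data: the short exact sequence $0\to G'\to\widetilde{G}\to G''\to 0$ in which $G'$ is trivial or $\Cyc_2$ and $G''\cong\Alt_4$ or $\Sym_4$, together with a genuine $4$-dimensional representation $V$ of $\widetilde{G}$ with $\PP(V)\cong\PP^3$ as $\widetilde{G}$-varieties. Since the action on $\PP^3$ is faithful, the map $\widetilde{G}\to\operatorname{PGL}(V)$ is injective, so its lift $\widetilde{G}\to\operatorname{GL}(V)$ is injective as well and we may analyse $\widetilde{G}$ through the decomposition $V=U\oplus W$, where $P=\PP(U)$ is the invariant plane carrying the standard $3$-dimensional representation of $G''$ and $q=\PP(W)$ is the invariant point. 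If $G'$ is trivial then $\widetilde{G}\cong G''$ is $\Alt_4$ or $\Sym_4$, giving the first two entries of the list.

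Next I would treat the case $G'\cong\Cyc_2$. Because $\operatorname{Aut}(\Cyc_2)$ is trivial, the normal subgroup $G'$ is automatically central, so $\widetilde{G}$ is a central extension of $G''$ by $\Cyc_2$; the point to extract is that this extension is \emph{non-stem}. I would read this off the representation: the generator $g$ of $G'$ acts trivially on $P=\PP(U)$, hence by a single scalar $\mu$ on $U$, and it acts by some scalar $\nu$ on the line $W$; since $g$ has order $2$ in $\operatorname{GL}(V)$ we have $\mu,\nu\in\{\pm1\}$, and $\mu\neq\nu$, for otherwise $g$ would be scalar on $V$ and thus trivial on $\PP^3$, contradicting faithfulness. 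Therefore $\det(g)=\mu^3\nu=\mu\nu=-1$. As every commutator lies in the kernel of $\det$, this forces $G'\not\subseteq[\widetilde{G},\widetilde{G}]$, so the central extension is not stem.

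It then remains to enumerate the central non-stem extensions of $\Alt_4$ and $\Sym_4$ by $\Cyc_2$. Using the universal coefficient splitting $H^2(G'',\Cyc_2)\cong\operatorname{Ext}((G'')^{\mathrm{ab}},\Cyc_2)\oplus\operatorname{Hom}(H_2(G''),\Cyc_2)$, the transgression $H_2(G'')\to\Cyc_2$ attached to a class is surjective precisely when the extension is stem, so here the non-stem classes are exactly those lying in the $\operatorname{Ext}$ summand. For $\Alt_4$ (with abelianization $\Cyc_3$ and Schur multiplier $\Cyc_2$) this summand vanishes, leaving only the split extension $\Alt_4\times\Cyc_2$; for $\Sym_4$ (with abelianization $\Cyc_2$ and Schur multiplier $\Cyc_2$) it is $\operatorname{Ext}(\Cyc_2,\Cyc_2)\cong\Cyc_2$, yielding the split extension $\Sym_4\times\Cyc_2$ and one non-split extension, which one identifies as $\Alt_4\rtimes\Cyc_4$. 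Combined with the split case, this gives exactly the five groups claimed. I expect this final step to be the main obstacle: it is where explicit knowledge of the Schur multipliers and of the stem/non-stem dichotomy is needed, both to recognise the non-split non-stem cover of $\Sym_4$ as $\Alt_4\rtimes\Cyc_4$ and to confirm that the genuine stem double covers $\operatorname{GL}_2(\mathbb{F}_3)$ and the binary octahedral group are the ones eliminated by the determinant computation above.
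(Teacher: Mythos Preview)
Your proposal is correct and follows the same approach as the paper's one-line proof, which simply asserts that when $G'\simeq\Cyc_2$ the extension is central and non-stem ``since otherwise $G'$ acts trivially on $V$''. Your determinant computation is a concrete way of unpacking that clause: in the paper's construction $U$ is already a representation of $G''$, so in fact $\mu=1$ and $\nu=-1$, and the argument that any one-dimensional character (in particular the action on $W$, or equivalently $\det$) kills $[\widetilde{G},\widetilde{G}]$ is exactly what forces non-stem. Your explicit enumeration of the non-stem $\Cyc_2$-extensions via the universal-coefficient splitting, together with the identification of the unique non-split one over $\Sym_4$ as $\Alt_4\rtimes\Cyc_4$, supplies detail that the paper leaves implicit; both arguments arrive at the same five groups.
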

\begin{proof} If $G'\simeq \Cyc_{2}$ then $\widetilde{G}$ is a central non-stem extension of $G''$ by $\Cyc_{2}$, since otherwise $G'$ acts trivially on $V$.
\end{proof}

\begin{lemma} In some coordinates the quartic $Q$ has a symmetric equation with respect to the permutations of coordinates. As a consequence, $\widetilde{\Aut}(X)\simeq \Sym_{4}$ or $\Sym_{4}\times \Cyc_{2}$.
\end{lemma}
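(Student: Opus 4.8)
The plan is to pin down the defining quartic $F$ of $Q$ as an $\Alt_4$-semi-invariant, to show that it must in fact be symmetric, and then to read off $\widetilde{\Aut}(X)$ from this symmetry.

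First I would put $\Alt_4\subset\widetilde G$ into permutation form. Since $\PP^{3}=\PP(V)$ for a four-dimensional representation $V$, and $V|_{\Alt_4}=U\oplus W$ with $U$ the three-dimensional irreducible (so that $P=\PP(U)$) and $W$ one-dimensional (so that $q=\PP(W)$), the same argument as in Case~$24^\circ$ applies: tensoring the three-dimensional irreducible by a character does not change it, so $\PP(V)$ is $\Alt_4$-equivariantly the projectivisation of the permutation representation. Thus in suitable coordinates $x_0,\dots,x_3$ the group $\Alt_4$ permutes the coordinates, the invariant point is $q=(1:1:1:1)$, and the unique trope is $P=\{x_0+x_1+x_2+x_3=0\}$. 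As $\Alt_4$ preserves $Q$, the quartic form $F$ spans a one-dimensional subrepresentation, i.e. $\sigma\cdot F=\chi(\sigma)F$ for a character $\chi\colon\Alt_4\to\operatorname{k}^{*}$; since $[\Alt_4,\Alt_4]=\Cyc_2^2$, the character $\chi$ is either trivial or one of the two primitive cubic characters of $\Alt_4/\Cyc_2^2\simeq\Cyc_3$.

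If $\chi$ is trivial, I would argue that $F$ is automatically symmetric. The residual group $\Sym_4/\Alt_4\simeq\Cyc_2$ acts on the space of $\Alt_4$-invariant quartics, and a summand on which a transposition acts by $-1$ would be a sign-semi-invariant: it would vanish on every diagonal $\{x_i=x_j\}$, hence be divisible by the degree-$6$ Vandermonde $\prod_{i<j}(x_i-x_j)$, which is impossible for a quartic. So the anti-invariant part vanishes and $F$ is $\Sym_4$-invariant. If instead $\chi$ is a primitive cubic character, then $F$ belongs to the three-parameter family of semi-invariants written out explicitly in Case~$22^\circ$. Here I would use that $P$ is a trope, so $Q\cap P$ is a double conic and the restriction of $F$ to $P=\{x_0+x_1+x_2+x_3=0\}$ must be a perfect square. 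Substituting $x_3=-(x_0+x_1+x_2)$ and imposing that the resulting plane quartic be a square yields, exactly as in the corresponding computation of Case~$22^\circ$, that the would-be conic splits into a pair of lines, a contradiction. This rules out the cubic characters, so $F$ is symmetric. I expect this explicit restriction-to-the-trope computation to be the main obstacle.

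For the consequence, symmetry of $F$ means that the permutation matrices embed $\Sym_4$ into $\widetilde{\Aut}(X)$, realising each transposition by an element of order $2$ that induces a transposition on the conic $C\subset P$. Since $\widetilde{\Aut}(X)\supseteq\widetilde G$ meets the same hypotheses, the preceding corollary lists its possibilities as well, and the image $G''$ of $\widetilde{\Aut}(X)$ on $C$ is now all of $\Sym_4$, which excludes $\Alt_4$ and $\Alt_4\times\Cyc_2$. The only remaining candidate with $G''=\Sym_4$ besides $\Sym_4$ and $\Sym_4\times\Cyc_2$ is $\Alt_4\rtimes\Cyc_4$; but there, writing $\Cyc_4=\langle c\rangle$ with $c^2$ central, a short check gives $(ac)^2\in\Alt_4\,c^2$ for every $a\in\Alt_4$, so no element lying over a transposition has order $2$, whereas our transpositions do. Thus $\Alt_4\rtimes\Cyc_4$ is excluded as well. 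Finally, the kernel $G'$ of the action on $P$ is trivial or $\Cyc_2$, its nontrivial generator being forced to be the harmonic homology $x_i\mapsto x_i-\tfrac12\sum_j x_j$ with centre $q$ and axis $P$; accordingly $\widetilde{\Aut}(X)\simeq\Sym_4$ when this homology does not preserve $Q$ and $\widetilde{\Aut}(X)\simeq\Sym_4\times\Cyc_2$ when it does.
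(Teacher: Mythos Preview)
Your overall architecture matches the paper's: put $\Alt_4$ into permutation form, show the defining quartic is a genuine invariant rather than a cubic-character semi-invariant, and then read off $\widetilde{\Aut}(X)$. Your Vandermonde argument for the trivial-character case and your exclusion of $\Alt_4\rtimes\Cyc_4$ at the end are both fine.

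The gap is in your treatment of the cubic-character case. You restrict $F$ to the trope $P=\{s_1=0\}$ and assert, by analogy with Case~$22^\circ$, that the square-root conic must split. But the tropes in Case~$22^\circ$ are $\Cyc_3$-invariant planes through $(1{:}1{:}1{:}1)$, not the plane $\{s_1=0\}$, so that computation does not transfer. In fact the unique $\chi^2$-semi-invariant conic on $P$ (the only candidate for $R$) is smooth: in coordinates it is $-x_1^2-\xi^2 x_2^2-\xi x_3^2+2\xi x_1x_2+2\xi^2 x_1x_3+2x_2x_3$, with nonzero discriminant. Moreover the restriction map from the three-dimensional space of $\chi$-semi-invariant quartics on $\PP^3$ to the two-dimensional space of $\chi$-semi-invariant quartics on $P$ is surjective (its kernel is the line $s_1^2 q_\chi$), so there is a genuine one-parameter family of cubic-character quartics whose restriction to $P$ is the double of a smooth conic. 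Your stated reason for excluding this case is therefore incorrect.

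The paper closes this gap by bringing in the extra geometric datum specific to Case~$20^\circ$: the $\Alt_4$-orbit of six nodes $p_1,\dots,p_6$ coming from the six nodes of the cubic $Y$, which lie off the trope. One of them has the form $(a{:}a{:}b{:}b)$; after the shear $x_i\mapsto x_i+\alpha s_1$ it can be placed at $(1{:}1{:}0{:}0)$, and imposing a node there forces $C=-\xi B,\ A=2\xi B$; for these values $Q\cap P$ is then checked not to be a double conic. If you want to repair your own route without invoking the six off-trope nodes, the clean observation is that the space of $\chi$-semi-invariant cubics on $P$ is zero, so in the Taylor expansion $F=R^2+s_1 H_3+s_1^2 H_2+\cdots$ the term $H_3$ vanishes identically; hence any cubic-character quartic meeting the trope in a double conic is singular along the whole conic $C$, contradicting nodality. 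Either way, the bare trope condition is not enough and your appeal to Case~$22^\circ$ does not do the work you need.
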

\begin{proof} By Lemma~\ref{le14} we may assume that $\Alt_{4}\subset\widetilde{G}$ acts on $V$ by the permuta\-tions of coordinates. If the equation of the quartic $Q$ is not symmetric then it has the form
$$A(x_{0}^{3}x_{1}+x_{1}^{3}x_{0}+x_{2}^{3}x_{3}+x_{3}^{3}x_{2}+\xi(x_{0}^{3}x_{2}+x_{2}^{3}x_{0}+x_{1}^{3}x_{3}+x_{3}^{3}x_{1})+\xi^{2}(x_{0}^{3}x_{3}+x_{3}^{3}x_{0}+x_{1}^{3}x_{2}+$$
$$+x_{2}^{3}x_{1}))+B(x_{0}^{2}x_{1}x_{2}+x_{1}^{2}x_{0}x_{3}+x_{2}^{2}x_{0}x_{3}+x_{3}^{2}x_{1}x_{2}+\xi(x_{0}^{2}x_{2}x_{3}+x_{1}^{2}x_{2}x_{3}+x_{2}^{2}x_{0}x_{1}+$$
$$+x_{3}^{2}x_{0}x_{1})+\xi^{2}(x_{0}^{2}x_{1}x_{3}+x_{1}^{2}x_{0}x_{2}+x_{2}^{2}x_{1}x_{3}+x_{3}^{2}x_{0}x_{2}))+C(x_{0}^{2}x_{1}^{2}+x_{2}^{2}x_{3}^{2}+\xi(x_{0}^{2}x_{2}^{2}+x_{1}^{2}x_{3}^{2})+$$
$$+\xi^2(x_{0}^{2}x_{3}^{2}+x_{1}^{2}x_{2}^{2})),$$
where $\xi$ is a primitive root of the unity of degree $3$. We also know that $Q$ is singular at the $G$-orbit of length $6$, thus $Q$ is singular at the point $(a:a:b:b)$ for some $a\neq\pm b$. Note that we may apply the transformation
$$(x_0:x_1:x_2:x_3)\longmapsto\left(x_0+\alpha\sum\limits_{i=0}^{3}x_{i}:x_1+\alpha\sum\limits_{i=0}^{3}x_{i}:x_2+\alpha\sum\limits_{i=0}^{3}x_{i}:x_3+\alpha\sum\limits_{i=0}^{3}x_{i}\right),$$  that corresponds to choosing another basic vector of the $1$-dimensional subrepresen\-tation $W$. Thus we may assume that the quartic $Q$ is singular at the point $(1:1:0:0)$. Solving the corresponding system of equations we see that $C=-\xi B, A=2\xi B$. But the intersection of such quartic with the plane $P$ is not a double conic $2C$.
\end{proof}

\begin{lemma} The variety $X$ can be given by the equation \begin{equation}\label{eq5}
s_{2}^{2}-s_{1}s_{3}+\frac{1}{4}s_{1}^2s_{2}+A(8s_{1}s_{3}-6s_{1}^2s_{2}+s_{1}^{4})=0
\end{equation}
for some $A\neq 1$, where $s_{j}=\sum\limits_{i=0}^{3} x_{i}^{j}$. The group $\Aut(X)$ is isomorphic to  $\Sym_{4}\times \Cyc_{2}$.
\end{lemma}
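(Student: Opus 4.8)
The plan is to cut $Q$ out of the five-dimensional space of $\Sym_{4}$-symmetric quartics using the trope together with the singularities at the six ``deep'' nodes, and then to determine $\Aut(X)$. By the previous lemma I may fix coordinates in which $\Sym_{4}\subset\widetilde{\Aut}(X)$ permutes $x_{0},\dots,x_{3}$ and $Q$ is symmetric, so in power sums $Q=a\,s_{4}+b\,s_{1}s_{3}+c\,s_{2}^{2}+d\,s_{1}^{2}s_{2}+e\,s_{1}^{4}$. The only $\Sym_{4}$-invariant plane is the trope $P=\{s_{1}=0\}$, and the six nodes on its conic will turn out to be the orbit $\{s_{1}=s_{2}=s_{3}=0\}$, while the remaining six come from the nodes of the cubic $Y$ and form the $\Sym_{4}$-orbit of a point with two pairs of equal coordinates.

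First I would impose that $P\cap Q$ be a double conic. Restricting to $P$ annihilates every monomial divisible by $s_{1}$, leaving $Q|_{P}=a\,s_{4}|_{P}+c\,s_{2}^{2}|_{P}$. Since $s_{4}|_{P}$ and $(s_{2}|_{P})^{2}$ are linearly independent invariants on $P$, and since a $\Sym_{4}$-invariant double conic must be the square of the unique invariant conic $\{s_{2}|_{P}=0\}$, this forces $a=0$. With $a=0$ a direct check shows the six points $\{s_{1}=s_{2}=s_{3}=0\}$ are nodes of $Q$ for every value of the remaining coefficients, so these six trope nodes are free of charge and impose no condition.

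Next I would normalise and use the deep nodes. They form one $\Sym_{4}$-orbit of a point $(\lambda:\lambda:\mu:\mu)$; applying the one-parameter group $x_{i}\mapsto x_{i}+t\,s_{1}$, which commutes with $\Sym_{4}$, fixes $P$, and only reparametrises the invariant line complementary to $P$, I would move this node to $(1:1:0:0)$. Requiring $Q$ and its gradient to vanish there gives two independent linear relations on $(b,c,d,e)$ (the third partial being forced by Euler's identity and the $\Cyc_{2}\times\Cyc_{2}$-symmetry of the point), so the $\PP^{3}$ of admissible quartics is cut down to a pencil; a change of parameter then puts it in the displayed form, the excluded value of $A$ being the one at which $Q$ becomes the square of a quadric and $X$ ceases to be nodal of the required type. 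I expect the real work to be the bookkeeping that these conditions indeed carve out exactly a one-dimensional family and that its generic member has precisely the twelve advertised nodes — the six with $s_{1}=s_{2}=s_{3}=0$ and the orbit of six at $(1:1:0:0)$ — and no worse singularities.

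Finally I would compute the automorphism group. By the previous lemma $\widetilde{\Aut}(X)$ is $\Sym_{4}$ or $\Sym_{4}\times\Cyc_{2}$, and the central $\Cyc_{2}$, commuting with the permutation action, can only be the involution $\iota:x_{i}\mapsto x_{i}-\tfrac{1}{2}s_{1}$ fixing $P$ pointwise and negating the complementary line. Computing the action of $\iota$ on $s_{1}s_{3},s_{2}^{2},s_{1}^{2}s_{2},s_{1}^{4}$ one finds $\iota^{*}Q=Q$ only when the coefficient of $s_{1}s_{3}$ vanishes, which happens exactly at the degenerate member excluded above; hence $\widetilde{\Aut}(X)=\Sym_{4}$ on the family. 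Adjoining the Geiser involution $\theta$, which is central and acts trivially on $\PP^{3}$, yields $\Aut(X)\simeq\Sym_{4}\times\Cyc_{2}$; this last step is a short semi-invariance computation rather than the crux.
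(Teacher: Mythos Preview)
Your approach is essentially the paper's own: start from the symmetric quartic (from the previous lemma), use the trope condition $Q|_{s_1=0}=2C$ to kill the $s_4$ term, normalise a deep node to $(1:1:0:0)$ via the shear $x_i\mapsto x_i+\alpha s_1$, solve, and then rule out the extra central $\Cyc_2$ by checking that the involution $x_i\mapsto x_i-\tfrac12 s_1$ fails to preserve $Q$. The paper does exactly these steps in the same order.

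One point needs correcting. You assert that $\iota^\ast Q=Q$ precisely when the coefficient of $s_1s_3$ vanishes, and that this is ``the degenerate member excluded above''. The first claim is right (one computes $\iota^\ast(s_1s_3)=-s_1s_3+\tfrac32 s_1^2s_2-\tfrac14 s_1^4$, so invariance forces that coefficient to zero), but in the displayed pencil the coefficient of $s_1s_3$ is $-1+8A$, which vanishes at $A=\tfrac18$, not at the value $A=1$ excluded in the statement. At $A=\tfrac18$ the quartic is $s_2^2-\tfrac12 s_1^2s_2+\tfrac18 s_1^4=(s_2-\tfrac{1+i}{4}s_1^2)(s_2-\tfrac{1-i}{4}s_1^2)$, a union of two distinct quadrics rather than a square; this is indeed degenerate and so $A=\tfrac18$ is also implicitly forbidden, which is why your argument for $\widetilde{\Aut}(X)=\Sym_4$ still goes through. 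Just don't identify this with the stated exclusion $A\neq 1$, which comes from a different degeneration of the singular locus.
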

\begin{proof} We know that the intersection of the variety $X$ with the hyperplane $\sum\limits_{i=0}^{3} x_{i}=0$ is a double conic. So the equation of the quartic $Q$ has the form
\begin{equation}\label{eq2}
s_{2}^{2}+as_{1}s_{3}+bs_{1}^2s_{2}+cs_{1}^{4}=0,
\end{equation}
where $s_{j}=\sum\limits_{i=0}^{3} x_{i}^{j}$. As in the previous lemma we may assume that the quartic $Q$ is singular at the point $(1:1:0:0)$. Solving the corresponding system of equations we obtain the required formula~\eqref{eq5}.

Assume that $\widetilde{\Aut}(X)\simeq \Sym_{4}\times \Cyc_{2}$. Since $G'$ acts nontrivially on $W$, $\Cyc_{2}$ acts as $-\operatorname{Id}$ on $W$. The generator of $\Cyc_{2}$ acts as
\begin{equation}
\label{eq1}
(x_{0}:x_{1}:x_{2}:x_{3})\longmapsto \left(x_{0}-\frac{1}{2}\sum\limits_{i=0}^{3}x_{i}:x_{1}-\frac{1}{2}\sum\limits_{i=0}^{3}x_{i}:x_{2}-\frac{1}{2}\sum\limits_{i=0}^{3}x_{i}:x_{3}-\frac{1}{2}\sum\limits_{i=0}^{3}x_{i}\right).
\end{equation}
One can easily check that such a map doesn't preserve the quartic $Q$.
\end{proof}

From the previous lemmas we deduce the following proposition:
\begin{propos}\label{pr7} The variety $X$ can be given by the equation ~\eqref{eq5}, the group $\Aut(X)\simeq \Sym_{4}\times \Cyc_{2}$ and $G\simeq \Sym_{4}\times \Cyc_{2}$, $\Sym_{4}$ (which permutes two planes on $X$, otherwise $X$ is not $G$-minimal) or $\Alt_{4}\times \Cyc_{2}$.
\end{propos}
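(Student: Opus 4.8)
The plan is to take the equation~\eqref{eq5} and the isomorphism $\Aut(X)\simeq\Sym_{4}\times\Cyc_{2}$ from the preceding lemma as given, and then to cut down the subgroups $G\subseteq\Aut(X)$ by $G$-minimality, the decisive invariant being how $G$ acts on the two planes of $X$. First I would fix the group-theoretic picture: by the preceding lemma $\widetilde{\Aut}(X)\simeq\Sym_{4}$ acts on $\PP^{3}$ by permuting the coordinates $x_{i}$, while the extra factor in $\Aut(X)\simeq\Sym_{4}\times\Cyc_{2}$ is the Geiser involution $\theta$, so that $\Aut(X)=\Sym_{4}\times\langle\theta\rangle$. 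Since the permutation representation of $\Sym_{4}$ on the hyperplane $P=\{s_{1}=0\}$ is the (faithful) standard representation, no nontrivial coordinate permutation fixes $P$ pointwise; hence in the exact sequence $0\to G'\to\widetilde G\to G''\to 0$ of the lemmas above one has $G'=1$, so $\widetilde G=G''\in\{\Alt_{4},\Sym_{4}\}$. (This also discards the isomorphism types $\Alt_{4}\times\Cyc_{2}$, $\Sym_{4}\times\Cyc_{2}$ and $\Alt_{4}\rtimes\Cyc_{4}$ allowed by the earlier corollary, none of which embeds in $\Sym_{4}$.)

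Next I would pin down the action on the two planes. Setting $s_{1}=\sum x_{i}=0$ in~\eqref{eq5} leaves $Q|_{P}=s_{2}^{2}$, so by Proposition~\ref{pr1} the trope $P$ pulls back to the two planes $\Pi_{\pm}=\{s_{1}=0,\ y=\pm s_{2}\}$. The Geiser involution $y\mapsto-y$ interchanges them, whereas the permutation action of $\Sym_{4}$ fixes $y$ and preserves the symmetric form $s_{2}$, so it fixes each of $\Pi_{+}$ and $\Pi_{-}$. Consequently the homomorphism $\Aut(X)\to\Sym_{2}$ recording the action on the unordered pair $\{\Pi_{+},\Pi_{-}\}$ is exactly projection onto the $\theta$-coordinate.

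I would then use $G$-minimality as the filter. The classes $[\Pi_{+}]$ and $[\Pi_{-}]$ are linearly independent in $\operatorname{Cl}(X)_{\QQ}$: their sum is the Cartier class $-\tfrac12 K_{X}$, but each is non-$\QQ$-Cartier (this non-$\QQ$-factoriality is the defining feature of $X$), so neither is proportional to the other. Hence if $G$ fixed both planes, then $\operatorname{Cl}^{G}(X)$ would contain two independent invariant classes and $X$ would fail to be $G$-minimal. Therefore any admissible $G$ must interchange $\Pi_{+}$ and $\Pi_{-}$, i.e.\ $G\not\subseteq\Sym_{4}\times\{1\}$.

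Finally I would enumerate the subgroups $G\subseteq\Sym_{4}\times\langle\theta\rangle$ with $\widetilde G\in\{\Alt_{4},\Sym_{4}\}$ that are not contained in $\Sym_{4}\times\{1\}$. If $\theta\in G$ then $G=\widetilde G\times\langle\theta\rangle$, giving $\Alt_{4}\times\Cyc_{2}$ or $\Sym_{4}\times\Cyc_{2}$. If $\theta\notin G$ then $G$ is the graph of a homomorphism $\widetilde G\to\langle\theta\rangle$, which must be nontrivial by the plane-swapping requirement; since $\operatorname{Hom}(\Alt_{4},\Cyc_{2})=0$ this rules out $\widetilde G=\Alt_{4}$, while for $\widetilde G=\Sym_{4}$ the only nontrivial homomorphism is the sign, producing the twisted copy of $\Sym_{4}$ in which odd permutations are paired with $\theta$ and therefore swap the planes. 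This leaves exactly the three groups $\Sym_{4}\times\Cyc_{2}$, the twisted $\Sym_{4}$, and $\Alt_{4}\times\Cyc_{2}$, the plane-fixing copies $\Alt_{4}$ and $\Sym_{4}\times\{1\}$ being excluded as non-$G$-minimal; a short verification that $\operatorname{Cl}^{G}(X)\simeq\mathbb{Z}$ for the three survivors then confirms they are genuine candidates. The step requiring real care is the interplay of the two observations that feed the enumeration — that the permutation $\Sym_{4}$ genuinely \emph{fixes} each plane (via $Q|_{P}=s_{2}^{2}$) while $\theta$ swaps them — together with the bookkeeping of lifts in the extension $1\to\langle\theta\rangle\to\Aut(X)\to\Sym_{4}\to1$; the rest is routine.
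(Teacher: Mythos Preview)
Your argument is correct and matches the paper's approach: the paper simply states that the proposition follows from the preceding lemmas without spelling out the final deduction, and you have supplied exactly that deduction --- identifying the extra $\Cyc_{2}$ with the Geiser involution, checking that the coordinate-permuting $\Sym_{4}$ fixes each plane $\Pi_{\pm}=\{s_{1}=0,\ y=\pm s_{2}\}$ while $\theta$ swaps them, and then enumerating the subgroups $G\subset\Sym_{4}\times\langle\theta\rangle$ with $\widetilde G\in\{\Alt_{4},\Sym_{4}\}$ that are not contained in the plane-fixing factor $\Sym_{4}\times\{1\}$. One small remark: your observation that $G'=1$ is a genuine sharpening of Lemma~8.2 (which allowed $G'\simeq\Cyc_{2}$), obtained a~posteriori from Lemma~8.7's conclusion that $\widetilde{\Aut}(X)=\Sym_{4}$; this is perfectly valid but worth flagging as such rather than presenting it as a reading of the earlier exact sequence.
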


\section{Case $19^{\circ}$}
In this case $X$ is a del Pezzo threefold with $\operatorname{rk}\operatorname{Cl}(X)=3$ that contains no planes.
\begin{theorem}(see~\cite[Theorem 4.2]{Pro1}) There are exactly six complete one-dimen\-sional systems of Weil divisors without fixed components on $X$. Each of them defines a structure of a quadric fibration on some small $\QQ$-factorialization of $X$. They fits into the following graph

\begin{center}
\includegraphics[scale=0.75]{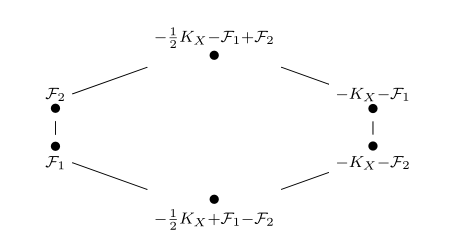}
\end{center}
where two systems are connected by a line iff the corresponding quadric fibrations factors through the same $\PP^{1}$-fibration.
\end{theorem}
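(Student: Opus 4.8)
The plan is to pass to a small $\QQ$-factorialization $\sigma\colon\widetilde{X}\to X$ and reformulate the statement as a description of the Mori structure of $\widetilde{X}$. Since $X$ has only terminal Gorenstein singularities with $\operatorname{rk}\operatorname{Cl}(X)=3$, any such $\widetilde{X}$ is a weak Fano threefold with terminal Gorenstein singularities and $\operatorname{rk}\Pic(\widetilde{X})=3$; the small $\QQ$-factorializations are exactly the crepant small $\QQ$-factorial models over $X$, pairwise related by flops. A complete one-dimensional system $|D|$ of Weil divisors on $X$ without fixed components corresponds to a movable class in $\operatorname{Cl}(X)_{\mathbb{R}}$, and on the small modification where its movable part is nef and base-point-free it defines a morphism to $\PP^{1}$. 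Thus the six systems to be produced are in bijection with the fiber-type extremal contractions onto $\PP^{1}$ occurring across all small $\QQ$-factorializations of $X$.

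First I would compute the movable cone $\overline{\operatorname{Mov}}(\widetilde{X})\subset N^{1}(\widetilde{X})_{\mathbb{R}}\cong\mathbb{R}^{3}$ and its Mori chamber decomposition: each chamber is the nef cone of one small $\QQ$-factorialization and adjacent chambers meet along flopping walls, while the boundary of the movable cone carries the fiber-type contractions. Running the two-ray game on each $\widetilde{X}$, I would classify every fiber-type contraction. A contraction onto $\PP^{1}$ has two-dimensional general fiber $F$, and $-\tfrac12 K_{X}$ restricts to a divisor $L$ on $F$ with $-K_{F}=2L$, which forces $F$ to be a quadric surface; hence the contraction is a quadric fibration. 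A contraction onto $\PP^{2}$ would instead give a conic bundle. Counting the boundary rays of $\overline{\operatorname{Mov}}(\widetilde{X})$ that map onto $\PP^{1}$ yields the asserted number six, the count being governed by the combinatorics of the associated root systems $\Delta'=A_{5}$, $\Delta''=A_{2}$ and by the primitive model $\widehat{X}/Z$ with $Z=(\PP^{1})^{2}$.

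It then remains to build the incidence graph. Two quadric fibrations $\widetilde{X}_{i}\to\PP^{1}$ and $\widetilde{X}_{j}\to\PP^{1}$ are to be joined exactly when they factor through a common $\PP^{1}$-fibration, that is, through a morphism $\widetilde{X}\to S$ onto a surface $S$ carrying a further fibration $S\to\PP^{1}$. Such a common factorization corresponds to a codimension-one face of the movable cone adjacent to both boundary rays, equivalently to an intermediate extremal contraction shared by the two systems. The primitive model $\widehat{X}\to(\PP^{1})^{2}$ already exhibits the basic pattern: its two projections $(\PP^{1})^{2}\to\PP^{1}$ produce two quadric fibrations, both factoring through the two $\PP^{1}$-fibration structures $\widehat{X}\to\PP^{1}\times\PP^{1}$, and these factorizations give the edges emanating from the corresponding vertices. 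Assembling all such intermediate fibrations reproduces the graph in the statement.

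The main obstacle is the exhaustiveness and exactness of the enumeration. One must determine the full movable cone, all of its flopping walls, and all of its boundary rays, and then verify that precisely six of these rays give fiber-type contractions onto $\PP^{1}$, ruling out any additional divisorial contractions, del Pezzo fibrations, or further quadric fibrations, while also correctly identifying the fiber type in each case. Equally delicate is pinning down the exact incidence pattern, since deciding when two systems share a common $\PP^{1}$-fibration requires careful bookkeeping of the intermediate contractions, tied to the root-system data. This combined birational-and-combinatorial analysis is precisely what is carried out in~\cite[Theorem 4.2]{Pro1}.
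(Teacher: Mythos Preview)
The paper does not give its own proof of this theorem: it is stated with the attribution ``(see~\cite[Theorem 4.2]{Pro1})'' and then used as a black box in the subsequent analysis of Case~$19^{\circ}$. So there is nothing in the paper to compare your proposal against; the author simply imports the result from Prokhorov.

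Your sketch is a reasonable outline of the strategy one expects in~\cite{Pro1}: pass to small $\QQ$-factorial models, analyze the movable cone and its Mori chamber decomposition, identify the one-dimensional complete linear systems without fixed components with the fiber-type contractions to $\PP^{1}$ appearing on the boundary, and use the root-lattice combinatorics ($\Delta'=A_{5}$, $\Delta''=A_{2}$) together with the primitive model $\widehat{X}\to(\PP^{1})^{2}$ to count them and read off the hexagonal incidence. You yourself note at the end that the actual verification---enumerating all chambers and boundary rays, excluding divisorial and del Pezzo--fibration walls, and checking the adjacency pattern---is what~\cite[Theorem 4.2]{Pro1} carries out. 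One small caution: the objects being classified are complete pencils, i.e.\ classes $D$ with $h^{0}(X,\mathcal{O}_{X}(D))=2$ and no fixed part, which is a slightly sharper condition than ``movable class giving a map to $\PP^{1}$''; in practice the two coincide here because $-\tfrac12 K_{X}$ restricts to a hyperplane class on each fiber, forcing the fibration to be a quadric bundle with $|D|$ the full pullback pencil, but this identification deserves a sentence.
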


Hence we have the following natural short exact sequence
$$0\longrightarrow G'\longrightarrow \Aut(X)\longrightarrow G''\longrightarrow 0$$
where $G'$ preserves all $\mathcal{F}_{i}$'s and $G''$ is a subgroup of $\Dih_{12}$.
\begin{remark} The Geiser involution acts on the hexagon as a central symmetry, because $\mathcal{F}+\tau(\mathcal{F})$ must be a multiple of $-\frac{1}{2}K_{X}$.
\end{remark}

Let $F\in \mathcal{F}_{1}$ be a general member and $\theta(F)$ be the corresponding member of the linear system $-K_{X}-\mathcal{F}_{1}$. Then $F+\theta(F)$ is the preimage of a quadric under the double cover $\pi$. So systems $\mathcal{F}_{1}$ and $-K_{X}-\mathcal{F}_{1}$ give us a system of quadrics parametrized by a conic in the space of quadrics, since a general point $t\in \PP^{3}$ lies on two quadrics in our family (two points in the preimage $\pi^{-1}(t)$ lie in two different members of the linear system $\mathcal{F}_{1}$). We can explicitly write this system in the form
\begin{equation}\label{eq6}p^{2}Q_{1}+pqQ_{2}+q^{2}Q_{3}=0
\end{equation}
where $(p:q)$ parametrize a point in $\PP^{1}$ and $Q_{i}=Q_{i}(x_{0}:x_{1}:x_{2}:x_{3})$ are quadric forms. The equation of the quartic $Q$ in this case is simply the discriminant of~\eqref{eq6} $$Q_{2}^{2}-4Q_{1}Q_{3}=0.$$
\begin{lemma} Three quadrics defined by $Q_{i}=0$ intersect transversally at $8$ points (we will denote quadrics by the same symbol as their equations). These points are nodes of the quartic~$Q$.
\end{lemma}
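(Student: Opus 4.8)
The plan is to reduce the statement to a purely local computation at each point of the base locus $B=\{Q_1=Q_2=Q_3=0\}$, using two structural facts: that $Q=Q_2^2-4Q_1Q_3$ is literally the discriminant of the conic of quadrics, and that by our standing hypothesis $Q$ is nodal, so its only singularities are ordinary double points. The guiding idea is that lying in $B$ forces a point to be singular on $Q$, while the \emph{type} of that singularity is controlled precisely by the transversality of the three quadrics; the two assertions of the lemma will then turn out to be two sides of the same coin.

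First I would check that every $p\in B$ is a singular point of $Q$. Differentiating $Q=Q_2^2-4Q_1Q_3$ gives
$$\partial_i Q = 2 Q_2\,\partial_i Q_2 - 4 Q_3\,\partial_i Q_1 - 4 Q_1\,\partial_i Q_3,$$
and at a point where $Q_1=Q_2=Q_3=0$ each summand carries a factor $Q_j$, so $\nabla Q|_p=0$ and $p\in\operatorname{Sing}(Q)$. Because $Q$ is nodal, $\operatorname{Sing}(Q)$ is finite; hence $B$ cannot contain a curve (such a curve would lie in $\operatorname{Sing}(Q)$), so $B$ is zero-dimensional. By B\'ezout the intersection number of three quadrics is then $2\cdot2\cdot2=8$, i.e. $B$ has length $8$ counted with multiplicity.

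Next I would determine the singularity type at a point $p\in B$. Choosing affine coordinates $u=(u_1,u_2,u_3)$ centred at $p$ and writing $\ell_j=d Q_j|_p$ for the linear parts, a second differentiation of $Q$ (all first-order terms dropping out since $Q_j(p)=0$) shows that the lowest-order part of $Q$ at $p$ is the quadratic form $\ell_2^2-4\ell_1\ell_3$, i.e. the discriminant of the linearised family $\lambda^2\ell_1+\lambda\mu\,\ell_2+\mu^2\ell_3$. A short linear-algebra argument now yields the crucial equivalence: this form is nondegenerate (equivalently, $p$ is a node of $Q$) if and only if $\ell_1,\ell_2,\ell_3$ are linearly independent, i.e. if and only if the three quadrics meet transversally at $p$. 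Indeed, if the $\ell_j$ are independent one may take them as coordinates and $u_2^2-4u_1u_3$ has rank $3$; conversely a nonzero common kernel vector of the $\ell_j$ lies in the radical of the form.

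Finally I would assemble the pieces. Since $Q$ is nodal, each of the finitely many points of $B\subseteq\operatorname{Sing}(Q)$ is a node, so by the equivalence just established the three quadrics meet transversally there, which makes each point a reduced, multiplicity-one point of $B$. As the total length is $8$, the scheme $B$ consists of exactly $8$ distinct reduced points, all of them nodes of $Q$, which is the assertion. I expect the only genuine content to be the Hessian identity together with its reformulation as linear independence of the differentials; the point requiring care is the transition from ``singular point of the nodal quartic'' to ``transverse base point'', which is supplied exactly by the node$\,\Leftrightarrow\,$independence equivalence, together with the verification that $B$ is honestly zero-dimensional so that B\'ezout applies.
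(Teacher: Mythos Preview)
Your proof is correct and follows essentially the same approach as the paper: show that $B\subset\operatorname{Sing}(Q)$, hence $B$ is finite by nodality, then identify the quadratic part of $Q$ at a base point with $\ell_2^2-4\ell_1\ell_3$ and use that nodality forces this to have full rank, i.e.\ the differentials $\ell_1,\ell_2,\ell_3$ are independent. The paper's version is terser---it phrases the failure of transversality as ``some $Q_i$ singular at $p$'' or ``tangent planes not in general position'' and asserts that in either case the quadratic term of $Q$ drops rank---whereas you make the Hessian computation and the rank-$3$ $\Leftrightarrow$ independence equivalence explicit; but the argument is the same.
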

\begin{proof} Obviously, the common points of $Q_{i}$'s are singular points of $Q$, so there are finite number of them. Let $p$ be one of such points. If $Q_{i}$ is singular at the point $p$ for some $i$ then locally the equation of $Q_{i}$ at this point contains no constant and linear terms, so the equation of the quartic $Q$ cannot contain the quadratic term of maximal rank. The same is true if tangent planes to $Q_{i}$'s are not in general position.
\end{proof}

Now we consider the equation~\eqref{eq6} as a quadric form in variables $x_{i}$ whose coefficients are quadric polynomials in variables $p, q$. Let $M(p:q)$ be the correspon\-ding symmetric matrix. Its determinant is a polynomial of degree $8$ (we denote this polynomial by $D(p:q)$), so in general situation we have $8$ singular members in the family. But in our case the situation is different.

\begin{lemma} Let $s$ be a node of the quartic $Q$ which is not a common point of $Q_{i}$'s. Then this point is the vertex of a singular member of the system~\eqref{eq6} which corresponds to a root of the polynomial $D(p:q)$ of multiplicity at least 2. Moreover, if $Q_{1}$ is this member then $s\in Q_{2}$.
\end{lemma}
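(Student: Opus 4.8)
The plan is to work directly with the determinantal description and to reduce everything to the fact that, because $s$ lies on the discriminant quartic but is not a base point of the family, the restriction of~\eqref{eq6} to $s$ is a nonzero perfect square. First I would fix notation: write $R(p,q;x)=p^{2}Q_{1}(x)+pqQ_{2}(x)+q^{2}Q_{3}(x)=x^{T}M(p,q)x$, so that $\Phi(x):=Q_{2}(x)^{2}-4Q_{1}(x)Q_{3}(x)$ is the equation of $Q$ and simultaneously the discriminant of $R(\cdot,\cdot;x)$ in $(p,q)$. Since $s$ is a node of $Q$ we have $\Phi(s)=0$, and since $s\notin Q_{1}\cap Q_{2}\cap Q_{3}$ the binary form $R(\cdot,\cdot;s)$ is not identically zero; being a nonzero binary quadratic of vanishing discriminant it has a unique double root $(p_{0}:q_{0})$. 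Reading off the coefficients of a perfect square gives
$$(Q_{1}(s):Q_{2}(s):Q_{3}(s))=(q_{0}^{2}:-2p_{0}q_{0}:p_{0}^{2}),$$
that is $Q_{1}(s)=\lambda q_{0}^{2}$, $Q_{2}(s)=-2\lambda p_{0}q_{0}$, $Q_{3}(s)=\lambda p_{0}^{2}$ with $\lambda\neq 0$ (the scalar $\lambda$ vanishes precisely when $s$ is a base point). In particular $R(p_{0},q_{0};s)=0$, so $s$ lies on the member of~\eqref{eq6} indexed by $(p_{0}:q_{0})$.

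Next I would show that $s$ is the \emph{vertex} of that member. Differentiating $\Phi=Q_{2}^{2}-4Q_{1}Q_{3}$ in the $x$-variables and substituting the proportionality above, the three gradient terms collapse into a single vector:
$$\nabla_{x}\Phi(s)=-4\lambda\bigl(p_{0}^{2}\nabla Q_{1}+p_{0}q_{0}\nabla Q_{2}+q_{0}^{2}\nabla Q_{3}\bigr)(s)=-8\lambda\,M(p_{0},q_{0})\,s.$$
Because $s$ is a singular point of $Q$ we have $\nabla_{x}\Phi(s)=0$, and since $\lambda\neq 0$ this forces $M(p_{0},q_{0})s=0$. Thus $s$ is a vertex of the quadric $\{R(p_{0},q_{0};\cdot)=0\}$, this quadric is singular, and $D(p_{0},q_{0})=\det M(p_{0},q_{0})=0$.

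Then I would upgrade this to multiplicity at least $2$ using Jacobi's formula $\partial D=\operatorname{tr}\!\bigl(\operatorname{adj}(M)\,\partial M\bigr)$, which is a polynomial identity valid for every $M$. If $\operatorname{corank}M(p_{0},q_{0})\geq 2$ then $\operatorname{adj}M(p_{0},q_{0})=0$ and both partials of $D$ vanish at $(p_{0},q_{0})$, so the multiplicity is at least $2$. If the corank is exactly $1$, then, $M$ being symmetric, $\operatorname{adj}M(p_{0},q_{0})=\mu\,ss^{T}$ with $\mu\neq 0$, whence
$$\frac{\partial D}{\partial p}(p_{0},q_{0})=\mu\,s^{T}(\partial_{p}M)s=\mu\,\frac{\partial R}{\partial p}(p_{0},q_{0};s),\qquad \frac{\partial D}{\partial q}(p_{0},q_{0})=\mu\,\frac{\partial R}{\partial q}(p_{0},q_{0};s).$$
Both right-hand sides vanish because $(p_{0}:q_{0})$ is a double root of the homogeneous form $R(\cdot,\cdot;s)$, and a multiple root kills all first partials. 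Hence $(p_{0}:q_{0})$ is a root of $D$ of multiplicity at least $2$. For the remaining assertion I would normalize: after a linear change of the parameter $(p:q)$ sending $(p_{0}:q_{0})$ to $(1:0)$ — which merely replaces $Q_{1},Q_{2},Q_{3}$ by a new basis of the same conic of quadrics, with the member at $(1:0)$ becoming the new $Q_{1}$ — the proportionality reads $Q_{1}(s)=Q_{2}(s)=0\neq Q_{3}(s)$; in particular $s\in Q_{2}$.

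I expect the main obstacle to be the clean identity $\nabla_{x}\Phi(s)=-8\lambda\,M(p_{0},q_{0})s$: the whole argument hinges on observing that the proportionality $(Q_{1}(s):Q_{2}(s):Q_{3}(s))=(q_{0}^{2}:-2p_{0}q_{0}:p_{0}^{2})$ is exactly what makes the gradient of the discriminant collapse onto the single vector $M(p_{0},q_{0})s$, and on verifying that $\lambda\neq 0$ is precisely the hypothesis that $s$ is not a base point. Once this is in place the vertex statement is immediate, and the multiplicity statement reduces to the short adjugate computation above, the only delicate point being the separate (degenerate) treatment of corank $\geq 2$.
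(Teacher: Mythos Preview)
Your argument is correct and takes a genuinely different route from the paper. The paper normalizes coordinates so that the distinguished member is $Q_{1}$ and $s=(1:0:0:0)$, then argues directly: from $Q_{2}^{2}=4Q_{1}Q_{3}$ one reads off $s\in Q_{2}$, $s\notin Q_{3}$; comparing linear terms shows that if $Q_{1}$ were smooth at $s$ then so would $Q$ be; and the multiplicity is obtained by inspecting the entries of $M(p,q)$ (the $(1,1)$-entry is divisible by $q^{2}$ and the first row and column by $q$, so $q^{2}\mid D$). Your approach replaces this coordinate bookkeeping by the single identity $\nabla_{x}\Phi(s)=-8\lambda\,M(p_{0},q_{0})s$ together with Jacobi's formula and the rank-one description of the adjugate, which is slicker and works uniformly without choosing coordinates.

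One point worth noting: the paper's proof establishes something slightly stronger than what you prove, namely that the singular member has corank exactly one (so that $s$ is \emph{the} vertex, not merely \emph{a} point of the vertex locus). It does this by invoking two ambient hypotheses of Case~$19^{\circ}$: if the member were a double plane then $Q$ would be singular along a curve, contradicting nodality; if it were a pair of planes then $X$ would contain a plane, which is excluded in this case. Your intrinsic argument shows $M(p_{0},q_{0})s=0$ and handles corank $\geq 2$ only for the multiplicity claim, not for the uniqueness of the vertex. This extra corank-one conclusion is used tacitly in the subsequent corollary and lemma (each of the four remaining nodes is the vertex of a distinct singular cone), so if you adopt your approach you should add the short corank argument separately.
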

\begin{proof} Since $s$ is a point of $Q$, it belongs to a single member of the system~\eqref{eq6}. We may assume that it is the quadric $Q_{1}$, so $(p:q)=(1:0)$. Since $s\in Q$ and the equation of $Q$ is $Q_{2}^{2}-4Q_{1}Q_{3}=0$, we see, that $s\in Q_{2}$ and $s\notin Q_{3}$. Thus if $s$ is a smooth point of $Q_{1}$ then it is a smooth point of $Q$, because linear terms of $Q$ and $Q_{1}$ are proportional with non-zero coefficient in a neighbourhood of $s$. Assume that $Q_{1}$ is singular along a line. If $Q_{1}$ is a double plane then $Q$ is singular along the whole curve $Q_{1}\cap Q_{2}$. If $Q_{1}$ is a pair of planes then the intersection of one of these planes with $Q$ is a double quadric, so the variety $X$ contains planes and this is not our case.

We may assume that $s=(1:0:0:0)$. Since $Q_{1}$ is singular at the point $s$, its equation contains no terms $x_{0}^{2}$ and $x_{0}x_{i}$. Also we know that $Q_{2}$ contains $s$, so its equation doesn't contain the term $x_{0}^2$. Hence the top left corner of the matrix $M(p:q)$ is divisible by $q^{2}$, while all elements in the top row and left column are divisible by $q$. This entailed that $D(p:q)$ is divisible by $q^{2}$, so $(1:0)$ is a multiple root of this polynomial.
\end{proof}
\begin{corollary} $D(p:q)$ has exactly four different roots of multiplicity $2$, and vertices of the corresponding quadrics are precisely $4$ missing nodes of the quartic~$Q$.
\end{corollary}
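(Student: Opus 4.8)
The plan is to combine the two preceding lemmas with a degree count for $D(p:q)$. First I would recall from the classification (type $19^{\circ}$ of Theorem 2.1) that the quartic $Q$ carries exactly $12$ nodes and contains no planes. By the lemma on the common locus, eight of these nodes are the transversal intersection points of $Q_{1}$, $Q_{2}$ and $Q_{3}$; hence precisely $12-8=4$ nodes fail to lie on all three quadrics. I will call these the \emph{missing} nodes, and the whole point is to match them with the roots of $D$.

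Next I would invoke the previous lemma: each missing node $s$ is the vertex of a degenerate member of the family~\eqref{eq6} and corresponds to a root of $D(p:q)$ of multiplicity at least $2$. The key remaining point is that distinct missing nodes produce distinct roots. For this I would rule out members of rank $\le 2$ exactly as in the proof of that lemma: a double plane would force $Q$ to be singular along an entire curve, contradicting nodality, and a pair of planes would produce a plane on $X$, contradicting $p=0$ in case $19^{\circ}$. Therefore every degenerate member has rank exactly $3$, hence a unique vertex, so the assignment sending a missing node to the root parametrizing its degenerate member is injective. This yields four pairwise distinct roots of $D$, each of multiplicity at least $2$.

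Finally the degree count closes the argument. Since $D(p:q)$ is a binary form of degree $8$, four distinct roots of multiplicity at least $2$ already account for the full degree; hence each of them has multiplicity exactly $2$ and there can be no further roots. In particular there are no ``extra'' degenerate members hidden among the eight common points of the $Q_{i}$, so the four degenerate members are precisely the rank-$3$ quadrics whose vertices are the four missing nodes, which is the assertion.

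The part needing the most care is the distinctness of the roots, that is, excluding that two missing nodes are vertices of a single degenerate member; once members of rank $\le 2$ are eliminated via the nodality and planelessness of $X$, this is immediate, and the degree count is then purely formal. The main obstacle is therefore not computational but lies in correctly importing the rank restriction from the previous lemma.
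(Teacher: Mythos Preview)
Your proposal is correct and follows precisely the line the paper intends: the paper states the corollary without proof, treating it as immediate from the two preceding lemmas together with the fact that $\deg D=8$, and your write-up simply spells out that deduction (eight nodes are the transversal intersection of the $Q_i$, the remaining four are vertices of rank-$3$ cones giving four distinct roots of multiplicity $\ge 2$, and the degree forces equality). The only minor remark is that the rank-$3$ restriction you reprove is already contained in the proof of the previous lemma, so you could simply cite it rather than repeat the argument.
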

\begin{lemma} The vertices of four singular quadrics are in general position.
\end{lemma}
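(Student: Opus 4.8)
The plan is to prove the equivalent assertion that the four vertices $s_1,s_2,s_3,s_4$ are not contained in a common plane. Indeed, if four points of $\PP^3$ are not coplanar then no three of them are collinear (three collinear points together with any fourth point would span only a plane), so ``not coplanar'' is exactly ``in general position''. Assume for contradiction that all four vertices lie in a plane $H$; choose coordinates so that $H=\{x_3=0\}$ and write $M(p:q)=p^2A_1+pqA_2+q^2A_3$ for the symmetric $4\times 4$ matrices of $Q_1,Q_2,Q_3$. Restricting the family to $H$ gives a conic family whose symmetric $3\times3$ matrix $M_H(p:q)$ is the upper-left block of $M(p:q)$ on the coordinates $x_0,x_1,x_2$, and I set $D_H(p:q)=\det M_H(p:q)$, a binary form of degree at most $6$.

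First I would show that each of the four parameters $(p_i:q_i)$ of the singular members $K_i$ is a double root of $D_H$. Fix $i$ and, as in the proof of the preceding lemma, adapt the coordinate on $\PP^1$ so that $K_i$ is the member at $(p:q)=(1:0)$, and choose the coordinates on $H$ so that $s_i=(1:0:0:0)$ (which lies in $H$). Then $A_1 s_i=0$ because $s_i$ is the vertex of the cone $K_i$, so the whole first row and column of $A_1$ vanish, while $s_i\in Q_2$ gives $(A_2)_{00}=0$. Hence in the block $M_H$ the entry $(M_H)_{00}=q^2(A_3)_{00}$ is divisible by $q^2$ and the remaining entries of its first row and column are divisible by $q$; the same cofactor expansion used earlier for $D(p:q)$ then shows $q^2\mid D_H$, i.e. $(p_i:q_i)$ is a double root. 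Since the four parameters are pairwise distinct (they are the four distinct double roots of $D(p:q)$), the form $D_H$ would have at least $4\cdot 2=8$ roots counted with multiplicity, contradicting $\deg D_H\le 6$. This degree count is the heart of the argument.

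The only gap in this reasoning, and the point I expect to be the main obstacle, is the degenerate possibility $D_H\equiv 0$, in which the multiplicity count is vacuous. Using the bordered-determinant identity one has $D_H(p:q)=\check H^{\,T}\operatorname{adj}(M(p:q))\,\check H$ up to a nonzero scalar, where $\check H$ is the covector of $H$; thus $D_H\equiv 0$ means precisely that $[H]$ is a base point of the one-parameter family of dual quadrics $\{\operatorname{adj}(M(p:q))\}$, equivalently that $H$ is tangent to every member of the family. Since only the four cones $K_i$ are singular, a general member is a smooth quadric, and the kernels of the restricted matrices $M_H(p:q)$ would then sweep out a rational curve in $H$ passing through the four distinct vertices $s_i$. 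I would rule this out by showing that such a configuration is incompatible with the eight base points $Q_1\cap Q_2\cap Q_3$ being reduced and transversal (equivalently, with $Q$ being nodal), so that $D_H\not\equiv 0$ and the contradiction above is genuine.
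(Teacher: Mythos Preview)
Your strategy is genuinely different from the paper's. The paper proceeds by brute-force coordinate computation in two separate subcases: if three vertices are collinear, one writes everything out and finds that all $Q_i$ contain a common line, so $Q$ is singular along it; if the four vertices span a plane $H$, one writes everything out and finds that $Q|_H$ is a double conic, so $X$ contains a plane, which is forbidden in type $19^{\circ}$. Your degree-count idea (four distinct double roots in a binary form of degree $\le 6$) is cleaner and correctly unifies the two subcases under the single hypothesis ``coplanar''.

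However, the gap you flag is not a technicality but the whole difficulty. If the four vertices lie in $H$ and your argument for $D_H\not\equiv 0$ went through, the configuration would be impossible for \emph{any} nodal $Q$, with no reference to the hypothesis that $X$ has no planes. But the paper's computation shows that coplanar vertices \emph{do} occur for nodal $Q$: what happens is precisely that $Q|_H$ becomes a double conic, and this is ruled out only by the type-$19^{\circ}$ assumption that $X$ carries no planes. Consequently, in every genuinely coplanar situation one must have $D_H\equiv 0$; your ``non-degenerate'' branch is vacuous, and the degree count never fires. All of the content sits in the case you leave open.

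Your sketch for that case does not close the gap. Knowing that $H$ is tangent to every member and that the singular points of the restricted conics sweep out a rational curve through the $s_i$ does not visibly interact with the eight transversal base points $Q_1\cap Q_2\cap Q_3$, which need not lie in $H$ at all. What one actually needs is to extract from $D_H\equiv 0$ either that $Q$ is singular along a curve (collinear subcase) or that $Q|_H$ is a perfect square (spanning subcase), and then invoke the no-planes hypothesis of type $19^{\circ}$. That is essentially the paper's computation; without it, the proof is incomplete.
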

\begin{proof} Assume that three vertices are colinear. We may assume that quadrics $Q_{1}$, $Q_{3}$ and $Q_{1}+Q_{2}+Q_{3}$ are singular at the points $(1:0:0:0)$, $(0:1:0:0)$ and $(1:1:0:0)$ respectively. One can solve the system of equations on coefficients which arise from this singularities and see that all $Q_{i}$'s contain the line $x_{0}=x_{1}=0$, so the quartic $Q$ is singular along this line.

Assume that four vertices are coplanar. We may assume that quadrics $Q_{1}$, $Q_{3}$, $Q_{1}+Q_{2}+Q_{3}$ and $t^{2}Q_{1}+tQ_{2}+Q_{3}$ are singular at the points $(1:0:0:0)$, $(0:1:0:0)$, $(0:0:1:0)$ and $(1:1:1:0)$ respectively. One can solve the system of equations on coefficients that arises from this singularities and see that $Q|_{x_{3}=0}$ is a double quadric, so the variety $X$ contains planes and this is not our case.
\end{proof}

Assume that $(1:0), (0:1), (1:1)$ and $(t:1)$ are roots of $D(p:q)$ and $(1:0:0:0)$, $(0:1:0:0)$, $(0:0:1:0)$ and $(0:0:0:1)$ are vertices of the corresponding quadrics. We will call these points \emph{base nodes} of the quartic $Q$. Using the fact that $Q_{1}$ is singular at $(1:0:0:0)$, $Q_{3}$ is singular at $(0:1:0:0)$, $Q_{2}$ contains $(1:0:0:0)$ and $(0:1:0:0)$, $Q_{1}+Q_{2}+Q_{3}$ is singular at $(0:0:1:0)$ and $t^{2}Q_{1}+tQ_{2}+Q_{3}$ is singular at $(0:0:0:1)$, we obtain the following equations:

$$Q_{1}=a_{1}x_{1}^{2}+a_{2}x_{2}^{2}+a_{3}x_{3}^{2}+a_{12}x_{1}x_{2}+a_{13}x_{1}x_{3}+a_{23}x_{2}x_{3}$$
\begin{equation}\label{eq7}Q_{3}=b_{0}x_{0}^{2}+b_{2}x_{2}^{2}+b_{3}x_{3}^{2}+b_{02}x_{0}x_{2}+b_{03}x_{0}x_{3}+ta_{23}x_{2}x_{3}
\end{equation}
$$Q_{2}=(-a_{2}-b_{2})x_{2}^{2}-(ta_{3}+\frac{b_{3}}{t})x_{3}^{2}+c_{01}x_{0}x_{1}-b_{02}x_{0}x_{2}-\frac{b_{03}}{t}x_{0}x_{3}-a_{12}x_{1}x_{2}-ta_{13}x_{1}x_{3}-$$
$$-(1+t)a_{23}x_{2}x_{3}.$$

Now we introduce new coordinates $p'=p-tq, q'=p-q$. In these coordinates the point $(p:q)=(1:1)$ becomes $(1:0)$ and the point $(p:q)=(t:1)$ becomes $(0:1)$. In these coordinates our system can be written as follows $$p'^{2}Q_{1}'+p'q'Q_{2}'+q'^{2}Q_{3}'.$$
We know that $Q_{2}'$ contains points $(0:0:1:0)$ and $(0:0:0:1)$. This gives us following conditions on coefficients in the equation~\eqref{eq7}: $a_{2}=b_{2}$, $b_{3}=t^{2}a_{2}$. Also we can apply a diagonal transformation and get the following system:
$$Q_{1}=x_{1}^{2}+x_{2}^{2}+x_{3}^{2}+a_{12}x_{1}x_{2}+a_{13}x_{1}x_{3}+a_{23}x_{2}x_{3}$$
\begin{equation}\label{eq8}
Q_{3}=x_{0}^{2}+x_{2}^{2}+t^2x_{3}^{2}+b_{02}x_{0}x_{2}+b_{03}x_{0}x_{3}+ta_{23}x_{2}x_{3}
\end{equation}
$$Q_{2}=-2x_{2}^{2}-2tx_{3}^{2}+c_{01}x_{0}x_{1}-b_{02}x_{0}x_{2}-\frac{b_{03}}{t}x_{0}x_{3}-a_{12}x_{1}x_{2}-ta_{13}x_{1}x_{3}-(1+t)a_{23}x_{2}x_{3}.$$

Let us denote this system of equations by $\operatorname{Sys}(t; a_{12}, a_{13}, a_{23}, b_{02}, b_{03}, c_{01})$. Now our goal is to describe its automorphism group for different parameters.

Let $G'\subset \Aut(X)$ be a subgroup that preserves the pair of systems $\mathcal{F}_{1}$ and $-K_{X}-\mathcal{F}_{1}$. Then we may assume that either $G'=\Aut(X)$ or $G'$ is a subgroup of $\Aut(X)$ of index $3$ (if the index is $2$ then we can choose another pair of systems). The group $G'$ contains the Geiser involution, let $H$ be the quotient $G'/\Cyc_{2}$. Then $H$ is a subgroup of $\operatorname{PGL}_{4}(\operatorname{k})$ that preserves our system of quadrics. We have the natural short exact sequence
$$0\longrightarrow H'\longrightarrow H\longrightarrow H''\longrightarrow 0,$$
where $H'$ preserves all members of the system and $H''$ acts faithfully on $\PP^{1}$. The group $H''$ preserves the quadruple of points corresponding to singular members of the system of quadrics. In general situation the stabilizer of quadruple of points in $\PP^{1}$ is $\Cyc_{2}\times \Cyc_{2}$, but if $t=-1, \frac{1}{2}$ or $2$ then such stabilizer is $\Dih_{8}$ and if $t=\frac{1\pm \sqrt{3}i}{2}$ then such stabilizer is $\Alt_{4}$. The group $H'$ acts by changing signs of variables $x_{1}$, $x_{2}$ and $x_{3}$ and is isomorphic to $\Cyc_{2}^{n}$ for some $0\leq n\leq 3$.

\subsection{Calculation of the group $H''$}

\begin{lemma}\label{le9.5} Assume that $H''$ contains an element of order $3$. Then we have the following:
\begin{enumerate}
\item
In some coordinates we have $t=\frac{1+\sqrt{3}i}{2}$, $a_{12}=a_{13}=a_{23}$, $c_{01}=-b_{02}=-\frac{b_{03}}{t}$;
\item
The group $H''$ is isomorphic to $\Alt_{4}$ iff $a_{12}=-c_{01}$ and to $\Cyc_{3}$ otherwise.
\end{enumerate}
\end{lemma}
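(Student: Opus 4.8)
The plan is to analyse the order-$3$ element of $H''$ first on the parameter line $\PP^1$, and only afterwards translate its action into relations on the coefficients of $\operatorname{Sys}(t;a_{12},a_{13},a_{23},b_{02},b_{03},c_{01})$.

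First I would study the induced action on $\PP^1$. An element $g''\in H''$ of order $3$ lies in $\operatorname{PGL}_2(\operatorname{k})$ and hence has exactly two fixed points on $\PP^1$, while every one of its orbits has length $1$ or $3$. Since $H''$ preserves the four-point set $\{(1:0),(0:1),(1:1),(t:1)\}$ corresponding to the singular members, the only way to partition four points into orbits of sizes $1$ and $3$ is $4=1+3$: one of the four points is fixed and the remaining three are cyclically permuted. A quadruple admitting such a symmetry is equianharmonic, so its cross-ratio is a primitive sixth root of unity; for the chosen labelling this is exactly the condition $t^2-t+1=0$, i.e. $t=\frac{1\pm\sqrt{3}i}{2}$. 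Relabelling the four base nodes (equivalently, applying complex conjugation, which interchanges the two roots) we may assume $t=\frac{1+\sqrt{3}i}{2}$, in agreement with the value for which the stabiliser of the quadruple is $\Alt_4$ noted above. A concrete order-$3$ M\"obius map realising this is $u\mapsto\frac{1}{1-u}$ in the affine coordinate $u=p/q$: it fixes $(t:1)$ and cycles $(1:0)\to(0:1)\to(1:1)\to(1:0)$.

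Next I would lift $g''$ to $\PP^3$. Because the base nodes are the vertices of the singular members, the lift $g$ fixes the vertex $(0:0:0:1)$ and cyclically permutes the vertices $(1:0:0:0),(0:1:0:0),(0:0:1:0)$; thus in the coordinates of \eqref{eq8} it is a monomial transformation cyclically permuting $x_0,x_1,x_2$ and rescaling $x_3$ by a scalar to be determined. Tracking how the reparametrisation acts on the pencil, one computes that $g$ must send, projectively, $Q_1\mapsto Q_3$, $Q_3\mapsto Q_1+Q_2+Q_3$ and $Q_1+Q_2+Q_3\mapsto Q_1$, while fixing $t^2Q_1+tQ_2+Q_3$. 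Matching the pure square terms of $Q_1$ and $Q_3$ pins down the scalar on $x_3$ in terms of $t$, and then matching the mixed terms in these three identities forces $a_{12}=a_{13}=a_{23}$ together with $c_{01}=-b_{02}=-\frac{b_{03}}{t}$. This yields statement (i). The main labour, and the step most likely to hide a sign or scaling slip, is precisely this bookkeeping of scalars, since $x_3$ and $x_0$ are scaled differently, which is what produces the factor $t$ in $-\frac{b_{03}}{t}$.

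Finally I would identify $H''$. By the first paragraph $H''$ is a subgroup of the stabiliser of an equianharmonic quadruple, which is $\Alt_4$, and it contains an element of order $3$; the only subgroups of $\Alt_4$ containing $\Cyc_3$ are $\Cyc_3$ and $\Alt_4$ itself, so $H''\simeq\Cyc_3$ or $\Alt_4$. To decide between them I would test whether the extra involution is realised: take the double transposition of the four parameter points, say $(1:0)\leftrightarrow(0:1)$ and $(1:1)\leftrightarrow(t:1)$, lift it to the involution $h\in\operatorname{PGL}_4(\operatorname{k})$ swapping the corresponding vertices in pairs, and impose that $h$ preserves the system $\operatorname{Sys}$. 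Under the relations already obtained in (i), this single extra invariance condition collapses to the equation $a_{12}=-c_{01}$. Hence $H''\simeq\Alt_4$ precisely when $a_{12}=-c_{01}$, and $H''\simeq\Cyc_3$ otherwise, which is statement (ii). The only genuine obstacle is the coordinate computation of the middle paragraph; the reductions in the first and last paragraphs are purely group-theoretic.
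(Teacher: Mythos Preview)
Your strategy is the same as the paper's: write down an explicit order-$3$ transformation on $(p:q)$ and on $(x_0:x_1:x_2:x_3)$, compare $\operatorname{Sys}(t;\ldots)$ with its transform, and then test an extra involution for part~(ii). Two points, however, deserve attention.

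First, you and the paper make opposite choices of fixed vertex. The paper's order-$3$ map fixes $(1:0)$ on $\PP^1$ and hence the vertex $(1:0:0:0)$, cycling $x_1,x_2,x_3$; this is precisely why the resulting relations read $a_{12}=a_{13}=a_{23}$ and $c_{01}=-b_{02}=-b_{03}/t$, i.e.\ symmetry among the indices $1,2,3$ with $0$ distinguished. Your map $u\mapsto 1/(1-u)$ fixes $(t:1)$ and hence $(0:0:0:1)$, cycling $x_0,x_1,x_2$; the comparison of systems then links $\{c_{01},a_{12},b_{02}\}$ in one orbit and $\{b_{03},a_{13},a_{23}\}$ in another, which does not produce the stated relations without a further relabelling. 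So your assertion that ``matching the mixed terms \ldots forces $a_{12}=a_{13}=a_{23}$ together with $c_{01}=-b_{02}=-\frac{b_{03}}{t}$'' is not what your chosen lift actually yields. The fix is simply to take the fixed point to be $(1:0)$ from the outset, as the paper does.

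Second, you omit a step the paper treats carefully: the lift of $g''\in H''$ to an element of $\operatorname{PGL}_4$ preserving the system is only determined up to composition with elements of $H'$ (diagonal sign changes on $x_1,x_2,x_3$). The paper notes that after conjugating by such sign changes one is reduced to two cases (change all three signs or none), solves both, and observes that one case is contained in the other. Your sketch fixes a single monomial lift and a single scalar on $x_3$, so as written it could miss solutions or introduce spurious ones; the sign-change bookkeeping is exactly where the minus signs in $c_{01}=-b_{02}=-b_{03}/t$ come from. The same remark applies to part~(ii): the paper again runs through the sign-change possibilities before concluding $a=b$, whereas you assume a single canonical lift of the double transposition.
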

\begin{proof} Consider the map $$(p:q)\longmapsto \left(p+\frac{1}{t-1}q:\frac{1}{t-1}q\right),\ (x_{0}:x_{1}:x_{2}:x_{3})\longmapsto\left(\frac{x_{0}}{t}:x_{3}:x_{1}:x_{2}\right).$$ On $\PP^{1}$ it preserves $(1:0)$ and permutes cyclically three other points. It transforms our system $\operatorname{Sys}(t; a_{12}, a_{13}, a_{23}, b_{02}, b_{03}, c_{01})$ into $\operatorname{Sys}(t; a_{23}, a_{12}, a_{13}, -\frac{b_{03}}{t}, tc_{01}, b_{02})$. Now we also can change some signs of variables $x_{1}, x_{2}, x_{3}$ with the aim of getting the original system. But we need to consider only two cases: change all signs or don't change anything, since we can conjugate our map with diagonal map that changes some signs (i.e. we can consider our equations in another basis). In the first case we obtain the system
$$a_{12}=a_{13}=a_{23},\ b_{02}=-\frac{b_{03}}{t},\ b_{03}=tc_{01},\ c_{01}=b_{02}$$ that gives us $a_{12}=a_{13}=a_{23},\ b_{02}=b_{03}=c_{01}=0$, and in the second case we obtain the system
$$a_{12}=a_{13}=a_{23},\ b_{02}=\frac{b_{03}}{t},\ b_{03}=-tc_{01},\ c_{01}=-b_{02}$$ that gives us $a_{12}=a_{13}=a_{23},\ c_{01}=-b_{02}=-\frac{b_{03}}{t}$. This proves the first statement. Notice that the first case is a subcase of the second one.

Consider the map $$(p:q)\longmapsto \left(p-q:\frac{1}{t}p-q\right),\ (x_{0}:x_{1}:x_{2}:x_{3})\longmapsto\left(\frac{x_{3}}{t^2}:x_{2}:\frac{x_{1}}{t}:t x_{0}\right).$$
On $\PP^{1}$ it swaps $(1:0)$ with $(t:1)$ and $(0:1)$ with $(1:1)$. It transforms our system $\operatorname{Sys}(t; a, a, a, b, tb, -b)$ into $\operatorname{Sys}(t; a, b, b, a, tb, -a)$. Again we can change signs of $x_{1}, x_{2}$ and $x_{3}$ in the latter system, but all signs changes except the trivial one leads to $a=b=0$. The trivial one leads to $a=b$. This proves the second statement.
\end{proof}

\begin{remark} The proof of the previous statement also gives us an explicit expres\-sion of generators of the group $H$.
\end{remark}

\begin{lemma} Assume that $H''$ contains an element of order $4$. Then we have the following:
\begin{enumerate}
\item
In some coordinates we have $t=-1$, and either $a_{12}=a_{13}=b_{02}=b_{03},\ a_{23}=c_{01}=0$ or $a_{12}=a_{13}=b_{02}=-b_{03},\ a_{23}=-c_{01}$;
\item
The group $H''$ is isomorphic to $\Dih_{8}$.
\end{enumerate}
\end{lemma}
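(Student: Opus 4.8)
The plan is to mirror the argument of Lemma~\ref{le9.5}, replacing the order-$3$ symmetry by an order-$4$ one. First I would invoke the classification of stabilizers of four points on $\PP^{1}$ recalled just before the statement: neither $\Cyc_{2}\times\Cyc_{2}$ nor $\Alt_{4}$ contains an element of order $4$, so the hypothesis that $H''$ has such an element forces the four roots of $D(p:q)$ to form a harmonic quadruple, i.e. $t\in\{-1,\tfrac{1}{2},2\}$. These three values are permuted among themselves by the $\Sym_{3}$-action that relabels the base nodes (equivalently, permutes the roots of $D(p:q)$), so after a change of coordinates I may assume $t=-1$. In particular $H''$ is then contained in the stabilizer $\Dih_{8}$ of the harmonic quadruple $\{(1:0),(0:1),(1:1),(-1:1)\}$, so it remains to pin down the parameters and to show that the whole of $\Dih_{8}$ is realized.

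For statement $(i)$ I would write down the monomial transformation of $\PP^{3}$ that cyclically permutes the four base nodes in the $4$-cycle induced by the order-$4$ Möbius map $(p:q)\mapsto(p+q:q-p)$ of $\PP^{1}$, which sends $(1:0)\mapsto(-1:1)\mapsto(0:1)\mapsto(1:1)\mapsto(1:0)$, exactly as the explicit maps were produced in the proof of Lemma~\ref{le9.5}. Requiring this transformation to carry the system $\operatorname{Sys}(-1;a_{12},a_{13},a_{23},b_{02},b_{03},c_{01})$ back to itself, after composing with a suitable sign change of the variables $x_{1},x_{2},x_{3}$ (the freedom coming from $H'$), yields a system of relations on the six parameters. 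As in Lemma~\ref{le9.5}, conjugation by a diagonal sign change is a harmless change of basis, so only two essentially different sign patterns (``change nothing'' and ``change all three signs'') need to be examined, and they produce precisely the two families $a_{12}=a_{13}=b_{02}=b_{03},\ a_{23}=c_{01}=0$ and $a_{12}=a_{13}=b_{02}=-b_{03},\ a_{23}=-c_{01}$. The computational heart of the lemma, and its main obstacle, is this parameter bookkeeping: one must check that imposing the order-$4$ symmetry together with the permitted sign changes is consistent only for the two listed families and yields no further solutions.

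To finish I would prove $(ii)$. Since $H''\subseteq\Dih_{8}$ already contains the order-$4$ rotation $r$, it equals either $\Cyc_{4}$ or $\Dih_{8}$, so it suffices to exhibit one reflection. Under either family (with $t=-1$, so $t^{2}=1$) a direct substitution shows that an involution of the form $(x_{0}:x_{1}:x_{2}:x_{3})\mapsto(x_{1}:x_{0}:x_{2}:\pm x_{3})$ interchanges $Q_{1}$ and $Q_{3}$ and fixes $Q_{2}$ (for the first family one takes the plain swap, for the second one combines it with $x_{3}\mapsto-x_{3}$). Hence it preserves the system of quadrics and the quartic $Q$, it fixes the pair $\{\mathcal{F}_{1},-K_{X}-\mathcal{F}_{1}\}$, and so it descends to an element $s\in H''$ swapping $(1:0)\leftrightarrow(0:1)$ on $\PP^{1}$. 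As $\langle r,s\rangle=\Dih_{8}$, we conclude $H''\simeq\Dih_{8}$.
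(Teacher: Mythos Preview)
Your proposal is correct and follows essentially the same approach as the paper: the same order-$4$ M\"obius map $(p:q)\mapsto(p+q:-p+q)$ and the same reduction to two sign patterns yield exactly the two parameter families in~(i). For~(ii) you exhibit the reflection $(p:q)\mapsto(q:p)$ via $(x_0:x_1:x_2:x_3)\mapsto(x_1:x_0:x_2:\pm x_3)$, whereas the paper uses the other type of reflection $(p:q)\mapsto(-p:q)$ via $(x_0:x_1:x_2:x_3)\mapsto(\mp x_0:x_1:x_3:x_2)$; both choices generate $\Dih_8$ together with the order-$4$ rotation, so this is a harmless variation.
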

\begin{proof}Consider the map $$(p:q)\longmapsto (p+q:-p+q),\ (x_{0}:x_{1}:x_{2}:x_{3})\longmapsto\left(x_{2}:x_{3}:\frac{x_{1}}{2}:\frac{x_{0}}{2}\right).$$ On $\PP^{1}$ it cyclically permutes $(0:1), (1:1), (1:0)$ and $(-1:1)$. It transforms our system $\operatorname{Sys}(t; a_{12}, a_{13}, a_{23}, b_{02}, b_{03}, c_{01})$ into $\operatorname{Sys}(t; b_{02}, a_{12}, -c_{01}, b_{03}, a_{13}, a_{23})$. Now we also can change some signs of variables $x_{1}, x_{2}, x_{3}$ with the aim of getting the original system. But we need to consider only two cases: change all signs or don't change anything, since we can change the basis as in the previous lemma. In the first case we obtain the system
$$a_{12}=b_{02}=b_{03}=a_{13},\ a_{23}=-c_{01}=-a_{23},$$ and in the second case we obtain the system
$$a_{12}=b_{02}=-b_{03}=a_{13},\ a_{23}=-c_{01}.$$ This proves the first statement. Notice that the first case is not a subcase of the second one.

Consider the map $$(p:q)\longmapsto (-p:q),\ (x_{0}:x_{1}:x_{2}:x_{3})\longmapsto(-x_{0}:x_{1}:x_{3}:x_{2}).$$ On $\PP^{1}$ it preserves $(1:0)$ and $(0:1)$ and swaps $(1:1)$ and $(-1:1)$. It preserves the system $\operatorname{Sys}(t; a, a, b, a, -a, -b)$. Consider the map $$(p:q)\longmapsto (-p:q),\ (x_{0}:x_{1}:x_{2}:x_{3})\longmapsto(x_{0}:x_{1}:x_{3}:x_{2}).$$ It preserves the system $\operatorname{Sys}(t; a, a, 0, a, a, 0)$. This proves the second statement.
\end{proof}

\begin{lemma} Assume that $H''$ contains an element of order $2$ that fixes two base nodes, but doesn't contain an element of order $4$. Then we have the following:
\begin{enumerate}
\item
In some coordinates $t=-1$, and either (A) $a_{12}=a_{13},\ b_{02}=b_{03}\neq 0,\ c_{01}=0$ or (B) $a_{12}=a_{13},\ b_{02}=-b_{03}$;
\item
The group $H''$ is isomorphic to $\Cyc_{2}^{2}$ if $a_{12}=a_{23}=b_{02}=-b_{03}$.
\end{enumerate}
\end{lemma}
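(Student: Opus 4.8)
The plan is to follow the template of the preceding lemmas on elements of order $3$ and order $4$: exploit the action of $H''$ on the four base nodes to force $t=-1$ and to identify the given involution explicitly, then lift it to $\PP^{3}$ and impose invariance of the pencil~\eqref{eq6} to read off the relations among the coefficients of $\operatorname{Sys}(t;a_{12},a_{13},a_{23},b_{02},b_{03},c_{01})$. The group $H''$ sits inside the stabiliser in $\operatorname{PGL}_{2}(\operatorname{k})$ of the four base points $(1:0),(0:1),(1:1),(t:1)$, and this stabiliser is $\Cyc_{2}^{2}$ for generic $t$, is $\Dih_{8}$ precisely for $t\in\{-1,\tfrac12,2\}$, and is $\Alt_{4}$ for $t=\tfrac{1\pm\sqrt{3}i}{2}$. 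An involution that fixes two of the four points and swaps the other two is a transposition of the $4$-point set, never a double transposition, whereas every involution of $\Cyc_{2}^{2}$ and of $\Alt_{4}$ is a double transposition fixing no base point. Hence the hypothesis forces the stabiliser to be $\Dih_{8}$, so $t\in\{-1,\tfrac12,2\}$; relabelling the base nodes by the M\"obius transformation permuting the roots (with the matching permutation of the vertices, i.e.\ of the coordinates $x_{i}$) normalises $t=-1$ and arranges that the given involution $\sigma$ fixes $(1:0),(0:1)$ and swaps $(1:1),(-1:1)$, so that $\sigma$ acts as $(p:q)\mapsto(-p:q)$ on $\PP^{1}$.

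Next I would lift $\sigma$ to $\PP^{3}$. Any lift must fix the vertices $(1:0:0:0),(0:1:0:0)$ and interchange $(0:0:1:0),(0:0:0:1)$, so it has the shape $(x_{0}:x_{1}:x_{2}:x_{3})\mapsto(\alpha_{0}x_{0}:\alpha_{1}x_{1}:\alpha_{2}x_{3}:\alpha_{3}x_{2})$; matching the coefficients of $x_{1}^{2},x_{2}^{2},x_{3}^{2}$ in $Q_{1}$ forces the scalars attached to $x_{1},x_{2},x_{3}$ to have equal squares, and after an overall rescaling and a conjugation by a diagonal $\pm1$-matrix (which merely flips signs of off-diagonal coefficients and is absorbed into ``some coordinates'', exactly the ``all signs or none'' step of the two previous lemmas) the lift becomes $\phi_{\epsilon}\colon(x_{0}:x_{1}:x_{2}:x_{3})\mapsto(\epsilon x_{0}:x_{1}:x_{3}:x_{2})$ with $\epsilon=\pm1$. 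Since $\sigma$ negates the middle coefficient of~\eqref{eq6}, invariance of the pencil is equivalent to $Q_{1}\circ\phi_{\epsilon}=\mu Q_{1}$, $Q_{2}\circ\phi_{\epsilon}=-\mu Q_{2}$, $Q_{3}\circ\phi_{\epsilon}=\mu Q_{3}$. Substituting $t=-1$ into~\eqref{eq8} and comparing coefficients yields $\mu=1$ together with $a_{12}=a_{13}$ (from $Q_{1}$), the relation $b_{02}=\epsilon b_{03}$ (from $Q_{3}$), and $(\epsilon+1)c_{01}=0$ (the only new relation from $Q_{2}$). For $\epsilon=1$ this is case~(A), $a_{12}=a_{13}$, $b_{02}=b_{03}$, $c_{01}=0$ (the inequality $b_{02}\neq0$ being recorded to exclude a more symmetric degeneration, which would carry an element of order $4$ against the standing hypothesis); for $\epsilon=-1$ this is case~(B), $a_{12}=a_{13}$, $b_{02}=-b_{03}$, with $c_{01}$ unconstrained. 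This settles~(i).

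For~(ii) I would produce a second involution. Inside the Klein four-group $\{1,\sigma,\tau,\sigma\tau\}\subset\Dih_{8}$ the element $\tau$ is the other vertex reflection $(p:q)\mapsto(q:p)$, fixing $(1:1),(-1:1)$ and swapping $(1:0),(0:1)$; its lift interchanges $x_{0}\leftrightarrow x_{1}$ and rescales $x_{2},x_{3}$, so that invariance of the pencil under $\tau$ reads $Q_{1}\circ\tilde\tau=\mu Q_{3}$, $Q_{3}\circ\tilde\tau=\mu Q_{1}$, $Q_{2}\circ\tilde\tau=\mu Q_{2}$. A direct coefficient comparison in case~(B) confirms that the relations $a_{12}=a_{23}=b_{02}=-b_{03}$ suffice for $\tilde\tau$ to preserve~\eqref{eq6}; then $H''$ contains the two commuting involutions $\sigma$ and $\tau$, hence a copy of $\Cyc_{2}^{2}$, and since by hypothesis $H''$ has no element of order $4$ it cannot exhaust $\Dih_{8}$, so $H''\cong\Cyc_{2}^{2}$.

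The one genuine obstacle is the coordinate bookkeeping: correctly pinning down the scalars in each lift (which are constrained by the non-unit entries $t$, $t^{2}$, $1+t$ occurring in $Q_{2},Q_{3}$) and controlling the residual diagonal-sign freedom so that the analysis really collapses to the two cases $\epsilon=\pm1$ for $\sigma$ and to the single verification for $\tau$. The conceptual input---the $\Dih_{8}$ dichotomy that yields $t=-1$---is immediate from the classification of four-point stabilisers, and every remaining step is the same finite linear-algebra computation already carried out in Lemma~\ref{le9.5} and its order-$4$ analogue.
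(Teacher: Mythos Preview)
Your strategy is the paper's: normalise $t=-1$ using that an involution fixing two of the four base points forces the $\Dih_8$ cross-ratio, write down the lift of $\sigma:(p:q)\mapsto(-p:q)$ to $\PP^3$, compare coefficients in $\operatorname{Sys}$, and then for (ii) repeat with the second reflection $\tau:(p:q)\mapsto(q:p)$, lifted as $(x_0:x_1:x_2:x_3)\mapsto(x_1:x_0:x_2:x_3)$ composed with sign changes. That is exactly how the paper proceeds.

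Two bookkeeping points where your sketch deviates slightly from what actually happens. First, the conjugation-by-diagonal-signs step does \emph{not} collapse the lifts of $\sigma$ to just the two $\phi_\epsilon$: conjugation fixes $\alpha_0,\alpha_1$ and only toggles $(\alpha_2,\alpha_3)\mapsto(-\alpha_2,-\alpha_3)$, so four cases survive (the paper phrases this as the four sign changes of $x_0$ and $x_2$). The two cases with $\alpha_2\alpha_3=-1$ force $a_{12}=a_{13}=a_{23}=b_{02}=b_{03}=0$ and are thus subcases of (A) or (B); you cannot skip this check by appealing to the ``all signs or none'' trick of Lemma~\ref{le9.5}, because there the permutation was a $3$-cycle on $x_1,x_2,x_3$ (where conjugation genuinely reduces eight sign patterns to two), whereas here it is a transposition. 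Second, your gloss on $b_{02}\neq0$ is off: setting $b_{02}=0$ in (A) does not produce an element of order $4$; it just makes (A) a subcase of (B), so the inequality is there only to keep the two cases disjoint. You already flagged the sign bookkeeping as the obstacle, and once it is done honestly the argument goes through as you outline.
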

\begin{proof}Consider the map $$(p:q)\longmapsto (-p:q),\ (x_{0}:x_{1}:x_{2}:x_{3})\longmapsto(x_{0}:x_{1}:x_{3}:x_{2}).$$ On $\PP^{1}$ it preserves $(1:0)$ and $(0:1)$ and swaps $(1:1)$ with $(-1:1)$. It transforms our system $\operatorname{Sys}(t; a_{12}, a_{13}, a_{23}, b_{02}, b_{03}, c_{01})$ into $\operatorname{Sys}(t; a_{13}, a_{12}, a_{23}, b_{03}, b_{02}, -c_{01})$. We also can change signs of variables $x_{0}$ and $x_{2}$, and this gives us four systems of equations that give us two different cases from the first statement (other cases are their subcases).

Consider the map $$(p:q)\longmapsto (q:p),\ (x_{0}:x_{1}:x_{2}:x_{3})\longmapsto(x_{1}:x_{0}:x_{2}:x_{3}).$$ On $\PP^{1}$ it preserves $(1:1)$ and $(-1:1)$ and swaps $(0:1)$ with $(1:0)$. It transforms the system $\operatorname{Sys}(t; a, a, b, c, c, 0)$ into $\operatorname{Sys}(t; c, c, -b, a, a, 0)$ and $\operatorname{Sys}(t; a, a, b, c, -c, d)$ into $\operatorname{Sys}(t; c, -c, -b, a, $ $a, d)$. Also we can change signs of variables $x_{1}$, $x_{2}$ and $x_{3}$, and this gives us eight systems of equations in any case that give us two different cases from the second statement (all other cases are their subcases or subcases of the first statement in the previous lemma, i.e. $H'$ contains an element of order $4$).
\end{proof}

\begin{lemma} Assume that $H''$ contains an element of order $2$ that does not fix two base nodes, and the group $H''$ was not described in the previous lemmas. Then we have the following:
\begin{enumerate}
\item
In some coordinates either (A) $b_{03}=ta_{12},\ b_{02}=a_{13}$ or (B) $b_{03}=-ta_{12},$ $b_{02}=a_{13},$ $a_{23}=0,\ c_{01}=0$;
\item
The group $H''$ is isomorphic to $\Cyc_{2}^{2}$ if in some coordinates either (A) $b_{03}=ta_{12},\ b_{02}=a_{13},$ $a_{23}=-c_{01}$ or (B) $b_{03}=-ta_{12},\ b_{02}=a_{13},\ a_{23}=0,\ c_{01}=0$.
\end{enumerate}
\end{lemma}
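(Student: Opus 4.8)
The plan is to imitate the proofs of the three preceding lemmas: realise the given involution of $H''$ by an explicit pair consisting of a Möbius transformation of the parameter line $\PP^{1}$ and a linear change of the coordinates $x_{i}$, compute how this pair acts on the coefficient string of $\operatorname{Sys}(t;a_{12},a_{13},a_{23},b_{02},b_{03},c_{01})$, and then impose that, after possibly composing with a sign change of the variables, the transformed system coincides with the original one.

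First I would pin down the involution. Since $H''$ is a subgroup of the stabiliser of the four base points $(1:0),(0:1),(1:1),(t:1)$ on $\PP^{1}$, is not one of the groups already treated (so it is neither $\Alt_{4}$ nor $\Dih_{8}$, and the relevant involution does not fix two base nodes), the element in question acts on these four points as a fixed-point-free double transposition. After renumbering I may take it to swap $\{(1:0),(0:1)\}$ and $\{(1:1),(t:1)\}$; this is induced by $(p:q)\mapsto(tq:p)$. The corresponding base nodes $(1:0:0:0)\leftrightarrow(0:1:0:0)$ and $(0:0:1:0)\leftrightarrow(0:0:0:1)$ are then interchanged, which forces the linear lift to have the shape $(x_{0}:x_{1}:x_{2}:x_{3})\mapsto(\alpha x_{1}:\beta x_{0}:\gamma x_{3}:\delta x_{2})$. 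The scalars $\alpha,\beta,\gamma,\delta$ are determined, up to the residual sign ambiguity of $H'$, by demanding that the pulled-back pencil again be in the normal form~\eqref{eq8}: matching the pure squares gives $\alpha^{2}=\gamma^{2}=t^{2}\beta^{2}=t^{2}\delta^{2}$, and with the baseline choice one checks that the map carries $\operatorname{Sys}(t;a_{12},a_{13},a_{23},b_{02},b_{03},c_{01})$ into $\operatorname{Sys}(t;\tfrac{b_{03}}{t},b_{02},a_{23},a_{13},ta_{12},c_{01})$.

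To obtain statement (i) I would set this transformed string equal to the original one after applying an element of $H'$. Because conjugating the lift by a sign matrix merely permutes the admissible sign patterns, only finitely many need be tested. The trivial sign change yields $b_{03}=ta_{12},\ b_{02}=a_{13}$ with $a_{23},c_{01}$ free, which is case (A); the sign change flipping $x_{1}$ and $x_{3}$ yields $b_{03}=-ta_{12},\ b_{02}=a_{13},\ a_{23}=0,\ c_{01}=0$, which is case (B); the remaining patterns force enough coefficients to vanish that they are subcases of (A), (B) or of the earlier lemmas. For statement (ii), recall that for generic cross-ratio the stabiliser of the four base points is the Klein four-group $\Cyc_{2}^{2}$, whose non-trivial elements are exactly the three double transpositions; having already excluded $\Dih_{8}$ and $\Alt_{4}$, the group $H''$ equals $\Cyc_{2}^{2}$ precisely when a second, commuting double transposition is also realised. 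I would take the one swapping $\{(1:0),(1:1)\}$ and $\{(0:1),(t:1)\}$, induced by $(p:q)\mapsto(p-tq:p-q)$ and lifting to $x_{0}\leftrightarrow x_{2}$, $x_{1}\leftrightarrow x_{3}$; running the same normalisation-and-matching computation and combining it with the constraints of (A) produces the extra relation $a_{23}=-c_{01}$, whereas under the constraints of (B) this invariance is automatic, so $H''\simeq\Cyc_{2}^{2}$ there with no further condition.

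The routine but delicate part of the argument is the bookkeeping: solving correctly for $\alpha,\beta,\gamma,\delta$ from the normal form, tracking the induced sign flips on the six coefficients, and checking that every sign pattern other than the two distinguished ones collapses to an already-covered case. The conceptual input---that $H''$ lies inside the Klein four-group and is generated by two commuting fixed-point-free involutions---is exactly what guarantees that the two computations above exhaust all possibilities.
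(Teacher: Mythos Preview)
Your proposal is correct and follows essentially the same approach as the paper: realise the involution by the Möbius map $(p:q)\mapsto(tq:p)$ with the linear lift sending $\operatorname{Sys}(t;a_{12},a_{13},a_{23},b_{02},b_{03},c_{01})$ to $\operatorname{Sys}(t;\tfrac{b_{03}}{t},b_{02},a_{23},a_{13},ta_{12},c_{01})$, then match after sign changes to obtain (A) and (B), and finally test a second commuting double transposition to extract the extra condition $a_{23}=-c_{01}$ in case (A) (with (B) automatically invariant). The only cosmetic difference is that the paper phrases the sign change as flipping $x_{0},x_{2}$ rather than $x_{1},x_{3}$ and chooses the other non-trivial double transposition $(p:q)\mapsto(tp-tq:p-tq)$ for part (ii), but these choices are equivalent.
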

\begin{proof}Consider the map $$(p:q)\longmapsto (tq:p),\ (x_{0}:x_{1}:x_{2}:x_{3})\longmapsto\left(x_{1}:\frac{x_{0}}{t}:x_{3}:\frac{x_{2}}{t}\right).$$ On $\PP^{1}$ it swaps $(1:0)$ with $(0:1)$ and $(1:1)$ with $(t:1)$. It transforms our system $\operatorname{Sys}(t; a_{12}, a_{13}, a_{23}, b_{02}, b_{03}, c_{01})$ into $\operatorname{Sys}(t; \frac{b_{03}}{t}, b_{02}, a_{23}, a_{13}, ta_{12}, c_{01})$. Also we can change signs of variables $x_{0}$ and $x_{2}$, and this gives us four systems of equations that give us two different cases from the first statement (other cases are their subcases): (A) occurs when we do not change signs and (B) occurs if we change both signs.

Consider the map $$(p:q)\longmapsto (tp-tq:p-tq),\ (x_{0}:x_{1}:x_{2}:x_{3})\longmapsto\left(x_{3}:-\frac{x_{2}}{t}:-\frac{x_{1}}{t-1}:\frac{x_{0}}{t^2-t}\right).$$ On $\PP^{1}$ it swaps $(1:1)$ with $(0:1)$ and $(t:1)$ with $(1:0)$. It transforms the system $\operatorname{Sys}(t; a, b, c, b, ta, d)$ into $\operatorname{Sys}(t; a, b, -d, b, ta, -c)$ and preserves the system $\operatorname{Sys}(t; a, b, 0, b, -ta, 0)$. Also we can change signs of variables $x_{1}$, $x_{2}$ and $x_{3}$, and this gives us eight systems of equations in the first case that give us single case from the second statement (all other cases are its subcases or can be reduced to it by changing variables).
\end{proof}

The previous lemmas give us complete description of the group $H''$.

\subsection{Calculation of the groups $H$ and $\widetilde{\Aut}(X)$ and birational rigidity}

\begin{propos}\label{pr8} Assume that $H'\simeq \Cyc_{2}^{3}$. Then we have the following:
\begin{enumerate}
\item
 $a_{12}=a_{13}=a_{23}=b_{02}=b_{03}=c_{01}=0$.
 \item
  The group $H''$ can be isomorphic to $\Alt_{4}$ (iff $t=\frac{1\pm \sqrt{3}\i}{2}$), $\Dih_{8}$ (iff $t=-1, 2$ or $\frac{1}{2}$) or $\Cyc_{2}^{2}$ (for other values of $t$).
\item
The group $H$ is a subgroup of the group $\widetilde{\Aut}(X)$ of index 3.
\item
The group $\widetilde{\Aut}(X)$ is isomorphic to $(\Cyc_{2}^{2}\rtimes \Sym_{4})\rtimes \Cyc_{3}$ (GAPId [288, 1025]) for $t=\frac{1\pm \sqrt{3}\i}{2}$, $\Cyc_{2}^{3}\rtimes \Sym_{4}$ (GAPId [192, 955]) for $t=-1, 2$ or $\frac{1}{2}$ and $\Cyc_{2}^{2}\rtimes \Sym_{4}$ (GAPId [96, 227]) for other values of $t$.
\item
The group $\Aut(X)$ is isomorphic to $\widetilde{\Aut}(X)\times \Cyc_{2}$.
\end{enumerate}
\end{propos}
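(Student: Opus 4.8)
The plan is to carry out the five assertions essentially in the order stated, working throughout with the normalized pencil $\operatorname{Sys}(t;a_{12},a_{13},a_{23},b_{02},b_{03},c_{01})$ of~\eqref{eq8} and its three building blocks $Q_1,Q_2,Q_3$. For (i), I would use that $H'\simeq\Cyc_2^3$ means the three coordinate sign-involutions $x_i\mapsto -x_i$, $i=1,2,3$, all lie in $H'$ and hence fix \emph{every} member of the pencil, so each of $Q_1,Q_2,Q_3$ is sent to itself. Since each $Q_j$ contains a nonzero diagonal term fixed by a given flip, that flip must multiply $Q_j$ by $1$, and therefore must annihilate every off-diagonal monomial through $x_i$; running over $i=1,2,3$ kills all six coefficients at once, giving $a_{12}=a_{13}=a_{23}=b_{02}=b_{03}=c_{01}=0$. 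This is a short direct check and I expect no difficulty here.

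For (ii), with the now-diagonal pencil $Q_1=x_1^2+x_2^2+x_3^2$, $Q_3=x_0^2+x_2^2+t^2x_3^2$, $Q_2=-2x_2^2-2tx_3^2$, the group $H''$ is precisely the subgroup of $\operatorname{PGL}_2(\operatorname{k})$ stabilizing the four base points $(1:0),(0:1),(1:1),(t:1)$ that lifts to a linear symmetry of the pencil. One direction is the classical trichotomy for the stabilizer of four points on $\PP^1$: it is $\Cyc_2^2$ in general, $\Dih_8$ exactly in the harmonic case $t\in\{-1,2,\tfrac12\}$, and $\Alt_4$ exactly in the equianharmonic case $t=\frac{1\pm\sqrt{3}i}{2}$. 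For the other direction I would feed the zero values of the six coefficients into the explicit lifts already constructed in Lemma~\ref{le9.5} and the lemmas following it: specialized to the diagonal pencil those maps manifestly preserve it, so every element of the stabilizer is realized and $H''$ equals the full stabilizer.

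The crux is (iii), and with it the group identification in (iv). Computing the branch quartic gives, up to scalar, $Q=\sum_{i<j}c_{ij}x_i^2x_j^2$ with $c_{01}=c_{02}=c_{03}=c_{12}=1$, $c_{13}=t^2$, $c_{23}=(t-1)^2$; in the squared coordinates $u_i=x_i^2$ this is a \emph{degenerate} (rank three) quadratic form $Q(u)$, and the three antipodal pairs of systems in the hexagon correspond exactly to the three ways of writing $Q(u)=\ell_B^2-4\ell_A\ell_C$ as the discriminant of a pencil of diagonal quadrics. By the general setup $[\widetilde{\Aut}(X):H]=[\Aut(X):G']\in\{1,3\}$, so (iii) amounts to excluding the fixed-pair case $G'=\Aut(X)$; I would do this by exhibiting an automorphism of $X$ that cyclically permutes the three discriminant representations, i.e.\ that realizes the order-three rotation of the hexagon. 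Producing this symmetry explicitly is the main obstacle, since it is not monomial in the coordinates adapted to a single pencil; the natural route is to exploit the rank-three structure of $Q(u)$ (its vertex and the associated conic) to pass to coordinates displaying the three representations symmetrically.

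Granting (iii), assertion (iv) follows by assembling $\widetilde{\Aut}(X)$ from the normal part $H$ (of order $32$, $64$, $96$ according as $H''\simeq\Cyc_2^2,\Dih_8,\Alt_4$) together with the extra $\Cyc_3$, giving orders $96$, $192$, $288$; the precise isomorphism type and GAP identifier I would pin down from explicit generators by a direct, computer-assisted check in each of the three cases for $t$. Finally (v) is formal: the Geiser involution $\theta$ acts as $y\mapsto -y$ on the model $y^2=Q$, hence is central in $\Aut(X)$ and generates the kernel of $\Aut(X)\to\widetilde{\Aut}(X)$; after rescaling the lift so that the finite group $\widetilde{\Aut}(X)$ fixes $Q$ exactly, the assignment $g\mapsto(g,\,y\mapsto y)$ splits the extension, whence $\Aut(X)\simeq\widetilde{\Aut}(X)\times\langle\theta\rangle\simeq\widetilde{\Aut}(X)\times\Cyc_2$.
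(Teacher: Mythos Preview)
Your treatment of (i), (ii), (iv) and (v) is essentially the paper's: (i) is dispatched as obvious from the sign-flips, (ii) is read off from the lemmas on $H''$ specialized to the zero coefficients, (iv) is done by feeding explicit generators to GAP, and (v) is the splitting of the Geiser extension. No disagreement there.

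The one substantive difference is (iii). You correctly isolate the crux---one must produce an automorphism of $Q$ outside $H$, equivalently one not preserving the base-node tetrahedron---and you propose to find it conceptually, via the rank-three quadratic form $Q(u)$ in $u_i=x_i^2$ and its three discriminant factorizations. That route is reasonable, but you leave it as a plan (``the main obstacle'') rather than carrying it out. The paper takes the blunter course: it simply writes down an explicit $4\times 4$ matrix with entries in $\sqrt{-t}$ and $\sqrt{t-1}$, checks by direct computation that it preserves the quartic $Q$, and observes that it moves the base nodes, hence lies in $\widetilde{\Aut}(X)\setminus H$. Since the index is a priori $1$ or $3$, this single witness forces index~$3$. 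Your approach would, if completed, give a more structural explanation (the order-three rotation of the hexagon realized through the symmetry of the rank-three conic), whereas the paper's buys a one-line verification at the cost of an opaque matrix. Either way, the missing ingredient in your proposal is precisely that explicit element; without it (iii), and hence the orders in (iv), are not actually established.
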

\begin{proof}
The first statement is obvious. The second statement follows from the previous lemmas.

One can calculate that the matrix
$$\begin{pmatrix}
    \frac{1}{2}       & \frac{\alpha}{2} & -\frac{\beta}{2} & \frac{\alpha\beta}{2} \\
    \frac{1}{2\alpha}      & \frac{1}{2} & \frac{\beta}{2\alpha} & -\frac{\beta}{2} \\
    \frac{1}{2\beta}      &  -\frac{\alpha}{2\beta} & -\frac{1}{2} & -\frac{\alpha}{2} \\
    \frac{1}{2\alpha\beta}   & -\frac{1}{2\beta} & \frac{1}{2\alpha} & \frac{1}{2}
\end{pmatrix}$$
where $\alpha=\sqrt{-t}$ and $\beta=\sqrt{t-1}$, preserves the equation of the quartic $Q$ and obviously this map does not belong to $H$, since $H$ preserves the quadruple of base nodes. This proves the third statement. The fourth statement can be easily proved with GAP using the action of the group $\widetilde{\Aut}(X)$ on the set of nodes.

For the proof of the fifth statement notice that matrices of the generators of the group $\widetilde{\Aut}(X)$ together with $-\operatorname{Id}$ generate the subgroup of $\operatorname{GL}_{4}(\operatorname{k})$ that is a double extension of the group $\widetilde{\Aut}(X)$. Thus we have an embedding of the group $\widetilde{\Aut}(X)$ into an automorphism group of $\PP(2, 1, 1, 1, 1)$. Obviously the image of this embedding commutes with the Geiser involution, so the group $\Aut(X)$ is the direct product of $\widetilde{\Aut}(X)$ and $\Cyc_{2}$.
\end{proof}
\begin{propos}\label{pr9} Assume that $H'\simeq \Cyc_{2}^{2}$. Then we have the following:
\begin{enumerate}
\item
Exactly one of the numbers $a_{12}, a_{13}, a_{23}, b_{02}, b_{03}, c_{01}$ is non-zero. We may assume that $a_{23}\neq 0$ without loss of generality.
\item
The group $H''$ is isomorphic to $\Cyc_{2}^{2}$ (iff $t=-1$) or $\Cyc_{2}$ (for other values of $t$).
\item
The group $H$ coincides with $\widetilde{\Aut}(X)$.
\item
We always have an $\Aut(X)$-invariant pair of singular points so the variety $X$ cannot be $G$-birationally rigid.
\end{enumerate}
\end{propos}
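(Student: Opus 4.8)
The plan is to read the four assertions off the normal form $\operatorname{Sys}(t;a_{12},a_{13},a_{23},b_{02},b_{03},c_{01})$ of~\eqref{eq8}, the real content being the invariant pair of nodes in (iv); statements (i)--(iii) are structural preparation.

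First I would prove (i) by turning membership in $H'$ into a linear condition over $\mathbb{F}_2$. The group $H'$ consists exactly of the diagonal sign changes of $x_1,x_2,x_3$ that fix each $Q_k$; writing such a change as $\epsilon=(\epsilon_1,\epsilon_2,\epsilon_3)\in\mathbb{F}_2^3$, the six off-diagonal coefficients correspond to the six edges of the complete graph on the four base nodes, and the monomial attached to a parameter is fixed by $\epsilon$ precisely when a nonzero linear functional vanishes on $\epsilon$: the functionals are $\epsilon_1,\epsilon_2,\epsilon_3$ for $c_{01},b_{02},b_{03}$ and $\epsilon_1+\epsilon_2,\epsilon_1+\epsilon_3,\epsilon_2+\epsilon_3$ for $a_{12},a_{13},a_{23}$. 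Hence $H'$ is the intersection of the kernels of the functionals attached to the \emph{nonzero} parameters, so $\operatorname{codim}H'$ equals the dimension of their span. The hypothesis $H'\simeq\Cyc_2^2$ forces this codimension to be $1$, i.e. all nonzero parameters share a single functional; since the six functionals listed are pairwise distinct, exactly one parameter is nonzero. These six functionals are all the nonzero elements of $(\mathbb{F}_2^3)^{\ast}$ except $\epsilon_1+\epsilon_2+\epsilon_3$, so the surviving functional is automatically realized and $H'\neq\Cyc_2^3$, in agreement with the assumption. Finally, the symmetry group of the configuration of four base nodes, acting through relabelings of the four roots of $D(p:q)$, permutes the six edges transitively while replacing $t$ by another value of its cross-ratio orbit; since (ii)--(iv) are uniform in $t$, this lets me arrange $a_{23}\neq0$ with all other parameters zero.

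Next I would establish (ii) and (iii). Feeding $\operatorname{Sys}(t;0,0,a_{23},0,0,0)$ into Lemma~\ref{le9.5} and the subsequent lemmas, the relations needed for an element of order $3$ (namely $a_{12}=a_{13}=a_{23}$) or of order $4$ both force $a_{23}=0$ and are excluded; the double transposition $(p:q)\mapsto(tq:p)$, which interchanges $(1:0:0:0)\leftrightarrow(0:1:0:0)$ and $(0:0:1:0)\leftrightarrow(0:0:0:1)$, leaves $\operatorname{Sys}(t;0,0,a_{23},0,0,0)$ invariant for every $t$, while the pair-fixing involution $(x_0:x_1:x_2:x_3)\mapsto(x_0:x_1:x_3:x_2)$ preserves it exactly when $t=-1$. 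This gives $H''\simeq\Cyc_2^2$ for $t=-1$ and $H''\simeq\Cyc_2$ otherwise. For (iii) I must exclude that $\Aut(X)$ cyclically permutes the three quadric-pencil structures, which is the only way $H$ could have index $3$ in $\widetilde{\Aut}(X)$. An automorphism realizing the order-$3$ rotation of the hexagon would, after the normalization of Lemma~\ref{le9.5}, impose the symmetric relations $a_{12}=a_{13}=a_{23}$, contradicting (i); alternatively one computes $\widetilde{\Aut}(X)$ directly from its faithful action on the twelve nodes, exactly as in Proposition~\ref{pr8}. Either way $H=\widetilde{\Aut}(X)$.

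Finally (iv) follows from (ii) and (iii): in every case $H''$ is generated by involutions preserving the partition of the four base nodes into $\{(1:0:0:0),(0:1:0:0)\}$ and $\{(0:0:1:0),(0:0:0:1)\}$, so this partition is $H''$-invariant. As $H'$ acts diagonally, the Geiser involution acts trivially on $\PP^3$, and $H=\widetilde{\Aut}(X)$ by (iii), the set $\{(1:0:0:0),(0:1:0:0)\}$ is an $\widetilde{\Aut}(X)$-invariant pair of nodes. Hence for any admissible $G$ the orbit of the corresponding node of $X$ has length at most $2$, which is impossible for a $G$-birationally rigid variety by Lemma~\ref{le1}. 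The main obstacle is step (iii), namely pinning down $\widetilde{\Aut}(X)$ and ruling out a hidden order-$3$ symmetry permuting the three pencils; the secondary subtlety is making the reduction to $a_{23}\neq0$ in (i) rigorous, i.e. verifying that relabeling the roots of $D(p:q)$ acts transitively on the six parameters.
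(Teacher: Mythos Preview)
Your treatment of (i), (ii), and (iv) is fine and matches the paper in spirit; the $\mathbb{F}_2$ bookkeeping for (i) is a clean way to say what the paper calls ``obvious''. The genuine problem is your argument for (iii), and you yourself flag it as the main obstacle.

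Your first proposal --- that an order-$3$ rotation of the hexagon would, via Lemma~\ref{le9.5}, force $a_{12}=a_{13}=a_{23}$ --- does not work. Lemma~\ref{le9.5} is a statement about $H''$, the image of $H$ acting on the $\PP^1$ parametrizing the \emph{fixed} pencil. By construction $H$ is the image in $\widetilde{\Aut}(X)$ of the stabilizer of the pair $\{\mathcal{F}_1,-K_X-\mathcal{F}_1\}$; an element realizing the order-$3$ rotation of the hexagon does \emph{not} preserve this pair, so it is not in $H$ and contributes nothing to $H''$. The conclusion $a_{12}=a_{13}=a_{23}$ simply does not follow. Your fallback ``compute $\widetilde{\Aut}(X)$ directly \dots\ exactly as in Proposition~\ref{pr8}'' is also not a proof: in Proposition~\ref{pr8} the paper \emph{exhibits} an explicit matrix realizing index $3$, whereas here one must show no such element exists, which is a different (and harder) task than producing one.

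The paper's argument for (iii) is geometric and worth learning. With only $a_{23}\neq 0$ one has (for $t=-1$) $Q_2=-2x_2^2+2x_3^2=2(x_3-x_2)(x_3+x_2)$, a pair of planes $P_{1,2}=\{x_2\pm x_3=0\}$. Each of these planes contains exactly six nodes of $Q$: the two base nodes $(1{:}0{:}0{:}0)$, $(0{:}1{:}0{:}0)$ together with four of the eight points of $Q_1\cap Q_2\cap Q_3$. If $H$ had index $3$ in $\widetilde{\Aut}(X)$, the order-$3$ element would carry this distinguished pair of planes to another such pair, producing a third plane $P$ containing six nodes. One then argues that $P$ must contain at least two base nodes (otherwise $P$ shares three non-base nodes with $P_1$ or $P_2$, which is impossible), hence $P$ has equation $x_i=0$ or $x_i+\alpha x_j=0$; a direct check shows no such plane other than $P_1,P_2$ contains six nodes. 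Thus the pair $\{P_1,P_2\}$ is an intrinsic invariant of $X$ pinning down the chosen pencil, and no order-$3$ symmetry can permute the three pencil structures. The case $t\neq -1$ is handled the same way with messier formulas. This is the missing idea in your sketch.
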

\begin{proof} The first statement is obvious. The second statement follows from the previous lemmas.

 Assume that $t=-1$. We have $Q_{2}=-2x_{2}^{2}+2x_{3}^{2}=2(x_{3}-x_{2})(x_{3}+x_{2})$. Two planes $P_{1, 2}=\{x_{2}\pm x_{3}=0\}$ contain six singular points of $Q$ both (four common points of $Q_{i}$'s, $(1:0:0:0)$ and $(0:1:0:0)$). Quadruple of base nodes are distinguished from others since they lie on even number of planes. If $H$ is a subgroup of $\widetilde{\Aut}(X)$ of index $3$ then we have another such pair of planes, so there is a third plane $P$ containing $6$ singular points. This plane contains at least $2$ base nodes. Indeed, otherwise $P$ and $P_{1}$ or $P_{2}$ contain at least three common non-based nodes, but it is impossible. So the equation of $P$ is of the form $x_{i}=0$ or $x_{i}+\alpha x_{j}=0$. One can check that such a plane contains six nodes iff it is $P_{1}$ or $P_{2}$. The case $t\neq -1$ can be treated in the same way, but equations in this case looks more ugly.

 Since $H=\widetilde{G}$, $(1:0:0:0)$ and $(0:1:0:0)$ form an $\Aut(X)$-invariant pair of points.
\end{proof}
\begin{propos}\label{pr10} Assume that $H'\simeq \Cyc_{2}$. Then we have the following:
\begin{enumerate}
\item
 There are two cases : \newline(A) all coefficients in $\operatorname{Sys}(t; a_{12}, a_{13}, a_{23}, b_{02}, b_{03}, c_{01})$ with the same index are zero and we may assume without loss of generality that $b_{02}=b_{03}=c_{01}=0$; \newline (B) all coefficients are zero except two, such that the union of sets of their indices is the set $\{0, 1, 2, 3\}$ and we may assume without loss of generality that $a_{23}$ and $c_{01}\neq 0$.
 \item
 In the case (A) the group $H''$ can be isomorphic to $\Cyc_{3}$ (we may assume without loss of generality, that $t=\frac{1+\sqrt{3}\i}{2}$, $a_{12}=a_{13}=a_{23}\neq 0$), $\Cyc_{2}$ (we may assume without loss of generality, that $t=-1$, $a_{12}=a_{13}\neq 0$) or trivial.
 \item
 In the case (B) the group $H''$ is isomorphic to $\Dih_{8}$ iff $t=-1$ and $c_{01}=\pm a_{23}$ (we may assume that $c_{01}=a_{23}$), to $\Cyc_{2}^{2}$ iff either $t=-1$ and $c_{01}\neq \pm a_{23}$ or $a_{23}=\pm c_{01}$ with arbitrary $t\neq -1$ (we may assume that $a_{23}=c_{01}$) and to $\Cyc_{2}$ in other cases.
\item
In the case (A) we always have $\widetilde{\Aut}(X)$-invariant node or triple of nodes, thus $X$ is never $G$-birationally rigid.
\item
In the case (B) if $c_{01}\neq \pm a_{23}$ or $t\neq -1$ then $X$ is not $G$-birationally rigid.
\item
In the case (B) if $c_{01}=a_{23}$ and $t=-1$ then $\Aut(X)$ is isomorphic to $\Sym_{4}\times \Cyc_{2}^{2}$ if $c_{01}=\pm 2\sqrt{3}$ and to $\Dih_{8}\times \Cyc_{2}^{2}$ otherwise.
\end{enumerate}
\end{propos}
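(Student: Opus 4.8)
The plan is to treat the six assertions in turn, leaning on the lemmas of the previous subsection (those determining when $H''$ contains an element of order $3$, $4$, or $2$) and on Lemma~\ref{le1}. For statement \emph{(1)} I would record that the six coefficients $a_{12},a_{13},a_{23},b_{02},b_{03},c_{01}$ are naturally indexed by the edges of the complete graph on the four base nodes $\{0,1,2,3\}$, and that $H'$ is generated by sign changes of $x_1,x_2,x_3$. Projectively, changing the signs of $\{x_i\}_{i\in S}$ agrees with changing the complementary ones, so each nontrivial element corresponds to a partition of $\{0,1,2,3\}$ into two parts, and it preserves $\operatorname{Sys}(t;\dots)$ exactly when every coefficient sitting on a cut edge vanishes. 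Hence $H'\simeq\Cyc_2$ means precisely one such partition has all its cut-coefficients zero. The cuts are of two kinds: the three edges through one vertex (a $1{+}3$ split) and the four edges of a $2{+}2$ split. Permuting the four base nodes by the $\operatorname{PGL}_2$-action on the $\PP^1$ of quadrics (which also moves the cross-ratio $t$) normalises the first kind to $c_{01}=b_{02}=b_{03}=0$, giving case (A), and the second to the surviving pair being the matching $\{a_{23},c_{01}\}$, giving case (B); in each case one checks no \emph{second} partition also vanishes, which is what forces the stated non-vanishing conditions.

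For \emph{(2)} and \emph{(3)} I would substitute the two vanishing patterns into those lemmas. In case (A) the order-$3$ lemma forces $t=\tfrac{1+\sqrt3\,i}{2}$ together with $a_{12}=a_{13}=a_{23}$, and yields $\Cyc_3$ rather than $\Alt_4$ because $\Alt_4$ would additionally demand $a_{12}=-c_{01}=0$; the order-$4$ condition and the order-$2$-not-fixing condition collapse the coefficients so far that $H'$ would jump to $\Cyc_2^3$, so the only remaining possibility is the order-$2$-fixing element at $t=-1$, giving $\Cyc_2$, and otherwise $H''$ is trivial. In case (B) the matching pattern is incompatible with an order-$3$ element, admits an order-$4$ element exactly when $t=-1$ and $a_{23}=-c_{01}$ (hence $\Dih_8$, normalised to $c_{01}=a_{23}$), and otherwise gives $\Cyc_2^2$ precisely when $a_{23}=\pm c_{01}$ or $t=-1$, and $\Cyc_2$ in the remaining generic situation.

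The non-rigidity statements \emph{(4)} and \emph{(5)} I would derive from Lemma~\ref{le1} after pinning down the action on the nodes. In case (A) the generator of $H'$ is the involution $x_0\mapsto-x_0$, whose fixed nodes are exactly the four base nodes, so $H$ permutes these four nodes only through $H''\in\{1,\Cyc_2,\Cyc_3\}$, with orbit partition $\{1,1,1,1\}$, $\{1,1,2\}$, or $\{1,3\}$; each contains an orbit of length $1$, $2$, or $3$, i.e. an invariant node, an invariant line through the swapped pair, or an invariant triple. One must still reconcile this with the full group $\widetilde{\Aut}(X)$: if an order-$3$ hexagon symmetry is present it cyclically permutes the three base-node $4$-sets, converting a $\Cyc_3$-fixed base node into an invariant triple (which is exactly why the statement reads ``invariant node or triple''), while for $\Cyc_2$ or trivial $H''$ I expect the index to be $1$, so the fixed nodes are genuinely $\widetilde{\Aut}(X)$-invariant. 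In case (B) with $t\neq-1$ or $c_{01}\neq\pm a_{23}$ the group $H''$ is $\Cyc_2$ or $\Cyc_2^2$, and I would combine the induced permutation of the base nodes with the factorisation of $Q_2$ (which at $t=-1$ splits off the planes $x_2\pm x_3=0$) to exhibit an invariant pair of nodes, hence an invariant line; Lemma~\ref{le1} then excludes rigidity.

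Finally, for \emph{(6)} I would compute $\widetilde{\Aut}(X)$ in the surviving case $t=-1$, $c_{01}=a_{23}=:c$, where $H=H'\rtimes H''$ has order $16$ with $H''\simeq\Dih_8$. Since $\Aut(X)\simeq\widetilde{\Aut}(X)\times\Cyc_2$ (the Geiser involution splits off, exactly as in Proposition~\ref{pr8}), it remains to decide whether $H$ equals $\widetilde{\Aut}(X)$ or has index $3$, i.e. whether the order-$3$ hexagon symmetry permuting the three pairs of opposite quadric systems exists. This is where I expect the main difficulty to lie: one writes an explicit candidate matrix for that symmetry, in the style of the extra generator in the proof of Proposition~\ref{pr8}, and determines for which $c$ it preserves the quartic; the computation should show it exists precisely when $c=\pm2\sqrt3$, giving $\widetilde{\Aut}(X)\simeq\Sym_4\times\Cyc_2$ and $\Aut(X)\simeq\Sym_4\times\Cyc_2^2$, while for all other $c$ the index is $1$ and $\Aut(X)\simeq\Dih_8\times\Cyc_2^2$, the abstract groups being identified by a GAP check on the action on the twelve nodes as in Proposition~\ref{pr8}. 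The recurring obstacle throughout \emph{(4)}–\emph{(6)} is precisely this index-$1$-versus-$3$ question: whenever the base-node $4$-set fails to be invariant under an order-$3$ extension one must argue instead with the full twelve-node configuration, and controlling that extension is where the real work is.
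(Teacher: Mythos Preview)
Your overall architecture matches the paper's: statements (i)--(iii) are read off from the earlier $H''$-lemmas, and (iv)--(vi) hinge on deciding whether $H$ has index $1$ or $3$ in $\widetilde{\Aut}(X)$ and then invoking Lemma~\ref{le1}. Two points deserve correction or sharpening.

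First, your proposed mechanism for (v) via ``the factorisation of $Q_{2}$ (which at $t=-1$ splits off the planes $x_{2}\pm x_{3}=0$)'' does not apply in case (B): there $c_{01}\neq 0$, so $Q_{2}=-2x_{2}^{2}+2x_{3}^{2}+c_{01}x_{0}x_{1}$ is irreducible. That factorisation is what drives the argument in Proposition~\ref{pr9} ($H'\simeq\Cyc_{2}^{2}$), not here. The paper instead writes down the explicit generators of $H$ in each subcase, computes the $H$-orbit decomposition of the twelve nodes (e.g.\ $2+2+8$ when $t=-1$, $c_{01}\neq\pm a_{23}$), and then excludes each candidate order-$3$ extension group-theoretically by comparing cycle types or orbit sizes.

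Second, the subcase $a_{23}=c_{01}$ with $t\neq -1$ belongs to (v), not (vi), and you do not say how to handle it. The paper treats this together with (vi) by the same device you anticipate only for (vi): an explicit (and rather intricate) change of coordinates carrying $\operatorname{Sys}(t;0,0,c,0,0,-c)$ to a system $\operatorname{Sys}(t';0,0,c',0,0,c')$, with $t'=\frac{(c^{2}-4)t}{4(1-t)}$. One then checks when the image system is isomorphic to the original; this happens only for $c=\pm 2\sqrt{2-t}$, which at $t=-1$ gives $c=\pm 2\sqrt{3}$ and index $3$, while for $t\neq -1$ the resulting index is $2$, hence excluded. So the coordinate-change computation does double duty for (v) and (vi), and you should plan to use it in both places rather than reserving it for (vi).
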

\begin{proof}
The first statement is obvious. The second and the third statement follows from previous lemmas.

In the case (A) the point $(1:0:0:0)$ is always $H$-invariant. If $H\subset\widetilde{\Aut}(X)$ is a subgroup of index $3$ then we have either an $\widetilde{\Aut}(X)$-invariant singular point, or an $\widetilde{\Aut}(X)$-invariant triple of points.

Assume that in the case (B) we have $t=-1$ and $c_{01}\neq \pm a_{23}$. In this case the group $H$ is generated by three maps
$$\pi: (x_{0}:x_{1}:x_{2}:x_{3})\longmapsto (-x_{0}:-x_{1}:x_{2}:x_{3}),$$
$$\sigma: (x_{0}:x_{1}:x_{2}:x_{3})\longmapsto (x_{1}:x_{0}:x_{2}:-x_{3})$$
$$\tau: (x_{0}:x_{1}:x_{2}:x_{3})\longmapsto (x_{0}:x_{1}:x_{3}:-x_{2})$$
and is isomorphic to $\Cyc_{2}^{3}$. Assume that $H$ is a subgroup of $\widetilde{\Aut}(X)$ of index $3$. Then we have three possibilities: $\widetilde{\Aut}(X)$ can be isomorphic to $\Cyc_{2}^{2}\times \Cyc_{6}$, $\Cyc_{2}\times \Alt_{4}$ or $\Cyc_{2}^{2}\times \Sym_{3}$. One can check by direct computations that $H$ acts on the set of singular points with three orbits of length $2, 2$ and $8$. In the first two cases $H$ is a normal subgroup of $\widetilde{\Aut}(X)$ of index $3$, thus every $\widetilde{\Aut}(X)$-orbit of singular points consists of one or three $H$-orbits of the same size. So we have an $\widetilde{\Aut}(X)$-invariant pair of singular points. In the third case we can consider the  normal subgroup $\Cyc_{2}^{2}\subset \widetilde{\Aut}(X)$. It acts with orbits of length either $1, 1, 2, 4, 4$ or $2, 2, 4, 4$. In both cases the quadruple of base nodes is distinguished, thus $H$ cannot be a subgroup of $\widetilde{\Aut}(X)$ of index $3$. Thus we have that $H=\widetilde{\Aut}(X)$ and there is a $G$-invariant pair of singular points.

Assume that in the case (B) we have $t\neq -1$ and $c_{01}\neq \pm a_{23}$. In this case the group $H$ is generated by two maps
$$\tau: (x_{0}:x_{1}:x_{2}:x_{3})\longmapsto (-x_{0}:-x_{1}:x_{2}:x_{3}),$$
 $$\sigma: (x_{0}:x_{1}:x_{2}:x_{3})\longmapsto (-x_{0}:x_{1}:x_{3}:x_{2}),$$ and is isomorphic to $\Cyc_{2}^{2}$. One can easily check that $\tau$ preserves $4$ singular points while $\sigma$ and $\tau\sigma$ don't preserve any singular point. So if $H\subset\widetilde{\Aut}(X)$ is a subgroup of degree $3$ then $\widetilde{\Aut}(X)\not\simeq \Alt_{4}$ (since all elements of degree $2$ in $\Alt_{4}$ are conjugate), so $\widetilde{\Aut}(X)$ is isomorphic to $\Dih_{12}$ or $\Cyc_{6}\times \Cyc_{2}$. In any case $X$ is not $G$-birationally rigid.

Assume that in the case (B) we have $a_{23}=c_{01}=c$ (we have no assumptions on $t$ here). Then we apply a change of coordinates given by the matrix
$$\begin{pmatrix}
    \frac{2i\delta}{\alpha\beta\gamma}        & -\frac{i\epsilon\beta}{2\alpha} & -\frac{\alpha\beta}{2\gamma} & -\frac{\epsilon\alpha}{2\beta\delta}\\
    -\frac{i\epsilon\beta}{\alpha\gamma\delta}       & \frac{i}{\alpha\beta} & \frac{-\alpha}{\beta\gamma\delta} & \frac{-\alpha\beta}{4\delta}\\
    -\frac{i\beta}{\alpha\gamma}       & \frac{i\epsilon}{\alpha\beta\delta} & \frac{\alpha}{\beta\gamma\delta} &  \frac{\epsilon\alpha\beta}{4\delta^2}\\
    \frac{2i}{\alpha\beta\gamma\delta}    & -\frac{i\beta}{2\alpha\delta} & \frac{\alpha\beta}{2\gamma\delta\epsilon} & \frac{\alpha}{2\beta\delta^2}
\end{pmatrix}$$
where $\alpha=\sqrt[4]{tc^2-4}, \beta=\sqrt{-c\epsilon+\alpha^2}$, $\gamma=\sqrt{c^2-4}$, $\delta=\sqrt{1-t}$ and $\epsilon=\sqrt{t}$. One can check what we transform the equation of $Q$ into the following:
$$-\frac{4(tc^2+4)}{c^2-4}x_{0}^2x_{1}^2-4x_{0}^2x_{2}^2-4x_{0}^2x_{3}^2-4x_{1}^2x_{2}^{2}-\frac{(c^2-4)^{2}t^2}{4(t-1)^2}x_{1}^{2}x_{3}^{2}+\frac{(tc^2+4)^{3}}{4(c^{2}-4)(t-1)^2}x_{2}^{2}x_{3}^{2}+$$
$$\frac{8\sqrt{1-t}c}{\sqrt{c^{2}-4}}x_{0}^{2}x_{2}x_{3}-\frac{2ct\sqrt{c^2-4}}{\sqrt{1-t}}x_{1}^{2}x_{2}x_{3}-\frac{8\sqrt{1-t}c}{\sqrt{c^{2}-4}}x_{2}^{2}x_{0}x_{1}+\frac{2ct\sqrt{c^2-4}}{\sqrt{1-t}}x_{3}^{2}x_{0}x_{1}-$$
$$-\frac{2c^2(tc^2+8t-4)}{c^2-4}x_{0}x_{1}x_{2}x_{3}.$$
One can check that this equation corresponds to the quartic obtained from system $\operatorname{Sys}(\frac{(c^2-4)t}{4(1-t)}; 0, $ $0, -2c\sqrt{\frac{1-t}{c^2-4}}, 0, 0, -2c\sqrt{\frac{1-t}{c^2-4}}\Large{)}$. This system is isomorphic to the original system $\operatorname{Sys}(t; 0, 0, c, 0, $ $0, -c)$ iff $c=\pm 2\sqrt{2-t}$. If $t=-1$ then the group $H$ is a subgroup of $\widetilde{\Aut}(X)$ of index $3$ and is isomorphic to $\Sym_{4}\times \Cyc_{2}$, and similarly to Lemma 10.12 we have that $\Aut(X)\simeq \Sym_{4}\times \Cyc_{2}^{2}$. If $t\neq -1$ then the group $H$ is a subgroup of $\widetilde{\Aut}(X)$ of index $2$ which is not our case. We also can apply the map $(x_{0}:x_{1}:x_{2}:x_{3})\mapsto (x_{1}:x_{0}:x_{2}:\frac{x_{3}}{t})$ and obtain the system $\operatorname{Sys}\left(\frac{4(1-t)}{(c^2-4)t}; 0, 0, -2c\sqrt{\frac{1-t}{c^2-4}}, 0, 0, -2c\sqrt{\frac{1-t}{c^2-4}}\right)$, but this system is never isomorphic to the original system. Obviously, all other permutations of coordinates lead either to one of these two systems or to a system with $a_{23}=0$ or $c_{01}=0$, so in this case $H$ is never a subgroup of $\widetilde{\Aut}(X)$ of index $3$. But if $\widetilde{\Aut}(X)\simeq H\simeq \Cyc_{2}^{3}$ then there is an $\widetilde{\Aut}(X)$-invariant line.
\end{proof}

\begin{propos}\label{pr11} Assume that $H'$ is trivial. Then we have the following:
\begin{itemize}
\item
$H''$ can be trivial or isomorphic to $\Cyc_{2}$, $\Cyc_{3}$, $\Cyc_{2}^{2}$ (two different cases), $\Dih_{8}$ or $\Alt_{4}$.
\item
If $H''$ is trivial or isomorphic to $\Cyc_{2}$, $\Cyc_{3}$, $\Cyc_{2}^{2}$ or $\Dih_{8}$ then $X$ is never $G$-biratio\-nally rigid.
\item
If $H''$ is isomorphic to $\Alt_{4}$ (recall that in this case we may assume that $t=\frac{1+\sqrt{3}i}{2}$, $a_{12}=a_{13}=a_{23}=-c_{01}=b_{02}=\frac{b_{03}}{t}$) then $\Aut(X)\simeq \Alt_{4}\times\Cyc_{6}$ if $a_{12}=1\pm\sqrt{3}$ and to $\Alt_{4}\times\Cyc_{2}$ otherwise.
\end{itemize}
\end{propos}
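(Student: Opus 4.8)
The plan is to combine the classification of $H''$ from the previous subsection with the fact that, since $H'$ is trivial, $H$ is isomorphic to $H''$. First I would read off the possible isomorphism types of $H''$ from the four lemmas computing $H''$: the group $H''$ is a subgroup of the stabilizer of the four base nodes inside $\operatorname{PGL}_2(\operatorname{k})$, which is $\Cyc_2^2$ in general, $\Dih_8$ for the harmonic value $t=-1$ (and its $\operatorname{PGL}_2$-equivalents $t=2,\tfrac12$) and $\Alt_4$ for the equianharmonic value $t=\frac{1+\sqrt{3}i}{2}$. Running through those lemmas and discarding the configurations that would force a sign change, i.e. a nontrivial element of $H'$, leaves exactly the list in the first bullet, the two occurrences of $\Cyc_2^2$ coming from the two inequivalent ways an involution can act on the four base nodes (fixing two of them, as in the first order-$2$ lemma, versus fixing none, as in the second).

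For the second bullet I would use the dichotomy that $\widetilde{\Aut}(X)$ contains $H\cong H''$ with index $1$ or $3$. If the index is $1$ then $\widetilde{\Aut}(X)\cong H''$; for every $H''$ on the list except $\Alt_4$ the group $H''$ is cyclic or contains a cyclic subgroup of index $2$ (namely $\Cyc_2\subset\Cyc_2^2$ and $\Cyc_4\subset\Dih_8$), so the Corollary following Lemma~\ref{le1} already rules it out. Thus for the non-$\Alt_4$ cases it remains to treat the index-$3$ situation, where $\widetilde{\Aut}(X)$ acquires an element $g$ of order $3$ cyclically permuting the three pairs of quadric systems (equivalently, the three diagonals of the hexagon formed by the six systems $\mathcal{F}_{i}$). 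The key step is to produce $g$ as an explicit projective transformation, in the spirit of the matrix exhibited in Proposition~\ref{pr8}, and to impose that it preserve $Q$: this is a closed condition on the parameters of $\operatorname{Sys}(t;a_{12},a_{13},a_{23},b_{02},b_{03},c_{01})$. I expect this condition to force the equianharmonic $\Alt_4$-configuration, so that for $H''\in\{1,\Cyc_2,\Cyc_3,\Cyc_2^2,\Dih_8\}$ no index-$3$ extension exists and we fall back to the excluded index-$1$ case; where this is delicate (the order-$24$ extension $\Sym_4$ of $\Dih_8$ is not excluded by the Corollary), I would instead exhibit a short $\widetilde{\Aut}(X)$-orbit of nodes or a $\widetilde{\Aut}(X)$-invariant line among the twelve nodes and contradict Lemma~\ref{le1} directly.

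For the third bullet the parameters are pinned down to $t=\frac{1+\sqrt{3}i}{2}$ and $a_{12}=a_{13}=a_{23}=-c_{01}=b_{02}=\frac{b_{03}}{t}$, leaving the single modulus $a_{12}$. Here $\widetilde{\Aut}(X)\supseteq\Alt_4=H$, and the only question is whether the index-$3$ extension is present. I would write down the candidate order-$3$ transformation permuting the three pairs and solve for the values of $a_{12}$ for which it preserves $Q$; I expect exactly $a_{12}=1\pm\sqrt{3}$, giving $\widetilde{\Aut}(X)\cong\Alt_4\times\Cyc_3$ for these values and $\widetilde{\Aut}(X)\cong\Alt_4$ otherwise. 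Finally, lifting the generators to $\operatorname{GL}_4(\operatorname{k})$ and adjoining $-\operatorname{Id}$, as in part~(5) of Proposition~\ref{pr8}, embeds $\widetilde{\Aut}(X)$ into the automorphism group of $\PP(2,1,1,1,1)$ commuting with the Geiser involution, so that $\Aut(X)\cong\widetilde{\Aut}(X)\times\Cyc_2$; this yields $\Alt_4\times\Cyc_6$ when $a_{12}=1\pm\sqrt{3}$ and $\Alt_4\times\Cyc_2$ otherwise.

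The main obstacle is the explicit parameter computation in the $\Alt_4$ case: constructing the order-$3$ transformation that cyclically permutes the three quadric systems and determining precisely the two special values $a_{12}=1\pm\sqrt{3}$ at which it becomes an automorphism, together with the separate verification that the $\Dih_8$ case cannot extend to $\Sym_4$ under the standing hypothesis that $H'$ is trivial.
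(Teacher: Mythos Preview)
Your treatment of the first and third bullets is essentially the paper's: the list of possible $H''$ is read off from the four preceding lemmas, and for $H''\cong\Alt_4$ the paper applies an explicit $4\times 4$ matrix sending $\operatorname{Sys}(t;a,a,a,a,ta,-a)$ to $\operatorname{Sys}(t;b,b,b,b,tb,-b)$ with $b$ an explicit rational function of $a$, solves $a=b$, and finds $a=-1$ (discarded because $Q_1$ becomes a pair of planes) or $a=1\pm\sqrt3$.

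The gap is in the second bullet. Your expectation that the order-$3$ condition ``forces the equianharmonic $\Alt_4$-configuration, so that for $H''\in\{1,\Cyc_2,\Cyc_3,\Cyc_2^2,\Dih_8\}$ no index-$3$ extension exists'' is neither what the paper proves nor obviously true. For $H''$ trivial, $\Cyc_2$, or $\Cyc_3$ the paper does \emph{not} exclude the index-$3$ extension; it simply enumerates the possible $\widetilde{\Aut}(X)$ ($\Cyc_3$; $\Cyc_6$ or $\Sym_3$; $\Cyc_9$ or $\Cyc_3^2$) and observes that in each case an $H$-fixed base node has $\widetilde{\Aut}(X)$-orbit of length $1$ or $3$, so Lemma~\ref{le1} applies directly. (Note your appeal to the Corollary of Lemma~\ref{le1} does not cover $\Cyc_3^2$.) The substantive work is for $H''\cong\Cyc_2^2$ and $\Dih_8$, where the dangerous index-$3$ extensions are $\Alt_4$, respectively $\Sym_4$ and $\Cyc_3\rtimes\Dih_8$ (you omitted the last). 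Here the paper does \emph{not} exhibit short orbits in the hypothetical enlarged group; instead it shows the extensions cannot arise. For one $\Dih_8$ subcase it writes down all twelve nodes explicitly and checks that the elements of $H\cong\Dih_8$ that would have to be conjugate in $\Sym_4$ (or central in $\Cyc_3\rtimes\Dih_8$) have incompatible cycle types on the node set. For one $\Cyc_2^2$ subcase (the one acting transitively on the base nodes) it passes to coordinates in which the $\Cyc_2^2$-fixed tetrahedron is standard and shows that the coefficients of the $x_i^4$ in $Q$ would force $Q$ to be a double quadric if an order-$3$ element were present. In the remaining subcase of each, $H$-invariant lines in $\PP^3$ form a pencil, and any $\widetilde{\Aut}(X)$ acting on that $\PP^1$ has fixed points, producing an $\widetilde{\Aut}(X)$-invariant line without first deciding whether the extension exists. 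Your proposal does not anticipate these computations, and your suggested fallback (finding a short orbit inside a hypothetical $\Sym_4$) would not work: the obstruction there is to the existence of the extension, not to rigidity once it exists.
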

\begin{proof} If $H''$ is trivial then the group $\widetilde{\Aut}(X)$ is also trivial or isomorphic to $\Cyc_{3}$. In any case we have an $\Aut(X)$-invariant singular point or a triple of singular points. If $H''\simeq \Cyc_{3}$ then $\widetilde{\Aut}(X)\simeq \Cyc_{3}, \Cyc_{9}$ of $\Cyc_{3}^{2}$. In any case we have an $\Aut(X)$-invariant singular point or a triple of singular points. If $H''\simeq \Cyc_{2}$ then $\widetilde{\Aut}(X)\simeq \Cyc_{2}, \Cyc_{6}$ or $\Sym_{3}$. In any case $X$ cannot be $G$-birationally rigid.

Assume that $H$ is isomorphic to $\Cyc_{2}^{2}$. If $H=\widetilde{\Aut}(X)$ then $X$ is not $G$-birationally rigid, so we need to check if $H$ is a subgroup of $\widetilde{\Aut}(X)$ of index $3$. There are three groups of order $12$ containing $\Cyc_{2}^{2}$: $\Dih_{12}$, $\Cyc_{6}\times \Cyc_{2}$ and $\Alt_{4}$. In the first two cases $X$ cannot be $G$-birationally rigid, so we need to check only the last case. If $H$ acts on the set of base nodes with two orbits then nontrivial elements of $H$ cannot be conjugate (one can easily check that one of them acts on the set of nodes as a product of $6$ transpositions while two other elements act as a products of $5$ transpositions), thus $\widetilde{\Aut}(X)$ cannot be isomorphic to $\Alt_{4}$. So we need to check only the case of $\Cyc_{2}^{2}$ acting on the set of base nodes transitively.

Let us consider the case then $a_{12}=a\neq 0, b_{03}=-ta, b_{02}=a_{13}=b\neq 0, c_{01}=a_{23}=0$. In this case the group $H$ is generated by two maps $$\sigma:(x_{0}:x_{1}:x_{2}:x_{3})\longmapsto \left(\sqrt{t} x_{1}:-\frac{x_{0}}{\sqrt{t}}:\sqrt{t} x_{3}:-\frac{x_{2}}{\sqrt{t}}\right)$$ and $$\tau:(x_{0}:x_{1}:x_{2}:x_{3})\longmapsto \left(\sqrt{t}\sqrt{t-1} x_{3}:-\frac{\sqrt{t-1}x_{2}}{\sqrt{t}}:-\frac{\sqrt{t} x_{1}}{\sqrt{t-1}}:\frac{x_{0}}{\sqrt{t}\sqrt{t-1}}\right).$$ Maps $\sigma, \tau$ and $\sigma\tau$ act on $\PP^{3}$ with two lines of fixed points which form a tetrahedron with vertices $$p_{1}=(\sqrt{t}\sqrt{t-1}:-i\sqrt{t-1}:i\sqrt{t}:1), p_{2}=(-\sqrt{t}\sqrt{t-1}:i\sqrt{t-1}:i\sqrt{t}:1),$$ $$p_{3}=(\sqrt{t}\sqrt{t-1}:i\sqrt{t-1}:-i\sqrt{t}:1), p_{4}=(-\sqrt{t}\sqrt{t-1}:-i\sqrt{t-1}:-i\sqrt{t}:1).$$ Since $\widetilde{\Aut}(X)\simeq \Alt_{4}$ and $\Cyc_{2}^{2}\subset \Alt_{4}$ is a normal subgroup, the element of degree $3$ should preserve one point among $p_{i}$'s and permute cyclically three other points. Now we consider a map given by the matrix
$$\begin{pmatrix}
        \sqrt{t}\sqrt{t-1}    & -\sqrt{t}\sqrt{t-1} & \sqrt{t}\sqrt{t-1} & -\sqrt{t}\sqrt{t-1}\\
    -i\sqrt{t-1}       & i\sqrt{t-1} & i\sqrt{t-1} & -i\sqrt{t-1}\\
    i\sqrt{t}       & i\sqrt{t} & -i\sqrt{t}&  -i\sqrt{t}\\
    1    & 1 & 1 & 1
\end{pmatrix}$$
In new coordinates the equation of $Q$ has the form

$$(ib-\sqrt{t}a+2\sqrt{t-1})^2x_{0}^{4}+(-ib+\sqrt{t}a+2\sqrt{t-1})^2x_{1}^{4}+(ib+\sqrt{t}a+2\sqrt{t-1})^2x_{2}^{4}+$$
$$+(-ib-\sqrt{t}a+2\sqrt{t-1})^2x_{3}^{4}+\text{other terms.}$$
Assume that $ib-\sqrt{t}a+2\sqrt{t-1}=0$. Then one can check that an element of degree $3$ which acts like $(x_{0}:x_{1}:x_{2}:x_{3})\mapsto (x_{0}:\alpha x_{2}:\beta x_{3}:\gamma x_{1})$ preserve this equation only if $a=\frac{2\sqrt{t}}{\sqrt{t-1}}, b=\frac{-2i}{\sqrt{t-1}}$ and the quartic  $Q$ is a double quadric, which is impossible. The same is true if $-ib+\sqrt{t}a+2\sqrt{t-1}=0$, $ib+\sqrt{t}a+2\sqrt{t-1}=0$ or $-ib-\sqrt{t}a+2\sqrt{t-1}$. If all of these coefficients are nonzero we can apply a diagonal map and obtain an equation
$$x_{0}^{4}+x_{1}^{4}+x_{2}^{4}+x_{3}^{4}+2x_{0}^{2}x_{1}^{2}+2x_{2}^{2}x_{3}^{2}+\text{other terms},$$ and the automorphism of degree three acts iff $a^{2}+b^{2}=4$, but in this case $Q$ again is a double quadric. So this case is impossible.

Assume that $b_{03}=ta_{12}, b_{02}=a_{13}, c_{01}=-a_{23}$. In this case the group $H$ is generated by two maps $$\sigma:(x_{0}:x_{1}:x_{2}:x_{3})\longmapsto \left(\sqrt{t} x_{1}:\frac{x_{0}}{\sqrt{t}}:\sqrt{t} x_{3}:\frac{x_{2}}{\sqrt{t}}\right)\text{ and}$$ $$\tau:(x_{0}:x_{1}:x_{2}:x_{3})\longmapsto \left(\sqrt{t}\sqrt{t-1} x_{3}:-\frac{\sqrt{t-1}x_{2}}{\sqrt{t}}:-\frac{\sqrt{t} x_{1}}{\sqrt{t-1}}:\frac{x_{0}}{\sqrt{t}\sqrt{t-1}}\right).$$ Lines in $\PP^{3}$ preserved by the group $H$ form a family parametrized by the line $\PP^{1}$. The group $\widetilde{\Aut}(X)$ acts on this line with two fixed points, so there are two $\widetilde{\Aut}(X)$-invariant lines and $X$ is never $G$-birationally rigid.

Assume that $H$ is isomorphic to $\Dih_{8}$. If $H=\widetilde{\Aut}(X)$ then $X$ is not $G$-birationally rigid, so we need to check if the group $H$ is a subgroup of $\widetilde{\Aut}(X)$ of index $3$. There are four groups of order $24$ containing $\Dih_{8}$: $\Dih_{24}$, $\Sym_{4}$, $\Dih_{8}\times \Cyc_{3}$ and $\Cyc_{3}\rtimes \Dih_{8}$. If $\widetilde{\Aut}(X)$ is isomorphic to $\Dih_{24}$ or $\Dih_{8}\times \Cyc_{3}$ then $X$ is not $G$-birationally rigid. So we need to check two other possibilities. We may assume that $t=-1$ and we have two possibilities: either $a_{12}=a_{13}=b_{02}=b_{03}=a\neq 0, a_{23}=c_{01}=0$ or $a_{12}=a_{13}=b_{02}=-b_{03}=a\neq 0, a_{23}=-c_{01}=b$.

In the first case we can explicitly calculate all singularities of $X$: $$p_{1}=(1:0:0:0),\ p_{2}=(0:1:0:0),\ p_{3}=(0:0:1:0),\ p_{4}=(0:0:0:1),$$ $$p_{5}=(-a+\alpha:-a+\alpha:1:1),\ p_{6}=(-a+\alpha:-a-\alpha:1:1),$$
 $$p_{7}=(-a-\alpha:-a+\alpha:1:1),\ p_{8}=(-a-\alpha:-a-\alpha:1:1),$$
  $$p_{9}=(\sqrt{2}:-\sqrt{2}:-1:1),\ p_{10}=(-\sqrt{2}:\sqrt{2}:-1:1),$$
  $$p_{11}=(-a+\alpha:-a+\alpha:-1-2a(-a+\alpha):1),\ p_{12}=(-a-\alpha:-a-\alpha:2a(a+\alpha)-1:1),$$ where $\alpha=\sqrt{a^2-2}$. Two generators of the group $H$ act on the set of nodes as permutations $\sigma=(1, 4, 2, 3)(5, 8)(6, 12, 7, 11)(9, 10)$ and $\tau=(1,2)(6,7)(9,10)$. If $\widetilde{\Aut}(X)\simeq \Cyc_{3}\rtimes \Dih_{8}$ then the element $\sigma^{2}$ generates its center, but it acts on the set of nodes as $(1, 2)(3,4)(6,7)(11,12)$, so we have a distinguished quadruple of nodes which is impossible. If $\widetilde{\Aut}(X)\simeq \Sym_{4}$ then where is a normal subgroup $\sigma^{2}\in \Cyc_{2}^{2}\subset \Dih_{8}\subset \Sym_{4}$ and all elements of order $2$ in $\Cyc_{2}^{2}$ are conjugate to each other and thus have the same type of a cycle decomposition. But only $\sigma^{2}$ in $\Dih_{8}$ is a product of $4$ transpositions, all other are products of $3$ or $5$ transpositions. So this case is also impossible.

In the second case we again have a $1$-parametric family of $H$-invariant lines parametrized by $\PP^{1}$, and even if $H$ is a subgroup of $\widetilde{\Aut}(X)$ of index $3$ then there is an $\widetilde{\Aut}(X)$-invariant line in $\PP^{3}$.

Assume that $H$ is isomorphic to $\Alt_{4}$, i.e. $t=\frac{1+i\sqrt{3}}{2}$ and $a_{12}=a_{13}=a_{23}=b_{02}=\frac{b_{03}}{t}=-c_{01}=a$. Let us apply a map given by the matrix

$$\begin{pmatrix}
    \frac{\beta}{\alpha}        & \alpha^{3} & -\frac{1}{\alpha} & \frac{1}{t\alpha}\\
    \frac{1}{\alpha}       & \frac{\beta}{\alpha} & -\frac{i}{\alpha^{3}} & -\frac{1}{\alpha}\\
    -\alpha^{3}       & -\frac{1}{\alpha} & \frac{\beta}{\alpha} &  \frac{1}{t^2\alpha}\\
    t\alpha^{3}    & -\alpha^{3} & -\frac{1}{\alpha} & \frac{\beta}{\alpha}
\end{pmatrix}$$
where $\alpha=\sqrt[4]{t}$ and $$\beta=\frac{a(t+1)+\sqrt{a^2(t+1)^{2}-4t(a+1)}}{2}.$$ One can check that this map transforms our quartic to a quartic with equation corresponding to $\operatorname{Sys}(t; b, b, b, b, \frac{b}{t}, -b)$, where $$b=\frac{2a(8-8a+4(3-i\sqrt{3})\beta}{24a^{2}-40a-32-4(3-i\sqrt{3})a\beta}.$$ This system is isomorphic to the original one if $a=b$. One can check that this happens iff $a=-1$ or $1\pm\sqrt{3}$. But if $a=-1$ then the quadric $Q_{1}$ is a union of two planes which is impossible.

\end{proof}

\section{Case $18^{\circ}$}

In this case $X$ has a small $\QQ$-factorialization $\widetilde{X}$ which is the blow-up of a singular factorial cubic threefold $Y$ of Picard rank $1$ in a general smooth point $p$. The number of nodes on $Y$ is not greater than $5$, but it cannot be zero since $X$ is rational. By the Lemma~\ref{le1}, there are exactly four such points, and the corresponding singular points of $X$ form a single $G$-orbit (let us denote them by $p_1, p_2, p_3, p_{4}$). Remaining singular points of the variety $X$ comes from six lines passing through the point $p$. Their images lie on the unique trope $P$ and form a single $G$-orbit. Let us denote by $C$ the conic $P\cap Q$. The following lemmas 10.1--10.6 can be proven similarly to the corresponding lemmas of the case $20^\circ$ (see Lemmas 8.1--8.6).
\begin{lemma} There is a $G$-invariant point $q$ in $\PP^{3}$.
\end{lemma}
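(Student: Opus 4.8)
The plan is to reduce the statement to the complete reducibility of finite-group representations in characteristic zero, exactly as in the proof of Lemma 8.1, since here $P$ plays the role of the $\widetilde{G}$-invariant plane from case $20^{\circ}$. The only geometric input required is that the trope $P$ be distinguished, and this is immediate: in case $18^{\circ}$ the variety $X$ carries $p=2$ planes, so by Proposition~\ref{pr1} the quartic $Q$ has a single trope. Being the unique trope, $P$ is preserved by every element of $\widetilde{\Aut}(X)$, hence is $\widetilde{G}$-invariant; and since $G$ acts on $\PP^{3}$ only through its image $\widetilde{G}\subset\operatorname{PGL}_{4}(\operatorname{k})$, a $\widetilde{G}$-invariant point in $\PP^{3}$ is automatically $G$-invariant. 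Thus it suffices to produce a $\widetilde{G}$-invariant point lying off $P$.

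Next I would linearize the action. Write $\PP^{3}=\PP(V)$ for a $4$-dimensional $\operatorname{k}$-vector space $V$, and $P=\PP(U)$ for a $3$-dimensional subspace $U\subset V$. Let $\widehat{G}\subset\operatorname{SL}_{4}(\operatorname{k})$ be the preimage of $\widetilde{G}$ under the projection $\operatorname{SL}_{4}(\operatorname{k})\to\operatorname{PGL}_{4}(\operatorname{k})$. This $\widehat{G}$ is a finite group whose tautological representation on $V$ induces the given $\widetilde{G}$-action on $\PP(V)$, and the $\widetilde{G}$-invariance of $P$ says precisely that $U$ is a $\widehat{G}$-subrepresentation of $V$.

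Finally, since $\widehat{G}$ is finite and $\operatorname{char}\operatorname{k}=0$, Maschke's theorem furnishes a $\widehat{G}$-invariant complement $W\subset V$ to $U$; as $\dim V=4$ and $\dim U=3$, this $W$ is one-dimensional. The point $q:=\PP(W)$ is $\widehat{G}$-invariant, hence $\widetilde{G}$-invariant and $G$-invariant, and it does not lie on $P$ because $W\cap U=0$. This gives the required $q$. I do not anticipate any genuine obstacle: the uniqueness of the trope is forced by $p=2$ together with Proposition~\ref{pr1}, and the remaining step is exactly the standard complete-reducibility argument already invoked for Lemma 8.1.
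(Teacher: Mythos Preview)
Your proof is correct and follows the same approach as the paper, which explicitly states that Lemmas 10.1--10.6 are proven similarly to Lemmas 8.1--8.6; the argument of Lemma 8.1 is precisely the lift to $\widehat{G}\subset\operatorname{SL}_{4}(\operatorname{k})$ followed by splitting off a one-dimensional complement to the subrepresentation $U$ corresponding to $P$. Your additional justification that $P$ is the unique trope (hence $\widetilde{G}$-invariant) and your remark reconciling $G$-invariance with $\widetilde{G}$-invariance are useful clarifications that the paper leaves implicit.
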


There is the natural short exact sequence $$0\longrightarrow G'\longrightarrow G\longrightarrow G''\longrightarrow 0,$$ where the group $G'$ acts trivially on $P$ and $G''$ acts faithfully on $P$.
\begin{lemma} The group $G'$ is trivial.
\end{lemma}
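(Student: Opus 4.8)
The plan is to determine the action of $G'$ completely and then let the four nodes $p_1,\dots,p_4$ force it to be trivial. By the previous lemma there is a $G$-invariant point $q\notin P$, while by construction $G'$ fixes the plane $P$ pointwise. Choosing coordinates so that $P=\{x_3=0\}$ and $q=(0:0:0:1)$, any nontrivial $g\in G'$ has fixed locus containing $P$ and $q$, so a lift to $\operatorname{GL}_4(\operatorname{k})$ is a perspectivity $\operatorname{diag}(1,1,1,\lambda_g)$ with axis $P$ and centre $q$. Hence $g\mapsto\lambda_g$ embeds $G'$ into $\operatorname{k}^{*}$, so $G'\simeq\Cyc_n$ is cyclic, and each nontrivial element preserves every line through $q$ and acts freely on such a line away from its two fixed points $q$ and $\ell\cap P$, with orbits of size exactly $n$.

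Next I would locate the four nodes relative to $q$ and $P$. Since the trope contains exactly six nodes (Proposition~\ref{pr1}) and these make up the $G$-orbit of length $6$ disjoint from $\{p_1,\dots,p_4\}$, none of the $p_i$ can lie on $P$; and $q\notin\{p_1,\dots,p_4\}$ because $q$ is $G$-fixed while the $p_i$ form an orbit of length $4$. Thus each $p_i$ lies off $P\cup\{q\}$, so its $G'$-orbit is contained in the line $\overline{qp_i}$ and has size exactly $n$. As the four nodes are then partitioned into $G'$-orbits of size $n$, we obtain $n\mid 4$, i.e. $n\in\{1,2,4\}$. It remains to exclude $n=4$ and $n=2$ using the absence of $\widetilde{G}$-invariant lines (Lemma~\ref{le1}). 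If $n=4$, the four collinear nodes span a single line through $q$, which is $\widetilde{G}$-invariant because $G$ permutes them, a contradiction. If $n=2$, the nodes split into two $G'$-orbits lying on two lines $\ell_1,\ell_2$ through $q$; normality of $G'$ in $G$ makes $\{\ell_1,\ell_2\}$ a $G$-stable pair, so the plane $\Pi=\langle\ell_1,\ell_2\rangle$ is $\widetilde{G}$-invariant, and since $q\in\Pi$ while $q\notin P$ we have $\Pi\neq P$, whence $\Pi\cap P$ is a $\widetilde{G}$-invariant line, again impossible. Therefore $n=1$ and $G'$ is trivial.

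The step I expect to be most delicate is the bookkeeping for $n=2$: one must verify that the partition of the nodes into $G'$-orbits is genuinely $G$-equivariant (this is exactly where normality of $G'$ enters) and that $\ell_1\neq\ell_2$, since if all four nodes happened to be collinear the argument must instead revert to the $n=4$ case. It is worth stressing that this is precisely the point where Case $18^\circ$ diverges from Case $20^\circ$: there the six nodes permit $G'\simeq\Cyc_2$, because an involution pairs them into three concurrent lines through $q$, and three concurrent lines need not produce an invariant line; with only four nodes, two lines always span an invariant plane and hence an invariant line, so no nontrivial $G'$ can survive.
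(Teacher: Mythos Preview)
Your proof is correct and follows essentially the same approach as the paper. The paper gives no separate proof here, referring instead to the analogous Lemma~8.2 in Case~$20^{\circ}$; your argument is precisely the natural adaptation of that proof, with the divisibility constraint $n\mid 4$ replacing $n\mid 6$ and the two-lines-through-$q$ step now eliminating $n=2$ rather than $n=3$. Your closing remark correctly identifies why Case~$18^{\circ}$ forces $G'$ to be trivial while Case~$20^{\circ}$ allows $G'\simeq\Cyc_{2}$: with four nodes the two $G'$-orbits for $n=2$ span a plane through $q$ whose intersection with $P$ is a $\widetilde{G}$-invariant line, whereas three concurrent lines through $q$ need not be coplanar.
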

\begin{lemma} The group $G''$ is isomorphic to $\Alt_{4}$ or $\Sym_{4}$.
\end{lemma}
\begin{corollary} The group $G$ is isomorphic to $\Alt_{4}$ or $\Sym_{4}$.
\end{corollary}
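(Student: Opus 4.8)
The plan is to read off the conclusion directly from the short exact sequence
$$0\longrightarrow G'\longrightarrow G\longrightarrow G''\longrightarrow 0$$
that was set up immediately before the statement, together with the two lemmas that pin down its outer terms. By the preceding lemma the kernel $G'$ acts trivially on $P$ and is in fact trivial, so the quotient map $G\to G''$ is an isomorphism. Then by the lemma identifying $G''$, the group $G''$ is isomorphic to $\Alt_{4}$ or $\Sym_{4}$, and composing these two facts yields $G\simeq G''\simeq \Alt_{4}$ or $\Sym_{4}$.

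Concretely, the first step is to invoke the vanishing of $G'$: since $G'$ consists of automorphisms fixing the plane $P$ pointwise and this subgroup is trivial, the restriction homomorphism $G\to \Aut(P)$ is injective and its image is exactly $G''$. The second step is to quote the classification of $G''$ as $\Alt_{4}$ or $\Sym_{4}$, which itself rests on the faithful action of $G''$ on the pair $(P,C)$ and the existence of a length-$6$ orbit on the conic $C$, exactly as in the analysis of case $20^{\circ}$. Combining injectivity with surjectivity onto $G''$ forces $G$ to be isomorphic to one of these two groups.

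I expect there to be no real obstacle here: the whole content has been front-loaded into Lemmas~10.2 and~10.3, so the corollary is a one-line deduction from the exactness of the sequence once $G'$ is known to be trivial. The only point worth a sentence of care is to note that, because $G'$ vanishes (in contrast to the earlier cases $20^{\circ}$ and $22^{\circ}$, where $G'$ could be $\Cyc_{2}$ and one had to discuss central non-stem extensions), no extension problem arises at all and the isomorphism type of $G$ is determined outright rather than only up to a central $\Cyc_{2}$.
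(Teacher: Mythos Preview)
Your proposal is correct and matches the paper's intended argument: the paper gives no separate proof for this corollary because it is immediate from Lemmas~10.2 and~10.3 via the short exact sequence, exactly as you describe. Your remark contrasting this with the earlier cases $20^{\circ}$ and $22^{\circ}$, where $G'$ could be $\Cyc_{2}$ and a central-extension analysis was required (cf.\ Corollary~8.5), is apt and explains why the list of possibilities collapses to just $\Alt_{4}$ and $\Sym_{4}$ here.
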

\begin{lemma} $\PP^{3}$ with the action of the group $\widetilde{G}$ is the projectivisation of a $4$-dimensional representation of the group $G$.
\end{lemma}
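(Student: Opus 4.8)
The plan is to build the representation by decomposing $\PP^3$ into the invariant plane and the invariant point provided by the earlier lemmas. Write $\PP^3=\PP(V)$, let $P=\PP(U)$ be the unique trope, and let $q=\PP(W)$ be the $G$-invariant point of Lemma 10.1, where $U,W\subset V$ are the corresponding linear subspaces of dimension $3$ and $1$. Since $q\notin P$ we have $V=U\oplus W$, and both summands are preserved by any linear lift of the $\widetilde{G}$-action. I also record at the outset that, because $G'$ is trivial by Lemma 10.2, the Geiser involution (which acts trivially on $\PP^3$, hence on $P$) cannot lie in $G$; therefore $\widetilde{G}\simeq G\simeq\Alt_4$ or $\Sym_4$ by Corollary 10.4, so producing a representation of $G$ is the same as producing a linear lift of the $\widetilde{G}$-action on $\PP(V)$.

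First I would fix a linear action on $U$. The faithful action of $G\simeq\Alt_4$ or $\Sym_4$ on $P$ realises $\widetilde{G}$ as a subgroup of $\operatorname{PGL}_3(\operatorname{k})$, and such a subgroup is unique up to conjugation and arises as the projectivisation of the unique irreducible $3$-dimensional representation of $G$. Identifying $U$ with that representation yields a genuine homomorphism $\rho_U\colon G\to\operatorname{GL}(U)$ whose projectivisation is exactly the given action of $\widetilde{G}$ on $P$.

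Next I would extend $\rho_U$ to all of $V$. For each $g\in\widetilde{G}$ there is a linear lift of the projective action on $\PP(V)$, unique up to a single global scalar, and every such lift preserves $U$ and $W$. Requiring the lift to restrict to $\rho_U(g)$ on $U$ pins down the scalar, so it singles out a well-defined element $\tilde{g}\in\operatorname{GL}(V)$; since $W$ is one-dimensional and $\tilde{g}$-invariant, $\tilde{g}$ acts on $W$ by a definite scalar $\mu_g$. The assignment $g\mapsto\tilde{g}$ is a homomorphism: both $\widetilde{g_1g_2}$ and $\tilde{g_1}\tilde{g_2}$ lift the same element of $\operatorname{PGL}(V)$ and both restrict on $U$ to $\rho_U(g_1g_2)=\rho_U(g_1)\rho_U(g_2)$, and a lift is determined by its restriction to $U$, so the two agree. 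This gives the desired $4$-dimensional representation of $G$ with $\PP(V)\simeq\PP^3$ as $\widetilde{G}$-varieties.

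The only step that genuinely requires input is the first one, namely that the projective action on $P$ lifts to an honest linear representation $U$ and not merely to a projective one. This is exactly where the structure of $G$ matters: for $\Alt_4$ and $\Sym_4$ the relevant subgroup of $\operatorname{PGL}_3(\operatorname{k})$ comes from an honest irreducible $3$-dimensional representation, so no central-extension obstruction intervenes. Unlike the case $20^{\circ}$, here $G'$ is trivial, so there is no auxiliary $\Cyc_2$ acting trivially on $P$ that would have to be carried along, and once $U$ has been fixed the extension to $V$ is forced and entirely formal.
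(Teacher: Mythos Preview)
Your proof is correct and follows essentially the same approach as the paper (which simply refers back to the analogous Lemma~8.4 in case~$20^{\circ}$): decompose $V=U\oplus W$ using the invariant plane and point, lift the projective action on $P=\PP(U)$ to an honest $3$-dimensional representation of $G''\simeq\widetilde G$, and use this to pin down the unique compatible lift on all of $V$. You are more explicit than the paper both about why $G\simeq\widetilde G$ here (via the triviality of $G'$ and the Geiser involution) and about why the resulting map $g\mapsto\tilde g$ is a homomorphism. One tiny imprecision: for $G=\Sym_4$ there are two irreducible $3$-dimensional representations rather than one, but they differ only by the sign character and have the same projectivisation, so your argument is unaffected.
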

\begin{lemma} The equation of the quartic $Q$ is symmetric in some coordinates.
\end{lemma}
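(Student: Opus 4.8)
The plan is to follow the template of the corresponding lemma in Case $20^\circ$ (Lemma 8.6). By Lemma~\ref{le14} I would choose homogeneous coordinates so that $\Alt_{4}\subseteq G$ acts on $V$ by permuting $x_{0},x_{1},x_{2},x_{3}$; by the preceding lemma $\PP^{3}=\PP(V)$ is then the projectivisation of the permutation representation, the $G$-invariant point $q$ is $(1:1:1:1)$, and the unique trope $P$ is the plane $\{x_{0}+x_{1}+x_{2}+x_{3}=0\}$. Since $\{Q=0\}$ is $G$-invariant, $Q$ is an $\Alt_{4}$-semi-invariant quartic form, so the line $\langle Q\rangle$ transforms by a character $\psi$ of $\Alt_{4}$. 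As $\Alt_{4}^{\mathrm{ab}}\cong\Cyc_{3}$ we have $\psi\in\{1,\xi,\xi^{2}\}$, where $\xi$ is a primitive cube root of unity. A Molien computation shows that the space of $\Alt_{4}$-invariant quartics in four variables has the same dimension ($5$) as the space of symmetric quartics; since the latter is contained in the former, every $\Alt_{4}$-invariant quartic is already symmetric, so the case $\psi=1$ immediately finishes the lemma. The cases $\psi=\xi$ and $\psi=\xi^{2}$ are interchanged by an odd permutation of the coordinates (which still normalises $\Alt_{4}$), so I would assume $\psi=\xi$; then $Q$ is forced into the explicit three-parameter ``non-symmetric'' form with coefficients $A,B,C$ that appears in the proof of Lemma 8.6.

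It remains to rule out this form. The four nodes of the factorial cubic $Y$ give a length-$4$ $G$-orbit of nodes of $Q$ lying off $P$. Using the freedom to change the chosen basis vector of the one-dimensional summand $W$ — the translations $x_{i}\mapsto x_{i}+\alpha\sum_{j}x_{j}$, which act as the identity on $P$ — I would normalise this orbit to the coordinate points $(1:0:0:0),\dots,(0:0:0:1)$. Imposing that $(1:0:0:0)$ be a singular point of $Q$ annihilates the surviving cubic terms and forces $A=0$; by $\Alt_{4}$-equivariance the other three coordinate points are then automatically singular as well.

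Finally I would use that $P$ is a trope, i.e.\ that $Q|_{P}$ is a double conic, where the character bookkeeping is the key tool. The restriction $Q|_{P}$ has character $\xi$, so if $Q|_{P}=L^{2}$ then $L$ is a semi-invariant conic of character $\xi^{2}$, and such a conic is unique up to scalar; its square spans the one-dimensional space of perfect squares inside the two-dimensional space of character-$\xi$ quartics on $P$. Hence the double-conic condition is a single linear condition on $(B:C)$ and pins $Q$ down to a unique quartic $Q_{0}$. I would then compute $Q_{0}$ explicitly and check that it is \emph{not} an admissible nodal quartic of type $18^\circ$ — concretely, that it degenerates (for instance to a double quadric) or acquires a non-isolated or supernumerary singularity — which is the required contradiction. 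Therefore $\psi=1$, and $Q$ is symmetric.

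The hard part will be this last computation. Unlike in Lemma 8.6, where the only off-trope orbit has length $6$ (stabiliser $\Cyc_{2}$), so that a single node already imposes two independent relations and determines $Q$ up to scalar, here the only off-trope orbit has length $4$ (stabiliser $\Cyc_{3}$) and a node contributes merely the relation $A=0$; the double-conic condition must be invoked to finish pinning $Q$ down, and only then does the contradiction surface. Verifying that the resulting $Q_{0}$ genuinely fails to carry the nodal configuration of Case $18^\circ$ — rather than producing a spurious legitimate example — is the step that requires the most care.
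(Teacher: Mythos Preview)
Your plan is sound and would succeed, but you have made the endgame harder than it needs to be. The ``hard part'' you flag dissolves once you notice a character obstruction that kicks in \emph{before} you ever locate the off-trope nodes. Write $Q = R_4 + s_1 R_3 + s_1^2 R_2 + s_1^3 R_1 + s_1^4 R_0$ with $R_{4-k}\in\Sym^{4-k}U^*$, using the splitting $V^*=\langle s_1\rangle\oplus U^*$. Each $R_{4-k}$ carries the character $\xi$, but $\Sym^3 U$ contains \emph{no} copy of $\xi$ or $\xi^2$ (a short computation gives $\Sym^3 U\cong \mathbf{1}\oplus U^{\oplus 3}$). Hence $R_3=0$ automatically. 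So as soon as the trope condition $R_4=F_2^{\,2}$ is imposed, one has $Q=F_2^{\,2}+s_1^{2}(\cdots)$, which is singular along the whole smooth conic $\{s_1=F_2=0\}$ --- a contradiction with nodality. This rules out the $\psi=\xi$ case for \emph{every} $(A:B:C)$ at once; there is no unique $Q_0$ to isolate and no ``spurious legitimate example'' to fear. Your normalisation of the $4$-orbit and the deduction $A=0$ are correct but unnecessary.

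The paper gives no details here, only the pointer ``similarly to Lemma~8.6''. In Lemma~8.6 the off-trope orbit has length $6$, which suffices to pin $(A:B:C)$ down uniquely; the paper then checks by hand that $Q|_P$ fails to be a double conic. Your adaptation --- length-$4$ orbit gives one relation, trope gives a second, then hunt for a further obstruction --- is the natural way to transplant that argument to case $18^\circ$, and it does go through (your final check would discover exactly the curve of singularities above). But the $R_3=0$ observation short-circuits both routes: it shows directly that for a $\xi$-semi-invariant $Q$ the trope condition is incompatible with isolated singularities, uniformly in cases $18^\circ$ and $20^\circ$, without ever touching the off-trope nodes.
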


\begin{propos}\label{pr12} The quartic $Q$ can be given by the equation
\begin{equation}\label{eq4}
s_{2}^{2}-2s_{1}s_{3}+s_{1}^2s_{2}+A(2s_{1}s_{3}-3s_{1}^2s_{2}+s_{1}^{4})=0
\end{equation}
for some $A\neq 1$. The group $\Aut(X)$ is isomorphic to $\Sym_{4}\times \Cyc_{2}$ and the group $G$ is $\Sym_{4}\times \Cyc_{2}$, $\Alt_{4}\times \Cyc_{2}$ or $\Sym_{4}$ (which permutes planes on $X$).
\end{propos}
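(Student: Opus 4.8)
The plan is to mirror the computation carried out for the case $20^\circ$ (the Lemma preceding Proposition~\ref{pr7}), adjusting it to the numerical data of the present case. By Lemma~10.6 I may pass to coordinates in which $Q$ is invariant under the standard permutation action of $\Sym_4$, so that
\[
Q = a\,s_4 + b\,s_1 s_3 + c\,s_2^2 + d\,s_1^2 s_2 + e\,s_1^4, \qquad s_j = \sum_{i=0}^{3} x_i^j .
\]
The unique trope is the $\Sym_4$-invariant plane $P=\{s_1=0\}$, and by Proposition~\ref{pr1} the curve $C=P\cap Q$ is a double conic. Since $Q$ is symmetric, $C$ is $\Sym_4$-invariant, and a character computation shows that the only $\Sym_4$-invariant conic in $P=\PP(\mathrm{std})$ is $\{s_2|_P=0\}$; hence $Q|_P=c\,(s_2|_P)^2$, which forces $a=0$ and $c\neq 0$. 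After rescaling I set $c=1$.

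Next I would use that the four nodes $p_1,\dots,p_4$ form the single $\Sym_4$-orbit of a point of type $(\lambda:\mu:\mu:\mu)$. The transformations $(x_i)\mapsto(x_i+\alpha s_1)$ — which fix $P$ pointwise and the invariant point $q$, commute with $\Sym_4$, and amount only to a new choice of basis vector in the one-dimensional subrepresentation $W$ — move this orbit to the coordinate points, so I may assume $Q$ is singular at $(1:0:0:0)$. Imposing $\partial_i Q(1:0:0:0)=0$ gives, after using the $\Sym_3$-symmetry of the point, the two independent linear relations $1+b+d+e=0$ and $b+2d+4e=0$ on $(b,d,e)$; solving them and writing $A=e$ yields precisely~\eqref{eq4}. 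The six remaining nodes lie automatically on $C$ (they are $C\cap\{G_3=0\}$ in the notation of Proposition~\ref{pr1}), so no further condition is needed. Finally $A=1$ must be discarded, since there $Q=(s_2-s_1^2)^2$ is a double quadric.

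To compute $\Aut(X)$ I would note that symmetry of~\eqref{eq4} gives $\Sym_4\subseteq\widetilde{\Aut}(X)$. As in Lemma~10.2 the kernel of the action of $\widetilde{\Aut}(X)$ on $P$ consists of perspectivities fixing $P$ pointwise and $q$, and the only nontrivial candidate is the involution~\eqref{eq1}; a direct substitution shows it does not preserve~\eqref{eq4}, so $\widetilde{\Aut}(X)$ acts faithfully on $P$. Preserving the conic $C$ and its octahedral $6$-point orbit then pins $\widetilde{\Aut}(X)$ down to $\Sym_4$. Since the Geiser involution $\theta$ is central and splits off (lifting the generators to $\operatorname{GL}_4(\operatorname{k})$ together with $-\operatorname{Id}$), one gets $\Aut(X)\simeq\Sym_4\times\Cyc_2$.

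It remains to list the admissible $G\subseteq\Aut(X)$. By Corollary~10.4 the image $\widetilde G$ is $\Alt_4$ or $\Sym_4$, and by Lemma~\ref{le1} the conditions on orbit lengths and the absence of invariant lines hold automatically once $\Alt_4\subseteq\widetilde G$. The decisive extra requirement is $G$-minimality: $\operatorname{Cl}(X)$ has rank $2$, and an automorphism acts nontrivially (as $-1$) on the rank-one complement of $-K_X$ exactly when it interchanges the two planes $\Pi_1,\Pi_2$ lying over $P$; hence $G$ is minimal if and only if it contains such a plane-interchanging element. The Geiser involution $\theta$ interchanges $\Pi_1$ and $\Pi_2$, while the homomorphism $\Sym_4\to\Cyc_2$ recording this interchange kills $\Alt_4$ and equals the sign character on exactly one of the two copies of $\Sym_4$ sitting in $\Sym_4\times\Cyc_2$. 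Consequently $\Alt_4$ and the plane-preserving $\Sym_4$ fail to be minimal, and the admissible groups are precisely $\Sym_4\times\Cyc_2$, $\Alt_4\times\Cyc_2$ and the plane-permuting copy of $\Sym_4$. I expect this bookkeeping — tracking how $\theta$ and the odd permutations act on the two planes, equivalently on the rank-two class group — to be the only delicate point; the coefficient computation of the first two paragraphs is routine.
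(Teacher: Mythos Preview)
Your proof is correct and follows essentially the same route as the paper: the paper's own argument for Proposition~\ref{pr12} is a two-line reference back to the parallel computation in case~$20^\circ$ (Lemma~8.7 and Proposition~\ref{pr7}), and you have carried out exactly that computation, normalising the singular orbit to a coordinate point via the shear $(x_i)\mapsto(x_i+\alpha s_1)$ and solving the resulting linear system in the coefficients. Your extra justifications --- the character argument forcing the $s_4$-coefficient to vanish, the check that $A=1$ yields the double quadric $(s_2-s_1^2)^2$, and the explicit minimality bookkeeping via the plane-interchange homomorphism --- are all sound and fill in details the paper leaves implicit.
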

\begin{proof} We know from the previous lemma that the quartic $Q$ has a symmetric equation. Since the intersection $Q\cap P$ is a double conic, the equation of $X$ has the form~\eqref{eq2}. The variety $X$ is singular at the orbit of length $4$ outside of $P$. Thus $X$ is singular at the point $(a:a:a:b)$ for some $a$ and $b$. Similarly to Lemma 9.6 we may assume that $a=0$ and $b=1$. One can easily calculate that the quartic $Q$ has the equation of the required form.
\end{proof}

\section{Case $17^{\circ}$}

\begin{propos} In this case $X$ is never $G$-birationally rigid.
\end{propos}
\begin{proof}
In this case $X$ contains exactly $11$ nodes. By the Lemma~\ref{le1} either they form a unique $G$-orbit or they form two orbits of length 4 and 7 respectively.

Assume that they form a unique $G$-orbit. Thus the group $G$ contains an element of order $11$ which acts transitively on the set of nodes. We may assume that such an element acts diagonally: $$\sigma: (x_{0}:x_{1}:x_{2}:x_{3})\longmapsto (x_{0}:\xi x_{1}:\xi^{a} x_{2}:\xi^{b} x_{3})$$ where $\xi$ is a root of unity of degree $11$ and $0\leq a, b\leq 10$ are integers. Since all nodes cannot be coplanar, we see, that $a$ and $b\neq 0, 1$ and $a\neq b$ and we may assume that one singular point has coordinates $(1:1:1:1)$. Note, that we can perform three following operations:
\begin{enumerate}
\item
to choose another root of unity;
\item
to multiply all coordinates by the same number;
\item
to reorder coordinates.
\end{enumerate}
Using this operations we can reduce the task to four cases: $(a, b)=(2, 3)$, $(2, 4)$, $(2, 5)$ or $(3, 4)$. Then in any case we do the following: for every $0\leq c\leq 10$ we write down all monomials in variables $x_{i}$ such that $\sigma$ acts of them by multiplication by $\xi^{c}$, then we take the sum of such monomials with undefined coefficients and solve the system of linear equations which comes from the fact that $(1:1:1:1)$ is a singular point. In any case except one either the system has no nonzero solutions or the resulting quartic is reducible or non-reduced. The only exception is the case $(a, b, c)=(2, 3, 6)$. In this case we have the following monomials: $x_{0}^{2}x_{3}^{2}, x_{0}x_{1}x_{2}x_{3}, x_{0}x_{2}^{3}, x_{1}^{3}x_{3}, x_{1}^{2}x_{2}^{2}$. Solving the system gives us a one-parametric family of quartics $$\alpha(x_{0}^{2}x_{3}^{2}+3x_{1}^{2}x_{2}^{2}-2x_{0}x_{2}^{3}-2x_{1}^{3}x_{3})+\beta(x_{0}x_{3}-x_{1}x_{2})^2=0.$$
But in this case the quartic $Q$ is singular along the whole twisted cubic $(t^{3}:t^{2}s:ts^{2}:s^{3})$, so this case is impossible.

Assume that we have two orbits of length 4 and 7. In this case the group $G$ contains an element of order $7$ which acts transitively on the second orbit of nodes and trivially on the first orbit. We may assume that such an element acts diagonally: $$\sigma: (x_{0}:x_{1}:x_{2}:x_{3})\longmapsto (x_{0}:\xi x_{1}:\xi^{a} x_{2}:\xi^{b} x_{3})$$ where $\xi$ is a root of unity of degree $7$ and $0\leq a, b\leq 6$ are integers. Since all nodes cannot be coplanar, we see, that $a$ and $b\neq 0, 1$ and $a\neq b$ and we may assume that one singular point has coordinates $(1:1:1:1)$. Since four remaining nodes are fixed by $\sigma$, they coincide with points $(1:0:0:0), (0:1:0:0), (0:0:1:0)$ and $(0:0:0:1)$. Thus we must consider only monomials which has degree at most $2$ in any variable. Using the same arguments as in the previous case we see, that in any case either the system has no nonzero solutions or the resulting quartic is reducible or non-reduced.
\end{proof}
\section{Case $16^{\circ}$}

As in the case $19^{\circ}$ we know, that there is a system of quadrics
\begin{equation}\label{eq16.1}p^{2}Q_{1}+pqQ_{2}+q^{2}Q_{3}=0
 \end{equation}
 parametrized by $\PP_{1}$ such that $Q_{i}$'s intersect transversally in eight points and the equation of the quartic $Q$ is the discriminant $Q_{2}^{2}-4Q_{1}Q_{3}=0$, but in this case such system of quadrics is unique. We have eight singular points on the quartic $Q$ which are intersection points of $Q_{i}$'s and at most three additional singular points. Thus by Lemma~\ref{le1} there are no additional singular points. Thus we have exactly $8$ singular members in our system of quadrics. Let us denote by $(p_{i}: q_{i}), i=1, ..., 8$ the point on $\PP^{1}$ corresponding to a singular quadric and by $v_{i}$ the vertice of corresponding quadric. By $Q_{(p_{0}:q_{0})}$ we will denote the quadric in the system~\eqref{eq16.1} for $(p:q)=(p_{0}:q_{0})$.

We have the natural short exact sequence

$$0\longrightarrow G'\longrightarrow \widetilde{\Aut}(X)\longrightarrow G''\longrightarrow 0,$$

where the group $G'$ acts trivially on the $\PP^{1}$ parametrizing the system of quadrics and $G''$ is a subgroup of $\operatorname{PGL}_{2}(\operatorname{k})$.

\begin{lemma} Either the group $G'$ is trivial or there are two skew lines such that both of them contain four points $v_{i}$.
\end{lemma}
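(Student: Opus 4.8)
The plan is to fix a non-trivial element $g\in G'$ and to read the two skew lines off its fixed locus. Since $G'$ acts trivially on the $\PP^{1}$ parametrising the system~\eqref{eq16.1}, the element $g$ preserves every member of the family, in particular each of the eight singular members $Q_{(p_{i}:q_{i})}$, and therefore fixes each of their (distinct) vertices $v_{i}$; thus $g$ fixes eight distinct points of $\PP^{3}$. Moreover $g$ fixes the conic $(p:q)\mapsto Q_{(p:q)}$ pointwise inside $\PP(\langle Q_{1},Q_{2},Q_{3}\rangle)\cong\PP^{2}$, and a projective transformation fixing a smooth conic pointwise is the identity; hence $g^{*}$ is scalar on $\langle Q_{1},Q_{2},Q_{3}\rangle$, i.e.\ $g^{*}Q_{j}=cQ_{j}$ for one common constant $c$ and all $j$. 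In particular $g$ preserves each quadric $Q_{j}$ and the base locus $Q_{1}\cap Q_{2}\cap Q_{3}$.

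Next I would lift $g$ to $\widehat{g}\in\operatorname{GL}_{4}(\operatorname{k})$ and work in an eigenbasis. Writing $M(p:q)$ for the symmetric matrix of $Q_{(p:q)}$ (as in the case $19^{\circ}$, so that $D(p:q)=\det M(p:q)$ has degree $8$ and, here, eight \emph{simple} roots), the relation $g^{*}Q_{j}=cQ_{j}$ means that the $(a,b)$-entry of $M(p:q)$ can be non-zero only when the product of the two corresponding eigenvalues of $\widehat g$ equals $c$. This sparsity pattern, together with three non-degeneracy facts---the base locus is finite ($8$ points), the quartic $Q$ is irreducible with only its $8$ nodes, and $D(p:q)$ has $8$ simple roots---is what drives the whole argument.

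I would then run through the eigenvalue multiplicities of $\widehat g$. Type $(1,1,1,1)$ is excluded at once, since such an element has only four fixed points and cannot fix the eight distinct $v_{i}$. For types $(3,1)$ and $(2,1,1)$ I would examine case by case which eigenvalue-product equals $c$; depending on the pattern one finds one of the following: a coordinate line or plane is contained in every $Q_{j}$, so the base locus is positive-dimensional; or every $Q_{j}$ is singular at a common point, so $Q$ is a cone or carries a multiple plane, contradicting that it is an irreducible quartic with only eight nodes; or a $2\times2$ anti-diagonal block appears, making $D(p:q)$ a perfect square and hence giving it multiple roots; or two singular members share a vertex, leaving fewer than eight distinct $v_{i}$. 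Each outcome contradicts one of the three facts, so $g$ must be of type $(2,2)$: an involution with $\operatorname{Fix}(g)=L_{1}\sqcup L_{2}$ a pair of skew lines $L_{k}=\PP(U_{k})$.

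Finally, in the type $(2,2)$ case write $\widehat g=\operatorname{diag}(\lambda_{1},\lambda_{1},\lambda_{2},\lambda_{2})$. Finiteness of the base locus rules out $c=\lambda_{1}\lambda_{2}$ and the vanishing of a diagonal block, forcing $c=\lambda_{1}^{2}$ and $\lambda_{2}=-\lambda_{1}$; then every $M(p:q)$ is block-diagonal for $U_{1}\oplus U_{2}$, so $D(p:q)=\det\bigl(M(p:q)|_{U_{1}}\bigr)\cdot\det\bigl(M(p:q)|_{U_{2}}\bigr)$ is a product of two binary quartics. The four roots of the first factor give singular members whose vertex lies in $L_{1}$, and the four roots of the second give vertices in $L_{2}$; hence exactly four of the $v_{i}$ lie on $L_{1}$ and four on $L_{2}$, which is the desired pair of skew lines. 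The hard part is the middle step: eliminating types $(3,1)$ and $(2,1,1)$ requires a careful bookkeeping of which products of eigenvalues can equal $c$ and a matching of each surviving pattern against finiteness of the base locus, nodality and irreducibility of $Q$, and simplicity of the roots of $D$.
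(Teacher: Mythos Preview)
Your approach is sound and genuinely different from the paper's. The paper does not touch the sparsity pattern of $M(p:q)$ at all; instead it lists the four shapes of $\operatorname{Fix}(g)$ in $\PP^{3}$ (four points; two points and a line; two skew lines; a point and a plane), kills the first by counting, kills the ``two points and a line'' case by observing that the line is then $\widetilde{\Aut}(X)$-invariant (contradicting the standing birational-rigidity hypothesis via Lemma~\ref{le1}), and kills the ``point and a plane'' case by restricting the system~\eqref{eq16.1} to the plane and noting that the determinant of the induced system of conics has degree $6$ (too few singular members to account for seven or eight $v_{i}$ on the plane), with a short extra argument when that determinant vanishes identically. In the surviving ``two skew lines'' case the paper again invokes the no-invariant-line hypothesis: if the split were not $4+4$ the two lines would be individually $\widetilde{\Aut}(X)$-invariant.

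What your route buys is that the $4+4$ split is forced \emph{structurally}, by the factorisation $D=\det(M|_{U_{1}})\cdot\det(M|_{U_{2}})$, rather than by appealing to the global hypothesis; this is a cleaner explanation of why the split is exactly four and four. On the other hand, your elimination of types $(3,1)$ and $(2,1,1)$ is only sketched: the phrase ``depending on the pattern one finds one of the following'' hides a genuine case analysis over which eigenvalue product equals $c$, and several subcases (e.g.\ $c=\lambda^{2}$ with $\mu=-\lambda$ in type $(3,1)$, where $M$ is block-diagonal $3+1$ and one must argue that the $1\times 1$ block forces a multiple root of $D$) need to be written out. The paper's argument is much shorter precisely because it leans on Lemma~\ref{le1}; note also that your input ``$D$ has eight simple roots'' is itself derived in the paper from Lemma~\ref{le1}, so you are not actually avoiding the birational-rigidity hypothesis, only using it one step earlier.
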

\begin{proof} Assume that the group $G'$ is nontrivial. Let $g\in G'$ be an arbitrary nontrivial element. Then we have the following possibilities for the fixed locus $\operatorname{Fix}(g)$: (i) four points; (ii) two points and a line; (iii) two skew lines; (iv) a point and a plane. The first case is impossible since all points $v_{i}$ lie in $\operatorname{Fix}(g)$. The second case is impossible since in this case we have an $\widetilde{\Aut}(X)$-invariant line.

In the fourth case the plane $P$ contains seven or eight $v_{i}$'s. Consider the induced system of quadrics $p^{2}Q_{1}|_{P}+pqQ_{2}|_{P}+q^{2}Q_{3}|_{P}=0$. If its determinant is a nonzero polynomial then it has degree $6$, so this system contains at most six singular members, this is contradiction. If the determinant is zero polynomial then the locus of degenerate conics in the linear system $\alpha Q_{1}|_{P}+\beta Q_{2}|_{P}+\gamma Q_{3}|_{P}=0$ is a union of a conic and a line, so we have $G''$-invariant conic or pair of conics, thus $G''$ is a cyclic or dihedral group. Since $G'$ acts trivially on $P$ (otherwise we have too many $v_{i}$'s on one line and this line is $\widetilde{G}$-invariant), we have $G''$-invariant (and thus $\widetilde{\Aut}(X)$-invariant) line on $P$.

In the third case both lines contain four $v_{i}$'s, otherwise they are $\widetilde{\Aut}(X)$-invariant.
\end{proof}

\begin{lemma} Assume that $G'$ is trivial. Then the group $\widetilde{\Aut}(X)$ is isomorphic to $\Alt_{4}$ or $\Sym_{4}$.
\end{lemma}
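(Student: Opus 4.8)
The plan is to exploit the fact that triviality of $G'$ turns $\widetilde{\Aut}(X)$ into a finite subgroup of $\operatorname{PGL}_2(\operatorname{k})$, and then to eliminate, one by one, every possibility in the classification of such subgroups except $\Alt_4$ and $\Sym_4$. First I would note that when $G'$ is trivial the short exact sequence identifies $\widetilde{\Aut}(X)$ with $G''$, which by construction acts faithfully on the $\PP^1$ parametrizing the system~\eqref{eq16.1} and is therefore a finite subgroup of $\operatorname{PGL}_2(\operatorname{k})$. By the classical classification of such subgroups, $\widetilde{\Aut}(X)$ is one of $\Cyc_n$, $\Dih_{2n}$, $\Alt_4$, $\Sym_4$ or $\Alt_5$.

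The cyclic and dihedral cases are excluded immediately by the corollary to Lemma~\ref{le1}: the group $\widetilde{\Aut}(X)$ contains no cyclic subgroup of index $1$ or $2$, since the fixed points of the generator of such a subgroup would produce a $\widetilde{\Aut}(X)$-invariant line in $\PP^3$, hence a $\widetilde{G}$-invariant line, contradicting Lemma~\ref{le1}. Thus only $\Alt_4$, $\Sym_4$ and $\Alt_5$ remain.

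It remains to rule out $\Alt_5$, and this is the one step that needs a genuine geometric input rather than a direct invariant-line argument. The key observation is that $\widetilde{\Aut}(X)$ permutes the singular members of~\eqref{eq16.1}, so the points $(p_i:q_i)$, $i=1,\dots,8$, form a nonempty $G''$-invariant subset of the parametrizing $\PP^1$ of cardinality at most $8$. I would then invoke the fact that every nonempty orbit of $\Alt_5$ acting on $\PP^1$ has length at least $12$ (the exceptional orbits have lengths $12$, $20$, $30$, and the generic orbit has length $60$). Hence $\Alt_5$ admits no invariant set of cardinality between $1$ and $11$, so it cannot preserve the set of singular members. This contradiction eliminates $\Alt_5$ and leaves exactly $\Alt_4$ and $\Sym_4$, as claimed.

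The main obstacle is precisely this $\Alt_5$ case: unlike the cyclic and dihedral cases it is not caught by any invariant-line argument, and the clean way around it is the orbit-length bound applied to the canonical eight-point configuration of singular quadrics. As a consistency check one sees that the method excludes nothing spurious, since both surviving groups do admit invariant eight-point sets on $\PP^1$ — the $4+4$ configuration for $\Alt_4$ and the eight cube-vertices for $\Sym_4$ — matching the eight singular members found above.
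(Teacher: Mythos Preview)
Your proof is correct and follows essentially the same route as the paper: identify $\widetilde{\Aut}(X)$ with $G''\subset\operatorname{PGL}_2(\operatorname{k})$, rule out cyclic and dihedral groups via Corollary~2.3, and eliminate $\Alt_5$ by the orbit-length bound on the eight singular members of the pencil. Your write-up is simply more explicit about the orbit structure and the consistency check, but the argument is the same.
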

\begin{proof} We know that in this case $\widetilde{\Aut}(X)=G''$. It cannot be cyclic or dihedral. Also it cannot be isomorphic to $\Alt_{5}$ since there are no $\Alt_{5}$-orbits of length less or equal to $8$.
\end{proof}
\begin{lemma} Assume that $G'$ is trivial and $H\simeq \Alt_{4}$ is a subgroup of $\widetilde{\Aut}(X)$. Then $v_{i}$'s form two $H$-orbits of length four and points in one orbit are colinear.
\end{lemma}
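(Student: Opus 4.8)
\emph{Overview of the plan.} The argument has three stages: determine the two orbits from the action on the parameter $\PP^{1}$; identify the action of $H$ on $\PP^{3}$ as a projectivised representation, ruling out the permutation model; and finally locate the vertices $v_{i}$ on an invariant line.

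Since $G'$ is trivial, $H\simeq\Alt_{4}$ acts faithfully on the $\PP^{1}$ parametrising the system of quadrics, through the embedding $H\hookrightarrow G''\subset\operatorname{PGL}_{2}(\operatorname{k})$. The only orbit lengths of $\Alt_{4}$ on $\PP^{1}$ are $4,4,6$ and $12$: there is no fixed point, no orbit of length $2$ (as $\Alt_{4}$ has no subgroup of order $6$) and no orbit of length $3$ (the only subgroup of order $4$ is $\Cyc_{2}^{2}$, and its nontrivial elements cannot share three common fixed points on $\PP^{1}$). As the eight points $(p_{i}:q_{i})$ form an $H$-invariant set, the only admissible decomposition of $8$ is $4+4$, so they split into two $H$-orbits of length four. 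The assignment $(p_{i}:q_{i})\mapsto v_{i}$ is $H$-equivariant, hence the vertices $v_{i}$ also form two $H$-orbits of length four.

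Next I would realise $\PP^{3}$ as the projectivisation of a representation of $H$, as in the previous cases, and exclude the ``permutation'' model $U\oplus L$ with $U$ the three-dimensional irreducible representation and $L$ one-dimensional. The decisive point is that such a model has a unique $H$-fixed point $\mathsf{q}\in\PP^{3}$: restricting the family to $\mathsf{q}$ gives the binary form $p^{2}Q_{1}(\mathsf{q})+pqQ_{2}(\mathsf{q})+q^{2}Q_{3}(\mathsf{q})$, whose zero set on the parameter $\PP^{1}$ is $H$-invariant and of length at most two, hence empty, forcing the form to vanish identically. Thus $\mathsf{q}\in Q_{1}\cap Q_{2}\cap Q_{3}$ is a node, and being the unique $H$-fixed point it is fixed by all of $\widetilde{\Aut}(X)$ (as $H\simeq\Alt_{4}$ is normal in $\Alt_{4}$ and in $\Sym_{4}$), so its $G$-orbit has length one, contradicting Lemma~\ref{le1}. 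Consequently the action factors through the binary tetrahedral cover of $\Alt_{4}$ and $\PP^{3}=\PP(W\oplus W')$ for two two-dimensional representations, yielding two skew $H$-invariant lines $\ell=\PP(W)$ and $\ell'=\PP(W')$, on each of which $H$ acts as $\Alt_{4}\subset\operatorname{PGL}_{2}(\operatorname{k})$.

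Finally I would locate the vertices. The parameter of a singular member $Q_{(p_{i}:q_{i})}$ is fixed by some $\Cyc_{3}\subset H$, so the member and its vertex are $\Cyc_{3}$-invariant and $v_{i}$ is an eigendirection of a generator $\hat g$ of the lift of this $\Cyc_{3}$. Since $\hat g^{3}$ is a scalar, the four eigenvalues of $\hat g$ are cube roots of a single number and cannot all be distinct; the repeated eigenvalue gives a two-dimensional eigenspace, i.e.\ a $\Cyc_{3}$-fixed line $m_{i}$ meeting $\ell$ and $\ell'$ in one point each. A short computation with the $\hat g$-invariant symmetric form shows that the corank-one condition forces $v_{i}\in m_{i}$, and that $v_{i}$ equals the point $m_{i}\cap\ell$ exactly when $Q_{(p_{i}:q_{i})}$ passes through it. Now $Q_{(p_{i}:q_{i})}\cap\ell$ is cut out by the $\Cyc_{3}$-semiinvariant form $Q_{(p_{i}:q_{i})}|_{\ell}$, which is either the pair of $\Cyc_{3}$-fixed points of $\ell$ or a double fixed point; tracking the $\Cyc_{3}$-eigenvalues for the two fixed parameters shows that the two orbits behave asymmetrically, so that for one of them every member contains the relevant fixed point of $\ell$ and the whole orbit lies on $\ell$, hence is collinear. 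I expect this last eigenvalue bookkeeping — verifying that one orbit is forced onto an invariant line rather than onto the diagonal lines $m_{i}$ — to be the main obstacle.
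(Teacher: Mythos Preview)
Your Stage~1 is correct and identical to the paper's. Your Stage~2 is a genuinely different and elegant shortcut: instead of any computation, you exploit the unique $H$-fixed point in the linear model $\PP(U\oplus L)$ to produce a $\widetilde{\Aut}(X)$-invariant node, contradicting Lemma~\ref{le1}. This is cleaner than what the paper does for that case.

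However, Stage~2 only disposes of the \emph{linear} model. In the paper's set-up one assumes an orbit $\{v_{1},\dots,v_{4}\}$ in general position and normalises the resulting monomial action by a parameter $\gamma$ with $\gamma^{4}=1$; the values $\gamma=\pm 1$ give exactly the linear model you have excluded, but $\gamma=\pm i$ give the genuine $2.\Alt_{4}$-model $\PP(W\oplus W')$. That model \emph{does} have length-four orbits in general position: since $W\oplus W'$ has no three-dimensional subrepresentation, the orbit of a generic point on the $\Cyc_{3}$-fixed line $m_{i}$ can span neither a plane nor a line, hence spans $\PP^{3}$. So Stage~3 is not bookkeeping --- it carries the whole weight of the $\gamma=\pm i$ cases.

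Your Stage~3 contains an actual error. Writing a $\Cyc_{3}$-semiinvariant corank-one quadric in the eigenbasis (eigenvalues $\omega,\omega^{2},\omega^{2},1$, say) one does verify that the vertex must lie in the repeated eigenspace $m_{i}$, as you claim. But the criterion ``$v_{i}=m_{i}\cap\ell$ exactly when $Q_{(p_{i}:q_{i})}$ passes through $m_{i}\cap\ell$'' is false: for the trivial character the quadric has no $x_{2}^{2}$ term at all, so it \emph{always} passes through $m_{i}\cap\ell$, while its vertex can sit anywhere on $m_{i}$. Thus the asymmetry you hope for between the two $\Cyc_{3}$-fixed parameters does not force either vertex to an endpoint of $m_{i}$, and the argument does not close.

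The paper sidesteps this entirely by a computer check: for each of the four values of $\gamma$ it lists the admissible positions of the second orbit $\{v_{5},\dots,v_{8}\}$ and solves the linear system expressing that $Q_{(p_{i}:q_{i})}$ is singular at $v_{i}$, finding only degenerate (double-quadric) solutions. Your conceptual route would need a new idea to handle $\gamma=\pm i$; otherwise you are back to a computation of the same kind the paper performs.
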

\begin{proof} The group $\Alt_{4}$ acts faithfully on $\PP_{1}$ and there is only one $\Alt_{4}$-invariant set of eight points -- the union of two orbits of length 4. This proves the first statement.

Assume that $v_{i}$'s in one orbit are in general position. We may assume that $(p_{1}:q_{1})=(1:0)$, $(p_{2}:q_{2})=(0:1)$, $(p_{3}:q_{3})=(1:1)$, $(p_{4}:q_{4})=(\alpha:1)$, $(p_{5}:q_{5})=(1:\beta+1)$, $(p_{6}:q_{6})=(\alpha+1:1)$, $(p_{7}:q_{7})=(\alpha^{2}:1)$, $(p_{8}:q_{8})=(\beta:1)$, $v_{1}=(1:0:0:0)$, $v_{2}=(0:1:0:0)$, $v_{3}=(0:0:1:0)$ and $v_{4}=(0:0:0:1)$, there $\alpha=\frac{1+i\sqrt{3}}{2}$ and $\beta=\frac{1-i\sqrt{3}}{2}$. In this case we can explicitly describe the action of $H$ on $\PP^{3}$. Up to conjugation by a diagonal map it is generated by the maps $$(x_{0}:x_{1}:x_{2}:x_{3})\longrightarrow (x_{0}:x_{2}:x_{3}:x_{1})$$ and
$$(x_{0}:x_{1}:x_{2}:x_{3})\longrightarrow (x_{1}:x_{0}:\gamma x_{3}:\gamma^{-1}x_{2}),$$ where $\gamma^{4}=1$ (the last equality holds because points $v_{i}$ don't lie on $Q$, so its equation contains terms $x_{0}^{4}, x_{1}^{4}, x_{2}^{4}$ and $x_{3}^{4}$). In any case one can check all possibilities for $v_{5}, v_{6}, v_{7}$ and $v_{8}$ (they form another $\Alt_{4}$-orbit) and solve with computer the system of equation on coefficients of the quadrics $Q_{1}$, $Q_{2}$ and $Q_{3}$ which we obtain from the fact that $Q_{(p_{i}:q_{i})}$ is singular at the point $v_{i}$. It turns out that such system either has no nonzero solutions or gives us an equation of $Q$ of the form $C(x_{0}x_{1}+x_{0}x_{2}+x_{0}x_{3}+x_{1}x_{2}+x_{1}x_{3}+x_{2}x_{3})^{2}=0$ which is forbidden.
\end{proof}
\begin{corollary} There are two skew lines (one may assume that they are $x_{0}=x_{1}=0$ and $x_{2}=x_{3}=0$) containing four $v_{i}$'s.
\end{corollary}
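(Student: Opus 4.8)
The plan is to read the corollary as the common output of the dichotomy furnished by the first lemma of this section, so I would split according to whether $G'$ is trivial. If $G'\neq 1$, that lemma already asserts the existence of two skew lines each carrying four of the vertices $v_i$, and nothing remains but to fix coordinates; hence the whole content lies in the case $G'=1$, where I would concentrate the work.

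Assume $G'=1$. By the second lemma $\widetilde{\Aut}(X)$ is $\Alt_4$ or $\Sym_4$, so it contains a subgroup $H\simeq\Alt_4$, and by the third lemma the eight vertices fall into two $H$-orbits of length four, one of which, say $O_1$, is colinear on a line $\ell_1$; transitivity of $H$ on $O_1$ makes $\ell_1$ an $H$-invariant line. The next step is to extract the representation-theoretic consequence of this. Lifting the $\PP^3$-action to $\operatorname{GL}_4$, a faithful \emph{linear} representation of $\Alt_4$ is forced to be $3\oplus 1$, whose only proper invariant subspaces give an invariant point and an invariant plane, but no invariant line. The existence of $\ell_1$ therefore excludes the linear action and forces the action through the binary tetrahedral double cover, with underlying representation $W\oplus W'$ for two-dimensional irreducibles $W,W'$, and $\ell_1=\PP(W)$. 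Since $W\oplus W'$ has no one-dimensional subrepresentation it has no $H$-fixed point, and consequently any two \emph{distinct} $H$-invariant lines automatically meet in the empty set, i.e.\ are skew (two coincident $2$-dimensional subrepresentations would otherwise share a nonzero, hence $\geq 1$-dimensional, subrepresentation).

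The heart of the matter is then to place the second orbit $O_2$ on a second invariant line. When $W\cong W'$ this is clean: writing $V=W\otimes U$ with $U$ a trivial two-dimensional multiplicity space, the $H$-invariant lines are exactly the $\PP(W\otimes\ell)$ for $\ell\in\PP(U)$, and because $H$ fixes every $\ell$, every length-four orbit passes through a fixed point of its order-three stabilizer and is thereby captured inside one such invariant line; thus $O_1\subset\PP(W\otimes\ell_1)$ and $O_2\subset\PP(W\otimes\ell_2)$. These two lines are distinct, since otherwise all eight $v_i$ would lie on a single invariant line, contradicting Lemma~\ref{le1}, and hence they are skew. Choosing a basis adapted to $W\otimes\ell_1$ and $W\otimes\ell_2$ realizes them as $\{x_2=x_3=0\}$ and $\{x_0=x_1=0\}$, as claimed.

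The main obstacle is the remaining case $W\not\cong W'$, where representation theory alone does not suffice. A short eigenvalue computation on the three faithful two-dimensional characters of the binary tetrahedral group shows that any two distinct such representations share a common eigenvalue on an order-three element, producing a fixed line that crosses between $\ell_1$ and $\ell_2$; hence genuine length-four orbits off $\ell_1\cup\ell_2$ exist, and one cannot conclude a priori that $O_2\subset\ell_2$. To close this case I would feed the defining conditions---that each $v_i$ is the vertex of the corresponding singular member $Q_{(p_i:q_i)}$ of the system of quadrics---into the explicit coordinate setup used in the previous lemma, forcing $O_2$ to be colinear as well (equivalently, excluding the configuration $W\not\cong W'$); Lemma~\ref{le1} then separates the two invariant lines and the skewness established above completes the proof.
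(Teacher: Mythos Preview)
Your dichotomy on $G'$ is right, and the representation-theoretic analysis when $G'$ is trivial is largely correct, but the argument is over-elaborate because you read the preceding lemma too weakly. The phrase ``points in one orbit are colinear'' means that the points belonging to a single orbit are colinear, i.e.\ \emph{each} of the two $H$-orbits lies on a line: the proof there assumes an orbit is in general position and reaches a contradiction, and by relabelling this applies to either orbit. With both $O_1\subset\ell_1$ and $O_2\subset\ell_2$ already in hand, the corollary is immediate: the $H$-invariant lines $\ell_i$ correspond to two-dimensional irreducible subrepresentations $W_i$ of the binary tetrahedral cover, so $W_1\cap W_2=0$ whenever $\ell_1\neq\ell_2$, and coincidence is excluded since it would give a $\widetilde G$-invariant line contrary to Lemma~\ref{le1}. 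That is the paper's implicit one-line argument, and your multiplicity-space discussion for $W\cong W'$, while correct, is not needed.

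Even under your weak reading, the computation you propose in the $W\not\cong W'$ case is unnecessary and your proof is presently incomplete there. Once a single invariant line exists you already know $V=W\oplus W'$ with both summands two-dimensional irreducibles of $2.\Alt_4$; such a $V$ has \emph{no} three-dimensional subrepresentation, hence no $H$-invariant plane. Now apply the preceding lemma to $O_2$ (relabel the orbits): $O_2$ is not in general position, and since it cannot span an invariant plane it must be colinear, necessarily on $\ell_2=\PP(W')$, the only invariant line other than $\ell_1$. Your parenthetical ``equivalently, excluding the configuration $W\not\cong W'$'' is not right: nothing forces $W\cong W'$, and the remainder of the section proceeds without any such hypothesis.
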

\begin{lemma} Assume that $(p_{1}:q_{1})=(1:0)$, $(p_{2}:q_{2})=(0:1)$, $(p_{3}:q_{3})=(1:1)$, $(p_{4}:q_{4})=(a:1)$, $v_{1}=(1:0:0:0)$, $v_{2}=(0:1:0:0)$, $v_{3}=(1:1:0:0)$ and $v_{4}=(b:1:0:0)$. Then $a\neq b$.
\end{lemma}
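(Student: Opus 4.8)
The plan is to restrict the whole system of quadrics to the line $L=\{x_{2}=x_{3}=0\}$ carrying the four vertices $v_{1},\dots,v_{4}$, and to compare the positions recorded by $a$ (the cross-ratio of the parameters $(p_{i}:q_{i})$) and by $b$ (the cross-ratio of the vertices on $L$). Writing $A=Q_{1}|_{L}$, $B=Q_{2}|_{L}$, $C=Q_{3}|_{L}$ for the induced binary quadratic forms in $(x_{0},x_{1})$, the restricted system becomes $p^{2}A+pqB+q^{2}C$. Since $Q_{(p_{i}:q_{i})}$ is a quadric cone with vertex $v_{i}\in L$, the line $L$ meets it only at $v_{i}$ with multiplicity two, so each $Q_{(p_{i}:q_{i})}|_{L}$ is a perfect square vanishing at $v_{i}$. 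Using $v_{1}=(1:0:0:0)$, $v_{2}=(0:1:0:0)$ and $v_{3}=(1:1:0:0)$ this pins down $A=\alpha x_{1}^{2}$, $C=\gamma x_{0}^{2}$ and $A+B+C=\delta(x_{0}-x_{1})^{2}$ for some scalars $\alpha,\gamma,\delta$, hence $B=(\delta-\gamma)x_{0}^{2}-2\delta x_{0}x_{1}+(\delta-\alpha)x_{1}^{2}$.

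Next I would extract $a$ and $b$ from this normal form. The parameters $(p_{i}:q_{i})$ are exactly the four zeros of the binary quartic $\Delta(p,q)=\operatorname{disc}_{(x_{0},x_{1})}(p^{2}A+pqB+q^{2}C)$, and a direct computation gives $\Delta=4pq\,R(p,q)$ with $R(1,1)=0$ automatically; Vieta's formula applied to the quadratic $R$ then yields $a=\dfrac{\gamma(\delta-\alpha)}{\alpha(\delta-\gamma)}$. On the other hand, evaluating the restricted form at $(p:q)=(a:1)$ and locating its double root gives $b=\dfrac{a\delta}{a\delta+(1-a)\gamma}$. Since the eight parameters are distinct, $a\notin\{0,1,\infty\}$, so these two formulas are the only input needed.

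The last step is to assume $a=b$ and reach a contradiction, which I expect to be the delicate point. From $a=b$ and $a\neq 0,1$ one gets $\gamma=\delta$, and then finiteness of $a$ in the formula above forces $\alpha=\delta$ as well (the value $\delta=0$ is excluded, since it would place $L$ inside the base locus $Q_{1}\cap Q_{2}\cap Q_{3}$, which is a finite set of eight points). Thus $a=b$ drives the configuration into the degenerate position $A=\delta x_{1}^{2}$, $B=-2\delta x_{0}x_{1}$, $C=\delta x_{0}^{2}$, i.e. $p^{2}A+pqB+q^{2}C=\delta(px_{1}-qx_{0})^{2}$. To rule this out I would examine the point $v_{2}=(0:1:0:0)$: there $Q_{2}(v_{2})=\delta-\alpha=0$ and $Q_{3}(v_{2})=0$, while $v_{2}$ is the vertex of the cone $Q_{3}=Q_{(0:1)}$, so $\nabla Q_{3}(v_{2})=0$. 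Feeding this into $\nabla Q=2Q_{2}\nabla Q_{2}-4Q_{1}\nabla Q_{3}-4Q_{3}\nabla Q_{1}$ shows $\nabla Q(v_{2})=0$, so $Q$ is singular at $v_{2}$. As $Q_{1}(v_{2})=\alpha=\delta\neq 0$, the point $v_{2}$ is not one of the eight transversal intersection points of the $Q_{i}$, hence it is a ninth singular point of $Q$; this contradicts the fact, established at the start of this section, that $Q$ has exactly eight nodes. Therefore $a\neq b$.
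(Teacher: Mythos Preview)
Your approach is sound and lands on the same contradiction as the paper: the paper simply says ``solve the system of equations on coefficients of $Q_i$'s and see that if $a=b$ then $Q$ contains $v_1,v_2,v_3,v_4$, this is impossible'', whereas you make this explicit by restricting everything to the line $L$ and tracking cross-ratios. Your endgame, showing that $v_2$ becomes a singular point of $Q$ outside $Q_1\cap Q_2\cap Q_3$ and hence a forbidden ninth node, is exactly the mechanism behind the paper's ``impossible''.

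There is one slip in your bookkeeping. After you obtain $\gamma=\delta$, the equation $\gamma(\delta-\alpha)=a\,\alpha(\delta-\gamma)$ becomes $\delta(\delta-\alpha)=0$, so either $\alpha=\delta$ or $\delta=0$. Your parenthetical excludes $\delta=0$ by claiming it would put $L$ inside the base locus $Q_1\cap Q_2\cap Q_3$. That is not true as stated: $\gamma=\delta=0$ only forces $C=0$ and $A+B+C=0$, hence $B=-A=-\alpha x_1^2$, and if $\alpha\neq 0$ then $A\neq 0$ and $L\not\subset Q_1$. The correct reason $\delta\neq 0$ is already built into your formula for $b$: once $\gamma=\delta$, the denominator $a\delta+(1-a)\gamma$ equals $\delta$, and the hypothesis $v_4=(b:1:0:0)$ with $b$ finite forces this denominator to be nonzero (if $\gamma=\delta=0$ then $Q_{(a:1)}|_L=a(a-1)\alpha\,x_1^{2}$ has its double root at $(1:0)$, which would make $v_4=v_1$). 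With this correction your chain $\gamma=\delta\neq 0\Rightarrow\alpha=\delta$ is valid, and the rest of the argument goes through unchanged.
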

\begin{proof} One can solve the corresponding system of equation on coefficients of $Q_{i}$'s and see that if $a=b$ then $Q$ contains $v_{1}, v_{2}, v_{3}$ and $v_{4}$, this is impossible.
\end{proof}
\begin{lemma} The group $G''$ is isomorphic to $\Sym_{4}$ or $\Dih_{8}$.
\end{lemma}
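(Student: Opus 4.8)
The plan is to discard every finite subgroup of $\operatorname{PGL}_{2}(\operatorname{k})$ except $\Sym_{4}$ and $\Dih_{8}$, using the partition of the eight singular values induced by the two skew lines. By Corollary 12.4 the eight points $(p_{i}:q_{i})$ split into two quadruples $A$ and $B$, according to whether the vertex $v_{i}$ lies on $\ell_{1}=\{x_{0}=x_{1}=0\}$ or on $\ell_{2}=\{x_{2}=x_{3}=0\}$. Since $G'$ fixes each singular quadric, hence each $v_{i}$, the action of $\widetilde{\Aut}(X)$ on the eight points factors through $G''$; moreover, if $G''$ stabilised the set $A$, then the four vertices over $A$ would be permuted among themselves and $\ell_{1}$ would be an $\widetilde{\Aut}(X)$-invariant line, forbidden by Lemma~\ref{le1}. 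Thus $G''$ must exchange $A$ and $B$. Writing $K\trianglelefteq G''$ for the index-two subgroup fixing each of $A,B$, we obtain $K\hookrightarrow\Stab_{\operatorname{PGL}_{2}(\operatorname{k})}(A)$, so $K$ is contained in $\Cyc_{2}^{2}$, $\Dih_{8}$ or $\Alt_{4}$ (depending on the cross-ratio of $A$), and $|G''|=2|K|\le 24$.

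First I would settle the extreme cases. The group $\Alt_{5}$ has no orbit of length at most $8$ on $\PP^{1}$ (as used in Lemma 12.2), so it cannot preserve the eight-point set. The group $\Alt_{4}$ meets an invariant eight-point set only in the union of its two length-four orbits, each of which is $\Alt_{4}$-invariant; hence $\Alt_{4}$ cannot exchange $A$ and $B$, and $\ell_{1}$ would again be invariant. On the other hand $\Sym_{4}$, realised as the symmetry group of a cube with the two inscribed tetrahedra serving as $A$ and $B$ (exchanged by the odd permutations, $K=\Alt_{4}$), survives; this is the expected $\Sym_{4}$-case.

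Next I would remove the remaining dihedral and cyclic groups. The two fixed points on $\PP^{1}$ of the cyclic part of $K$ form a $G''$-invariant pair; whenever an element of order three forces one of these poles into $A$ (as for $\Cyc_{6}$ and for $\Dih_{6}=\Sym_{3}$), that pole is a singular value fixed by the exchanging element, contradicting $A\mapsto B$, so the line through the two corresponding vertices would again be invariant. The bound $K\le\Stab(A)$ together with $|G''|\le 24$ eliminates every dihedral group other than $\Dih_{4}$, $\Dih_{6}$, $\Dih_{8}$, $\Dih_{16}$ (for the others the index-two subgroup is too large to stabilise four points), and the Corollary to Lemma~\ref{le1} eliminates $G''\simeq\Cyc_{2}$: there the exchanging element conjugates the scalar generator of $G'$ to its inverse, so $\widetilde{\Aut}(X)$ is generalised dihedral and contains the cyclic group $G'$ with index two. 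After these steps the survivors are $\Sym_{4}$, $\Dih_{8}$, and a short list of stubborn configurations ($\Dih_{16}$, $\Dih_{4}=\Cyc_{2}^{2}$, $\Cyc_{4}$, $\Cyc_{8}$).

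The main obstacle is the case $G''\simeq\Dih_{16}$, which occurs exactly when $A$ has harmonic cross-ratio, $K\simeq\Dih_{8}=\Stab(A)$, and $A\cup B$ is a single $\Dih_{16}$-orbit (the eight eighth roots of unity in suitable coordinates): here the poles are regular values, no invariant line is visible, and the order bound does not apply, so none of the soft arguments above reaches it. I expect to dispose of it — and of the residual small groups — by the method recurring throughout this section: passing to coordinates adapted to $\ell_{1}$ and $\ell_{2}$, writing down the most general $G''$-invariant system $p^{2}Q_{1}+pqQ_{2}+q^{2}Q_{3}$, and checking that its discriminant is forced to be reducible or a double quadric; equivalently, by tracking the induced action of $\widetilde{\Aut}(X)$ on the eight nodes $Q_{1}\cap Q_{2}\cap Q_{3}$ and showing it cannot realise orbit lengths only in $\{8\}$ or $\{4,4\}$ as Lemma~\ref{le1} demands. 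Once $\Dih_{16}$ and the residual cases are removed, the classification collapses to $G''\simeq\Sym_{4}$ or $\Dih_{8}$.
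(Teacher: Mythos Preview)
Your overall architecture matches the paper's: split the eight singular values into $A$ and $B$ according to which of the two skew lines carries the vertex, observe that $G''$ must swap $A$ and $B$ (else $\ell_{1}$ is $\widetilde{\Aut}(X)$-invariant), and study the index-two stabiliser $K=H\subset\Stab_{\operatorname{PGL}_{2}}(A)\in\{\Cyc_{2}^{2},\Dih_{8},\Alt_{4}\}$. But from there you are missing the one idea that makes the argument close, and this is precisely why $\Dih_{16}$ (and $\Cyc_{4}$, $\Cyc_{2}^{2}$, $\Cyc_{8}$) look ``stubborn'' to you.

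The paper's decisive step is Lemma~12.5: in the normalisation $(p_{1}{:}q_{1})=(1{:}0)$, $(p_{2}{:}q_{2})=(0{:}1)$, $(p_{3}{:}q_{3})=(1{:}1)$, $(p_{4}{:}q_{4})=(a{:}1)$ and $v_{1}=(1{:}0{:}0{:}0)$, $v_{2}=(0{:}1{:}0{:}0)$, $v_{3}=(1{:}1{:}0{:}0)$, $v_{4}=(b{:}1{:}0{:}0)$ one has $a\neq b$. Now $H$ acts \emph{simultaneously} on the parameter $\PP^{1}$ and on the line $\ell_{1}$, and the bijection $(p_{i}{:}q_{i})\leftrightarrow v_{i}$ is $H$-equivariant. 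Any element of $H$ that acts on the four marked points as a $4$-cycle, or as a transposition fixing two of them, realises the \emph{same} permutation on both $\PP^{1}$'s; a one-line M\"obius computation shows that such a permutation pins the cross-ratio to a single value, forcing $a=b$. Hence $H$ contains no element of order $4$ and no transposition. This instantly kills $H=\Dih_{8}$ (so $G''\neq\Dih_{16}$) and $H=\Cyc_{4}$, with no need for any invariant-system computation.

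Your treatment of the small cases also drifts. You look for fixed points of $K$ on the \emph{parameter} $\PP^{1}$; the paper instead looks at fixed points on $\ell_{1}\subset\PP^{3}$. Since $G'$ fixes the four vertices on $\ell_{1}$ it acts trivially there, so $H$ itself acts on $\ell_{1}$; if $H$ is cyclic it has a fixed point $p\in\ell_{1}$, and then $p$ together with its image on $\ell_{2}$ under any $g\in G''\setminus H$ spans an $\widetilde{\Aut}(X)$-invariant line (because $H$ is normal of index two), contradicting Lemma~\ref{le1}. This single observation eliminates every cyclic $H$ in one stroke, covering all of your residual cases $\Cyc_{2}$, $\Cyc_{2}^{2}$, $\Sym_{3}$, $\Cyc_{4}$, $\Cyc_{6}$, $\Cyc_{8}$ without the patchwork of ad hoc arguments you sketch. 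What survives is exactly $H\in\{\Cyc_{2}^{2},\Alt_{4}\}$, and then the classification of finite subgroups of $\operatorname{PGL}_{2}$ forces $G''\in\{\Dih_{8},\Sym_{4}\}$.
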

\begin{proof} Let $H\subset G''$ be a subgroup of index two which preserves the quadruple of points $(p_{1}:q_{1}),(p_{2}:q_{2}), (p_{3}:q_{3}), (p_{4}:q_{4})$. This is a subgroup of $\Alt_{4}$, $\Dih_{8}$ or $\Cyc_{2}^{2}$. This group automatically acts on the line $x_{0}=x_{1}=0$. Since $a\neq b$, $H$ cannot contain an element of degree four or an element of degree two which preserves two points $(p_{i}:q_{i})$. Also $H$ cannot be a cyclic group, otherwise we have a $H$-invariant point on the line $x_{0}=x_{1}=0$ and thus a $\widetilde{\Aut}(X)$-invariant line in $\PP^{3}$. Thus $H\simeq \Cyc_{2}^{2}$ (in this case $G''\simeq \Dih_{8}$) or $\Alt_{4}$ (in this case $G''\simeq \Sym_{4}$).
\end{proof}

\begin{propos}\label{pr13} If the group $G''$ is isomorphic to $\Dih_{8}$ then there is a two-para\-metric family of possible varieties and their automorphism groups are isomorphic to $\Dih_{8}\times \Cyc_{2}^{2}$.
\end{propos}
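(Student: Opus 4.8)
The plan is to work in the coordinates provided by Corollary~\ref{pr8}'s predecessors in this section, so that $\ell_1=\{x_0=x_1=0\}$ and $\ell_2=\{x_2=x_3=0\}$ are the two skew lines carrying the eight vertices $v_i$, four on each (this is exactly the conclusion reached before Lemma 13.5). Since $G''\cong\Dih_8$, its index-two subgroup $H\cong\Cyc_2^2$ preserves each of the two quadruples of points $(p_i:q_i)$, hence each of the lines $\ell_1,\ell_2$, while the nontrivial coset of $H$ interchanges them. An element preserving both lines preserves the spanning subspaces $\langle e_0,e_1\rangle$ and $\langle e_2,e_3\rangle$, so $H$ lifts to block-diagonal matrices and the swap coset to block-antidiagonal ones. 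First I would fix an explicit generating set of $\Dih_8\subset\operatorname{PGL}_4(\operatorname{k})$ of this shape, for instance $\rho:(x_0:x_1:x_2:x_3)\mapsto(x_2:x_3:x_1:-x_0)$ of order $4$ and $\tau:(x_0:x_1:x_2:x_3)\mapsto(x_1:x_0:x_3:x_2)$ of order $2$, and record how they act on the base $\PP^1_{(p:q)}$.

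Next I would impose invariance of the family $\{p^2Q_1+pqQ_2+q^2Q_3\}$ under this $\Dih_8$-action, covering its action on $(p:q)$. Writing each $Q_j$ in full generality and comparing with the reparametrised family yields a system of linear relations among the coefficients; solving it, and using up the residual coordinate freedom (rescalings of $(x_0:x_1)$, of $(x_2:x_3)$ and of the system compatible with the action), should leave exactly two essential parameters. The key structural output of this step is that the surviving $Q_1,Q_2,Q_3$ are block-diagonal, $Q_j=A_j(x_0,x_1)+B_j(x_2,x_3)$. Since a quadratic form is even, every such $Q_j$, hence every member $Q_{(p:q)}$ and the quartic $Q=Q_2^2-4Q_1Q_3$, is invariant under $\nu=\operatorname{diag}(1,1,-1,-1)$. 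Thus $\nu$ is an automorphism acting trivially on the base, so by the dichotomy for $G'$ established at the start of this section $G'=\langle\nu\rangle\cong\Cyc_2$, and $\widetilde{\Aut}(X)$ is an extension of $\Dih_8$ by this central $\Cyc_2$.

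I would then check the open conditions making $X$ a genuine member of type $16^\circ$: that $Q_1,Q_2,Q_3$ meet transversally in eight points, that there are no further singular points and no planes on $X$, and that the induced group on the base is exactly $\Dih_8$ and not larger; each is a non-vanishing condition cutting out a dense open subset of the two-parameter family. Finally, to compute $\Aut(X)$ I would assemble $\rho,\tau,\nu$ together with the Geiser involution $\theta$. Because $\rho\nu\rho^{-1}$ negates the first block, which equals $\nu$ in $\operatorname{PGL}_4(\operatorname{k})$, and $\tau$ commutes with $\nu$, the element $\nu$ is central and the extension splits, giving $\widetilde{\Aut}(X)\cong\Dih_8\times\Cyc_2$ and $\Aut(X)\cong\langle\theta\rangle\times\widetilde{\Aut}(X)\cong\Dih_8\times\Cyc_2^2$. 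To rule out further automorphisms I would use that $\widetilde{\Aut}(X)$ acts faithfully on the eight nodes, since an element fixing all of them would force too many collinear or coplanar nodes by the eigenspace argument used in Section~5, thereby bounding $\widetilde{\Aut}(X)$ inside the symmetry group of the node configuration, and confirm the final group identification on the computer as in Proposition~\ref{pr8}.

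The hard part will be the two intertwined explicit computations: solving the $\Dih_8$-invariance equations cleanly enough to exhibit precisely two parameters together with the block-diagonal normal form, rather than drowning in coefficients, and pinning $\Aut(X)$ down to be exactly $\Dih_8\times\Cyc_2^2$, in particular certifying that the central extension is the direct product and that no special values of the two parameters enlarge the group.
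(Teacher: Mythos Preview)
Your plan is close in spirit to the paper's, but there is a genuine gap in how you set up the group action on $\PP^3$. You fix explicit generators $\rho,\tau$ of a copy of $\Dih_8$ inside $\operatorname{PGL}_4(\operatorname{k})$ and then impose equivariance of the pencil under this action. However, what is given is only $G''=\widetilde{\Aut}(X)/G'\cong\Dih_8$; you do not yet know that the extension $1\to G'\to\widetilde{\Aut}(X)\to G''\to 1$ splits, nor that a lift of the order-four element has the particular block-antidiagonal form you wrote down. The $2\times 2$ blocks of any lift depend on how the four vertices sit on each line, and by the lemma $a\neq b$ just before this proposition the cross-ratio of $v_1,\dots,v_4$ on $\ell$ is \emph{different} from that of $(p_1{:}q_1),\dots,(p_4{:}q_4)$ on the base, so you cannot simply ``record'' the base action from your chosen $\PP^3$ action either; both must be specified and made compatible. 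Consequently your key structural claim---that block-diagonality of the $Q_j$ falls out of $\Dih_8$-equivariance---is not justified: your $\rho^2$ covers a nontrivial involution of the base, hence does not force each $Q_j$ individually to be $\rho^2$-invariant, and nothing else in your setup forbids off-block terms.

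The paper proceeds in the opposite order, which avoids this circularity. It first fixes the raw data---the eight base points $(p_i{:}q_i)$ depending on one parameter~$\alpha$ and the eight vertices $v_i$ on the two lines depending on a second parameter~$\beta$---and then solves the linear system ``$Q_{(p_i:q_i)}$ is singular at $v_i$'' for the coefficients of $Q_1,Q_2,Q_3$, obtaining a unique solution up to scalar. The block-diagonal shape of the $Q_j$, the involution $\nu=\operatorname{diag}(1,1,-1,-1)$ generating $G'$, and two further explicit matrices realising $\Dih_8$ on $\PP^3$ are then \emph{read off} from the answer rather than assumed, and one checks on generators that together with the Geiser involution they give $\Dih_8\times\Cyc_2^2$. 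Your approach can be repaired---first bound $G'$ a priori (an element fixing all eight $v_i$ is scalar on each block, and must preserve a quartic with terms of bidegrees $(4,0)$, $(2,2)$, $(0,4)$, forcing $G'\subset\langle\nu\rangle$), then enumerate the possible extensions of $\Dih_8$ by $G'$ and normalise each before solving---but that is more work than the paper's direct route.
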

\begin{proof} We may assume that  $(p_{1}:q_{1})=(1:0)$, $(p_{2}:q_{2})=(0:1)$, $(p_{3}:q_{3})=(\alpha:1)$, $(p_{4}:q_{4})=(1:1)$, $(p_{5}:q_{5})=(\alpha:\alpha-\sqrt{\alpha-\alpha^2})$, $(p_{6}:q_{6})=(\alpha-\sqrt{\alpha-\alpha^2}:1)$, $(p_{7}:q_{7})=(\alpha:\alpha+\sqrt{\alpha-\alpha^2})$, $(p_{8}:q_{8})=(\alpha+\sqrt{\alpha-\alpha^2}:1)$, $v_{1}=(1:0:0:0)$, $v_{2}=(0:0:1:0)$, $v_{3}=(0:1:0:0)$, $v_{4}=(0:0:0:1)$, $v_{5}=(1:1:0:0)$, $v_{6}=(0:0:1:1)$, $v_{7}=(\beta:1:0:0)$ and $v_{8}=(0:0:\beta:1)$, where $\alpha\neq 0, 1, \frac{1}{2}$, $\beta\neq 0, 1$ and $\beta\neq\frac{2\sqrt{\alpha-\alpha^2}-1}{2\sqrt{\alpha-\alpha^2}+1}$ (such points are more convenient from computational point of view). The group $\Dih_{8}$ generated by
$$\sigma: (p:q)\longmapsto (\alpha q:p) \text{ and } \tau:(p:q)\longmapsto ((\alpha+\sqrt{\alpha-\alpha^2})p-aq:p-(\alpha-\sqrt{\alpha-\alpha^2})q)$$ acts transitively on the set of $(p_{i}:q_{i})$'s. We have the induced action on $v_{i}$'s: $\sigma$ transpose $v_{2i-1}$ with $v_{2i}$ and $\tau$ permutes them in the following way: $v_{1}\to v_{6}\to v_{3}\to v_{8}\to v_{1}$, $v_{2}\to v_{7}\to v_{4}\to v_{5}\to v_{2}$. One can solve the corresponding system of equations and obtain the single solution up to multiplication by a scalar:
\begin{equation}
\begin{gathered}
Q_{1}=(2a-1)(b-1)bx_{1}^{2}+\frac{2a(a-1+b\sqrt{a-a^2})-(b-1)\sqrt{a-a^2}}{a-\sqrt{a-a^2}}x_{2}^{2}+\\
+b(a(b-1)+\sqrt{a-a^2}(b+1))x_{3}^{2}-4b\sqrt{a-a^2}x_{2}x_{3},\\
Q_{2}=-(2(a-a^2)(b-1)+\sqrt{a-a^2}(b+1))x_{0}^{2}-b(2a^2(b-1)+\\
+\sqrt{a-a^2}(b+1))x_{1}^{2}-(2(a-a^2)(b-1)+\sqrt{a-a^2}(b+1))x_{2}^{2}-b(2a^2(b-1)+\\
+\sqrt{a-a^2}(b+1))x_{3}^{2}-4b\sqrt{a-a^2}x_{0}x_{1}-4b\sqrt{a-a^2}x_{2}x_{3},\\
Q_{3}=a\frac{2a(a-1+b\sqrt{a-a^2})-(b-1)\sqrt{a-a^2}}{a-\sqrt{a-a^2}}x_{0}^{2}+ab(a(b-1)+\\
+\sqrt{a-a^2}(b+1))x_{1}^{2}+a(2a-1)(b-1)bx_{3}^{2}-4ab\sqrt{a-a^2}x_{0}x_{1}.
\end{gathered}
\end{equation}
One can easily see that $Q$ is preserved by the maps $(x_{0}:x_{1}:x_{2}:x_{3})\mapsto(x_{0}:x_{1}:-x_{2}:-x_{3})$ (which generates $G'$) and $(x_{0}:x_{1}:x_{2}:x_{3})\mapsto(x_{2}:x_{3}:x_{0}:x_{1})$. More difficult but direct computation shows that it is preserved by the map given by the matrix
$$\begin{pmatrix}
    0        & 0 & \frac{\sqrt{b}}{\sqrt{b-1}} & -\frac{\sqrt{b}}{\sqrt{b-1}}\\
    0     & 0 & \frac{1}{\sqrt{b}\sqrt{b-1}} & -\frac{\sqrt{b}}{\sqrt{b-1}}\\
           \frac{i}{\sqrt{b-1}}& \frac{-ib}{\sqrt{b-1}} & 0 &  0\\
     \frac{i}{\sqrt{b-1}}& \frac{-i}{\sqrt{b-1}} & 0 & 0
\end{pmatrix}$$

Obviously, last two maps act on $v_{i}$'s as generators of the group $\Dih_{8}$, so these three maps generate $\widetilde{\Aut}(X)$. One can check that they together with the Geiser involution generate the group $\Dih_{8}\times \Cyc_{2}^{2}$.
\end{proof}
\begin{propos}\label{pr14} If the group $G''$ is isomorphic to $\Sym_{4}$ then there is unique variety with the automorphism group $\Sym_{4}\times \Cyc_{2}^{2}$.
\end{propos}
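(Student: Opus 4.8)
The plan is to exploit the inclusion $\Dih_{8}\subset \Sym_{4}$ together with Proposition~\ref{pr13}. Since $G''\simeq \Sym_{4}$ contains a subgroup isomorphic to $\Dih_{8}$ of index $3$, the variety $X$ is a member of the two-parameter family already produced in Proposition~\ref{pr13}, and the whole point is to show that the extra symmetry rigidifies both parameters. The key structural fact is that the octahedral group $\Sym_{4}\subset \operatorname{PGL}_{2}(\operatorname{k})$ has a \emph{unique} orbit of length $8$ on $\PP^{1}$ (the one whose points have stabilizer $\Cyc_{3}$, i.e.\ the vertices of the cube inscribed in the octahedron). Hence, once we impose $G''\simeq \Sym_{4}$, the configuration of the eight points $(p_{i}:q_{i})$ parametrizing the singular members of the system~\eqref{eq16.1} is determined up to projective equivalence, which already fixes the first parameter of the family of Proposition~\ref{pr13} to its octahedral value.

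First I would adjoin to the generators of the $\Dih_{8}$-action an explicit element $\rho\in \operatorname{PGL}_{2}(\operatorname{k})$ of order $3$ normalizing $\Dih_{8}$ and permuting the eight points $(p_{i}:q_{i})$ in the pattern dictated by the $\Sym_{4}$-orbit; the requirement that such a $\rho$ exist is precisely the numerical condition pinning down the first parameter. Next, using that $\PP^{3}$ is the projectivization of a $4$-dimensional representation and that the vertices $v_{i}$ are distributed on the two skew lines $x_{0}=x_{1}=0$ and $x_{2}=x_{3}=0$, I would lift $\rho$ to an element $\widetilde{\rho}\in\operatorname{PGL}_{4}(\operatorname{k})$ permuting the $v_{i}$ compatibly. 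Imposing that $\widetilde{\rho}$ preserve the quartic $Q$ attached to the quadrics of Proposition~\ref{pr13} yields a system of polynomial equations in the remaining parameter; solving it and discarding the degenerate solutions (those for which some $Q_{i}$ degenerates or $Q$ becomes a double quadric, as excluded in the earlier lemmas) leaves a single admissible value. This produces a unique variety $X$.

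Finally I would compute $\Aut(X)$. The group $\widetilde{\Aut}(X)$ is generated by the $\Dih_{8}$-lift of Proposition~\ref{pr13}, the generator $(x_{0}:x_{1}:x_{2}:x_{3})\mapsto(x_{0}:x_{1}:-x_{2}:-x_{3})$ of $G'\simeq\Cyc_{2}$, and the new element $\widetilde{\rho}$; I would verify, exactly as in Proposition~\ref{pr13} (e.g.\ with GAP, via the induced permutation action on the eight nodes $v_{i}$ and the eight singular members of the system), that these generate $\Sym_{4}\times\Cyc_{2}$. Since the Geiser involution $\theta$ commutes with all of $\widetilde{\Aut}(X)$, adjoining it gives $\Aut(X)\simeq\Sym_{4}\times\Cyc_{2}^{2}$, and the uniqueness just established shows there is exactly one such variety.

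The main obstacle I anticipate is the explicit elimination in the second step: one must show that the nonlinear condition $\widetilde{\rho}^{*}Q=Q$ has a unique solution in the remaining parameter and that this solution is non-degenerate. A subtlety here is that the lift $\widetilde{\rho}$ is determined only up to composition with $G'$ and up to an overall scalar, so one must fix the correct lift before imposing invariance; a wrong choice can either over-determine the system or fail to close up the group. Verifying that the resulting group is exactly $\Sym_{4}\times\Cyc_{2}^{2}$, and not an overgroup, is then a finite check on the node configuration.
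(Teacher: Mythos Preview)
Your strategy---specialize the two-parameter family of Proposition~\ref{pr13} by imposing the extra $\Cyc_{3}$-symmetry---is sound and is a genuinely different route from the paper's. The paper does not pass through Proposition~\ref{pr13} at all: it sets up $\Sym_{4}$-adapted coordinates on $\PP^{1}$ and on the two skew lines from scratch (with $\alpha=\frac{1\pm i\sqrt{3}}{2}$, $\beta=\alpha^{-1}$), writes down the eight linear conditions ``$Q_{(p_{i}:q_{i})}$ is singular at $v_{i}$'' directly, and solves to obtain a unique triple $(Q_{1},Q_{2},Q_{3})$ up to scalar; the automorphism group is then verified by exhibiting explicit matrices. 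Your reduction is more conceptual and reuses the work already done for $\Dih_{8}$, at the cost of an extra elimination step; the paper's direct computation avoids that step but repeats the setup.

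One genuine slip: you ask for $\rho\in\operatorname{PGL}_{2}(\operatorname{k})$ of order $3$ \emph{normalizing} $\Dih_{8}$. No such element exists inside $\Sym_{4}$, since $\Dih_{8}$ is a self-normalizing Sylow $2$-subgroup there; were $\rho$ to normalize $\Dih_{8}$, the group $\langle\Dih_{8},\rho\rangle$ would have $\Dih_{8}$ normal, which $\Sym_{4}$ does not. What you actually need is merely an order-$3$ element $\rho$ with $\langle\Dih_{8},\rho\rangle\simeq\Sym_{4}$ permuting the eight $(p_{i}:q_{i})$. In the paper's coordinates this $\rho$ fixes two of the eight points and acts by two $3$-cycles on the rest, preserving each of the two skew lines; your lift $\widetilde{\rho}$ is then block-diagonal with one undetermined relative scalar between the blocks, and the invariance $\widetilde{\rho}^{*}Q=Q$ together with the cross-ratio constraint on each line pins down both $\beta$ and that scalar. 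With this correction your outline goes through; the remaining verifications (non-degeneracy of the unique solution, and that the generators close up to $\Sym_{4}\times\Cyc_{2}^{2}$ rather than an overgroup) are the same finite checks the paper performs.
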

\begin{proof} We may assume that  $(p_{1}:q_{1})=(1:0)$, $(p_{2}:q_{2})=(\alpha^2:1)$, $(p_{3}:q_{3})=(0:1)$, $(p_{4}:q_{4})=(\alpha+1:1)$, $(p_{5}:q_{5})=(1:1)$, $(p_{6}:q_{6})=(\frac{1}{2-\alpha}:1)$, $(p_{7}:q_{7})=(\alpha:1)$, $(p_{8}:q_{8})=(\beta:1)$, $v_{1}=(1:0:0:0)$, $v_{2}=(0:0:1:0)$, $v_{3}=(0:1:0:0)$, $v_{4}=(0:0:0:1)$, $v_{5}=(1:1:0:0)$, $v_{6}=(0:0:1:1)$, $v_{7}=(\beta:1:0:0)$ and $v_{8}=(0:0:\beta:1)$, where $\alpha=\frac{1\pm i\sqrt{3}}{2}$, $\beta=\frac{1}{\alpha}$. The group $\Sym_{4}$ generated by $$\sigma: (p:q)\longmapsto (\alpha^2 p+(1-\alpha^2)q:p-\alpha^2 q) \text{ and } \tau:(p:q)\longmapsto ((\alpha-1)p+q:q)$$ acts transitively on the set of $(p_{i}:q_{i})$'s. We have the induced action on $v_{i}$'s: $\sigma$ transpose $v_{2i-1}$ with $v_{2i}$ and $\tau$ permutes them in the following way: $v_{1}\to v_{1}$, $v_{3}\to v_{5}\to v_{7}\to v_{3}$, $v_{2}\to v_{8}\to v_{4}\to v_{2}$, $v_{6}\to v_{6}$. One can solve the corresponding system of equations and obtain the single solution up to multiplication by a scalar:
\begin{equation}
\begin{gathered}
Q_{1}=x_{1}^{2}+(x_{3}-x_{2})(x_{3}+\beta x_{2}),\\
Q_{2}=\alpha^2 x_{0}^{2}+x_{2}((1+\alpha) x_{3}+\alpha x_{2}),\\
Q_{3}=-2\sqrt{3}ix_{0}^2-(1+\alpha)x_{1}^{2}+2\alpha x_{0}x_{1}+x_{2}^{2}-(1+\alpha)x_{3}^{2}+2\alpha^{2}x_{2}x_{3}.
\end{gathered}
\end{equation}
One can easily see that the quartic $Q$ is preserved by the map $$(x_{0}:x_{1}:x_{2}:x_{3})\mapsto(x_{0}:x_{1}:-x_{2}:-x_{3})$$ which generates $G'$. Also the equation of $Q$ is preserved by the map $$(x_{0}:x_{1}:x_{2}:x_{3})\mapsto(x_{2}:x_{3}:x_{0}:x_{1})$$. More difficult but direct computation shows that it is preserved by the map given by the matrix
$$\begin{pmatrix}
    1     & -\beta & 0 & 0\\
    0     & -\beta & 0 & 0\\
    0     &0 & \beta &  \alpha\\
     0& 0 & 1 & 0
\end{pmatrix}$$

Obviously, last two maps act on $v_{i}$'s as generators $\tau, \sigma$ of the group $\Sym_{4}$, so these three maps generate $\widetilde{\Aut}(X)$. One can check that they together with the Geiser involution generate the group $\Sym_{4}\times \Cyc_{2}^{2}$.
\end{proof}

\end{document}